\renewcommand*{\baselinestretch}{1.25}
\newtheorem{theorem}{Theorem}[section]
\newtheorem{lemma}{Lemma}[section]
\newtheorem{proposition}{Proposition}[section]
\newtheorem{corollary}{Corollary}[section]
\theoremstyle{definition}
\newtheorem{definition}{Definition}[section]
\newtheorem*{rmk*}{Remark}
\newtheorem{rmk}{Remark}[section]
\DeclareMathOperator{\variance}{Var}
\DeclareMathOperator{\covariance}{Cov}
\DeclareMathOperator{\trace}{tr}
\DeclareMathOperator{\influence}{Inf}
\numberwithin{equation}{section}
    \renewcommand*{\section}{\@startsection{section}{1}{\z@}%
    {6pt}{3pt}{\reset@font\normalsize\bfseries}}
    \renewcommand*{\subsection}{\@startsection{subsection}{2}{\z@}%
    {3pt}{3pt}{\reset@font\normalsize\mdseries\itshape}}
    \renewcommand*{\subsubsection}{\@startsection{subsubsection}{3}{\z@}%
    {3pt}{3pt}{\reset@font\normalsize\mdseries\itshape}}
\def\@seccntformat#1{\csname the#1\endcsname.\quad}
\def\@listi{\leftmargin\leftmargini
  \topsep=.5\baselineskip 
  \partopsep=0pt \parsep=0pt \itemsep=0pt}
\let\@listI\@listi
\def\@listii{\leftmargin\leftmarginii
  \labelwidth\leftmarginii \advance\labelwidth-\labelsep
  \topsep=0pt \partopsep=0pt \parsep=0pt \itemsep=0pt}
\def\@listiii{\leftmargin\leftmarginiii
  \labelwidth\leftmarginiii \advance\labelwidth-\labelsep
  \topsep=0pt \partopsep=0pt \parsep=0pt \itemsep=0pt}
\def\@listiv{\leftmargin\leftmarginiv
  \labelwidth\leftmarginiv \advance\labelwidth-\labelsep
  \topsep=0pt \partopsep=0pt \parsep=0pt \itemsep=0pt}
\newcommand{\bs}[1]{\boldsymbol{#1}}
\title{Gaussian approximation of maxima of Wiener functionals and its application to high-frequency data}
\author{Yuta Koike\thanks{Mathematics and Informatics Center and Graduate School of Mathematical Sciences, The University of Tokyo, 3-8-1 Komaba, Meguro-ku, Tokyo 153-8914 Japan}
\thanks{Department of Business Administration, Graduate School of Social Sciences, Tokyo Metropolitan University, Marunouchi Eiraku Bldg. 18F, 1-4-1 Marunouchi, Chiyoda-ku, Tokyo 100-0005 Japan}
\thanks{The Institute of Statistical Mathematics, 10-3 Midori-cho, Tachikawa, Tokyo 190-8562, Japan}
\thanks{CREST, Japan Science and Technology Agency}
}
\begin{document}

\maketitle

\begin{abstract}

This paper establishes an upper bound for the Kolmogorov distance between the maximum of a high-dimensional vector of smooth Wiener functionals and the maximum of a Gaussian random vector. 
As a special case, we show that the maximum of multiple Wiener-It\^o integrals with common orders is well-approximated by its Gaussian analog in terms of the Kolmogorov distance if their covariance matrices are close to each other and the maximum of the fourth cumulants of the multiple Wiener-It\^o integrals is close to zero. This may be viewed as a new kind of fourth moment phenomenon, which has attracted considerable attention in the recent studies of probability. 
This type of Gaussian approximation result has many potential applications to statistics. To illustrate this point, we present two statistical applications in high-frequency financial econometrics: One is the hypothesis testing problem for the absence of lead-lag effects and the other is the construction of uniform confidence bands for spot volatility.   

\vspace{3mm}

\noindent \textit{Keywords}: Bootstrap; Fourth moment phenomenon; Malliavin calculus; Maximum; Stein's method; Uniform confidence bands.

\end{abstract}

\section{Introduction}

This study is originally motivated by the problem of testing whether there exists a (possibly) time-lagged correlation between two Brownian motions based on their high-frequency observation data. Roughly speaking, the setting considered here is described as follows. We discretely observe the following two continuous-time processes on the interval $[0,T]$:
\begin{equation}\label{hry-model}
X^1_t=x^1_0+\sigma_1B^1_t,\qquad
X^2_t=x^2_0+\sigma_2B^2_{t-\vartheta},\qquad t\in[0,T],
\end{equation}
where $x^1_0,x^2_0\in\mathbb{R}$, $\sigma_1,\sigma_2>0$, $B_t=(B_t^1,B_t^2)$ $(t\in\mathbb{R})$ is a bivariate two-sided Brownian motion with correlation $\rho\in(-1,1)$ and $\vartheta\in\mathbb{R}$. For each $\nu=1,2$, the process $X^\nu$ is observed at the time points $0\leq t^\nu_0<t^\nu_1<\cdots<t^\nu_{n_\nu}\leq T$, hence the observation times are possibly non-synchronous. Based on the observation data $(X^1_{t^1_i})_{i=0}^{n_1}$ and $(X^2_{t^2_j})_{j=0}^{n_2}$, we aim at solving the following statistical hypothesis testing problem:
\begin{equation}\label{hry-test}
H_0:\rho=0\qquad\text{vs}\qquad H_1:\rho\neq0.
\end{equation}
Model \eqref{hry-model} was introduced in \citet{HRY2013} (as a more general one) to model lead-lag effects in high-frequency financial data (see also \cite{RR2010}). In \cite{HRY2013} the problem of estimating the time-lag parameter $\vartheta$ is considered. To estimate $\vartheta$, \citet{HRY2013} have introduced the following contrast function:
\[
U_n(\theta)=\sum_{i,j}(X^1_{t^1_i}-X^1_{t^1_{i-1}})(X^2_{t^2_j}-X^2_{t^2_{j-1}})1_{\{(t^1_{i-1},t^1_i]\cap(t^2_{j-1}-\theta,t^2_j-\theta]\neq\emptyset\}}.
\]
$U_n(\theta)$ could be considered as the (sample) cross-covariance function between the returns of $X^1$ and $X^2$ at the lag $\theta$ computed by \citet{HY2005}'s method. \citet{HRY2013} have shown that  
\[
\widehat{\vartheta}_n=\mathrm{arg}\max_{\theta\in\mathcal{G}_n}|U_n(\theta)|
\]
is a consistent estimator for $\vartheta$ under some regularity conditions while one appropriately takes the finite set $\mathcal{G}_n$ as long as $\rho\neq0$. The condition $\rho\neq0$ is necessary because it is clearly impossible to identify the parameter $\vartheta$ if $\rho=0$. Therefore, unless we can believe $\rho\neq0$ due to some external information, we need to reject the null hypothesis in the above testing problem before we carry out estimation of $\vartheta$. A natural approach to solve testing problem \eqref{hry-test} is to reject the null hypothesis if the value of $\max_{\theta\in\mathcal{G}_n}|U_n(\theta)|$ is \textit{too} large. To implement this idea precisely, we need to derive or approximate the distribution of $\max_{\theta\in\mathcal{G}_n}|U_n(\theta)|$ under the null hypothesis $H_0$. One main purpose of this paper is to give an answer to this problem. More generally, we consider the problem of approximating the distributions of maximum-type statistics appearing in high-frequency financial econometrics. Indeed, we encounter such statistics in many problems of high-frequency financial econometrics, e.g.~construction of uniform confidence bands for spot volatility and other time-varying characteristics, family-wise error rate control for testing at many time points (cf.~\cite{COR2016,ABD2007joe}), change point analysis of volatility (cf.~\cite{BJV2017}), testing the absence of jumps (cf.~\cite{LM2008,PW2016}) and so on.

From a mathematical point of view, this paper is built on two recent studies developed in different areas. The first one is the seminal work of \citet*{CCK2013,CCK2014,CCK2015,CCK2016} which we call the \textit{Chernozhukov-Chetverikov-Kato theory}, or the \textit{CCK theory} for short. One main conclusion from the CCK theory is a bound for the Kolmogorov distance between the distributions of the maximum of a (high-dimensional) random vector and that of a Gaussian vector, which has an apparent connection to our purpose. However, their result is not directly applicable to our problem because their target random vector is a sum of independent random vectors \cite{CCK2013,CCK2014,CCK2016} or Gaussian \cite{CCK2015,CCK2016}. In fact, one of our main target random vectors, $(U_n(\theta))_{\theta\in\mathcal{G}_n}$, is a sum of dependent random vectors even under the null hypothesis where the dependence is caused by the non-synchronicity of the observation times. Although there are several extensions of the CCK theory to a sum of dependent random vectors (see e.g.~\cite{ZW2015,ZC2017,CCK2014c,Chen2017,CQYZ2017}), it still seems difficult to apply such a result to our problem because the non-synchronicity causes a quite complex, ``non-stationary'', dependence structure. 
In this aspect this paper aims at extending the CCK theory suitably to our purpose, and our results indeed generalize several results of \cite{CCK2015}. In particular, our results do not require that the target random vector should be written as a sum of random vectors and give a simpler bound than those of the preceding studies listed above.   


It turns out that in the CCK theory the independence/Gaussianity assumption on the target vector is crucial for the application of \textit{Stein's method}.\footnote{The independence assumption also plays a role in deriving maximal moment inequalities, but this issue may be considered separately.} In other words, we can naturally extend the CCK theory to a case without independence as long as Stein's method is effectively applicable. This viewpoint leads us to using another important theory for this work, \textit{Malliavin calculus}, in our problem. In fact, starting from the seminal work of \citet{NP2009PTRF}, the recent studies show that ``Stein's method and Malliavin calculus fit together admirably well'' (page 3 of \citet{Nourdin2013}). This paper shows that this statement continues to hold true in the application to the CCK theory. Our application of Malliavin calculus is based on a multivariate extension of the ideas from \cite{NP2009PTRF}, which is established in \cite{NPR2010} (see also \cite{NP2010}). We refer to the monograph \cite{NP2012} for more information about this subject.   

After developing the main Gaussian approximation results, we turn to the original problem of statistical applications in high-frequency data. In this paper we demonstrate two applications: One is testing the absence of lead-lag effects and the other is constructing uniform confidence bands for spot volatility. We have already explained the background of the former problem in the above, so we briefly discuss the latter one. Estimation of spot volatility is one of major topics in high-frequency financial econometrics (see Chapter 8 of \cite{AJ2014} and references therein). There are quite a few articles concerning construction of \textit{pointwise} confidence bands for spot volatility; see e.g.~\cite{Kristensen2010,MR2015,MZ2008,APPS2012}.
In contrast, only a few results are available on the behavior of \textit{uniform} errors in spot volatility estimation: \citet{Kristensen2010} and \citet{KK2016} give uniform convergence rates for kernel-type spot volatility estimators, while \citet{FW2008} consider a Gumbel approximation for the distribution of uniform errors of kernel-type spot volatility estimators. Besides, \citet{Sabel2014} implements multiscale inference for spot volatility via KMT construction. 
This paper contributes this relatively undeveloped areas by providing a new approach to construct uniform confidence bands for spot volatility in the spirit of the CCK theory: Construction of uniform confidence bands is a typical application of the CCK theory, cf.~\cite{KS2016,KK2017,CCK2014b}. 

In the first application, the Gaussian approximation itself is still statistically infeasible because the covariance structure of the objective statistics is unknown. For this reason we also develop a wild (or multiplier) bootstrap procedure to approximate the quantiles of the error distribution of the test statistic, which is the approach taken in the CCK theory. The Gaussian approximation result serves as validating such a bootstrap procedure.  

The remainder of this paper is organized as follows. 
Section \ref{sec:main} presents the main Gaussian approximation results obtained in this study. 
In Section \ref{sec:qf} we derive Gaussian approximation results for maxima of random symmetric quadratic forms as an application of the main results. 
We present two statistical applications of our results in high-frequency financial econometrics in Section \ref{sec:applications}. We especially propose a testing procedure for \eqref{hry-test}. The finite sample performance of this testing procedure is illustrated in Section \ref{sec:simulation}.  
We put most technical parts of the paper on the Appendix: Appendix \ref{preliminaries} collects the preliminary definitions and results used in Appendix \ref{sec:proofs}, which contains proofs of all the results presented in the main text of the paper. 

\section*{Notation}

Throughout the paper, $\mathfrak{C}=(\mathfrak{C}(i,j))_{1\leq i,j\leq d}$ denotes a $d\times d$ nonnegative definite symmetric matrix, and $Z=(Z_1,\dots,Z_d)^\top$ denotes a $d$-dimensional centered Gaussian random vector with covariance matrix $\mathfrak{C}$. 
For a vector $x=(x_1,\dots,x_d)^\top\in\mathbb{R}^d$, we write $x_\vee=\max_{1\leq j\leq d}x_j$. 
For any $\varepsilon>0$ and any subset $A$ of $\mathbb{R}$, we write $A^\varepsilon=\{x\in\mathbb{R}:|x-y|\leq\varepsilon\text{ for some }y\in A\}$. 
For a real-valued function $f$ defined on an interval $I\subset\mathbb{R}$ and $\eta>0$, we write $\|f\|_\infty=\sup\{|f(x)|:x\in I\}$ and $w(f;\eta)=\sup\{|f(s)-f(t)|:s,t\in I,|s-t|\leq\eta\}$. 
For a random variable $\xi$ and $p\geq1$, we write $\|\xi\|_p=\{E[|\xi|^p]\}^{1/p}$. 
For a matrix $A$, we denote by $\|A\|_{\mathrm{sp}}$ and $\|A\|_F$ its spectral norm and Frobenius norm, respectively. 

Finally, we enumerate the notation from Malliavin calculus which are necessary to state our main results. We refer to \cite{Nualart2006,NP2012,Janson1997} for a detailed description of Malliavin calculus. Also, see Section \ref{sec:malliavin} of Appendix \ref{preliminaries} for a concise overview of the notions from Malliavin calculus used in this paper.
\begin{itemize}

\item Throughout the paper, $H$ denotes a real separable Hilbert space. The inner product and the norm of $H$ are denoted by $\langle\cdot,\cdot\rangle_H$ and $\|\cdot\|_H$, respectively. 

\item We assume that an isonormal Gaussian process $W=(W(h))_{h\in H}$ over $H$ defined on a probability space $(\Omega,\mathcal{F},P)$ is given. 
We denote by $L^2(W)$ the space $L^2(\Omega,\sigma(W),P)$ for short. 

\item For a non-negative integer $q$, $H^{\otimes q}$ and $H^{\odot q}$ denote the $q$th tensor power and $q$th symmetric tensor power, respectively. 

\item For an element $f\in H^{\odot q}$ we denote by $I_q(f)$ the $q$th multiple Wiener It\^o integral of $f$. 

\item For any real number $p\geq1$ and any integer $k\geq1$, $\mathbb{D}_{k,p}$ denotes the stochastic Sobolev space of random variables which are $k$ times differentiable in the Malliavin sense and the derivatives up to order $k$ have finite moments of order $p$. If $F\in\mathbb{D}_{k,p}$, we denote by $D^kF$ the $k$th Malliavin derivative of $F$. We write $DF$ instead of $D^1F$ for short. 


\item $L$ denotes the Ornstein-Uhlenbeck operator. Also, the pseudo inverse of $L$ is denoted by $L^{-1}$. 

\end{itemize}

\section{Main results}\label{sec:main}

Throughout this section, $F=(F_1,\dots,F_d)^\top$ denotes a $d$-dimensional random vector such that $F_j\in\mathbb{D}_{1,2}$ and $E[F_j]=0$ for all $j=1,\dots,d$. For each $\beta>0$, we define the function 
$\Phi_\beta:\mathbb{R}^d\to\mathbb{R}$ by 
\[
\Phi_\beta(x)=\beta^{-1}\log\left(\sum_{j=1}^de^{\beta x_j}\right)\qquad(x=(x_1,\dots,x_d)^\top\in\mathbb{R}^d).
\]
Eq.(1) from \cite{CCK2015} states that
\begin{equation}\label{max-smooth}
0\leq \Phi_\beta(x)-x_\vee\leq \beta^{-1}\log d
\end{equation}
for any $x\in\mathbb{R}^d$.

We first give a generalization of Theorem 1 from \cite{CCK2015} as follows:
\begin{proposition}\label{stein}
Let $g:\mathbb{R}\to\mathbb{R}$ be a $C^2$ function with bounded first and second derivatives. Then, for any $\beta>0$ we have
\[
\left|E\left[g\left(\Phi_\beta(F)\right)\right]-E\left[g\left(\Phi_\beta(Z)\right)\right]\right|
\leq(\|g''\|_\infty/2+\beta\|g'\|_\infty)\Delta,
\]
where
\[
\Delta=E\left[\max_{1\leq i,j\leq d}|\mathfrak{C}(i,j)-\langle DF_i,-DL^{-1}F_j\rangle_H|\right].
\]
In particular, it holds that  
\[
\left|E\left[g\left(F_\vee\right)\right]-E\left[g\left(Z_\vee\right)\right]\right|
\leq(\|g''\|_\infty/2+\beta\|g'\|_\infty)\Delta+2\beta^{-1}\|g'\|_\infty\log d.
\]
\end{proposition}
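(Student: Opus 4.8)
The plan is to treat the composite function $h := g\circ\Phi_\beta$ and to bound $\bigl|E[h(F)]-E[h(Z)]\bigr|$ by the smart-path (Gaussian interpolation) argument combined with the multivariate Malliavin integration-by-parts formula in the spirit of \cite{NPR2010}. Concretely, I would let $Z'$ be an independent copy of $Z$, put $g_t := \sqrt{t}\,F+\sqrt{1-t}\,Z'$ for $t\in[0,1]$, and set $\Psi(t):=E[h(g_t)]$, so that $\Psi(1)-\Psi(0)=E[h(F)]-E[h(Z)]$.

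First I would record the derivatives of $h$. Writing $\pi_j(x)=e^{\beta x_j}/\sum_{k}e^{\beta x_k}$ for the softmax weights, one has $\partial_j\Phi_\beta=\pi_j$ and $\partial_i\partial_j\Phi_\beta=\beta(\delta_{ij}\pi_i-\pi_i\pi_j)$, hence $\partial_i h=g'(\Phi_\beta)\pi_i$ and $\partial_i\partial_j h=g''(\Phi_\beta)\pi_i\pi_j+\beta g'(\Phi_\beta)(\delta_{ij}\pi_i-\pi_i\pi_j)$. Since $\pi(x)$ is a probability vector, $\sum_i\pi_i\equiv1$, which yields the \emph{deterministic} bound $\sum_{i,j}|\partial_i\partial_j h|\le\|g''\|_\infty+2\beta\|g'\|_\infty$, and also shows that $h\in C^2$ with bounded first and second derivatives, so that $h(g_t)\in\mathbb{D}_{1,2}$ and the manipulations below are licit.

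Next I would differentiate $\Psi$, getting $\Psi'(t)=\tfrac{1}{2\sqrt t}\sum_i E[F_i\,\partial_i h(g_t)]-\tfrac{1}{2\sqrt{1-t}}\sum_i E[Z'_i\,\partial_i h(g_t)]$. For the first sum I condition on $Z'$ and apply the Malliavin identity $E[F_iG]=E[\langle DG,-DL^{-1}F_i\rangle_H]$ with $G=\partial_i h(g_t)$; using $Dg_{t,j}=\sqrt t\,DF_j$ and the chain rule gives $\sum_i E[F_i\partial_i h(g_t)]=\sqrt t\sum_{i,j}E[\partial_i\partial_j h(g_t)\langle DF_j,-DL^{-1}F_i\rangle_H]$. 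For the second sum I condition on $F$ and apply the multivariate Gaussian (Stein) integration-by-parts for $Z'$, obtaining $\sum_i E[Z'_i\partial_i h(g_t)]=\sqrt{1-t}\sum_{i,j}\mathfrak{C}(i,j)E[\partial_i\partial_j h(g_t)]$. The factors $\sqrt t$ and $\sqrt{1-t}$ cancel, leaving
\[
\Psi'(t)=\frac12\sum_{i,j=1}^{d}E\!\left[\partial_i\partial_j h(g_t)\bigl(\langle DF_j,-DL^{-1}F_i\rangle_H-\mathfrak{C}(i,j)\bigr)\right].
\]
Bounding $|\langle DF_j,-DL^{-1}F_i\rangle_H-\mathfrak{C}(i,j)|\le\max_{1\le k,\ell\le d}|\mathfrak{C}(k,\ell)-\langle DF_k,-DL^{-1}F_\ell\rangle_H|$ (using symmetry of $\mathfrak{C}$, so the ordering of the indices is immaterial), pulling out the deterministic factor $\sum_{i,j}|\partial_i\partial_j h(g_t)|\le\|g''\|_\infty+2\beta\|g'\|_\infty$, and integrating over $t\in[0,1]$ yields the first displayed inequality with constant $\|g''\|_\infty/2+\beta\|g'\|_\infty$. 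For the ``in particular'' statement I would invoke the sandwich bound \eqref{max-smooth}: since $g$ is $\|g'\|_\infty$-Lipschitz, $|g(x_\vee)-g(\Phi_\beta(x))|\le\beta^{-1}\|g'\|_\infty\log d$ for every $x$, applied at $x=F$ and $x=Z$; the triangle inequality then combines this with the first inequality to give the second one.

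The hard part will be the rigorous justification of the two integration-by-parts steps in the presence of the auxiliary independent vector $Z'$ — the conditioning and measurability arguments, and the fact that $-DL^{-1}F_i$ lies in the domain of the divergence operator — together with the legitimacy of differentiating under the expectation in $\Psi'$ and the integrability needed to integrate $\Psi'$ over $[0,1]$ (the $t^{-1/2}$, $(1-t)^{-1/2}$ singularities disappear only after the cancellation above). All of these rest on $F_j\in\mathbb{D}_{1,2}$ and on the boundedness of the first two derivatives of $h$, and are precisely the technical points covered by the Malliavin-calculus preliminaries in the Appendix and by \cite{NPR2010}, so the argument should go through once those are cited.
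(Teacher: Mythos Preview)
Your proposal is correct and follows essentially the same route as the paper: Slepian/smart-path interpolation between $F$ and an independent copy of $Z$, Stein's identity for the Gaussian leg, the Nourdin--Peccati Malliavin integration-by-parts formula (the paper's Lemma~\ref{lemma:NP}) for the $F$ leg, and the bound $\sum_{i,j}|\partial_i\partial_j(g\circ\Phi_\beta)|\le\|g''\|_\infty+2\beta\|g'\|_\infty$ (which the paper cites as Lemma~\ref{cck-derivative} rather than rederiving it from the softmax computation). The ``in particular'' part via \eqref{max-smooth} is likewise identical.
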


\begin{rmk}
We can indeed derive Theorem 1 of \cite{CCK2015} from Proposition \ref{stein} in the following way. Suppose that the law of $F$ is the $d$-dimensional normal distribution with mean 0 and covariance matrix $\Sigma=(\Sigma(i,j))_{1\leq i,j\leq d}$. Without loss of generality, we may assume that $F$ is expressed as $F=\Sigma^{1/2}G$ with $G$ being a $d$-dimensional standard Gaussian vector. Then we can define the isonormal Gaussian process $W$ over $H:=\mathbb{R}^d$ by $W(h)=h^\top G$, $h\in H$ (cf.~Example 2.1.3 of \cite{NP2012}), and we have $F_i=\sum_{j=1}^d\gamma_{ij}W(e_j)$ for every $i=1,\dots,d$, where $\gamma_{ij}$ denotes the $(i,j)$-th component of the matrix $\Sigma^{1/2}$ and $(e_1,\dots,e_d)$ denotes the canonical basis of $\mathbb{R}^d$. In this case it holds that
\[
\langle DF_i,-DL^{-1}F_j\rangle_H=\sum_{k,l=1}^d\gamma_{ik}\gamma_{jl}\langle e_k,e_l\rangle_H
=\sum_{k=1}^d\gamma_{ik}\gamma_{jk}=\Sigma(i,j),
\]
hence we obtain the conclusion of Theorem 1 from \cite{CCK2015}.
\end{rmk}

Proposition \ref{stein} and some elementary approximation arguments lead the following useful lemma:
\begin{lemma}\label{coupling}
There is a universal constant $C>0$ such that 
\begin{align*}
P(F_\vee\in A)\leq P(Z_\vee\in A^{5\varepsilon})+C\varepsilon^{-2}(\log d)\Delta
\end{align*}
for any Borel set $A$ of $\mathbb{R}$ and any $\varepsilon>0$.
\end{lemma}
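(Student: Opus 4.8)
The plan is to deduce the lemma from Proposition \ref{stein} by a standard smoothing argument of the kind used in the CCK theory. First I would fix a Borel set $A\subset\mathbb{R}$ and $\varepsilon>0$, and replace the indicator $1_A$ by a smooth surrogate. Concretely, I would take a $C^2$ function $g:\mathbb{R}\to\mathbb{R}$ with $0\le g\le 1$, $g\equiv 1$ on $A^{2\varepsilon}$ (or on $A$ itself), $g\equiv 0$ outside $A^{3\varepsilon}$, and with $\|g'\|_\infty\le c/\varepsilon$, $\|g''\|_\infty\le c/\varepsilon^2$ for a universal constant $c$; such a bump function is obtained by mollifying the indicator of an appropriate enlargement of $A$. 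Then $P(F_\vee\in A)\le E[g(F_\vee)]$ and $E[g(Z_\vee)]\le P(Z_\vee\in A^{3\varepsilon})$, so it suffices to bound $E[g(F_\vee)]-E[g(Z_\vee)]$.

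The next step is to pass from $F_\vee$ to $\Phi_\beta(F)$ using \eqref{max-smooth}: since $0\le \Phi_\beta(x)-x_\vee\le\beta^{-1}\log d$, we have $g(F_\vee)\le g(\Phi_\beta(F))+\|g'\|_\infty\beta^{-1}\log d$ and similarly $g(\Phi_\beta(Z))\le g(Z_\vee)+\|g'\|_\infty\beta^{-1}\log d$. Combining these with Proposition \ref{stein} applied to $g$ gives
\[
E[g(F_\vee)]-E[g(Z_\vee)]\le (\|g''\|_\infty/2+\beta\|g'\|_\infty)\Delta+2\beta^{-1}\|g'\|_\infty\log d,
\]
which is exactly the second bound stated in Proposition \ref{stein}. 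Plugging in $\|g'\|_\infty\le c/\varepsilon$ and $\|g''\|_\infty\le c/\varepsilon^2$ yields a bound of the form $C(\varepsilon^{-2}\Delta+\beta\varepsilon^{-1}\Delta+\beta^{-1}\varepsilon^{-1}\log d)$.

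Finally I would optimize over $\beta$. Choosing $\beta=\varepsilon^{-1}\log d$ (the natural scale that balances the first and third error terms and makes the $\beta\varepsilon^{-1}\Delta$ term comparable to $\varepsilon^{-2}(\log d)\Delta$ since $\log d\ge 1$) collapses everything into $C\varepsilon^{-2}(\log d)\Delta$, after absorbing constants; one then replaces $A^{3\varepsilon}$ by $A^{5\varepsilon}$ to have slack (useful if one mollifies at scale $\varepsilon$ rather than building the enlargements directly into $g$). This gives the claimed inequality. The only mildly delicate point is the explicit construction of the $C^2$ cutoff $g$ with the stated sup-norm bounds on $g'$ and $g''$ while respecting that $A$ is an arbitrary Borel set — but this is handled by first passing to the open enlargement $A^{\varepsilon}$ (or $A^{2\varepsilon}$), which is open hence a countable union of intervals, and mollifying its indicator with a smooth kernel of bandwidth $\varepsilon$; none of this is substantive, so I would state it briefly. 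There is no real obstacle here: the lemma is essentially a repackaging of Proposition \ref{stein}, and the work is entirely in the bookkeeping of the smoothing parameters.
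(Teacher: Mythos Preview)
There is a genuine gap in your optimization step. Your bound is
\[
E[g(F_\vee)]-E[g(Z_\vee)]\le C\left(\varepsilon^{-2}\Delta+\beta\varepsilon^{-1}\Delta+\beta^{-1}\varepsilon^{-1}\log d\right),
\]
and you claim that $\beta=\varepsilon^{-1}\log d$ ``collapses everything into $C\varepsilon^{-2}(\log d)\Delta$''. But with this choice the third term becomes $\beta^{-1}\varepsilon^{-1}\log d = \varepsilon(\log d)^{-1}\cdot\varepsilon^{-1}\log d = 1$, a universal constant that does \emph{not} tend to zero and is not of the form $C\varepsilon^{-2}(\log d)\Delta$. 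In fact no choice of $\beta$ can repair this: forcing $\beta^{-1}\varepsilon^{-1}\log d\le C\varepsilon^{-2}(\log d)\Delta$ requires $\beta\ge C^{-1}\varepsilon/\Delta$, and then the second term satisfies $\beta\varepsilon^{-1}\Delta\ge C^{-1}$, again a nonvanishing constant. The best your route gives is $C(\varepsilon^{-2}\Delta+\varepsilon^{-1}\sqrt{\Delta\log d})$, which is not the stated bound.

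The source of the problem is the Lipschitz passage $g(F_\vee)\le g(\Phi_\beta(F))+\|g'\|_\infty\beta^{-1}\log d$: this additive smoothing cost cannot be absorbed. The paper's proof avoids it by using \emph{set enlargement} rather than a Lipschitz correction to move between $x_\vee$ and $\Phi_\beta(x)$. Concretely: with $\beta=\varepsilon^{-1}\log d$, \eqref{max-smooth} gives $F_\vee\in A\Rightarrow\Phi_\beta(F)\in A^\varepsilon$, so $P(F_\vee\in A)\le E[1_{A^\varepsilon}(\Phi_\beta(F))]$. Only \emph{then} does one sandwich by the smooth $g$ from Lemma~\ref{cck-approx} with $1_{A^\varepsilon}\le g\le 1_{A^{4\varepsilon}}$, apply the \emph{first} inequality of Proposition~\ref{stein} (which involves only $(\|g''\|_\infty/2+\beta\|g'\|_\infty)\Delta$ and no $\beta^{-1}\log d$ term), and then undo the passage via $\Phi_\beta(Z)\in A^{4\varepsilon}\Rightarrow Z_\vee\in A^{5\varepsilon}$. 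In short: enlarge the set, then smooth --- not the other way around.
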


\begin{rmk}
Lemma \ref{coupling} is useful when we derive a Gaussian approximation for the supremum of statistics indexed by an infinite set (see Proposition \ref{spot} and its proof). In fact, Lemma \ref{coupling} can be considered as a counterpart of Theorem 3.1 from \cite{CCK2016}, which is used to derive their Gaussian approximation results for suprema of empirical processes. 
An advantage of Lemma \ref{coupling} over Theorem 3.1 from \cite{CCK2016} is that the second term of the estimate is proportional to $\varepsilon^{-2}$ in Lemma \ref{coupling}, while it is proportional to $\varepsilon^{-3}$ in Theorem 3.1 from \cite{CCK2016}. This difference generally leads a weaker condition and a better convergence rate in Gaussian approximation; see Remark \ref{rmk:spot} for details.
\end{rmk}


Combining Lemma \ref{coupling} with several technical tools developed in the CCK theory, we obtain the following main result of this paper, which can be considered as a generalization of Theorem 2 from \cite{CCK2015}:  
\if0
\begin{theorem}\label{kolmogorov}
Suppose that $d\geq2$ and there are positive constants $\underline{\sigma},\overline{\sigma}$ such that $\underline{\sigma}^2\leq\mathfrak{C}(j,j)\leq\overline{\sigma}^2$ for all $j=1,\dots,d$. Set $a_d=E[\max_{1\leq j\leq d}(Z_j/\sqrt{\mathfrak{C}(j,j)})]$. Then 
\begin{equation}\label{eq:kolmogorov}
\sup_{x\in\mathbb{R}}\left|P(F_\vee\leq x)-P(Z_\vee\leq x)\right|
\leq C\Delta^{1/3}\left\{1\vee a_d^2\vee\log(1/\Delta)\right\}^{1/3}(\log d)^{1/3},
\end{equation}
where $C>0$ depends only on $\underline{\sigma}$ and $\overline{\sigma}$ (the right side is understood to be 0 if $\Delta=0$). In particular, there is a constant $C'>0$ which depends only on $\underline{\sigma}$ and $\overline{\sigma}$ such that
\begin{align*}
\sup_{x\in\mathbb{R}}\left|P(F_\vee\leq x)-P(Z_\vee\leq x)\right|
\leq C'\Delta^{1/3}\left\{1\vee \log(d/\Delta)\right\}^{2/3}.
\end{align*}
\end{theorem}
\fi
\begin{theorem}\label{kolmogorov}
(a) Suppose that $d\geq2$ and there are constants $\underline{\sigma},\overline{\sigma}>0$ such that $\underline{\sigma}^2\leq\mathfrak{C}(j,j)\leq\overline{\sigma}^2$ for all $j=1,\dots,d$. Set $a_d=E[\max_{1\leq j\leq d}(Z_j/\sqrt{\mathfrak{C}(j,j)})]$. Then 
\begin{equation}\label{eq:kolmogorov}
\sup_{x\in\mathbb{R}}\left|P(F_\vee\leq x)-P(Z_\vee\leq x)\right|
\leq C\Delta^{1/3}\left\{1\vee a_d^2\vee\log(1/\Delta)\right\}^{1/3}(\log d)^{1/3},
\end{equation}
where $C>0$ depends only on $\underline{\sigma}$ and $\overline{\sigma}$ (the right side is understood to be 0 if $\Delta=0$). 

\noindent(b) Suppose that $d\geq2$ and there is a constant $b>0$ such that $\mathfrak{C}(j,j)\geq b$ for all $j=1,\dots,d$. Then 
\begin{equation}\label{eq:kolmogorov2}
\sup_{x\in\mathbb{R}}\left|P(F_\vee\leq x)-P(Z_\vee\leq x)\right|
\leq C'\Delta^{1/3}(\log d)^{2/3},
\end{equation}
where $C'>0$ depends only on $b$. 
\end{theorem}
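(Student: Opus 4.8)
The plan is to deduce Theorem~\ref{kolmogorov} from Lemma~\ref{coupling} combined with anti-concentration inequalities for the maximum of a Gaussian vector, and then to optimize over the slack parameter $\varepsilon$; the two parts differ only in which anti-concentration bound is invoked.

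\emph{Reduction common to (a) and (b).} Apply Lemma~\ref{coupling} once with $A=(-\infty,x]$ and once with $A=(x,\infty)$. Since $(-\infty,x]^{5\varepsilon}=(-\infty,x+5\varepsilon]$ and $(x,\infty)^{5\varepsilon}=[x-5\varepsilon,\infty)$, rearranging the two resulting inequalities and adding/subtracting $P(Z_\vee\le x)$ gives, for every $x\in\mathbb{R}$ and every $\varepsilon>0$,
\[
\bigl|P(F_\vee\le x)-P(Z_\vee\le x)\bigr|\le \sup_{y\in\mathbb{R}}P\bigl(y\le Z_\vee\le y+5\varepsilon\bigr)+C\,\varepsilon^{-2}(\log d)\,\Delta .
\]
If $\Delta=0$, Proposition~\ref{stein} forces $E[g(\Phi_\beta(F))]=E[g(\Phi_\beta(Z))]$ for every admissible $g$ and every $\beta>0$, hence $\Phi_\beta(F)\overset{d}{=}\Phi_\beta(Z)$; letting $\beta\to\infty$ and using \eqref{max-smooth} yields $F_\vee\overset{d}{=}Z_\vee$, so the left side is $0$. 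Thus we may assume $\Delta>0$, and it remains to control the Gaussian anti-concentration function $\varepsilon\mapsto\sup_y P(y\le Z_\vee\le y+5\varepsilon)$ and then balance the two terms in $\varepsilon$.

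\emph{Part (a).} I would invoke the refined Gaussian anti-concentration bound of the CCK theory (\cite{CCK2015}): when $\underline\sigma^2\le\mathfrak{C}(j,j)\le\overline\sigma^2$ for all $j$, there is $C_1=C_1(\underline\sigma,\overline\sigma)$ such that $\sup_y P(y\le Z_\vee\le y+u)\le C_1 u\bigl(a_d+\sqrt{1\vee\log(\underline\sigma/u)}\bigr)$ for all $u>0$. Writing $M:=1\vee a_d^2\vee\log(1/\Delta)$, inserting $u=5\varepsilon$ and $a_d\le\sqrt M$ into the reduction gives, up to a constant depending only on $\underline\sigma,\overline\sigma$,
\[
\sup_{x\in\mathbb{R}}\bigl|P(F_\vee\le x)-P(Z_\vee\le x)\bigr|\ \lesssim\ \varepsilon\Bigl(\sqrt M+\sqrt{1\vee\log(1/\varepsilon)}\Bigr)+\varepsilon^{-2}(\log d)\,\Delta .
\]
Then I would take $\varepsilon\asymp\bigl((\log d)\Delta/\sqrt M\bigr)^{1/3}$. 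For this $\varepsilon$ the identity $\log(1/\varepsilon)=\tfrac13\bigl(\log(1/\Delta)-\log\log d+\tfrac12\log M\bigr)$ together with $\log(1/\Delta)\le M$, $\log M\le M$, $\log d\ge\log 2$ and $M\ge1$ shows $1\vee\log(1/\varepsilon)\le C_2M$ for an absolute $C_2$, so the first term is $\lesssim\varepsilon\sqrt M$; and a direct computation gives $\varepsilon\sqrt M\asymp\varepsilon^{-2}(\log d)\Delta\asymp\Delta^{1/3}(\log d)^{1/3}M^{1/3}$, which is precisely \eqref{eq:kolmogorov}.

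\emph{Part (b) and the main obstacle.} Without an upper bound on the variances the inequality used in (a) is unavailable; instead I would use Nazarov's inequality (as in the CCK theory, \cite{CCK2015}), which requires only $\min_j\mathfrak{C}(j,j)\ge b>0$: since $\{Z_\vee\le x\}=\{Z\le x\mathbf{1}\}$ it yields $\sup_y P(y\le Z_\vee\le y+u)\le b^{-1/2}u(\sqrt{2\log d}+2)$. Substituting $u=5\varepsilon$ into the reduction and using $\log d\ge\log2$ gives $\sup_x|P(F_\vee\le x)-P(Z_\vee\le x)|\lesssim\varepsilon\sqrt{\log d}+\varepsilon^{-2}(\log d)\Delta$ with implied constant depending only on $b$; choosing $\varepsilon\asymp(\Delta\sqrt{\log d})^{1/3}$ balances the two terms at order $\Delta^{1/3}(\log d)^{2/3}$, which is \eqref{eq:kolmogorov2} (the displayed inequality is valid for every $\varepsilon>0$, so no smallness hypothesis on $\Delta$ is needed). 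The only genuinely delicate point in the whole argument is the logarithmic bookkeeping in Part (a): one must check that the factor $\sqrt{1\vee\log(\underline\sigma/\varepsilon)}$ produced by anti-concentration is absorbed into $M^{1/3}=\{1\vee a_d^2\vee\log(1/\Delta)\}^{1/3}$ at the chosen $\varepsilon$ — this is exactly why the statement carries the maximum $1\vee a_d^2\vee\log(1/\Delta)$ rather than $a_d^2$ alone — and that no explicit bound on the size of $\Delta$ enters the final constant. Everything else is a routine computation.
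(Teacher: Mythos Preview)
Your proof is correct and follows essentially the same route as the paper: combine Lemma~\ref{coupling} with a Gaussian anti-concentration bound (Theorem~3 of \cite{CCK2015} for part~(a), Nazarov's inequality as in Lemma~\ref{cck-nazarov} for part~(b)), then optimize over~$\varepsilon$. The only cosmetic differences are that the paper packages your two applications of Lemma~\ref{coupling} into the single Lemma~\ref{cck-kolmogorov}, handles $\Delta=0$ by letting $\varepsilon\to0$ in the already-derived bound rather than via Proposition~\ref{stein}, and disposes of $\Delta\ge1$ trivially upfront; it also defers the logarithmic bookkeeping you spell out to the proof of Theorem~2 in \cite{CCK2015}.
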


Since we have $\max_{1\leq j\leq d}|x_j|=\max\{\max_{1\leq j\leq d}x_j,\max_{1\leq j\leq d}(-x_j)\}$ for any real numbers $x_1,\dots,x_d$, we obtain the following result as a direct consequence of Theorem \ref{kolmogorov}:
\if0
\begin{corollary}\label{abs-kolmogorov}
Under the assumptions of Theorem \ref{kolmogorov}, we have
\begin{align*}
\sup_{x\in\mathbb{R}}\left|P\left(\max_{1\leq j\leq d}|F_j|\leq x\right)-P\left(\max_{1\leq j\leq d}|Z_j|\leq x\right)\right|
\leq C'\Delta^{1/3}\left\{1\vee \log(2d/\Delta)\right\}^{2/3},
\end{align*}
where $C'>0$ depends only on $\underline{\sigma}$ and $\overline{\sigma}$.
\end{corollary}
\fi
\begin{corollary}\label{abs-kolmogorov}
Under the assumptions of Theorem \ref{kolmogorov}(b), we have
\begin{align*}
\sup_{x\in\mathbb{R}}\left|P\left(\max_{1\leq j\leq d}|F_j|\leq x\right)-P\left(\max_{1\leq j\leq d}|Z_j|\leq x\right)\right|
\leq C'\Delta^{1/3}(\log d)^{2/3},
\end{align*}
where $C'>0$ depends only on $b$.
\end{corollary}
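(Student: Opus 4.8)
The plan is to obtain the corollary from Theorem \ref{kolmogorov}(b) by a symmetrization trick: apply the theorem to the $2d$-dimensional vector obtained by appending the negatives of the coordinates. Concretely, set $\widetilde F=(F_1,\dots,F_d,-F_1,\dots,-F_d)^\top$ and $\widetilde Z=(Z_1,\dots,Z_d,-Z_1,\dots,-Z_d)^\top$, and let $\widetilde{\mathfrak{C}}$ denote the covariance matrix of $\widetilde Z$, which is the natural $2d\times 2d$ block matrix built out of $\mathfrak{C}$. Each component of $\widetilde F$ lies in $\mathbb{D}_{1,2}$ (a vector space) and is centered, and the diagonal entries of $\widetilde{\mathfrak{C}}$ are precisely $\mathfrak{C}(1,1),\dots,\mathfrak{C}(d,d),\mathfrak{C}(1,1),\dots,\mathfrak{C}(d,d)$, all bounded below by $b$. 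Hence the hypotheses of Theorem \ref{kolmogorov}(b) hold for $(\widetilde F,\widetilde Z)$ with the same constant $b$ and ambient dimension $2d\geq 2$.

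The one point requiring care is that the discrepancy quantity $\widetilde\Delta:=E[\max_{1\leq i,j\leq 2d}|\widetilde{\mathfrak{C}}(i,j)-\langle D\widetilde F_i,-DL^{-1}\widetilde F_j\rangle_H|]$ associated with $\widetilde F$ coincides with the original $\Delta$. This holds because $D$ and $L^{-1}$ are linear, so replacing $F_i$ by $-F_i$ negates $\langle DF_i,-DL^{-1}F_j\rangle_H$ in the corresponding argument, while the matching entry of $\widetilde{\mathfrak{C}}$ picks up exactly the same sign; thus each of the four $d\times d$ blocks of the matrix $(\widetilde{\mathfrak{C}}(i,j)-\langle D\widetilde F_i,-DL^{-1}\widetilde F_j\rangle_H)_{1\leq i,j\leq 2d}$ has entries equal, up to an overall sign, to those of $(\mathfrak{C}(i,j)-\langle DF_i,-DL^{-1}F_j\rangle_H)_{1\leq i,j\leq d}$. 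Taking absolute values and maximizing over all $2d\times 2d$ entries therefore yields the same random variable as maximizing over the original $d\times d$ entries, so $\widetilde\Delta=\Delta$.

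With these observations in hand, Theorem \ref{kolmogorov}(b) applied to $(\widetilde F,\widetilde Z)$ gives $\sup_{x\in\mathbb{R}}|P(\widetilde F_\vee\leq x)-P(\widetilde Z_\vee\leq x)|\leq C'\Delta^{1/3}(\log(2d))^{2/3}$ for a constant $C'$ depending only on $b$. By the elementary identity $\max_{1\leq j\leq d}|x_j|=\max\{\max_{1\leq j\leq d}x_j,\max_{1\leq j\leq d}(-x_j)\}$ noted just before the corollary, we have $\widetilde F_\vee=\max_{1\leq j\leq d}|F_j|$ and $\widetilde Z_\vee=\max_{1\leq j\leq d}|Z_j|$; and since $d\geq 2$ we have $\log(2d)=\log 2+\log d\leq 2\log d$, whence $(\log(2d))^{2/3}\leq 2^{2/3}(\log d)^{2/3}$. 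Absorbing the factor $2^{2/3}$ into $C'$ completes the proof.

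I do not expect any genuine obstacle here: the argument is a short reduction. The only step that is not completely automatic is the identity $\widetilde\Delta=\Delta$, i.e.\ checking that the symmetrization does not enlarge the Malliavin-side discrepancy; the remainder is the stated $\max$/$\min$ identity together with trivial bookkeeping of dimensions and logarithms.
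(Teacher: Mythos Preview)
Your proposal is correct and is precisely the argument the paper intends: the corollary is stated as a direct consequence of Theorem~\ref{kolmogorov} via the identity $\max_j|x_j|=\max\{\max_j x_j,\max_j(-x_j)\}$, and your symmetrization to the $2d$-dimensional vector $(\widetilde F,\widetilde Z)$ together with the check $\widetilde\Delta=\Delta$ (which is indeed immediate from the linearity of $D$ and $L^{-1}$) is exactly how one makes this precise. The absorption of the $\log(2d)$ factor into the constant is fine since $d\ge 2$.
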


\if0
$e_\beta=\beta^{-1}\log d$

$A^\delta=\{x\in\mathbb{R}:|x-y|\leq\delta\text{ for some }y\in A\}$

$\beta>0$ and $\delta>1/\beta$ are given

\[
\varepsilon=\sqrt{e^{-\alpha}(1+\alpha)}<1,\qquad
\alpha=\beta^2\delta^2-1>0
\]

\begin{theorem}\label{coupling}
\begin{align*}
P(|F^*-Z^*|>2\beta^{-1}\log d+3\delta)
\leq(1-\varepsilon)^{-1}\left(\varepsilon+C\beta\delta^{-1}\Delta\right)
\end{align*}
where
\[
\Delta=E\left[\max_{1\leq i,j\leq d}|V^{ij}-\langle DF_i,-DL^{-1}F_j\rangle_H|\right].
\]
\end{theorem}

\begin{corollary}\label{applied-coupling}
\begin{align*}
P(|Z-\widetilde{Z}|>16\delta)
\lesssim\delta^{-2}E\left[\max_{1\leq i,j\leq d}|C^{ij}-\langle DF_i,-DL^{-1}F_j\rangle_H|\right]\log(d\vee n)+\frac{\log n}{n}
\end{align*}
\end{corollary}

We define the function $\Phi_\beta:\mathbb{R}^d\to\mathbb{R}$ by 
\[
\Phi_\beta(x)=\beta^{-1}\log\left(\sum_{j=1}^de^{\beta x_j}\right)\qquad(x=(x_1,\dots,x_d)^\top\in\mathbb{R}^d).
\]
By Eq.(17) from \cite{CCK2014} we have
\[
\max_{1\leq j\leq d}x_j\leq \Phi_\beta(x)\leq\max_{1\leq j\leq d}x_j+\beta^{-1}\log d
\]
for any $x=(x_1,\dots,x_d)\in\mathbb{R}^d$. Therefore, we obtain
\[
P(Z\in A)\leq P(\Phi_\beta(F)\in A^{e_\beta})=E[1_{A^{e_\beta}}(\Phi_\beta(F))].
\]
Next, by Lemma 4.2 of \cite{CCK2014} there is a $C^\infty$ function $g:\mathbb{R}\to\mathbb{R}$ and a universal constant $C>0$ such that $\|g'\|_\infty\leq\delta^{-1}$, $\|g''\|_\infty\leq C\beta\delta^{-1}$, $\|g'''\|_\infty\leq C\beta^2\delta^{-1}$ and
\[
(1-\varepsilon)1_{A^{e_\beta}}(t)\leq g(t)\leq\varepsilon+(1-\varepsilon)1_{A^{e_\beta+3\delta}}(t)
\]
for any $t\in\mathbb{R}$. Then we obtain
\[
E[1_{A^{e_\beta}}(\Phi_\beta(F))]\leq (1-\varepsilon)^{-1}E[g(\Phi_\beta(F))].
\]
Now, set $f:=g\circ \Phi_\beta$ and define the function $U_0f:\mathbb{R}^d\to\mathbb{R}$ by
\[
U_0f(x)=\int_0^1\frac{1}{2t}E\left[f(\sqrt{t}x+\sqrt{1-t}Z^*)-f(Z^*)\right]dt\qquad(x\in\mathbb{R}^d).
\]
It can easily been seen that $U_0f$ is of $C^2$ and we have
\begin{align*}
(\partial_iU_0f)(x)&=\int_0^1\frac{1}{2\sqrt{t}}E\left[\partial_if(\sqrt{t}x+\sqrt{1-t}Z^*)\right]dt,\\
(\partial_{i,j}^2U_0f)(x)&=\int_0^1\frac{1}{2}E\left[\partial_{i,j}f(\sqrt{t}x+\sqrt{1-t}Z^*)\right]dt
\end{align*}
for all $i,j=1,\dots,d$ and $x\in\mathbb{R}^d$. In particular, Lemma 4.3 from \cite{CCK2014} yields
\[
\sum_{i,j=1}^d|\partial_{i,j}^2(U_0f)(x)|\leq\|g''\|_\infty+2\|g'\|_\infty\beta
\]
for all $x\in\mathbb{R}^d$. Moreover, since the first and second derivatives of $U_0f$ are bounded, from the proof of Theorem 3.5 of \cite{NPR2010} we obtain
\begin{align*}
E[f(F)]-E[f(Z^*)]
=\sum_{i,j=1}^dE[\partial^2_{i,j}U_0f(F)(C^{ij}-\langle DF_i,-DL^{-1}F_j\rangle_H)].
\end{align*}
Therefore, we have
\begin{align*}
|E[f(F)]-E[f(Z^*)]|
&\leq E\left[\max_{1\leq i,j\leq d}|C^{ij}-\langle DF_i,-DL^{-1}F_j\rangle_H|\sum_{i,j=1}^p|\partial^2_{i,j}U_0f(F)|\right]\\
&\lesssim\beta\delta^{-1}E\left[\max_{1\leq i,j\leq d}|C^{ij}-\langle DF_i,-DL^{-1}F_j\rangle_H|\right].
\end{align*}
Finally, we conclude that
\begin{align*}
&P\left(\max_{1\leq i\leq d}F_i\in A\right)\\
&\leq P(\Phi_\beta(F)\in A^{e_\beta})=E[1_{A^{e_\beta}}(\Phi_\beta(F))]
\leq(1-\varepsilon)^{-1}E[g(\Phi_\beta(F))]\\
&\leq (1-\varepsilon)^{-1}\left\{E[g(\Phi_\beta(Z^*))]+C\beta\delta^{-1}E\left[\max_{1\leq i,j\leq d}|C^{ij}-\langle DF_i,-DL^{-1}F_j\rangle_H|\right]\right\}\\
&\leq E[1_{A^{e_\beta+3\delta}}(\Phi_\beta(Z^*))]+(1-\varepsilon)^{-1}\left\{\varepsilon+C\beta\delta^{-1}E\left[\max_{1\leq i,j\leq d}|C^{ij}-\langle DF_i,-DL^{-1}F_j\rangle_H|\right]\right\}\\
&\leq P\left(\max_{1\leq i\leq d}Z^*_i\in A^{2e_\beta+3\delta}\right)+(1-\varepsilon)^{-1}\left\{\varepsilon+C\beta\delta^{-1}E\left[\max_{1\leq i,j\leq d}|C^{ij}-\langle DF_i,-DL^{-1}F_j\rangle_H|\right]\right\}.
\end{align*}
\fi


In order to make Theorem \ref{kolmogorov} and Corollary \ref{abs-kolmogorov} useful, we need a reasonable bound for the quantity $\Delta$. When the random vector $F$ consists of multiple Wiener-It\^o integrals with common order, we have the following useful bound for $\Delta$:
\begin{lemma}\label{fourth-moment}
Let $q\geq2$ be an integer and suppose that $F_j=I_q(f_j)$ for some $f_j\in H^{\odot q}$ for $j=1,\dots,d$. Then we have
\[
\Delta\leq\max_{1\leq i,j\leq d}\left|\mathfrak{C}(i,j)-E[F_iF_j]\right|+C_q\log^{q-1}\left(2d^2-1+e^{q-2}\right)\max_{1\leq k\leq d}\sqrt{E[F_k^4]-3E[F_k^2]^2},
\]
where $C_q>0$ depends only on $q$.
\end{lemma}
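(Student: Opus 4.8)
The plan is to exploit the special structure of multiple Wiener--It\^o integrals. Since the Ornstein--Uhlenbeck operator acts on the $q$-th chaos by $LI_q(\cdot)=-qI_q(\cdot)$, we have $L^{-1}F_j=-\tfrac1q F_j$, hence $-DL^{-1}F_j=\tfrac1q DF_j$ and therefore $\langle DF_i,-DL^{-1}F_j\rangle_H=\tfrac1q\langle DF_i,DF_j\rangle_H$. Writing $\psi_{ij}:=\tfrac1q\langle DF_i,DF_j\rangle_H$, the integration-by-parts formula for the Ornstein--Uhlenbeck semigroup (equivalently, the constant term of the chaos expansion of $\psi_{ij}$ computed below) gives $E[\psi_{ij}]=E[F_iF_j]$, so by the triangle inequality
\[
\Delta\le\max_{1\le i,j\le d}\bigl|\mathfrak{C}(i,j)-E[F_iF_j]\bigr|
+E\Bigl[\max_{1\le i,j\le d}\bigl|\psi_{ij}-E[\psi_{ij}]\bigr|\Bigr],
\]
and it remains to bound the second term by $C_q\log^{q-1}(2d^2-1+e^{q-2})\max_k\sqrt{\kappa_4(F_k)}$, where $\kappa_4(F_k):=E[F_k^4]-3E[F_k^2]^2$.

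First I would pin down the chaos decomposition of $\psi_{ij}$. Combining the derivative formula $DF_j=qI_{q-1}(f_j(\cdot))$ with the product formula for multiple integrals yields
\[
\langle DF_i,DF_j\rangle_H=q^2\sum_{r=0}^{q-1}r!\binom{q-1}{r}^2 I_{2q-2-2r}\bigl(f_i\widetilde\otimes_{r+1}f_j\bigr).
\]
The $r=q-1$ term is constant and equals $E[F_iF_j]$, so $\psi_{ij}-E[\psi_{ij}]$ belongs to $\bigoplus_{k=1}^{q-1}\mathcal{H}_{2k}$, a sum of Wiener chaoses of order at most $2q-2$. For such variables hypercontractivity gives $\|\cdot\|_p\le(p-1)^{q-1}\|\cdot\|_2$ for every $p\ge2$, and a routine maximal inequality (a union bound over the $d^2$ pairs after optimizing $p$, equivalently controlling the associated Orlicz norm) gives
\[
E\Bigl[\max_{i,j}\bigl|\psi_{ij}-E[\psi_{ij}]\bigr|\Bigr]
\le C_q\bigl(\log(2d^2-1+e^{q-2})\bigr)^{q-1}\max_{i,j}\bigl\|\psi_{ij}-E[\psi_{ij}]\bigr\|_2.
\]

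The core of the argument is bounding $\|\psi_{ij}-E[\psi_{ij}]\|_2$ by the fourth cumulants. From the expansion above and orthogonality of distinct chaoses,
\[
\bigl\|\psi_{ij}-E[\psi_{ij}]\bigr\|_2^2=q^2\sum_{s=1}^{q-1}((s-1)!)^2\binom{q-1}{s-1}^4(2q-2s)!\,\bigl\|f_i\widetilde\otimes_s f_j\bigr\|^2,
\]
so it suffices to control $\|f_i\widetilde\otimes_s f_j\|$ for $1\le s\le q-1$. Since symmetrization is norm-nonincreasing, $\|f_i\widetilde\otimes_s f_j\|\le\|f_i\otimes_s f_j\|$, and the elementary contraction identity $\|f_i\otimes_s f_j\|^2=\langle f_i\otimes_{q-s}f_i,\,f_j\otimes_{q-s}f_j\rangle$ combined with the Cauchy--Schwarz inequality gives $\|f_i\otimes_s f_j\|^2\le\|f_i\otimes_{q-s}f_i\|\,\|f_j\otimes_{q-s}f_j\|$. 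Finally, the exact single-chaos fourth-cumulant formula
\[
\kappa_4(I_q(f))=\sum_{r=1}^{q-1}\frac{(q!)^4}{(r!)^2((q-r)!)^2}\Bigl(\|f\otimes_r f\|^2+\binom{2q-2r}{q-r}\|f\widetilde\otimes_r f\|^2\Bigr)
\]
shows $\|f\otimes_m f\|^2\le C_q\,\kappa_4(I_q(f))$ for each $1\le m\le q-1$. Chaining these bounds gives $\|f_i\widetilde\otimes_s f_j\|^2\le C_q\sqrt{\kappa_4(F_i)\kappa_4(F_j)}\le C_q\max_k\kappa_4(F_k)$, hence $\|\psi_{ij}-E[\psi_{ij}]\|_2\le C_q\max_k\sqrt{\kappa_4(F_k)}$; substituting this into the two displayed inequalities completes the proof.

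I expect the principal obstacle to be the combinatorial bookkeeping of the third paragraph: correctly identifying the contraction kernels in the chaos expansion of $\langle DF_i,DF_j\rangle_H$ and, above all, transferring control from the off-diagonal contractions $\|f_i\widetilde\otimes_s f_j\|$ to the diagonal fourth cumulants. The delicate point is that one needs the \emph{unsymmetrized} contractions $\|f\otimes_r f\|$ to be dominated by $\kappa_4$ (not merely the symmetrized ones $\|f\widetilde\otimes_r f\|$), which is exactly what the precise single-chaos fourth-cumulant formula, together with the contraction identity, provides; the maximal-inequality step is then standard given the hypercontractive structure.
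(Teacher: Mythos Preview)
Your proof is correct and follows essentially the same route as the paper: the same triangle-inequality split, the same chaos expansion of $q^{-1}\langle DF_i,DF_j\rangle_H$ showing $\psi_{ij}-E[\psi_{ij}]$ lives in chaoses of order $\le 2q-2$, the same hypercontractivity-based maximal inequality yielding the $\log^{q-1}$ factor, and the same reduction of $\|\psi_{ij}-E[\psi_{ij}]\|_2$ to fourth cumulants via contraction identities. The only cosmetic difference is that the paper cites Eq.~(6.2.6) of \cite{NP2012} for the last step, whereas you spell out the contraction/Cauchy--Schwarz argument by hand.
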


\begin{rmk}
Lemma \ref{fourth-moment} implies that, in order to bound the Kolmogorov distance between $F_\vee$ and $Z_\vee$, we only need to control the convergence rate of the covariance matrix of $F$ to that of $Z$ and the fourth cumulants of the components of $F$, as long as $F$ consists of multiple Wiener-It\^o integrals with common order. This can be considered as a type of \textit{fourth moment phenomenon}, which was first discovered by \cite{NP2005} while they derived central limit theorems for sequences of multiple Wiener-It\^o integrals. For more information about the fourth moment phenomenon, we refer to \cite{NP2012} and references therein.  
\end{rmk}


It is often involved to compute the variables $L^{-1}F_j$ in the case that $F_j$'s are general Wiener functionals. It would be worth mentioning that we can avoid this issue if the variables $F_j$ are twice differentiable in the Malliavin sense and satisfy a suitable moment condition. To state such a result precisely, we make some definitions: 
For an $H\otimes H$-valued random variable $G$, we denote by $\|G\|_\text{op}$ the operator norm of the (random) operator $H\ni h\mapsto \langle h,G\rangle_H\in H$. 
Also, we say that a random variable $Y$ is \textit{sub-Gaussian relative to the scale $a>0$} if $E[e^{\lambda Y}]\leq \exp(\lambda^2a^2/2)$ for all $\lambda\in\mathbb{R}$. 
\begin{lemma}\label{poincare}
If $F_1,\dots,F_d\in\mathbb{D}_{2,4p}$ for a positive integer $p$, we have
\[
\Delta\leq\max_{1\leq i,j\leq d}\left|\mathfrak{C}(i,j)-E[F_iF_j]\right|
+d^{1/p}\sqrt{2p-1}\cdot\frac{3}{2}\left(\max_{1\leq i\leq d}\left\|\left\|D^2F_i\right\|_\text{op}\right\|_{4p}\right)\left(\max_{1\leq j\leq d}\left\|\left\|DF_j\right\|_H\right\|_{4p}\right).
\]
Moreover, if there is a constant $a>0$ such that both the variables $\left\|D^2F_i\right\|_\text{op}$ and $\left\|DF_i\right\|_H$ are sub-Gaussian relative to the scale $a$ for all $i=1,\dots,d$, we have
\begin{align*}
\Delta\leq\max_{1\leq i,j\leq d}\left|\mathfrak{C}(i,j)-E[F_iF_j]\right|
+Ca^2\log^{3/2}(2d^2-1+\sqrt{e}),
\end{align*}
where $C>0$ is a universal constant. 
\end{lemma}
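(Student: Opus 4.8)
The plan is to bound $\Delta=E[\max_{1\le i,j\le d}|\mathfrak{C}(i,j)-\langle DF_i,-DL^{-1}F_j\rangle_H|]$ by splitting off the ``ideal'' term: since $E[\langle DF_i,-DL^{-1}F_j\rangle_H]=E[F_iF_j]$ (integration-by-parts identity from Malliavin calculus), the triangle inequality gives
\[
\Delta\le\max_{1\le i,j\le d}|\mathfrak{C}(i,j)-E[F_iF_j]|
+E\Bigl[\max_{1\le i,j\le d}\bigl|\langle DF_i,-DL^{-1}F_j\rangle_H-E[F_iF_j]\bigr|\Bigr].
\]
The first term is exactly the covariance-discrepancy term in the statement, so the whole task reduces to controlling the second expectation. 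The natural route is a Poincaré-type inequality on Wiener space: the random variable $G_{ij}:=\langle DF_i,-DL^{-1}F_j\rangle_H$ has mean $E[F_iF_j]$, so by the Gaussian Poincaré inequality $\variance(G_{ij})\le E[\|DG_{ij}\|_H^2]$, and more generally the $L^p$-Poincaré (Meyer-type / hypercontractivity) inequality gives $\|G_{ij}-E[G_{ij}]\|_{2p}\le C_p\|\,\|DG_{ij}\|_H\,\|_{2p}$ with $C_p=\sqrt{2p-1}$. The Malliavin chain rule applied to the inner product $\langle DF_i,-DL^{-1}F_j\rangle_H$ produces $DG_{ij}$ in terms of $D^2F_i$, $DF_i$, $D^2L^{-1}F_j$ and $DL^{-1}F_j$; using the commutation relation $DL^{-1}=(L-I)^{-1}D$ (equivalently $-DL^{-1}F_j=\sum_{q\ge1}\frac{1}{q}D J_qF_j$ on chaos components) one bounds the factors involving $L^{-1}F_j$ by the corresponding factors for $F_j$ itself, because the operator $(I-L)^{-1}$ is a contraction on each inner-product slot. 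This is where the constant $3/2$ and the product structure $\|\,\|D^2F_i\|_\text{op}\,\|_{4p}\cdot\|\,\|DF_j\|_H\,\|_{4p}$ emerge, after a Cauchy–Schwarz split of the $2p$-norm of a product into two $4p$-norms.

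Concretely: write $\|DG_{ij}\|_H\lesssim \|D^2F_i\|_\text{op}\|DL^{-1}F_j\|_H+\|DF_i\|_H\|D^2L^{-1}F_j\|_\text{op}$ (schematically), bound the $L^{-1}$-factors by the plain ones via the contraction property, so that $\|\,\|DG_{ij}\|_H\,\|_{2p}\le \tfrac{3}{2}\bigl(\|\,\|D^2F_i\|_\text{op}\,\|_{4p}\|\,\|DF_j\|_H\,\|_{4p}+\text{sym.}\bigr)$ — the $3/2$ absorbing the combinatorial constants. Then convert the maximum over $i,j$ into an $\ell^{2p}$-sum: $E[\max_{i,j}|G_{ij}-E[G_{ij}]|]\le \bigl(\sum_{i,j}\|G_{ij}-E[G_{ij}]\|_{2p}^{2p}\bigr)^{1/(2p)}\le d^{1/p}\max_{i,j}\|G_{ij}-E[G_{ij}]\|_{2p}$, which yields the factor $d^{1/p}$. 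Combining the pieces gives the first displayed bound. For the second (sub-Gaussian) bound, replace the $L^p$-Poincaré step by its exponential version: if $\|D^2F_i\|_\text{op}$ and $\|DF_i\|_H$ are sub-Gaussian relative to scale $a$, then their products and the corresponding $L^{-1}$-contracted versions are sub-exponential with parameter $\lesssim a^2$, so $G_{ij}-E[G_{ij}]$ is sub-exponential with parameter $\lesssim a^2$; a standard maximal inequality for $2d^2-1$ such variables (the shift by $\sqrt e$ and the $\log^{3/2}$ power coming from the sub-exponential tail bookkeeping, matching the form in Lemma \ref{fourth-moment}) gives $E[\max_{i,j}|G_{ij}-E[G_{ij}]|]\le Ca^2\log^{3/2}(2d^2-1+\sqrt e)$.

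The main obstacle is the chain-rule computation of $D\langle DF_i,-DL^{-1}F_j\rangle_H$ together with the careful passage from the $L^{-1}$-regularized factors to the unregularized ones: one must verify that $\|DL^{-1}F_j\|_H$ and $\|D^2L^{-1}F_j\|_\text{op}$ are controlled by $\|DF_j\|_H$ and $\|D^2F_j\|_\text{op}$ (in the relevant $L^p$- or sub-Gaussian sense) with explicit, dimension-free constants — this uses that $-DL^{-1}$ acts as $(I-L)^{-1}D$ and that $(I-L)^{-1}=\int_0^1 P_t\,\tfrac{dt}{t}\cdot(\text{projection weights})$ is a contraction, but making the operator-norm ($\|\cdot\|_\text{op}$ on $H\otimes H$) bound rigorous requires some care about how the contraction commutes with taking operator norms of the second derivative. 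The rest — Poincaré inequalities, Cauchy–Schwarz, and the maximal inequalities — is routine.
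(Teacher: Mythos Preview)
Your approach is essentially the paper's: the paper also splits off $\max_{i,j}|\mathfrak{C}(i,j)-E[F_iF_j]|$, then bounds $\|G_{ij}-E[G_{ij}]\|_{2p}$ via the $L^{2p}$-Poincar\'e inequality (Lemma 5.3.7 of \cite{NP2012}, giving the factor $\sqrt{2p-1}$), the chain rule $D\langle DF_i,-DL^{-1}F_j\rangle_H=\langle D^2F_i,-DL^{-1}F_j\rangle_H+\langle DF_i,-D^2L^{-1}F_j\rangle_H$ (Lemma 5.3.8 of \cite{NP2012}), and the contraction estimates $\|\|DL^{-1}F_j\|_H\|_{4p}\le\|\|DF_j\|_H\|_{4p}$ and $\|\|D^2L^{-1}F_j\|_{\mathrm{op}}\|_{4p}\le\tfrac{1}{2}\|\|D^2F_j\|_{\mathrm{op}}\|_{4p}$ (Lemma 5.3.7 of \cite{NP2012} again). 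The $3/2$ is exactly $1+\tfrac{1}{2}$ from these two contraction constants, not an absorbed combinatorial factor; and the $d^{1/p}$ comes from $\|\max_{i,j}|\cdot|\|_{2p}\le d^{2/(2p)}\max_{i,j}\|\cdot\|_{2p}$, as you say.

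There is, however, a genuine slip in your sub-Gaussian argument. Calling $G_{ij}-E[G_{ij}]$ ``sub-exponential'' does not produce the exponent $3/2$ on the logarithm: maxima of $d^2$ sub-exponential (i.e.\ sub-2nd-chaos) variables grow like $\log d$, not $\log^{3/2}d$. The paper's route is to feed the sub-Gaussian moment bounds $\|\|D^2F_i\|_{\mathrm{op}}\|_{4p},\|\|DF_j\|_H\|_{4p}\lesssim a\sqrt{p}$ into the $L^{2p}$-Poincar\'e estimate (which itself carries the factor $\sqrt{2p-1}$), yielding $\|G_{ij}-E[G_{ij}]\|_{p}\lesssim p^{3/2}a^2$ for every integer $p\ge1$. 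By Lemma~\ref{criteria-subchaos} this makes $G_{ij}-E[G_{ij}]$ a sub-\emph{third}-chaos variable relative to the scale $Ca^2$, and Proposition~\ref{max-sub-chaos} with $q=3$ then gives $E[\max_{i,j}|G_{ij}-E[G_{ij}]|]\le Ca^2\log^{3/2}(2d^2-1+\sqrt{e})$. The extra half-power of the logarithm comes precisely from the Poincar\'e factor $\sqrt{p}$ stacked on top of the $p$ from the product of two sub-Gaussians, so your ``sub-exponential bookkeeping'' explanation is off by this factor.
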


\begin{rmk}
The above result (combined with Theorem \ref{kolmogorov}) can be viewed as an analogy of the so-called \textit{second-order Poincar\'e inequalities} proved in \citet{NPR2009}. Indeed, its proof is based on the lemmas proved there.
\end{rmk}

\section{Gaussian approximation of maxima of random symmetric quadratic forms}\label{sec:qf}

In this section we focus on the problem of approximating the distribution of maxima of symmetric quadratic forms. The next result can be easily derived from the results in the previous section:
\begin{theorem}\label{gqf}
For each $n\in\mathbb{N}$, let $\boldsymbol{\xi}_n$ be an $N_n$-dimensional centered Gaussian vector with covariance matrix $\Sigma_n=(\Sigma_n(k,l))_{1\leq k,l\leq N_n}$ and $d_n\geq2$ be an integer. Also, for each $k=1,\dots,d_n$, let $A_{n,k}$ be an $N_n\times N_n$ symmetric matrix and $Z_n=(Z_{n,1},\dots,Z_{n,d_n})^\top$ be an $d_n$-dimensional centered Gaussian vector with covariance matrix $\mathfrak{C}_n=(\mathfrak{C}_n(k,l))_{1\leq k,l\leq d_n}$. Set $F_{n,k}:=\boldsymbol{\xi}_n^\top A_{n,k}\boldsymbol{\xi}_n-E[\boldsymbol{\xi}_n^\top A_{n,k}\boldsymbol{\xi}_n]$ and suppose that the following conditions are satisfied:
\begin{enumerate}[label=(\roman*)]

\item There is a constant $b>0$ such that $\mathfrak{C}_n(k,k)\geq b$ for every $n$ and every $k=1,\dots,d_n$.

\item $\max_{1\leq k\leq d_n}(E[F_{n,k}^4]-3E[F_{n,k}^2]^2)\log^6d_n\to 0$ as $n\to\infty$.

\item $\max_{1\leq k,l\leq d_n}\left|\mathfrak{C}_n(k,l)-E[F_{n,k}F_{n,l}]\right|\log^2d_n\to0$ as $n\to\infty$.

\end{enumerate}
Then we have 
\begin{equation}\label{kol}
\sup_{x\in\mathbb{R}}\left|P\left(\max_{1\leq k\leq d_n}F_{n,k}\leq x\right)-P\left(\max_{1\leq k\leq d_n}Z_{n,k}\leq x\right)\right|
\to0
\end{equation}
and
\begin{align*}
\sup_{x\in\mathbb{R}}\left|P\left(\max_{1\leq k\leq d_n}|F_{n,k}|\leq x\right)-P\left(\max_{1\leq k\leq d_n}|Z_{n,k}|\leq x\right)\right|
\to0
\end{align*}
as $n\to\infty$.
\end{theorem}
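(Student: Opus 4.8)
The plan is to realize each $F_{n,k}$ as a double Wiener-It\^o integral, bound the associated quantity $\Delta$ appearing in Theorem \ref{kolmogorov} via Lemma \ref{fourth-moment}, and then deduce \eqref{kol} and its absolute-value counterpart directly from Theorem \ref{kolmogorov}(b) and Corollary \ref{abs-kolmogorov}. For the chaos representation I would, for each fixed $n$, set up the isonormal Gaussian process exactly as in the Remark following Proposition \ref{stein}: write $\boldsymbol{\xi}_n=\Sigma_n^{1/2}G_n$ with $G_n$ a standard $N_n$-dimensional Gaussian vector, take $H:=\mathbb{R}^{N_n}$, and set $W(h)=h^\top G_n$ for $h\in H$. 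With $M_{n,k}:=\Sigma_n^{1/2}A_{n,k}\Sigma_n^{1/2}$ (a symmetric matrix) and $(e_a)_{a=1}^{N_n}$ the canonical basis, the identity $W(e_a)W(e_b)-\langle e_a,e_b\rangle_H=I_2(e_a\odot e_b)$ gives $F_{n,k}=\boldsymbol{\xi}_n^\top A_{n,k}\boldsymbol{\xi}_n-E[\boldsymbol{\xi}_n^\top A_{n,k}\boldsymbol{\xi}_n]=I_2(g_{n,k})$, where $g_{n,k}:=\sum_{a,b}M_{n,k}(a,b)\,e_a\otimes e_b\in H^{\odot2}$ (the membership uses the symmetry of $M_{n,k}$). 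Thus $(F_{n,1},\dots,F_{n,d_n})^\top$ is a vector of multiple Wiener-It\^o integrals of common order $q=2$ with centered components in $\mathbb{D}_{1,2}$, so Lemma \ref{fourth-moment} and Theorem \ref{kolmogorov} apply.

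Next I would apply Lemma \ref{fourth-moment} with $q=2$ to the quantity $\Delta_n$ attached to $(F_{n,k})_k$ and the covariance $\mathfrak{C}_n$. Since $\log(2d^2)\leq3\log d$ for $d\geq2$, this yields
\[
\Delta_n\leq\max_{1\leq k,l\leq d_n}\bigl|\mathfrak{C}_n(k,l)-E[F_{n,k}F_{n,l}]\bigr|+3C_2(\log d_n)\sqrt{\max_{1\leq k\leq d_n}\bigl(E[F_{n,k}^4]-3E[F_{n,k}^2]^2\bigr)}
\]
with $C_2$ an absolute constant. Multiplying by $(\log d_n)^2$ and writing $(\log d_n)^3\sqrt{x}=\sqrt{(\log d_n)^6x}$, hypotheses (ii) and (iii) give $\Delta_n(\log d_n)^2\to0$ as $n\to\infty$.

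Finally, hypothesis (i) provides a single constant $b>0$ with $\mathfrak{C}_n(k,k)\geq b$ for all $n$ and $k$, so Theorem \ref{kolmogorov}(b) holds for every $n$ with the same constant $C'=C'(b)$, giving
\[
\sup_{x\in\mathbb{R}}\left|P\Bigl(\max_{1\leq k\leq d_n}F_{n,k}\leq x\Bigr)-P\Bigl(\max_{1\leq k\leq d_n}Z_{n,k}\leq x\Bigr)\right|\leq C'\Delta_n^{1/3}(\log d_n)^{2/3}=C'\bigl(\Delta_n(\log d_n)^2\bigr)^{1/3}\to0,
\]
which is \eqref{kol}; the absolute-value statement follows in the same way from Corollary \ref{abs-kolmogorov}.

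I do not expect any individual step to be hard; the proof is bookkeeping once the chaos representation is in place. The point demanding the most care is the uniformity in $n$: one has to check that every constant along the way --- $C_2$ in Lemma \ref{fourth-moment} and $C'$ in Theorem \ref{kolmogorov}(b) --- is either absolute or depends only on $b$, so that the asymptotic hypotheses (i)--(iii) genuinely drive the Kolmogorov distance to $0$. A related bit of bookkeeping is matching the powers of $\log d_n$: the fourth-cumulant term enters $\Delta_n$ with a factor $\log d_n$ because $q=2$, and after being raised to the $1/3$ power and multiplied by the extra $(\log d_n)^{2/3}$ from Theorem \ref{kolmogorov}(b) it produces a contribution of order $\bigl(\max_k(E[F_{n,k}^4]-3E[F_{n,k}^2]^2)\,(\log d_n)^6\bigr)^{1/6}$, which is exactly why hypothesis (ii) carries $\log^6 d_n$ and hypothesis (iii) carries $\log^2 d_n$.
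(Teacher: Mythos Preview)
Your proposal is correct and follows essentially the same route as the paper: realize each $F_{n,k}$ as a double Wiener--It\^o integral via $\boldsymbol{\xi}_n=\Sigma_n^{1/2}G_n$ and $W(h)=h^\top G_n$, then invoke Lemma~\ref{fourth-moment} with $q=2$ together with Theorem~\ref{kolmogorov}(b) and Corollary~\ref{abs-kolmogorov}. Your write-up is in fact more explicit than the paper's, since you spell out the $\log d_n$ bookkeeping that explains why hypotheses (ii) and (iii) carry the exponents $6$ and $2$.
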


\begin{rmk}
Since any symmetric Gaussian quadratic form can be written as a linear combination of independent $\chi^2$ random variables via eigenvalue decomposition (see e.g.~Section 3.2.1 of \cite{DY2011}), the readers may be wondering about whether it is possible to apply the original CCK theory to derive a similar result to Theorem \ref{gqf} using eigenvalue decomposition. This is however impossible in general because the matrices $\Sigma_n^{1/2}A_{n,1}\Sigma_n^{1/2},\dots,\Sigma_n^{1/2}A_{n,d_n}\Sigma_n^{1/2}$ are not necessarily simultaneously diagonalizable by an orthogonal matrix, which may induce an additional cross-sectional dependence after orthogonal transformation. To see this, suppose that $\Sigma_n$ is identity for simplicity. Then, the afore-mentioned eigenvalue decomposition argument reads as follows: For each $k=1,\dots,d_n$, we take an $N_n\times N_n$ real orthogonal matrix $U_{n,k}$ such that $U_{n,k}A_{n,k}U_{n,k}^\top$ is diagonal, and set $\bs{\varepsilon}_{n,k}=U_{n,k}\bs{\xi}_n$. Then the components of $\bs{\varepsilon}_{n,k}$ are independent and $F_{n,k}$ can be written as a linear combination of the squared components of $\bs{\varepsilon}_{n,k}$. However, for $k\neq l$, the covariance matrix of $\bs{\varepsilon}_{n,k}$ and $\bs{\varepsilon}_{n,l}$ is given by $U_{n,k}U_{n,l}^\top$, which is generally not diagonal; e.g.~we have
\[
\frac{1}{\sqrt{2}}\left(
\begin{array}{cc}
1  &  1  \\
1  &  -1
\end{array}
\right)\left(
\frac{1}{\sqrt{5}}\left(
\begin{array}{cc}
2  &  1  \\
1  &  -2
\end{array}
\right)
\right)^\top
=\frac{1}{\sqrt{10}}\left(
\begin{array}{cc}
3  &  -1  \\
1  &  3
\end{array}
\right).
\]
\end{rmk}

\begin{rmk}
Even if the matrices $\Sigma_n^{1/2}A_{n,1}\Sigma_n^{1/2},\dots,\Sigma_n^{1/2}A_{n,d_n}\Sigma_n^{1/2}$ are simultaneously diagonalizable, there is gain to use Theorem \ref{gqf} instead of the original CCK theory. To see this, suppose that each $F_{n,k}$ can be written as
\[
F_{n,k}=\sum_{i=1}^{N_n}\lambda_{n,k}(i)(\eta_i^2-1),
\]
where $\lambda_{n,k}(1),\dots,\lambda_{n,k}(N_n)\in\mathbb{R}$ and $(\eta_i)_{i=1}^\infty$ is a sequence of i.i.d.~standard normal variables. In this case, if we assume that there are constants $\overline{b},\underline{b}>0$ such that
\[
\underline{b}\leq \sum_{i=1}^{N_n}\lambda_{n,k}(i)^2\leq\overline{b}
\]
for all $n\in\mathbb{N}$ and $k=1,\dots,d_n$ and that the matrix $\mathfrak{C}_n$ is equal to the covariance matrix of the variables $F_{n,1},\dots,F_{n,d_n}$, Proposition 2.1 of \cite{CCK2017} yields the convergence \eqref{kol}, provided that $B_n^2\log^7(d_nN_n)=o(N_n)$ as $n\to\infty$, where 
\[
B_n=\sqrt{N_n}\max_{1\leq k\leq d_n}\max_{1\leq i\leq N_n}|\lambda_{n,k}(i)|.
\] 
Since we have
\begin{equation}\label{fourth}
\max_{1\leq k\leq d_n}(E[F_{n,k}^4]-3E[F_{n,k}^2]^2)\leq\frac{B_n^2}{N_n}\overline{b},
\end{equation}
the convergence \eqref{kol} is indeed implied by $B_n^2\log^6d_n=o(N_n)$ according to Theorem \ref{gqf}. 
In addition, the inequality \eqref{fourth} can be not tight. A cheap example is the case that
\[
\lambda_{n,k}(i)=\left\{
\begin{array}{ll}
1/N_n^{1/4}  & \text{if }i=k,     \\
1/\sqrt{N_n}  & \text{otherwise}.
\end{array}
\right.
\]
In this case we have $B_n^2/N_n=1/\sqrt{N_n}$, while it holds that
\[
\max_{1\leq k\leq d_n}(E[F_{n,k}^4]-3E[F_{n,k}^2]^2)=O(N_n^{-1}).
\]

See also Remark \ref{rmk:spot} for another advantage of using our results instead of the original CCK theory. 
\end{rmk}

\begin{rmk}[Discussion on the fourth moment condition (ii)]\mbox{}
\begin{enumerate}[label=(\roman*)]

\item In Theorem \ref{gqf}, the number $N_n$ does not necessarily diverge to get the convergence $E[F^4_{n,k}]-3E[F^2_{n,k}]^2\to 0$. This is because the variance of $\bs{\xi}_n$ is allowed to diverge in the setting of the theorem. To see this, suppose that $N_n=1$, $\bs{\xi}_n$ is a centered Gaussian variable with variance $n$, and $A_{n,k}=1/\sqrt{2n}$. In this case we have $F_{n,k}=(\bs{\xi}_n^2-n)/\sqrt{2n}$ and thus $E[F_{n,k}^2]=1$ and $E[F^4_{n,k}]-3E[F^2_{n,k}]^2=12/n\to0$.

\item If $\sup_{n\in\mathbb{N}}\max_{1\leq k\leq d_n}E[F_{n,k}^2]<\infty$, a sufficient condition to prove the condition (ii) of Theorem \ref{gqf} is $\max_{1\leq k\leq d_n}\|\Sigma_n^{1/2}A_{n,k}\Sigma_n^{1/2}\|_\mathrm{sp}\log^3d_n\to0$ as $n\to\infty$. This follows from the following inequality (see Eq.(11) of \cite{DY2011}):
\[
E[F^4_{n,k}]-3E[F^2_{n,k}]^2=48\trace\left[\left(\Sigma_n^{1/2}A_{n,k}\Sigma_n^{1/2}\right)^4\right]
\leq 24\|\Sigma_n^{1/2}A_{n,k}\Sigma_n^{1/2}\|_\mathrm{sp}^2E[F_{n,k}^2]
\]
(note that we always have $E[F^4_{n,k}]-3E[F^2_{n,k}]^2\geq0$; see Remark 5.2.5 of \cite{NP2012}). 
In practice, it is often easier to check the condition on $\|\Sigma_n^{1/2}A_{n,k}\Sigma_n^{1/2}\|_\mathrm{sp}$ than to directly check the condition on $E[F^4_{n,k}]-3E[F^2_{n,k}]^2$.

\item The condition $E[F^4_{n,k}]-3E[F^2_{n,k}]^2\to 0$ is necessary to approximate the distribution of the random variable $F_{n,k}$ by a Gaussian distribution if $\sup_{n\in\mathbb{N}}E[F_{n,k}^2]<\infty$ because there is a universal constant $c>0$ such that
\[
\sup_{n\in\mathbb{N}}E[F_{n,k}^8]\leq c\sup_{n\in\mathbb{N}}E[F_{n,k}^2]^4<\infty
\]
(see e.g.~Theorem 5.10 of \cite{Janson1997}), 
which implies the uniform integrability of the variables $F_{n,k}^2$ and $F_{n,k}^4$, $n=1,2,\dots$. Actually, adopting an analogous discussion to the one from \citet{CCK2017}, we can easily generalize the conclusion of Theorem \ref{gqf} to the convergence of the Kolmogorov distance between $F_n$ and $Z_n$ as follows:
\[
\sup_{x_1,\dots,x_{d_n}\in\mathbb{R}}\left|P\left(\bigcap_{k=1}^{d_n}\left\{F_{n,k}\leq x_k\right\}\right)-P\left(\bigcap_{k=1}^{d_n}\left\{Z_{n,k}\leq x_k\right\}\right)\right|
\to0.
\]
Therefore, if $\sup_{n\in\mathbb{N}}\max_{1\leq k\leq d_n}E[F_{n,k}^2]<\infty$, the condition $\max_{1\leq k\leq d_n}(E[F_{n,k}^4]-3E[F_{n,k}^2]^2)\log^6d_n\to 0$ as $n\to\infty$ is indeed a necessary condition \textit{when $d_n$ is fixed} (it is still unclear that this condition is necessary when $d_n\to\infty$ as $n\to\infty$, though). 

\end{enumerate}
\end{rmk}

In the next section we will apply Theorem \ref{gqf} to derive a Gaussian approximation of the null distribution of the test statistic for the absence of lead-lag effects. In order to implement the test in practice, we need to compute quantiles of the null distribution, but it is not easy to directly compute those of the derived Gaussian analog of the test statistic because its covariance structure contains unknown quantities for statisticians. For this reason we will apply a wild bootstrap procedure to approximately compute quantiles of the null distribution. Theorem \ref{gqf} is still applicable for ensuring the validity of such a procedure as long as \textit{Gaussian} wild bootstrapping is considered, but it turns out that a wild bootstrap procedure based on another distribution performs much better in finite samples. For this reason we partially generalize Theorem \ref{gqf} to a non-Gaussian case. 

For every $n\in\mathbb{N}$, let $N_n\geq1$ and $d_n\geq2$ be integers and let $\Gamma_{n,k}=(\gamma_{n,k}(i,j))_{1\leq i,j\leq N_n}$ be an $N_n\times N_n$ symmetric matrix for each $k=1,\dots,d_n$. We assume that $\gamma_{n,k}(i,i)=0$ for all $i=1,\dots,N_n$, $k=1,\dots,d_n$ and $n\in\mathbb{N}$. Given a sequence $\xi=(\xi_i)_{i=1}^\infty$ of random variables, we set
\[
Q_{n,k}(\xi):=\sum_{i,j=1}^{N_n}\gamma_{n,k}(i,j)\xi_i\xi_j,\qquad k=1,\dots,d_n
\]
for every $n\in\mathbb{N}$.

Let $Y=(Y_i)_{i=1}^\infty$ be a sequence of independent variables such that $E[Y_i]=0$ and $E[Y_i^2]=1$ for every $i$. Also, let $G=(G_i)_{i=1}^\infty$ be a sequence of independent standard Gaussian variables. For every $i\in\mathbb{N}$, we define the random variables $(W^{(i)}_j)_{j=1}^\infty$ by
\[
W^{(i)}_j=
\left\{
\begin{array}{ll}
Y_j  & \text{if }j\leq i,     \\
G_j  & \text{if }j>i.     
\end{array}
\right.
\]


\begin{theorem}\label{kolmogorov-general}
For each $n\in\mathbb{N}$, let $Z_n=(Z_{n,1},\dots,Z_{n,d_n})^\top$ be a $d_n$-dimensional centered Gaussian vector with covariance matrix $\mathfrak{C}_n=(\mathfrak{C}_n(k,l))_{1\leq k,l\leq d_n}$, and set
\begin{align*}
R_{n,1}&=\sum_{i=1}^{N_n}E\left[\max_{1\leq k\leq d_n}\left|\sum_{j=1}^{N_n}\gamma_{n,k}(i,j)W^{(i)}_j\right|^3\right](E[|Y_i|^{3}]+E[|G_i|^3]),\\
R_{n,2}&=\max_{1\leq k,l\leq d_n}\left|\mathfrak{C}_n(k,l)-E[Q_{n,k}(G)Q_{n,l}(G)]\right|,\\
R_{n,3}&=\max_{1\leq k\leq d_n}\sqrt{E[Q_{n,k}(G)^4]-3E[Q_{n,k}(G)^2]^2}.
\end{align*}
Suppose that there is a constant $b>0$ such that $\mathfrak{C}_n(k,k)\geq b$ for every $n$ and every $k=1,\dots,d_n$. Then we have
\[
\sup_{x\in\mathbb{R}}\left|P\left(\max_{1\leq k\leq d_n}|Q_{n,k}(Y)|\leq x\right)-P\left(\max_{1\leq k\leq d_n}|Z_{n,k}|\leq x\right)\right|\to0
\]
as $n\to\infty$, provided that $R_{n,1}\log^{\frac{7}{2}}d_n\vee R_{n,2}\log^2d_n\vee R_{n,3}\log^3d_n\to0$.
\end{theorem}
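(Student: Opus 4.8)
The plan is to prove the statement by two successive replacements, each carried out at the level of a smoothed maximum and then glued together by anti-concentration. First I would pass from the non-Gaussian quadratic forms $Q_{n,k}(Y)$ to their Gaussian analogues $Q_{n,k}(G)$ by a classical coordinatewise Lindeberg swap, whose error is governed by $R_{n,1}$; then I would pass from $\max_k|Q_{n,k}(G)|$ to $\max_k|Z_{n,k}|$ by invoking the results of Section~\ref{sec:main}, whose error is governed by $R_{n,2}$ and $R_{n,3}$. The two-sided maximum is handled in the first step by the usual doubling device: work with the $2d_n$-dimensional vector whose entries are $Q_{n,1}(\cdot),\dots,Q_{n,d_n}(\cdot),-Q_{n,1}(\cdot),\dots,-Q_{n,d_n}(\cdot)$, so that its coordinatewise maximum equals $\max_k|Q_{n,k}(\cdot)|$; this only replaces $d_n$ by $2d_n$ inside logarithms. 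In the second step no manual doubling is needed, since Corollary~\ref{abs-kolmogorov} already treats $\max_k|\cdot|$.

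For the first replacement I would mimic the derivation of Lemma~\ref{coupling} from Proposition~\ref{stein}, but with the Stein--Malliavin step replaced by a Lindeberg telescoping. Fix $\beta,\delta,\varepsilon>0$ and let $g$ be a smoothed indicator of the type used in the proof of Lemma~\ref{coupling} (Lemma~4.2 of \cite{CCK2014}), so that $\|g'\|_\infty\le\delta^{-1}$, $\|g''\|_\infty\lesssim\beta\delta^{-1}$, $\|g'''\|_\infty\lesssim\beta^2\delta^{-1}$, and $(1-\varepsilon)1_{A^{e_\beta}}\le g\le\varepsilon+(1-\varepsilon)1_{A^{e_\beta+3\delta}}$ with $e_\beta:=\beta^{-1}\log(2d_n)$. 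Writing $f=g\circ\Phi_\beta$ precomposed with the doubled quadratic-form vector, I would telescope $E[f(Q(Y))]-E[f(Q(G))]$ over $i=1,\dots,N_n$ along the interpolants $W^{(i)}$, so that the $i$th increment changes only coordinate $i$ from $G_i$ to $Y_i$. The crucial structural point is that the assumption $\gamma_{n,k}(i,i)=0$ makes each $Q_{n,k}(\cdot)$ an \emph{affine} function of $\xi_i$ with linear part $2\big(\sum_j\gamma_{n,k}(i,j)\xi_j\big)\xi_i$; since $Y_i$ and $G_i$ both have mean $0$ and variance $1$ and are independent of the remaining coordinates, a third-order Taylor expansion in the $i$th variable makes the zeroth-, first- and second-order contributions cancel, leaving a remainder bounded by $\tfrac16\big(\sum_{k,l,m}|\partial^3_{klm}(g\circ\Phi_\beta)|\big)\big(2\max_k|\sum_j\gamma_{n,k}(i,j)W^{(i)}_j|\big)^3(E[|Y_i|^3]+E[|G_i|^3])$. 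Using the standard estimate $\sum_{k,l,m}|\partial^3_{klm}(g\circ\Phi_\beta)|\lesssim\|g'''\|_\infty+\beta\|g''\|_\infty+\beta^2\|g'\|_\infty\lesssim\beta^2\delta^{-1}$ and summing over $i$, the whole telescoping error is $\lesssim\beta^2\delta^{-1}R_{n,1}$. Reverting through the sandwich for $g$ and specializing $A$ to half-lines, this yields a Lemma~\ref{coupling}-type inequality
\[
P\Big(\max_k|Q_{n,k}(Y)|\in A\Big)\le P\Big(\max_k|Q_{n,k}(G)|\in A^{2e_\beta+3\delta}\Big)+\frac{1}{1-\varepsilon}\Big(\varepsilon+C\beta^2\delta^{-1}R_{n,1}\Big),
\]
together with the reverse inequality with $Y$ and $G$ exchanged (the bound is symmetric in $Y\leftrightarrow G$).

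For the second replacement, note that with $\gamma_{n,k}(i,i)=0$ each $Q_{n,k}(G)=\sum_{i\neq j}\gamma_{n,k}(i,j)G_iG_j$ is a genuine element of the second Wiener chaos: taking $H=\mathbb{R}^{N_n}$ and the isonormal Gaussian process $W(h)=\langle h,G\rangle_H$, we have $Q_{n,k}(G)=I_2\big(2\sum_{i<j}\gamma_{n,k}(i,j)e_i\odot e_j\big)$. Hence Lemma~\ref{fourth-moment} applied with $q=2$ to $F=(Q_{n,1}(G),\dots,Q_{n,d_n}(G))^\top$ (and the comparison Gaussian $Z_n$) gives $\Delta\le R_{n,2}+C_2\log(2d_n^2)\,R_{n,3}\le R_{n,2}+CR_{n,3}\log d_n$, and Corollary~\ref{abs-kolmogorov} (whose hypothesis is exactly $\mathfrak{C}_n(k,k)\ge b$) then yields
\[
\sup_{x\in\mathbb{R}}\Big|P\big(\max_k|Q_{n,k}(G)|\le x\big)-P\big(\max_k|Z_{n,k}|\le x\big)\Big|\le C'\big(R_{n,2}+CR_{n,3}\log d_n\big)^{1/3}(\log d_n)^{2/3},
\]
which tends to $0$ under $R_{n,2}\log^2d_n\vee R_{n,3}\log^3d_n\to0$.

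It remains to combine the two steps and to discharge the enlargement $A\mapsto A^{2e_\beta+3\delta}$; this is where the exponent $\tfrac72$ is produced and where I expect the real bookkeeping to lie. Using the second step to transfer the anti-concentration of $\max_k|Z_{n,k}|$ (a Nazarov/CCK inequality, valid since the variances are bounded below by $b$, giving $\sup_x P(\max_k|Z_{n,k}|\in[x,x+\eta])\lesssim\eta\sqrt{\log d_n}$) to $\max_k|Q_{n,k}(G)|$, the enlargement costs $O((e_\beta+\delta)\sqrt{\log d_n})$ in Kolmogorov distance. Choosing $\beta=\delta^{-1}\log(2d_n)$ (so $e_\beta=\delta$, $\beta\delta=\log(2d_n)>1$, and $\alpha:=\beta^2\delta^2-1=\log^2(2d_n)-1>0$) makes this $O(\delta\sqrt{\log d_n})$, makes $\beta^2\delta^{-1}=\delta^{-3}\log^2(2d_n)$, and makes the smoothed-indicator error $\varepsilon=\sqrt{e^{-\alpha}(1+\alpha)}$ doubly-exponentially small in $\log d_n$, hence negligible. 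Thus the first step contributes $O\big(\delta\sqrt{\log d_n}+\delta^{-3}R_{n,1}\log^2d_n\big)$; balancing the two terms, i.e. taking $\delta\asymp(R_{n,1}\log^{3/2}d_n)^{1/4}$, gives a bound of order $R_{n,1}^{1/4}\log^{7/8}d_n$, which tends to $0$ precisely when $R_{n,1}\log^{7/2}d_n\to0$. Together with the second step this proves the claim. The main obstacle is the first step: carrying the Lindeberg swap through the nonlinear smoothing $\Phi_\beta$ and the non-smooth half-line target while keeping the $\beta,\delta,\varepsilon$ dependence explicit, and matching it against the anti-concentration loss so that the stated logarithmic exponents come out. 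The simplification that $\gamma_{n,k}(i,i)=0$ turns each $Q_{n,k}$ into an affine (not genuinely quadratic) function of every individual $\xi_i$ is what makes the third-order Lindeberg remainder clean and is essential to obtaining the bound in terms of $R_{n,1}$.
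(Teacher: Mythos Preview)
Your proposal is correct and follows essentially the same route as the paper: a third-order Lindeberg swap (exploiting $\gamma_{n,k}(i,i)=0$ to make $Q_{n,k}$ affine in each coordinate) controlled by $R_{n,1}$, combined with the second-chaos fourth-moment bound (Lemma~\ref{fourth-moment} with $q=2$) controlled by $R_{n,2},R_{n,3}$, and closed off by Nazarov anti-concentration; your balancing $\delta\asymp(R_{n,1}\log^{3/2}d_n)^{1/4}$ reproduces the paper's choice $\varepsilon=R_{n,1}^{1/4}(\log d_n)^{3/8}$ and the exponent $7/2$.

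The only organizational difference is that the paper packages both transitions into a single coupling inequality (Lemma~\ref{coupling-general}) of the form $P(Q_\vee(Y)\in A)\le P(Z_\vee\in A^{5\varepsilon})+C\{\varepsilon^{-3}(\log^2 d)R_1+\varepsilon^{-2}(\log d)R_2+\varepsilon^{-2}(\log^2 d)R_3\}$, using the one-parameter smoothing of Lemma~\ref{cck-approx} and applying Proposition~\ref{stein} to $Q(G)$ inside the same chain; this lands directly on $Z$ so that anti-concentration (Lemma~\ref{cck-nazarov}) is invoked only once for $Z_\vee$. Your two-stage version instead first compares $Q(Y)$ with $Q(G)$ and then separately invokes Corollary~\ref{abs-kolmogorov}, which forces you to transfer anti-concentration from $Z$ back to $Q(G)$; this is valid but slightly less direct, and your use of the older two-parameter smoothing (Lemma~4.2 of \cite{CCK2014}) with the auxiliary $\varepsilon=\sqrt{e^{-\alpha}(1+\alpha)}$ is an unnecessary complication compared to the paper's Lemma~\ref{cck-approx}.
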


\begin{rmk}
The variables $W^{(i)}_j$ are related to the so-called \textit{Lindeberg method}. In fact, our proof of Theorem \ref{kolmogorov-general} is based on the generalized Lindeberg method developed in \cite{MOO2010,NPR2010aop} (see also Chapter 11 of \cite{NP2012}). 
\end{rmk}

\begin{rmk}
There is probably room for improvement in Theorem \ref{kolmogorov-general}. 
In particular, the truncation arguments used in the CCK theory (based on Lemma A.6 of \cite{CCK2013}) are apparently applicable to our case, which would significantly weaken the assumptions of Theorem \ref{kolmogorov-general}. 
On the other hand, it is less obvious whether the other techniques used in the CCK theory (especially in \citet{CCK2017}) are applicable to our case or not. Their excellent argument leads a very sharp bound, but  it seems crucial in their argument that the statistics considered there is a linear function of independent random variables. More precisely, to apply their argument to our case, the independence between the variables $U_i$ and $V_i$ appearing in the proof of Theorem \ref{kolmogorov-general} seems necessary, but this is not the case (such a structure is necessary to get an analogous estimate to Eq.(30) of \cite{CCK2017}, for example). 
This issue is left to future research. 
\end{rmk}

\begin{rmk}
Analogous quantities to $R_{n,2}$ and $R_{n,3}$ from Theorem \ref{kolmogorov-general} have already appeared in Theorem \ref{gqf} and it is usually not difficult to bound them. On the other hand, as long as the third moments of $Y_i$'s are uniformly bounded, the quantity $R_{n,1}$ is bounded by the third moment of the maximum of a sum of (high-dimensional) independent random vectors, so we have many inequalities which can be used to bound it (see e.g.~Chapter 14 of \cite{BvdG2011}). Here we give two examples of such inequalities. The first one only requires the uniform boundedness of the $p$-th moments of $Y_i$'s for some $p\geq3$, while the latter one is applicable when the variables $Y_i$ are sub-Gaussian. 
\end{rmk}
%
%
\begin{lemma}\label{lemma:hyper}\mbox{}
\begin{enumerate}[label=(\alph*)]

\item Suppose that $\sup_{i\in\mathbb{N}}\|Y_i\|_p<\infty$ for some $p\geq3$. Then
\begin{align*}
\sum_{i=1}^{N_n}E\left[\max_{1\leq k\leq d_n}\left|\sum_{j=1}^{N_n}\gamma_{n,k}(i,j)W^{(i)}_j\right|^3\right]
\leq 2d_n^{3/p}(p-1)^{3/2}\sup_{i\in\mathbb{N}}\|Y_i\|_p^3\sum_{i=1}^{N_n}\max_{1\leq k\leq d_n}\left(\sum_{j=1}^{N_n}\gamma_{n,k}(i,j)^2\right)^{3/2}
\end{align*}
for every $n$.

\item Suppose that there is a constant $a>0$ such that $Y_i$ is sub-Gaussian relative to the scale $a$ for all $i=1,2,\dots$. Then
\[
\sum_{i=1}^{N_n}E\left[\max_{1\leq k\leq d_n}\left|\sum_{j=1}^{N_n}\gamma_{n,k}(i,j)W^{(i)}_j\right|^3\right]
\leq
5^{3/2}a^3\log^{3/2}(2d_n-1+\sqrt{e})\sum_{i=1}^{N_n}\max_{1\leq k\leq d_n}\left(\sum_{j=1}^{N_n}\gamma_{n,k}(i,j)^2\right)^{3/2}
\]
for every $n$.

\end{enumerate}
\end{lemma}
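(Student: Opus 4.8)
The plan is to prove both parts by establishing, for each fixed $i\in\{1,\dots,N_n\}$, a bound on $E[\max_{1\le k\le d_n}|\zeta_k|^3]$, where $\zeta_k:=\sum_{j=1}^{N_n}\gamma_{n,k}(i,j)W^{(i)}_j$, and then summing over $i$. The observation underlying both parts is that, because $\gamma_{n,k}(i,i)=0$, the summand $j=i$ drops out, so $\zeta_k$ is a linear combination --- with coefficient vector $\gamma_{n,k}(i,\cdot)$ --- of the family $\{W^{(i)}_j:j\ne i\}=\{Y_j:j<i\}\cup\{G_j:j>i\}$, whose members are independent, centered, and of unit variance; in particular $E[\zeta_k^2]=\sum_{j=1}^{N_n}\gamma_{n,k}(i,j)^2$.

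For part (a): since $p\ge3$, Lyapunov's inequality and a union bound give
\[
E\Bigl[\max_{1\le k\le d_n}|\zeta_k|^3\Bigr]
\le\Bigl(E\Bigl[\max_{1\le k\le d_n}|\zeta_k|^p\Bigr]\Bigr)^{3/p}
\le\Bigl(\sum_{k=1}^{d_n}E\bigl[|\zeta_k|^p\bigr]\Bigr)^{3/p}
\le d_n^{3/p}\max_{1\le k\le d_n}\|\zeta_k\|_p^3,
\]
so the remaining task is an $L^p$-bound on the linear form $\zeta_k$. I would split $\zeta_k$ into its Gaussian part $\sum_{j>i}\gamma_{n,k}(i,j)G_j$, whose $L^p$-norm is at most $\|N(0,1)\|_p\,\|\gamma_{n,k}(i,\cdot)\|_2\le\sqrt{p-1}\,\|\gamma_{n,k}(i,\cdot)\|_2$, and its non-Gaussian part $\sum_{j<i}\gamma_{n,k}(i,j)Y_j$, which I would control by symmetrization (replacing $Y_j$ by $Y_j-Y_j'$, so that $\|Y_j-Y_j'\|_p\le2\sup_i\|Y_i\|_p$) together with the sharp Khintchine inequality $\|\sum_j\epsilon_j b_j\|_p\le\sqrt{p-1}(\sum_j b_j^2)^{1/2}$ --- equivalently, by the degree-one instance of the hypercontractivity estimate for homogeneous sums of random variables with finite $p$-th moment that underpins the generalized Lindeberg method (cf.\ \cite{MOO2010,NPR2010aop}; see also Appendix \ref{preliminaries}). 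Using $\sup_i\|Y_i\|_p\ge\sup_i\|Y_i\|_2=1$ to absorb the Gaussian part, this yields $\|\zeta_k\|_p\le C\sqrt{p-1}\,(\sup_i\|Y_i\|_p)\,\|\gamma_{n,k}(i,\cdot)\|_2$ for an absolute constant $C$, with the explicit prefactor coming out of the precise form of the estimate invoked. Cubing, substituting into the display, maximizing over $k$, and summing over $i$ then gives (a).

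For part (b): the point is that $\zeta_k$ is itself sub-Gaussian. Indeed, it is a sum of the independent sub-Gaussian variables $\gamma_{n,k}(i,j)Y_j$ (scale $|\gamma_{n,k}(i,j)|\,a$ for $j<i$) and $\gamma_{n,k}(i,j)G_j$ (scale $|\gamma_{n,k}(i,j)|$ for $j>i$), hence is sub-Gaussian relative to the scale $\bigl(a^2\sum_{j<i}\gamma_{n,k}(i,j)^2+\sum_{j>i}\gamma_{n,k}(i,j)^2\bigr)^{1/2}\le a\,\|\gamma_{n,k}(i,\cdot)\|_2$, using that a unit-variance sub-Gaussian variable has $a\ge1$. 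It then suffices to invoke the standard maximal inequality for a finite family of sub-Gaussian random variables --- the same type of estimate used in the proofs of Lemmas \ref{fourth-moment} and \ref{poincare} (see Appendix \ref{preliminaries}) --- which in the form needed here gives $E[\max_{1\le k\le d_n}|\zeta_k|^3]\le5^{3/2}\log^{3/2}(2d_n-1+\sqrt e)\,a^3\max_{1\le k\le d_n}\|\gamma_{n,k}(i,\cdot)\|_2^3$; summing over $i$ gives (b).

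Apart from the bookkeeping, the only genuinely substantive ingredient is the $L^p$-estimate of the non-Gaussian linear form $\sum_{j<i}\gamma_{n,k}(i,j)Y_j$ with the sharp $\sqrt{p-1}$ growth (and, in (b), recognising that this sharpness is automatic once $Y_j$ is sub-Gaussian); everything else --- the two maximal inequalities and the symmetrization --- is routine. Combining the Gaussian and non-Gaussian parts so that the stated constants come out is the one place where a little care is needed, but there is no conceptual obstacle.
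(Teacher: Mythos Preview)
Your approach is correct. Part (b) is essentially identical to the paper's proof: both observe (using $a\ge1$, which follows from $E[Y_i^2]=1$) that every $W^{(i)}_j$ is sub-Gaussian relative to the scale $a$, hence $\zeta_k$ is sub-Gaussian relative to $a\|\gamma_{n,k}(i,\cdot)\|_2$, and then invoke the maximal inequality of Proposition~\ref{max-sub-chaos}.

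For part (a) the paper takes a more unified route than your split into Gaussian and non-Gaussian pieces. It works with the notion of $(2,p,\eta)$-hypercontractivity throughout: Proposition~3.16 of \cite{MOO2010} shows that each $Y_i$ is $(2,p,(2\sqrt{p-1}\|Y_i\|_p)^{-1})$-hypercontractive, and since a standard Gaussian is $(2,p,(p-1)^{-1/2})$-hypercontractive with the larger parameter, \emph{every} $W^{(i)}_j$ is $(2,p,\eta)$-hypercontractive for a common $\eta$. A single appeal to the hypercontractivity bound for linear forms (Remark~3.3.1 of \cite{KW1992}) then gives $\|\zeta_k\|_p\le\eta^{-1}\|\gamma_{n,k}(i,\cdot)\|_2$ in one stroke, after which the Lyapunov/union-bound step is the same as yours. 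Your symmetrisation-plus-Khintchine argument is a valid alternative and, as you note, is the degree-one instance of the same hypercontractive principle; the only cost is the extra bookkeeping from triangle-inequalitying the two pieces back together, which loses a small multiplicative factor. The paper's route avoids the split and delivers the prefactor more directly.
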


Using the above lemma, we obtain a useful criterion to check the conditions appearing in Theorem \ref{kolmogorov-general} in terms of the so-called \textit{influence indices}: Given a symmetric matrix $\Gamma=(\gamma(i,j))_{1\leq i,j\leq N}$, the \textit{influence} of the variable $i$ of $\Gamma$ is defined by
\[
\influence_i(\Gamma)=\sum_{j=1}^N\gamma(i,j)^2
\]
for $i=1,\dots,N$. The influence indices play an important role in studies of the central limit theorem for random quadratic forms (and homogeneous sums more generally); see \cite{NPR2010aop,MOO2010,GT1999} for example.
\begin{corollary}\label{criterion}
Suppose that there is a constant $a>0$ such that $Y_i$ is sub-Gaussian relative to the scale $a$ for all $i=1,2,\dots$. 
Then the convergences $R_{n,1}\log^{\frac{7}{2}}d_n\to0$ and $R_{n,3}\log^3d_n\to0$ are implied by the following condition:
\begin{equation}\label{influence}
(\log d_n)^6\max_{1\leq k\leq d_n}\trace\left(\Gamma_{n,k}^4\right)+(\log d_n)^5\left(\max_{1\leq i\leq N_n}\sqrt{\Lambda_{n,i}}\right)\sum_{i=1}^{N_n}\Lambda_{n,i}\to0\quad\text{as }n\to\infty,
\end{equation}
where
\[
\Lambda_{n,i}=\max_{1\leq k\leq d_n}\influence_i(\Gamma_{n,k}),\qquad
i=1,\dots,N_n.
\]
\end{corollary}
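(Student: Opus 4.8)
The plan is to apply Lemma \ref{lemma:hyper}(b) to bound $R_{n,1}$ and to use the representation $E[Q_{n,k}(G)^4]-3E[Q_{n,k}(G)^2]^2 = c\,\trace(\Gamma_{n,k}^4)$ (up to a constant absorbed in $C_q$, as in the computation behind \eqref{fourth}, here with $q=2$ and all diagonal entries of $\Gamma_{n,k}$ vanishing) to bound $R_{n,3}$. Concretely, first I would observe that $\trace(\Gamma_{n,k}^4)=\|\Gamma_{n,k}^2\|_F^2\geq 0$ and, by the definition of the fourth cumulant of a second-order Gaussian chaos (cf.\ the inequality cited after \eqref{fourth}), $E[Q_{n,k}(G)^4]-3E[Q_{n,k}(G)^2]^2$ equals a universal constant times $\trace(\Gamma_{n,k}^4)$; hence $R_{n,3}\leq C\max_{1\leq k\leq d_n}\sqrt{\trace(\Gamma_{n,k}^4)}$, and therefore $R_{n,3}\log^3 d_n\to0$ is implied by $(\log d_n)^6\max_{k}\trace(\Gamma_{n,k}^4)\to0$, which is the first summand in \eqref{influence}.

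Next I would handle $R_{n,1}$. Since the $Y_i$ are sub-Gaussian relative to the scale $a$ (and standard Gaussians are sub-Gaussian relative to scale $1\leq a$ without loss of generality, or one simply notes $E[|G_i|^3]$ is a universal constant), the factor $E[|Y_i|^3]+E[|G_i|^3]$ in the definition of $R_{n,1}$ is bounded by a universal constant times $a^3$. Pulling this constant out and applying Lemma \ref{lemma:hyper}(b) gives
\[
R_{n,1}\leq C a^6\log^{3/2}(2d_n-1+\sqrt e)\sum_{i=1}^{N_n}\max_{1\leq k\leq d_n}\influence_i(\Gamma_{n,k})^{3/2}
= C a^6\log^{3/2}(2d_n-1+\sqrt e)\sum_{i=1}^{N_n}\Lambda_{n,i}^{3/2}.
\]
Then I would bound $\sum_{i}\Lambda_{n,i}^{3/2}\leq(\max_{1\leq i\leq N_n}\sqrt{\Lambda_{n,i}})\sum_{i}\Lambda_{n,i}$ and use $\log^{3/2}(2d_n-1+\sqrt e)\leq C(\log d_n)^{3/2}$ for $d_n\geq2$, so that
\[
R_{n,1}\log^{7/2}d_n\leq C a^6(\log d_n)^{5}\Bigl(\max_{1\leq i\leq N_n}\sqrt{\Lambda_{n,i}}\Bigr)\sum_{i=1}^{N_n}\Lambda_{n,i},
\]
which is exactly $Ca^6$ times the second summand in \eqref{influence}; hence \eqref{influence} forces $R_{n,1}\log^{7/2}d_n\to0$. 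Combining the two bounds yields both required convergences.

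I do not expect any genuine obstacle here: the corollary is essentially a bookkeeping exercise chaining Lemma \ref{lemma:hyper}(b) with the elementary trace formula for the fourth cumulant of a Gaussian quadratic form. The only mildly delicate points are (i) justifying the exact form of the constant relating $E[Q_{n,k}(G)^4]-3E[Q_{n,k}(G)^2]^2$ to $\trace(\Gamma_{n,k}^4)$ — but this is standard (it follows from Wick's formula / the product formula for multiple integrals, and an analogous formula already appears in the excerpt's discussion around \eqref{fourth} via \cite{DY2011}) — and (ii) taking care that the logarithmic factors $\log^{3/2}(2d_n-1+\sqrt e)$ appearing in Lemma \ref{lemma:hyper}(b) are comparable to $(\log d_n)^{3/2}$, which holds for all $d_n\geq2$ with a universal constant. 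Note that the corollary only asserts control of $R_{n,1}$ and $R_{n,3}$, not $R_{n,2}$, so no further argument is needed for that term.
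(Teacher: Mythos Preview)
Your proposal is correct and follows essentially the same route as the paper: the paper's proof of Corollary~\ref{criterion} bounds $R_{n,1}$ via Lemma~\ref{lemma:hyper}(b) together with the inequality $\sum_i\Lambda_{n,i}^{3/2}\leq(\max_i\sqrt{\Lambda_{n,i}})\sum_i\Lambda_{n,i}$, and handles $R_{n,3}$ via the trace formula for the fourth cumulant of a Gaussian quadratic form (Eq.~(11) of \cite{DY2011}), exactly as you outline. Your write-up is in fact more detailed than the paper's terse three-line proof; the only cosmetic remark is that $a\geq1$ is automatic from $E[Y_i^2]=1$, so no ``without loss of generality'' is needed when bounding the third moments.
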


\if0
\begin{rmk}\mbox{}
\begin{enumerate}[label=(\alph*)]

\item A typical example satisfying the condition \eqref{eq:dominate} is the situations where $\Gamma_{n,k}$'s correspond to sample auto-covariances:
\[
\gamma_{n,k}(i,j)=\left\{
\begin{array}{ll}
1/\sqrt{N_n} & \text{if }|j-i|=k,   \\
0  & \text{otherwise}. 
\end{array}
\right.
\]
In this case the quantities $\sum_{j=1}^{N_n}\gamma_{n,k}(i,j)^2$ does not depend on $k$, so we can take $\bar{\Gamma}_n=\Gamma_{n,1}$ for example. 

\item When the matrix $\bar{\Gamma}_n$ is symmetric, the quantity $\sum_{j=1}^{N_n}\bar{\gamma}_{n}(i,j)^2$ is usually referred to as the \textit{influence} of the variable $i$, which plays an important role in studies of the central limit theorem for random quadratic forms (and homogeneous sums more generally); see \cite{NPR2010aop,MOO2010,GT1999} for example.  
In many cases it would be reasonable to expect $\max_{1\leq i\leq d_n}\sum_{j=1}^{N_n}\bar{\gamma}_{n}(i,j)^2=O(N_n^{-1})$ because we have
\[
\max_{1\leq i\leq N_n}\sum_{j=1}^{N_n}\bar{\gamma}_{n}(i,j)^2\leq\|\bar{\Gamma}_n\|^2_\mathrm{sp}.
\]
According to \cite{GT1999}, $\|\bar{\Gamma}_n\|_\mathrm{sp}$ gives an optimal convergence rate for the Kolmogorov distance between 
\[
\bar{Q}_n(Y)\left/\sqrt{\variance[\bar{Q}_n(Y)]}\right.
\] 
and a standard Gaussian variable, where we set
\[
\bar{Q}_n(Y)=\sum_{i,j=1}^{N_n}\bar{\gamma}_n(i,j)Y_iY_j,
\]
hence it seems reasonable to expect $\|\bar{\Gamma}_n\|_\mathrm{sp}=O(N_n^{-1/2})$ in view of the standard Berry-Esseen inequality. 
Consequently, the latter condition of \eqref{influence} is typically satisfied when $\log^{10} d_n=o(N_n)$ as $n\to\infty$. 

\end{enumerate}
\end{rmk}
\fi
\begin{rmk}[Implication of the condition \eqref{influence}]
Let us consider the case that there is a symmetric matrix $\bar{\Gamma}_n=(\bar{\gamma}_n(i,j))_{1\leq i,j\leq N_n}$ such that $\influence_i(\bar{\Gamma}_n)=\Lambda_{n,i}$ for all $i=1,\dots,N_n$. Namely, the influence indices of the matrices $\Gamma_{n,1},\dots,\Gamma_{n,d_n}$ are dominated by that of the matrix $\bar{\Gamma}_n$. In this case the condition \eqref{influence} reads as
\[
(\log d_n)^6\max_{1\leq k\leq d_n}\trace\left(\Gamma_{n,k}^4\right)+(\log d_n)^{10}\|\bar{\Gamma}_n\|_F^4\max_{1\leq i\leq N_n}\influence_i(\bar{\Gamma}_n)\to0\quad\text{as }n\to\infty.
\]
The quantity $\|\bar{\Gamma}_n\|_F^2$ is the variance of the quadratic form
\[
\bar{Q}_n(Y)=\sum_{i,j=1}^{N_n}\bar{\gamma}_n(i,j)Y_iY_j.
\] 
Therefore, it would be natural to assume $\sup_{n\in\mathbb{N}}\|\bar{\Gamma}_n\|_F^2<\infty$. Moreover, in many cases it is reasonable to expect $\max_{1\leq i\leq d_n}\influence_i(\bar{\Gamma}_n)=O(N_n^{-1})$ because we have by definition 
\[
\max_{1\leq i\leq d_n}\influence_i(\bar{\Gamma}_n)\leq\|\bar{\Gamma}_n\|^2_\mathrm{sp}.
\]
According to \cite{GT1999}, $\|\bar{\Gamma}_n\|_\mathrm{sp}$ gives an optimal convergence rate for the Kolmogorov distance between 
\[
\bar{Q}_n(Y)\left/\sqrt{\variance[\bar{Q}_n(Y)]}\right.
\] 
and a standard Gaussian variable, 
hence it seems reasonable to expect $\|\bar{\Gamma}_n\|_\mathrm{sp}=O(N_n^{-1/2})$ in view of the standard Berry-Esseen inequality. 
Moreover, since $\trace(\Gamma_{n,k}^4)\leq\|\Gamma_{n,k}\|_\mathrm{sp}^2\|\Gamma_{n,k}\|_F^2$, we might expect $\max_{1\leq k\leq d_n}\trace(\Gamma_{n,k}^4)=O(N_n^{-1})$ due to a similar reason. 
Consequently, the condition \eqref{influence} is typically satisfied when $\log^{10} d_n=o(N_n)$ as $n\to\infty$. 

A typical example satisfying the above condition is the situations where $\Gamma_{n,k}$'s correspond to sample auto-covariances:
\[
\gamma_{n,k}(i,j)=\left\{
\begin{array}{ll}
1/\sqrt{N_n} & \text{if }|j-i|=k,   \\
0  & \text{otherwise}. 
\end{array}
\right.
\]
In this case the quantities $\sum_{j=1}^{N_n}\gamma_{n,k}(i,j)^2$ does not depend on $k$, so we can take $\bar{\Gamma}_n=\Gamma_{n,1}$ for example. 
\end{rmk}

\section{Application to high-frequency data}\label{sec:applications}

\subsection{Testing the absence of lead-lag effects}\label{sec:test}

We turn to the problem of testing the absence of lead-lag effects which is mentioned at the beginning of the Introduction. Here we consider a more general setting than the one described in the Introduction by allowing (deterministic) time-varying volatilities as well as the presence of multiple lead-lag times under the alternative. 
 
Let $\rho_1,\dots,\rho_M$ be real numbers satisfying the condition $\sum_{m=1}^M|\rho_m|<1.$ 
Also, let $\theta_1,\dots,\theta_M$ be mutually different numbers. Then, by Proposition 2 from \cite{HK2018} there is a bivariate Gaussian process $B_t=(B^1_t,B^2_t)$ $(t\in\mathbb{R})$ with stationary increments such that both $B^1$ and $B^2$ are standard Brownian motions as well as $B^1$ and $B^2$ have the cross spectral density given by
\[
\mathfrak{s}(\lambda)=\sum_{m=1}^M\rho_me^{-\sqrt{-1}\theta_m\lambda},\qquad\lambda\in\mathbb{R}.
\] 
This means that we have
\[
E\left[\left(\int_{-\infty}^\infty f(t)dB^1_t\right)\left(\int_{-\infty}^\infty g(t)dB^2_t\right)\right]
=\sum_{m=1}^M\rho_m\int_{-\infty}^\infty f(t)g(t+\theta_m)dt
\]
for any $f,g\in L^2(\mathbb{R})$.

For each $\nu=1,2$, we consider the process $X^\nu=(X^\nu_t)_{t\geq0}$ given by
\begin{equation}\label{model}
X^\nu_t=X^\nu_0+\int_0^t\sigma_\nu(s)dB^\nu_s,\qquad t\geq0,
\end{equation}
where $\sigma_\nu\in L^2(0,\infty)$ is nonnegative-valued and deterministic. We observe the process $X^\nu$ on the interval $[0,T]$ at the deterministic sampling times $0\leq t^\nu_0<t^\nu_1<\cdots<t^\nu_{n_\nu}\leq T$, which implicitly depend on the parameter $n\in\mathbb{N}$ such that
\[
r_n:=\max_{\nu=1,2}\max_{i=0,1,\dots,n_\nu+1}(t^\nu_i-t^\nu_{i-1})\to0
\]
as $n\to\infty$, where we set $t^\nu_{-1}:=0$ and $t^\nu_{n_\nu+1}:=T$ for each $\nu=1,2$.

\begin{rmk}\label{non-ergodic}
It is not difficult to extend the following discussion to the case that the volatilities $\sigma_1,\sigma_2$ and the sampling times $(t^1_i)_{i=0}^{n_1},(t^2_j)_{j=0}^{n_2}$ are random but independent of the process $B$, but we focus on the deterministic case for the simplicity of notation. Extension to a situation where the volatilities depend on $B$ is non-trivial because of the non-ergodic nature of the problem (i.e.~the asymptotic covariance matrix of the statistics $(U_n(\theta))_{\theta\in\mathcal{G}_n}$ defined below generally depends on $B$) and we leave it to future research.
\end{rmk}

Our aim is to construct a testing procedure for the following statistical hypothesis testing problem based on discrete observation data $(X^1_{t_i^1})_{i=0}^{n_1}$ and $(X^2_{t_i^2})_{i=0}^{n_2}$:
\begin{equation}\label{llag-test}
H_0:\rho_m=0\text{ for all }m=1,\dots,M\qquad
\text{vs}\qquad 
H_1:\rho_m\neq0\text{ for some }m=1,\dots,M.
\end{equation}

We introduce some notation. For each $\nu=1,2$, we associate the observation times $(t^\nu_i)_{i=0}^{n_\nu}$ with the collection of intervals $\Pi^\nu_n=\{(t^\nu_{i-1},t^\nu_i]:i=1,\dots,n_\nu\}$. We will systematically employ the notation $I$ (resp.~$J$) for an element of $\Pi^1_n$ (resp.~$\Pi^2_n$). 
For an interval $S\subset[0,\infty)$, we set $\overline{S}=\sup S$, $\underline{S}=\inf S$ and $|S|=\overline{S}-\underline{S}$. In addition, we set $V(S)=V_{\overline{S}}-V_{\underline{S}}$ for a a stochastic process $(V_t)_{t\geq0}$, and $S_\theta=S+\theta$ for a real number $\theta$. We define the Hoffmann-Rosenbaum-Yoshida cross-covariance estimator by
\[
U_n(\theta)=
\sum_{I\in\Pi^1_n,J\in\Pi^2_n}X^1(I)X^2(J)K(I,J_{-\theta}),
\]
where we set $K(I,J)=1_{\{I\cap J\neq\emptyset\}}$ for two intervals $I$ and $J$. 
Now our test statistic is given by
\[
T_n=\sqrt{n}\max_{\theta\in\mathcal{G}_n}|U_n(\theta)|,
\]
where $\mathcal{G}_n$ is a finite subset of $\mathbb{R}$. 

To establish the asymptotic property of our test statistic $T_n$, we first investigate the asymptotic property of the following quantity: 
\[
F_n(\theta)=\sqrt{n}(U_n(\theta)-E[U_n(\theta)]).
\]
We impose the following conditions:
\begin{enumerate}[label={\normalfont[A\arabic*]}]

\item\label{vol-bound} $\sup_{t\in[0,T]}(\sigma_1(t)+\sigma_2(t))<\infty$.


\item\label{avar-bound} There are positive constants $\underline{v},\overline{v}$ such that $\underline{v}\leq V_n(\theta)\leq\overline{v}$ for all $n\in\mathbb{N}$ and $\theta\in\mathcal{G}_n$, where
\[
V_n(\theta)=
n\sum_{I\in\Pi^1_n,J\in\Pi^2_n}\left(\int_{I}\sigma_1(t)^2dt\right)\left(\int_{J}\sigma_2(t)^2dt\right)K(I,J_{-\theta}).
\]

\item\label{cross-vol} $\Sigma(\theta_m)>0$ for all $m=1,\dots,M$, where
\[
\Sigma(\theta)=
\left\{
\begin{array}{ll}
\int_0^{T-\theta}\sigma_1(t)\sigma_2(t+\theta)dt  & \text{if }\theta\geq0,  \\
\int_{0}^{T+\theta}\sigma_1(t-\theta)\sigma_2(t)dt  & \text{if }\theta<0.
\end{array}
\right.
\]

\item\label{grid} The grid $\mathcal{G}_n$ satisfies the following conditions:
\begin{enumerate}[label=(\roman*)]

\item There is a constant $\gamma>0$ such that $\#\mathcal{G}_n=O(n^\gamma)$ as $n\to\infty$.

\item There is a sequence $(\upsilon_n)_{n\in\mathbb{N}}$ of positive numbers such that
\[
\{\theta_1,\dots,\theta_M\}\subset\bigcup_{\theta\in\mathcal{G}_n}[\theta-\upsilon_n,\theta+\upsilon_n]
\]
and $\lim_{n\to\infty}\upsilon_n\min\{n_1,n_2\}=0$.

\end{enumerate}

\end{enumerate}

\begin{rmk}
Assumption \ref{vol-bound} is standard in the literature and satisfied when $\sigma_1$ and $\sigma_2$ are c\`adl\`ag, for example. \ref{avar-bound} roughly says that the scaling factor $\sqrt{n}$ is appropriate (the quantity $V_n(\theta)$ is related to the variance of $U_n(\theta)$). \ref{avar-bound} holds true e.g.~when $0<\inf_{t\in[0,T]}\sigma_\nu(t)\leq \sup_{t\in[0,T]}\sigma_\nu(t)<\infty$ for every $\nu=1,2$, $n\sum_{I\in\Pi^1_n}|I|^2+n\sum_{J\in\Pi^2_n}|J|^2=O(1)$ as $n\to\infty$ and there is a constant $c>0$ such that $n(|I|\wedge|J|)\geq c$ for every $n$ and all $I\in\Pi^1_n$, $J\in\Pi^2_n$. \ref{cross-vol} ensures that $\max_{1\leq m\leq M}|E[U_n(\theta_m)]|$ does not vanish under $H_1$. \ref{grid} ensures that $\mathcal{G}_n$ is sufficiently fine to capture the cross-covariance at the lag $\theta_m$ for every $m$. Note that \ref{grid} is also assumed in \cite{HRY2013} (see Assumption B3 of \cite{HRY2013}).  
\end{rmk}

\begin{proposition}\label{hry}
For each $n\in\mathbb{N}$, let $(Z_n(\theta))_{\theta\in\mathcal{G}_n}$ be a family of centered Gaussian variables such that $E[Z_n(\theta)Z_n(\theta')]=E[F_n(\theta)F_n(\theta')]$ for all $\theta,\theta'\in\mathcal{G}_n$. 
Under assumptions \ref{vol-bound}--\ref{avar-bound}, we have
\[
\sup_{x\in\mathbb{R}}\left|P\left(\max_{\theta\in\mathcal{G}_n}|F_n(\theta)|\leq x\right)-P\left(\max_{\theta\in\mathcal{G}_n}|Z_n(\theta)|\leq x\right)\right|\to0
\]
as $n\to\infty$, provided that $nr_n^2\log^6(\#\mathcal{G}_n)\to0$.
\end{proposition}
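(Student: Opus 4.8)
The plan is to recognize that for fixed $n$ each $F_n(\theta)$ is an element of the second Wiener chaos generated by $(B^1,B^2)$ --- equivalently, a centered Gaussian symmetric quadratic form --- and to apply Theorem~\ref{gqf} with $d_n=\#\mathcal{G}_n$ and with $\mathfrak{C}_n$ equal to the \emph{exact} covariance matrix of $(F_n(\theta))_{\theta\in\mathcal{G}_n}$, so that the Gaussian vector in Theorem~\ref{gqf} is exactly $(Z_n(\theta))_{\theta\in\mathcal{G}_n}$ and condition~(iii) there holds trivially. The proof then reduces to two uniform-in-$\theta$ estimates: a lower bound $E[F_n(\theta)^2]\geq b>0$ (condition~(i)), and the fourth-cumulant bound $\max_{\theta\in\mathcal{G}_n}\bigl(E[F_n(\theta)^4]-3E[F_n(\theta)^2]^2\bigr)=O(nr_n^2)$, which together with the hypothesis $nr_n^2\log^6(\#\mathcal{G}_n)\to0$ gives condition~(ii). (Alternatively one can bypass Theorem~\ref{gqf} and combine Lemma~\ref{fourth-moment} with $q=2$ and Corollary~\ref{abs-kolmogorov}; exactly the same two estimates are needed, and they give a Kolmogorov distance $\lesssim\bigl(nr_n^2\log^6(\#\mathcal{G}_n)\bigr)^{1/6}$.)

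For the chaos representation, let $H$ be the real separable Hilbert space generated by $(B^1,B^2)$ and $W$ the associated isonormal Gaussian process, so that $X^\nu(I)=W(h^\nu_I)$ for suitable $h^\nu_I\in H$; because $B^1$ and $B^2$ have independent increments, $\langle h^1_I,h^1_{I'}\rangle_H=1_{\{I=I'\}}\int_I\sigma_1(t)^2\,dt$ and likewise for the $h^2_J$'s, while $\langle h^1_I,h^2_J\rangle_H=E[X^1(I)X^2(J)]$. By the product formula for multiple Wiener--It\^o integrals $F_n(\theta)=I_2(f_n(\theta))$ with $f_n(\theta)=\sqrt{n}\sum_{I\in\Pi^1_n,J\in\Pi^2_n}K(I,J_{-\theta})\,h^1_I\widetilde{\otimes}h^2_J$ (symmetrized tensor product), so $F_n(\theta)\in\mathbb{D}_{1,2}$. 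For condition~(i), Isserlis' formula gives $E[F_n(\theta)^2]=V_n(\theta)+n\,\trace\bigl(B_n(\theta)^2\bigr)$, where $B_n(\theta)h=\sum_{I,J}K(I,J_{-\theta})\langle h^2_J,h\rangle_H\,h^1_I$. Writing $B_n(\theta)=P_1B_n(\theta)P_2$ with $P_\nu$ the orthogonal projection onto the subspace of $B^\nu$-linear functionals, and using that $\|P_1P_2\|_{\mathrm{op}}\leq\sup|\mathfrak{s}|\leq\sum_{m=1}^M|\rho_m|<1$ (the maximal correlation between the two spectral subspaces is the sup of the cross-spectral density), one gets $|n\,\trace(B_n(\theta)^2)|\leq\|B_n(\theta)P_2P_1\|_{\mathrm{op}}^2\wedge\bigl(\|P_1P_2\|_{\mathrm{op}}^2\,\|B_n(\theta)\|_{\mathrm{HS}}^2\cdot n\bigr)\leq(\sum_m|\rho_m|)^2V_n(\theta)$, since $n\|B_n(\theta)\|_{\mathrm{HS}}^2=V_n(\theta)$. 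With~\ref{avar-bound} this yields $(1-(\sum_m|\rho_m|)^2)\underline{v}\leq E[F_n(\theta)^2]\leq(1+(\sum_m|\rho_m|)^2)\overline{v}$, so condition~(i) holds with $b=(1-(\sum_m|\rho_m|)^2)\underline{v}>0$.

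The heart of the proof is the fourth-cumulant bound. Writing $g_n(\theta)=\sum_{I,J}K(I,J_{-\theta})\,h^1_I\widetilde{\otimes}h^2_J$ (so $f_n(\theta)=\sqrt{n}\,g_n(\theta)$) and letting $A_{g_n(\theta)}$ be the symmetric Hilbert--Schmidt operator on $H$ with kernel $g_n(\theta)$, the standard second-chaos identity gives
\[
E[F_n(\theta)^4]-3E[F_n(\theta)^2]^2=48\,n^2\,\trace\bigl(A_{g_n(\theta)}^4\bigr)\leq 48\,n^2\,\bigl\|A_{g_n(\theta)}\bigr\|_{\mathrm{op}}^2\,\trace\bigl(A_{g_n(\theta)}^2\bigr),
\]
and $\trace(A_{g_n(\theta)}^2)=\|g_n(\theta)\|_{H^{\otimes2}}^2=\tfrac12\variance(U_n(\theta))\leq\overline{v}/n$ by the bounds above, so it suffices to prove $\|A_{g_n(\theta)}\|_{\mathrm{op}}=O(r_n)$ uniformly in $\theta$. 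I would bound $\|A_{g_n(\theta)}\|_{\mathrm{op}}\leq\|B_n(\theta)\|_{\mathrm{op}}$ and, for unit vectors $u,v\in H$, estimate $\langle u,B_n(\theta)v\rangle_H=\sum_{I,J}K(I,J_{-\theta})\langle u,h^1_I\rangle_H\langle v,h^2_J\rangle_H$ by a Cauchy--Schwarz inequality in which the pair $(I,J)$ carries the weight $\|h^2_J\|_H^2/\|h^1_I\|_H^2$; the two resulting factors become $\bigl(\sum_I\langle u,h^1_I\rangle_H^2\|h^1_I\|_H^{-2}\sum_{J:J_{-\theta}\cap I\neq\emptyset}\|h^2_J\|_H^2\bigr)^{1/2}$ and its mirror image, and each is $O(\sqrt{r_n})$ by Bessel's inequality together with the localization fact that $\bigcup\{J\in\Pi^2_n:J_{-\theta}\cap I\neq\emptyset\}$ has Lebesgue measure at most $|I|+2r_n$, hence $\sigma_2^2$-content $O(r_n)$ (and symmetrically for $\Pi^1_n$). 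This gives $\|B_n(\theta)\|_{\mathrm{op}}\leq Cr_n$ with $C$ depending only on $\sup_{t\in[0,T]}(\sigma_1(t)+\sigma_2(t))$, whence $E[F_n(\theta)^4]-3E[F_n(\theta)^2]^2\leq C'nr_n^2$ uniformly in $\theta$. The main obstacle is precisely this operator-norm estimate: since~\ref{vol-bound}--\ref{avar-bound} impose no lower bound on the sampling mesh, the number of $\Pi^2_n$-intervals meeting a shifted $\Pi^1_n$-interval is uncontrolled, and the role of the weighted Cauchy--Schwarz is exactly to convert those counts into the localized, $O(r_n)$-small volatility integrals.

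Finally, feeding the two estimates into Theorem~\ref{gqf}: condition~(ii) holds because $\max_{\theta\in\mathcal{G}_n}\bigl(E[F_n(\theta)^4]-3E[F_n(\theta)^2]^2\bigr)\log^6(\#\mathcal{G}_n)\leq C'nr_n^2\log^6(\#\mathcal{G}_n)\to0$ by hypothesis, and conditions~(i), (iii) were arranged above; Theorem~\ref{gqf} then yields $\sup_{x\in\mathbb{R}}|P(\max_{\theta\in\mathcal{G}_n}|F_n(\theta)|\leq x)-P(\max_{\theta\in\mathcal{G}_n}|Z_n(\theta)|\leq x)|\to0$, which is the assertion.
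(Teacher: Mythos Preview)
Your argument is correct and follows the same high-level strategy as the paper: cast $(F_n(\theta))_{\theta\in\mathcal G_n}$ as second-chaos (equivalently, Gaussian quadratic forms) and feed the variance lower bound and the fourth-cumulant bound into Theorem~\ref{gqf} (or directly into Corollary~\ref{abs-kolmogorov} + Lemma~\ref{fourth-moment}). The two required estimates are exactly the content of the paper's Lemmas~\ref{hry-variance} and~\ref{hry-fourth}, and your conclusions match theirs line for line: $E[F_n(\theta)^2]\in[(1-(\sum_m|\rho_m|)^2)\underline v,\,(1+(\sum_m|\rho_m|)^2)\overline v]$ and $\kappa_4(F_n(\theta))\lesssim nr_n^2$ uniformly in $\theta$.

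Where you differ is in the \emph{technique} for those two lemmas. For the variance, the paper expands via Isserlis and bounds the cross term by a direct Cauchy--Schwarz on the integrals $\int_{I\cap J_{-\theta_m}}\sigma_1\sigma_2(\cdot+\theta_m)$; you obtain the same inequality operator-theoretically, writing the cross term as $n\,\trace(B_n(\theta)^2)=n\,\trace\bigl((P_2P_1B_n(\theta))^2\bigr)$ and using $\|P_2P_1B_n(\theta)\|_{\mathrm{HS}}\le\|P_2P_1\|_{\mathrm{op}}\|B_n(\theta)\|_{\mathrm{HS}}$ together with the spectral bound $\|P_1P_2\|_{\mathrm{op}}\le\sup|\mathfrak s|\le\sum_m|\rho_m|$. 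For the fourth cumulant, the paper first applies the Hayashi--Yoshida \emph{reduction procedure} so that each interval meets at most three shifted counterparts, and then bounds the finite-matrix spectral norm $\|\Sigma_n^{1/2}A_n(\theta)\Sigma_n^{1/2}\|_{\mathrm{sp}}$ via row/column-sum estimates; you bypass the reduction entirely with a Schur-test/weighted Cauchy--Schwarz that turns the (a priori unbounded) overlap counts into the localized volatility integrals $\sum_{J:\,J_{-\theta}\cap I\neq\emptyset}\int_J\sigma_2^2\lesssim r_n$. Your route is shorter and more intrinsic (no auxiliary combinatorial step, no passage through finite matrices), while the paper's route has the advantage of keeping everything in the finite-dimensional matrix language of Theorem~\ref{gqf} and reusing the explicit cumulant formula from \cite{DY2011}. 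Either way the output is $\|A_{g_n(\theta)}\|_{\mathrm{op}}\lesssim r_n$, hence $\kappa_4(F_n(\theta))\le 48\,n^2\|A_{g_n(\theta)}\|_{\mathrm{op}}^2\trace(A_{g_n(\theta)}^2)\lesssim nr_n^2$, and the proposition follows. One cosmetic point: your displayed chain for $|n\,\trace(B_n(\theta)^2)|$ with the $\wedge$ is garbled; the clean version is the cyclic-trace identity above, which gives the squared factor $\|P_1P_2\|_{\mathrm{op}}^2$ you claim.
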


\begin{rmk}
It is impossible to apply the original CCK theory (at least naively) to prove Proposition \ref{hry} because we need to apply Theorem \ref{gqf} to a situation where the matrices $\Sigma_n^{1/2}A_{n,1}\Sigma_n^{1/2},\dots,\Sigma_n^{1/2}A_{n,d_n}\Sigma_n^{1/2}$ are not simultaneously diagonalizable. In fact, if we consider the synchronous and equidistant sampling with the step size $1/n$, the matrices corresponding to $U_n(\pm1/n)$ are of the form
\[
\left(\begin{array}{cc}
O & A \\
A^\top & O
\end{array}\right),
\]
where we take the matrix $A=(a_{ij})$ as
\[
a_{ij}=\left\{\begin{array}{ll}
1 & \text{if }j-i=\pm1, \\
0 & \text{otherwise}.
\end{array}\right.
\] 
We can easily check that those matrices are not commutative unless the size of $A$ is less than or equal to 2. 
\end{rmk}

The above proposition suggests that the null distribution of our test statistic $T_n$ could be approximated by that of $\max_{\theta\in\mathcal{G}_n}|Z_n(\theta)|$ for sufficiently large $n$. However, it is not easy to evaluate the distribution of $\max_{\theta\in\mathcal{G}_n}|Z_n(\theta)|$ directly, so we rely on a (wild) bootstrap procedure to construct critical regions for our test. The above Gaussian approximation result plays a role in validating the bootstrap procedure.  

Let $(w^1_I)_{I\in\Pi^1_n}$ and $(w^2_J)_{J\in\Pi^2_n}$ be mutually independent sequence of i.i.d.~random variables which are independent of the processes $X^1$ and $X^2$. Then we set
\[
U_n^*(\theta)=
\sum_{I\in\Pi^1_n,J\in\Pi^2_n}\left(w^1_IX^1(I)\right)\left(w^2_JX^2(J)\right)K(I,J_{-\theta}).
\]
Given a significance level $\alpha$, we denote by $q_n^*(1-\alpha)$ the $100(1-\alpha)$\% quantile of the bootstrapped test statistic $T_n^*=\sqrt{n}\max_{\theta\in\mathcal{G}_n}|U_n^*(\theta)|$, conditionally on $X^1$ and $X^2$:
\[
q_n^*(1-\alpha)=\inf\left\{z\in\mathbb{R}:P\left(T_n^*\leq z|\mathcal{F}^X\right)\geq1-\alpha\right\},
\]
where $\mathcal{F}^X$ is the $\sigma$-field generated by the processes $X^1$ and $X^2$.
\begin{rmk}
We generate the bootstrap observations under the null hypothesis $H_0$. This is a typical approach in the bootstrap test literature (see e.g.~\cite{BHLP2016}). Moreover, as discussed in Section 4 of \cite{DM1999} as well as Section 2 of \cite{DF2008}, this approach often serves as refining the performance of the test. 
\end{rmk}

\begin{proposition}\label{hry-boot}
Suppose that \ref{vol-bound}--\ref{grid} are satisfied. Suppose also that $E[w_I^1]=E[w^2_J]=0$, $E[(w^1_I)^2]=E[(w^2_J)^2]=1$ for all $I,J$ and there is a constant $a>0$ such that both $w_I^1$ and $w^2_J$ are sub-Gaussian relative to the scale $a$ for all $I,J$. Suppose further that $r_n=O(n^{-3/4-\eta})$ as $n\to\infty$ for some $\eta>0$. Then the following statements hold true for all $\alpha\in(0,1)$:
\begin{enumerate}[label=(\alph*)]

\item\label{null} Under $H_0$, we have $P\left(T_n\geq q_n^*(1-\alpha)\right)\to\alpha$ as $n\to\infty$. 

\item\label{alternative} Under $H_1$, we have $P\left(T_n\geq q_n^*(1-\alpha)\right)\to1$ as $n\to\infty$.

\end{enumerate}
\end{proposition}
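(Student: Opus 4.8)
The plan is to prove both statements through a conditional Gaussian approximation of the bootstrapped statistic $T_n^*$ given $\mathcal{F}^X$. Write $F_n=(F_n(\theta))_{\theta\in\mathcal{G}_n}$ and let $\Sigma_n(\theta,\theta')=E[F_n(\theta)F_n(\theta')]$. Under $H_0$ the cross spectral density vanishes, so $B^1\indepe B^2$, $E[U_n(\theta)]=0$, and hence $T_n=\sqrt n\max_\theta|U_n(\theta)|=\max_\theta|F_n(\theta)|$. Proposition \ref{hry} — whose hypothesis $nr_n^2\log^6(\#\mathcal{G}_n)\to0$ is implied by $r_n=O(n^{-3/4-\eta})$ together with condition (i) of \ref{grid} — then gives $\sup_x|P(T_n\le x)-P(\max_\theta|Z_n(\theta)|\le x)|\to0$, where $Z_n=(Z_n(\theta))_{\theta\in\mathcal{G}_n}$ is centered Gaussian with $\variance Z_n(\theta)=V_n(\theta)\in[\underline v,\overline v]$ by \ref{avar-bound}. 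So for \ref{null} it remains to show that $q_n^*(1-\alpha)$ converges, in the appropriate sense, to the $(1-\alpha)$-quantile of $\max_\theta|Z_n(\theta)|$; this will follow once we prove that, conditionally on $\mathcal{F}^X$, the law of $T_n^*$ is uniformly close to that of $\max_\theta|Z_n(\theta)|$ with probability tending to one, using the anti-concentration inequality for maxima of Gaussian vectors. For \ref{alternative} the plan is instead to show that $q_n^*(1-\alpha)=o_p(\sqrt n)$ while $T_n$ diverges at the rate $\sqrt n$.

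Conditionally on $\mathcal{F}^X$, the variable $\sqrt n\,U_n^*(\theta)=\sqrt n\sum_{I,J}X^1(I)X^2(J)K(I,J_{-\theta})w^1_Iw^2_J$ is a symmetric quadratic form with vanishing diagonal in the independent, mean-zero, unit-variance, sub-Gaussian weights $(w^1_I)_{I}\cup(w^2_J)_{J}$; its coefficient matrix $\Gamma_{n,\theta}$ has the block form $\tfrac{\sqrt n}{2}X^1(I)X^2(J)K(I,J_{-\theta})$. I would apply Theorem \ref{kolmogorov-general} conditionally on $\mathcal{F}^X$, taking $\mathfrak{C}_n$ to be the conditional covariance $\widehat\Sigma_n(\theta,\theta')=n\,E[U_n^*(\theta)U_n^*(\theta')\mid\mathcal{F}^X]=n\sum_{I,J}X^1(I)^2X^2(J)^2K(I,J_{-\theta})K(I,J_{-\theta'})$ so that $R_{n,2}=0$, and bounding $R_{n,1},R_{n,3}$ via Lemma \ref{lemma:hyper}(b) (legitimate since $\gamma_{n,\theta}(i,i)=0$ and the weights are sub-Gaussian) together with the identity $E[Q_{n,\theta}(G)^4]-3E[Q_{n,\theta}(G)^2]^2=48\trace(\Gamma_{n,\theta}^4)$. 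This reduces the verification of the hypotheses of Theorem \ref{kolmogorov-general} to showing that the random quantities $\sum_i\bigl(\max_\theta\influence_i(\Gamma_{n,\theta})\bigr)^{3/2}$ and $\max_\theta\trace(\Gamma_{n,\theta}^4)$, multiplied by the relevant powers of $\log\#\mathcal{G}_n=O(\log n)$, tend to $0$ in probability. Using \ref{vol-bound}, the fact that $\{J:K(I,J_{-\theta})=1\}$ is confined to a window of length $O(r_n)$, and the Gaussianity of the increments $X^\nu(I)$ (which gives, e.g., $\sum_IX^1(I)^2=O_p(1)$, $\sum_I|X^1(I)|^3=O_p(r_n^{1/2})$, and $\max_\theta\sum_{J:K(I,J_{-\theta})=1}X^2(J)^2=O_p(r_n\cdot\mathrm{polylog})$ uniformly in $I$ by a Bernstein bound over the polynomially many values of $\theta$; here and below $\mathrm{polylog}$ denotes a power of $\log n$), one finds $R_{n,1}=O_p(n^{3/2}r_n^2\cdot\mathrm{polylog})$, and it is precisely here that $r_n=O(n^{-3/4-\eta})$ is needed to force $R_{n,1}\log^{7/2}(\#\mathcal{G}_n)\to0$ — this is the source of the stronger sampling-frequency requirement relative to Proposition \ref{hry}. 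Consequently, on an event of probability tending to one, $\sup_x|P(T_n^*\le x\mid\mathcal{F}^X)-P(\max_\theta|\widehat Z_n(\theta)|\le x)|\to0$, where $\widehat Z_n$ is centered Gaussian with covariance $\widehat\Sigma_n$.

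Next I would establish the uniform consistency $\max_{\theta,\theta'\in\mathcal{G}_n}|\widehat\Sigma_n(\theta,\theta')-\Sigma_n(\theta,\theta')|\to0$ in probability: under $H_0$ one has $E[\widehat\Sigma_n(\theta,\theta')]=\Sigma_n(\theta,\theta')$, and writing each $X^\nu(I)^2$ as its mean plus a centered sub-exponential term exhibits $\widehat\Sigma_n(\theta,\theta')-\Sigma_n(\theta,\theta')$ as a polynomial chaos of order at most $2$ with variance $O(nr_n^2)=o(1)$; a hypercontractive tail bound together with a union bound over the $O((\#\mathcal{G}_n)^2)$ pairs finishes it. In particular $\widehat\Sigma_n(\theta,\theta)\ge\underline v/2$ for all $\theta$ with probability tending to one, which justifies the application of Theorem \ref{kolmogorov-general} above, and Corollary \ref{abs-kolmogorov}, applied conditionally on $\mathcal{F}^X$ with $F=\widehat Z_n$ Gaussian (so that the associated quantity $\Delta$ equals $\|\widehat\Sigma_n-\Sigma_n\|_\infty$ by the Remark following Proposition \ref{stein}), yields $\sup_x|P(\max_\theta|\widehat Z_n(\theta)|\le x\mid\mathcal{F}^X)-P(\max_\theta|Z_n(\theta)|\le x)|=o_p(1)$. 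Combining this with the previous paragraph gives $\sup_x|P(T_n^*\le x\mid\mathcal{F}^X)-P(\max_\theta|Z_n(\theta)|\le x)|=o_p(1)$; together with the anti-concentration of $\max_\theta|Z_n(\theta)|$ this places $q_n^*(1-\alpha)$, with probability tending to one, between the $(1-\alpha\pm\delta_n)$-quantiles of $\max_\theta|Z_n(\theta)|$ for some $\delta_n\to0$ in probability, and then the Gaussian approximation for $T_n$ from the first paragraph gives $P(T_n\ge q_n^*(1-\alpha))\to\alpha$, which is \ref{null}.

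For \ref{alternative}, note first that the marginal laws of the increments $X^\nu(I)$ do not depend on the $\rho_m$'s, so the argument above applies verbatim under $H_1$ and gives $q_n^*(1-\alpha)=O_p\bigl(\sqrt{(\max_\theta\widehat\Sigma_n(\theta,\theta))\log\#\mathcal{G}_n}\bigr)$, which one checks (using \ref{vol-bound}, \ref{avar-bound} and bounding the contribution of the cross-covariances) is $o_p(\sqrt n)$. On the other hand, under $H_1$ there is $m$ with $\rho_m\neq0$, and by condition (ii) of \ref{grid} a grid point $\theta_n^*\in\mathcal{G}_n$ with $|\theta_n^*-\theta_m|\le\upsilon_n$; computing $E[U_n(\theta_n^*)]$ from the cross spectral density and invoking \ref{cross-vol} (as in \citet{HRY2013}) gives $E[U_n(\theta_n^*)]\to\rho_m\Sigma(\theta_m)\neq0$, while Proposition \ref{hry} (valid under \ref{vol-bound}--\ref{avar-bound} irrespective of the null or alternative, the variances $\variance F_n(\theta)$ remaining controlled) gives $\max_\theta|F_n(\theta)|=O_p\bigl(\sqrt{(\max_\theta\variance F_n(\theta))\log\#\mathcal{G}_n}\bigr)=o_p(\sqrt n)$. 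Hence $T_n\ge\sqrt n|U_n(\theta_n^*)|\ge\sqrt n|E[U_n(\theta_n^*)]|-\max_\theta|F_n(\theta)|$ diverges at the rate $\sqrt n$, which dominates $q_n^*(1-\alpha)$, so $P(T_n\ge q_n^*(1-\alpha))\to1$. The main obstacle throughout is the conditional verification of the hypotheses of Theorem \ref{kolmogorov-general} and the uniform covariance consistency: since $R_{n,1}$, $R_{n,3}$ and $\widehat\Sigma_n$ are random functionals of the Brownian increments, one must combine exact Gaussian moment computations, chaos tail and hypercontractivity estimates, and uniformity over the $O(n^\gamma)$-point grid, while keeping careful track of the near-diagonal (window) structure of the kernel $K(I,J_{-\theta})$ — and it is exactly the bookkeeping of the resulting powers of $r_n$ and $\log n$ that pins down the requirement $r_n=O(n^{-3/4-\eta})$.
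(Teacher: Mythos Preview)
Your proposal is correct and follows essentially the same route as the paper. The paper organizes the conditional bootstrap approximation slightly differently---it applies Theorem~\ref{kolmogorov-general} with $\mathfrak{C}_n=\Sigma_n$ directly (so the covariance mismatch $\widehat\Sigma_n-\Sigma_n$ appears as the $R_{n,2}$ term) rather than first comparing $T_n^*$ to $\max_\theta|\widehat Z_n(\theta)|$ and then $\widehat Z_n$ to $Z_n$---and for part~\ref{alternative} it bounds $T_n^*$ (hence $q_n^*$) by a direct hypercontractivity moment estimate rather than reusing the conditional Gaussian approximation; but these are cosmetic reorderings of the same ingredients, and your identification of $R_{n,1}=O_p(n^{3/2}r_n^2\cdot\mathrm{polylog})$ as the bottleneck driving the condition $r_n=O(n^{-3/4-\eta})$ matches the paper exactly. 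One small caveat: under $H_1$ the \emph{joint} law of $(X^1(I),X^2(J))$ does change, so ``applies verbatim'' is a slight overstatement---but as you note, only marginal moment bounds are needed to control $\widehat\Sigma_n$ and the influence terms, and those are unaffected.
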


By Proposition \ref{hry-boot}, given a significance level $\alpha\in(0,1)$, we obtain a consistent and asymptotically level $\alpha$ test for \eqref{llag-test} by rejecting the null hypothesis if $T_n\geq q_n^*(1-\alpha)$. Of course, in the practical implementation we replace $q_n^*(1-\alpha)$ by a simulated one. For example, given observation data, we generate i.i.d.~copies $T_n^*(1),\dots,T_n^*(R)$ of $T_n^*$ (conditionally on the observation data) with some sufficiently large integer $R$. Then we replace the function $P\left(T_n^*\leq z|\mathcal{F}^X\right)$ of $z$ by its empirical counterpart $\frac{1}{R}\sum_{r=1}^R1_{\{T_n^*(r)\leq z\}}$ and compute $q_n^*(1-\alpha)$ accordingly. Note that this is equivalent to computing the \textit{bootstrap p-value} $\hat{p}^*=\frac{1}{R}\sum_{r=1}^R1_{\{T_n^*(r)> T_n\}}$ and rejecting the null hypothesis if $\hat{p}^*\leq\alpha$.
\begin{rmk}
The proposed test is evidently invariant under multiplying a constant. In particular, the factor $\sqrt{n}$ can be dropped when we implement the test in practice. 
\end{rmk}

\begin{rmk}[Choice of the multiplier variables]\label{wild}
Choice of the distribution of the multiplier variables $(w^1_I)_{I\in\Pi^1_n}$ and $(w^2_J)_{J\in\Pi^2_n}$ are important for the finite sample property of the test. In our situation it turns out that choosing Rademacher variables induces a quite good finite sample performance of our testing procedure. Namely, the proposed test performs very well in finite samples when the distributions of $w^1_I$ and $w^2_J$ are chosen according to
\[
P(w^1_I=1)=P(w^1_I=-1)=P(w^2_J=1)=P(w^2_J=-1)=\frac{1}{2}.
\]
This is presumably because the above choice makes the \textit{unconditional} distribution of the bootstrapped test statistics of $T_n^*$ coincide with the distribution of $T_n$. This can be shown in the same way as in the proof of Theorem 1 from \cite{DF2008}. For this reason we recommend that we should use Rademacher variables as the multiplier variables for the above testing procedure (and we do so in the simulation study of Section \ref{sec:simulation}). 
\end{rmk}

\subsection{Uniform confidence bands for spot volatility}

To illustrate another potential application of our main results, we present an application of our result to constructing uniform confidence bands for spot volatility. This section is only for an illustration purpose, so we do neither pursue the generality of the theory nor discuss practical problems on implementation such as the choice of a bandwidth and a kernel function. We refer to Section 6 of \cite{Kristensen2010} for a discussion on the latter issue.

Let us consider the stochastic process $X=(X_t)_{t\in[0,T]}$ which is defined on a stochastic basis $(\Omega,\mathcal{F},(\mathcal{F}_t)_{t\in[0,T]},P)$ and of the form
\[
X_t=X_0+\int_0^t\sigma(s)dB_s,\qquad t\in[0,T],
\]
where $B=(B_t)_{t\in[0,T]}$ is a standard $(\mathcal{F}_t)$-Brownian motion and $\sigma=(\sigma(t))_{t\in[0,T]}$ is a continuous $(\mathcal{F}_t)$-adapted process. The aim of this section is to construct uniform confidence bands for the \textit{spot volatility} $\sigma^2$ based on the high-frequency observation data $\{X_{t_i}\}_{i=0}^n$, where $t_i=Ti/n$, $i=0,1,\dots,n$. 

Specifically, we consider the following kernel-type estimator for $\sigma^2$ (cf.~\cite{Kristensen2010,FW2008}):
\[
\widehat{\sigma}_n^2(t):=\sum_{i=1}^nK_h(t_{i-1}-t)(X_{t_i}-X_{t_{i-1}})^2,\qquad t\in[0,T],
\]
where $h:=h_n>0$ is a bandwidth parameter, $K_h(x)=K(x/h)/h$ for $x\in\mathbb{R}$ and $K:\mathbb{R}\to\mathbb{R}$ is a kernel function. We derive a Gaussian approximation result for the supremum of the Studentization of $\widehat{\sigma}_n^2(t)$. Let us set 
\[
\mathfrak{s}_n(t)=\sqrt{\frac{2}{n^2}\sum_{i=1}^nK_h(t_{i-1}-t)^2}
\]
for $t\in[0,T].$ In view of Theorem 3 from \cite{Kristensen2010}, $\sigma^2(t)\mathfrak{s}_n(t)$ can be seen as an approximation of the asymptotic standard error of $\widehat{\sigma}^2_n(t)$.  
We define the Gaussian analog of the Studentization of $\widehat{\sigma}_n^2(t)$ as follows. For each $n\in\mathbb{N}$, let $(z_i^n)_{i=1}^n$ be a sequence of i.i.d.~centered Gaussian variables with variance $2/n^2$, and we set
\[
Z_n(t)=\frac{1}{\mathfrak{s}_n(t)}\sum_{i=1}^nK_h(t_{i-1}-t)z_i^n,\qquad t\in[0,T].
\]
We impose the following conditions:
\begin{enumerate}[label={\normalfont[B\arabic*]}]

\item\label{vol-cond} $w(\sigma;\eta)=O_p(\eta^\gamma)$ as $\eta\to0$ for some $\gamma\in(0,1]$. Moreover, $\sigma^2(t)>0$ for all $t\in[0,T]$ almost surely.

\item\label{kernel} The kernel function $K:\mathbb{R}\to\mathbb{R}$ is Lipschitz continuous and compactly supported as well as satisfies $\int_{-\infty}^\infty K(t)dt=1$.

\end{enumerate}
We also impose the following strengthened version of assumption \ref{vol-cond} when deriving the convergence rate of Gaussian approximation: 
\begin{enumerate}[label={\normalfont[SB\arabic*]}]

\item\label{s-vol-cond} There is a constant $\Lambda>0$ such that $\Lambda^{-1}\leq|\sigma(t)|\leq\Lambda$ and $w(\sigma;\eta)\leq\Lambda \eta^\gamma$ for all $t\in[0,T]$ and $\eta\in(0,1)$. 

\end{enumerate}

\begin{proposition}\label{spot}
Suppose that \ref{vol-cond}--\ref{kernel} are satisfied. Suppose also that the bandwidth parameter $h$ satisfies $nh^{1+2\gamma}\log n\to0$ and $\log^6n/nh\to0$ as $n\to\infty$. Let $a_n$ be a sequence of positive numbers such that $a_n\to0$ and $a_n/h\to\infty$ as $n\to\infty$. Then we have
\begin{equation}\label{eq:spot}
\sup_{x\in\mathbb{R}}\left|P\left(\sup_{t\in[a_n,T-a_n]}\left|\frac{\widehat{\sigma}_n^2(t)-\sigma^2(t)}{\sigma^2(t)\mathfrak{s}_n(t)}\right|\leq x\right)-P\left(\sup_{t\in[a_n,T-a_n]}|Z_n(t)|\leq x\right)\right|\to0
\end{equation}
as $n\to\infty$. 
Moreover, if we additionally assume \ref{s-vol-cond}, we have
\begin{multline}\label{spot:rate}
\sup_{x\in\mathbb{R}}\left|P\left(\sup_{t\in[a_n,T-a_n]}\left|\frac{\widehat{\sigma}_n^2(t)-\sigma^2(t)}{\sigma^2(t)\mathfrak{s}_n(t)}\right|\leq x\right)-P\left(\sup_{t\in[a_n,T-a_n]}|Z_n(t)|\leq x\right)\right|\\
=O\left(\sqrt{nh^{1+2\gamma}\log n}\right)+O\left(\frac{\log n}{(nh)^{\frac{1}{6}}}\right)
\end{multline}
as $n\to\infty$. 
\end{proposition}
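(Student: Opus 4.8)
The plan is to approximate the Studentized estimator uniformly in $t$ by a diagonal Gaussian quadratic form and then run the argument behind Lemma~\ref{coupling} on a sufficiently fine grid of $[a_n,T-a_n]$. Write $\widehat S_n(t)=(\widehat\sigma_n^2(t)-\sigma^2(t))/(\sigma^2(t)\mathfrak s_n(t))$. By a routine localization we may work under \ref{s-vol-cond}: under \ref{vol-cond} the events $\Omega_\Lambda=\{\Lambda^{-1}\le\sigma(t)\le\Lambda\text{ and }w(\sigma;\eta)\le\Lambda\eta^\gamma\text{ for all }t\in[0,T],\eta\in(0,1)\}$ satisfy $P(\Omega_\Lambda)\to1$ as $\Lambda\to\infty$, so both sides of \eqref{eq:spot} differ by at most $2P(\Omega_\Lambda^c)$ from their values on $\Omega_\Lambda$; on each $\Omega_\Lambda$ the bandwidth restrictions yield \eqref{spot:rate} with constants depending on $\Lambda$, which is what \ref{s-vol-cond} encodes.

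Put $\xi_i=(B_{t_i}-B_{t_{i-1}})/\sqrt{t_i-t_{i-1}}$, so that $\xi_1,\dots,\xi_n$ are i.i.d.\ standard normal, and define the idealized statistic
\[
\bar F_n(t)=\sum_{i=1}^n\bar c_i(t)\,(\xi_i^2-1),\qquad \bar c_i(t)=\frac{K_h(t_{i-1}-t)(t_i-t_{i-1})}{\mathfrak s_n(t)},
\]
whose coefficients are deterministic. Writing $W(h)=\sum_ih_i\xi_i$ for the isonormal Gaussian process on $H=\mathbb R^n$, we have $\bar F_n(t)=I_2\big(\sum_i\bar c_i(t)\,e_i\otimes e_i\big)$, so $\bar F_n(t)$ is a second-chaos functional. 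Using $\int K=1$, assumption \ref{kernel}, $a_n/h\to\infty$ and the definition of $\mathfrak s_n$, one checks that, uniformly in $t,t'\in[a_n,T-a_n]$,
\[
E[\bar F_n(t)^2]=1,\qquad E[\bar F_n(t)\bar F_n(t')]=E[Z_n(t)Z_n(t')],\qquad \max_{1\le i\le n}\bar c_i(t)^2+\sum_{i=1}^n\bar c_i(t)^4=O\big((nh)^{-1}\big),
\]
hence $E[\bar F_n(t)^4]-3E[\bar F_n(t)^2]^2=48\sum_i\bar c_i(t)^4=O((nh)^{-1})$.

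The core estimate is the uniform approximation $\big\|\widehat S_n-\bar F_n\big\|_\infty=O_p\big(\sqrt{nh^{1+2\gamma}}\big)$. This will be obtained by writing $(X_{t_i}-X_{t_{i-1}})^2=\sigma(t_{i-1})^2(B_{t_i}-B_{t_{i-1}})^2+2\sigma(t_{i-1})(B_{t_i}-B_{t_{i-1}})\varepsilon_i+\varepsilon_i^2$ with $\varepsilon_i=\int_{t_{i-1}}^{t_i}(\sigma(s)-\sigma(t_{i-1}))\,dB_s$, replacing the random factor $\sigma(t_{i-1})^2/\sigma^2(t)$ in the leading coefficient by $1$ — an $O(h^\gamma)$ relative perturbation, whose contribution after division by $\sigma^2(t)\mathfrak s_n(t)$ (of order $(nh)^{-1/2}$) is $O_p(\sqrt{nh^{1+2\gamma}})$ since $\sum_i|\bar c_i(t)|$ is of order $\sqrt{nh}$ — and absorbing the smoothing bias $\int K_h(s-t)\sigma^2(s)\,ds-\sigma^2(t)=O(h^\gamma)$; the remainders carrying $\varepsilon_i$ are controlled by the Burkholder--Davis--Gundy inequality together with $E[\varepsilon_i^2\mid\mathcal F_{t_{i-1}}]\le w(\sigma;t_i-t_{i-1})^2(t_i-t_{i-1})$, and uniformity over $t$ follows from a maximal inequality exploiting the Lipschitz continuity of $K$.

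Finally, fix a grid $\mathcal T_n\subset[a_n,T-a_n]$ of polynomially small mesh $\delta_n$, so that $\log\#\mathcal T_n=O(\log n)$ and, by the Lipschitz continuity of $K$, $w(\bar F_n;\delta_n)\vee w(Z_n;\delta_n)=o_p\big((\log n)(nh)^{-1/6}\big)$. Applying Lemma~\ref{coupling} to the vectors $(\bar F_n(t),-\bar F_n(t))_{t\in\mathcal T_n}$ and $(Z_n(t),-Z_n(t))_{t\in\mathcal T_n}$ — whose $\Delta$ satisfies $\Delta=O\big((\log n)(nh)^{-1/2}\big)$ by Lemma~\ref{fourth-moment} with $q=2$, the covariance term vanishing by construction — and combining this with: the anti-concentration inequality for the maximum of a Gaussian vector (to convert each $5\varepsilon$-enlargement into a cost of order $\varepsilon\sqrt{\log n}$), the passage between $\sup_{t\in[a_n,T-a_n]}$ and $\max_{t\in\mathcal T_n}$ for $\bar F_n$ and for $Z_n$ via the modulus estimates above, and the core approximation of the previous step (contributing $O_p(\|\widehat S_n-\bar F_n\|_\infty\sqrt{\log n})$), we find that the left-hand side of \eqref{eq:spot} is
\[
O\big((\log n)(nh)^{-1/6}\big)+O_p\big(\sqrt{nh^{1+2\gamma}\log n}\big)+o_p(1),
\]
the first term arising by optimizing the free parameter $\varepsilon$ in Lemma~\ref{coupling} (balancing the anti-concentration cost $\varepsilon\sqrt{\log n}$ against $\varepsilon^{-2}(\log\#\mathcal T_n)\Delta$). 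Under $nh^{1+2\gamma}\log n\to0$ and $\log^6 n/(nh)\to0$ this is $o_p(1)$, giving \eqref{eq:spot}, and under \ref{s-vol-cond} it is \eqref{spot:rate}. The main obstacle is the core estimate: one must show that, after Studentization and maximization over $t$, the stochastic-integral remainders $\varepsilon_i$ and the cross terms are dominated by $\sqrt{nh^{1+2\gamma}}$ up to logarithmic factors, which is exactly where the modulus-of-continuity hypothesis on $\sigma$ and the bandwidth condition $nh^{1+2\gamma}\log n\to0$ are needed.
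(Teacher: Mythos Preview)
Your strategy is the paper's: reduce to \ref{s-vol-cond}, approximate the Studentized estimator by the second-chaos process $\bar F_n$ (this is exactly the paper's $F_n(t)=M_n^0(t)/\mathfrak s_n(t)$, since $(B_{t_i}-B_{t_{i-1}})^2-(t_i-t_{i-1})=2\int_{t_{i-1}}^{t_i}\int_{t_{i-1}}^s dB_r\,dB_s$), discretize to a polynomial grid, apply Lemma~\ref{coupling} with Lemma~\ref{fourth-moment}, and convert via anti-concentration. Three points need tightening.

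\emph{Localization.} Conditioning on $\Omega_\Lambda$ does not yield a process that again satisfies the model hypotheses, so one cannot simply say ``both sides differ by $2P(\Omega_\Lambda^c)$''. The paper instead introduces stopping times $\tau_k$ and works with the stopped volatility $\sigma_k(t)=\sigma(t\wedge\tau_k)$, which satisfies \ref{s-vol-cond} globally and coincides with $\sigma$ on $\{\tau_k>T\}$; this is the correct device here.

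\emph{The core estimate.} This is where nearly all the effort lies, and your sketch underestimates it. Your freezing point $\sigma(t_{i-1})$ differs from the paper's $\sigma((t-\ell h)_+)$: the paper freezes at the left edge of the kernel support so the frozen value is constant in $i$, making the difference $M_n(t)-\sigma^2((t-\ell h)_+)M_n^0(t)$ a genuine continuous martingale to which Barlow--Yor's sharp BDG (Proposition~\ref{sharp-BDG}) applies. The sharp constant $\sqrt p$ there is essential: it yields $\|\cdot\|_p\lesssim p$ for the remainder, which via Lemma~\ref{criteria-subchaos} gives sub-2nd-chaos regularity, and then Proposition~\ref{vv-tail} (chaining for sub-chaos processes) delivers the uniform bound $\|\sup_t|\cdot|\|_{\psi_1}=O(h^\gamma(\log n)^2)$. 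Your remainders with $\sigma(t_{i-1})(B_{t_i}-B_{t_{i-1}})\varepsilon_i$ and $\varepsilon_i^2$ can also be controlled, but ``a maximal inequality exploiting the Lipschitz continuity of $K$'' is not one tool; you will need the same BDG-plus-chaining machinery, and a cruder $p$-dependence in BDG would cost extra log factors and spoil \eqref{spot:rate}.

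\emph{The final bound.} The left side of \eqref{eq:spot} is deterministic, so writing it as $O_p(\cdot)$ is incoherent. What is needed for the rate is a tail bound of the type $P(\|\widehat S_n-\bar F_n\|_\infty>\varepsilon)=O(n^{-1})$ at $\varepsilon=(nh)^{-1/6}\sqrt{\log n}$; this is exactly what the $\psi_1$-norm estimate in Lemma~\ref{local-approx} provides and is how the paper absorbs the approximation error into the $(\log n)/(nh)^{1/6}$ term rather than into a separate contribution.
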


\begin{rmk}
We introduce the parameters $a_n$ in Proposition \ref{spot} to avoid boundary effects. See Section 4 of \cite{Kristensen2010} for more details about this topic. 
\end{rmk}

\begin{rmk}\label{rmk:spot}
Although we use Lemma \ref{coupling} to prove Proposition \ref{spot} (see the proof of Lemma \ref{spot-coupling}), we can indeed use Theorem 3.1 of \cite{CCK2016} instead to derive a similar result. However, the result requires a (slightly) stronger condition on the bandwidth $h$ and leads to a worse convergence rate. 
In fact, an inspection of the proof of Proposition 2.1 from \cite{CCK2017} implies that we need to replace $\varepsilon^{-2}$ by $\varepsilon^{-3}$ in the inequality \eqref{eq:spot-coupling} if we use Theorem 3.1 of \cite{CCK2016} instead of Lemma \ref{coupling} to prove Lemma \ref{spot-coupling}. Then, the optimal choice of $\varepsilon$ in the proof of Proposition \ref{spot} becomes $\varepsilon=(nh)^{-\frac{1}{8}}\log^{3/8} n$, which changes the order of the second term on the right side of \eqref{spot:rate} to $O((nh)^{-\frac{1}{8}}\log^{7/8} n)$. Hence we need the condition $\log^7 n/nh\to0$ as $n\to\infty$ to get the convergence \eqref{eq:spot}.
\end{rmk}

In contrast to the previous subsection, the process $Z_n(t)$ does not contain any unknown parameter, so Proposition \ref{spot} is readily applicable to construction of uniform confidence bands for $\sigma^2$: Given a significance level $\alpha\in(0,1)$, let $q_n(1-\alpha)$ be the $100(1-\alpha)$\% quantile of the variable $\sup_{t\in[a_n,T-a_n]}|Z_n(t)|$ (which can be computed e.g.~by simulation). Then, 
\[
\left[\frac{\widehat{\sigma}_n^2(t)}{1+\mathfrak{s}_n(t)\cdot q_n(1-\alpha)},\frac{\widehat{\sigma}_n^2(t)}{1-\mathfrak{s}_n(t)\cdot q_n(1-\alpha)}\right],\qquad t\in[a_n,T-a_n],
\]
give asymptotically uniformly valid $100(1-\alpha)$\% confidence bands for $\sigma^2(t)$, $t\in[a_n,T-a_n]$.

\if0
Next we develop a bootstrap procedure to construct uniform confidence intervals for $\sigma^2$. We only focus on the Gaussian wild bootstrap procedure. For each $t\in[0,T]$, we define the estimator for $\mathfrak{s}_n^2(t)$ by
\[
\widehat{\mathfrak{s}}_n^2(t):=\frac{2}{3}\sum_{i=1}^nK_h(t_{i-1}-t)^2(X_{t_i}-X_{t_{i-1}})^4.
\]
Let $(\xi_i^*)_{i=1}^\infty$ be a sequence of i.i.d.~Gaussian variables with mean 0 and variance $2/3$, which is independent of the process $X$. We define the bootstrapped statistics by
\[
Z_n^*(t)=\frac{1}{\widehat{\mathfrak{s}}_n(t)}\sum_{i=1}^nK_h(t_{i-1}-t)(X_{t_{i}}-X_{t_{i-1}})^2\xi_i^*,\qquad t\in[0,T].
\]
Let us denote by $\mathcal{F}^X$ the $\sigma$-field generated by the process $X$.
\begin{proposition}\label{spot-boot}
Suppose that the assumptions of Proposition \ref{spot} are satisfied. Let $\alpha\in(0,1)$ and set
\[
q_n^*(1-\alpha)=\inf\left\{z\in\mathbb{R}:P\left(\sup_{t\in[a_n,T-a_n]}|Z_n^*(t)|\leq z\Big|\mathcal{F}^X\right)\geq1-\alpha\right\}.
\]
Then we have
\[
P\left(\sup_{t\in[a_n,T-a_n]}\left|\frac{\widehat{\sigma}_n^2(t)-\sigma^2(t)}{\widehat{\mathfrak{s}}_n(t)}\right|\leq q_n^*(1-\alpha)\right)\to1-\alpha
\]
as $n\to\infty$, provided that $\log^{12}n/nh\to0$.
\end{proposition}

Proposition \ref{spot-boot} implies that asymptotically uniformly valid $100(1-\alpha)$\% confidence bands for $\sigma^2(t)$, $t\in[a_n,T-a_n]$ is given by
\[
\left[\widehat{\sigma}_n^2(t)-\widehat{\mathfrak{s}}_n(t)\cdot q_n^*(1-\alpha),\widehat{\sigma}_n^2(t)+\widehat{\mathfrak{s}}_n(t)\cdot q_n^*(1-\alpha)\right],\qquad t\in[a_n,T-a_n].
\]
\fi

\begin{rmk}
The applications considered in this section concerns asymptotic settings where the terminal value $T$ of the sampling interval is fixed. Here, we briefly discuss applicability of our theory to asymptotic settings where the terminal value $T$ of the sampling interval tends to infinity. 
In such a setting, a typical problem which our theory seems to fit would be constructing uniform confidence bands for the coefficient functions of an ergodic diffusion process. Non-parametric estimation of the coefficient functions of a diffusion process from high-frequency data is extensively studied in the literature, but most studies focus only on point-wise inference (except for \citet{Kanaya2017}, where uniform convergence rates of kernel-based estimators have been derived; see also \citet{ST2016} where the authors construct uniform confidence bands for the drift coefficient of a diffusion process based on low-frequency observation data), so it would be important to consider this problem. 
In such a problem, estimators typically have deterministic asymptotic covariance matrices, hence the issue indicated in Remark \ref{non-ergodic} does not arise. 
Unfortunately, however, we encounter another issue that it seems difficult (at least not straightforward) to get a reasonable estimate for the quantity $\Delta$ in this problem. This is perhaps because we do not take account of special properties of the underlying diffusion process (such as the Markov and mixing properties) when deriving our estimate. 
Therefore, this issue might be resolved by adopting the approach from \citet{KY2000} where Malliavin calculus is locally applied to the underlying process with taking account of the Markov and mixing properties. However, a rigorous treatment of this idea is rather demanding, so we leave it to future work. 
\end{rmk}

\section{Numerical illustration}\label{sec:simulation}

In this section we illustrate the finite sample performance of the testing procedure for the absence of lead-lag effects, which is proposed in Section \ref{sec:test}.\footnote{The proposed testing procedure is implemented in the \textsf{R} package \textbf{yuima} as the function \texttt{llag.test} since version 1.7.2.} 
The setting of our numerical experiments is basically adopted from Section 5 of \cite{HRY2013}. 
Specifically, we simulate model \eqref{hry-model} with $T=1,\vartheta=0.1,x_0^1=x_0^2=0,\sigma_1=\sigma_2=1$. We vary the correlation parameter as $\rho\in\{0,0.25,0.5,0.75\}$ to examine the size and the power of the testing procedure.  
We consider both synchronous and non-synchronous sampling scenarios. For the synchronous sampling scenario $t^1_i=t^2_i=ih_n$, $i=0,1,\dots,\lfloor Th_n^{-1}\rfloor$, we examine three kinds of sampling frequencies: $h_n\in\{10^{-3},3\cdot 10^{-3},6\cdot 10^{-3}\}$. Also, in these scenarios we set $\mathcal{G}_n=\{kh_n:k\in\mathbb{Z},|kh_n|\leq0.3\}$ as the search grid. 
On the other hand, for the non-synchronous sampling scenario, we first simulate the processes on the equidistant times $i\cdot10^{-3}$, $i=0,1,\dots,1,000$, then we randomly pick 300 sampling times for $X^1$. We do so for $X^2$ independently of the sampling for $X^1$. In this scenario we set $\mathcal{G}_n=\{k\cdot10^{-3}:k\in\mathbb{Z},|k|\leq300\}$ as the search grid. 
For the testing procedure, we use Rademacher variables as the multiplier variables and 999 bootstrap replications to construct the critical regions.  
We run 10,000 Monte Carlo iterations in each experiment. 

Table \ref{table:simulation} reports the rejection rates of the proposed test in each experiment. For the case $\rho=0$, the numbers should be close to the corresponding significance levels, and this is true for all the experiments. Turning to the power performance, we find that in the low correlation case $\rho=0.25$ the power of the test is rather weak except for the most frequent sampling scenario. This is reasonable in view of the simulation results reported in \cite{HRY2013}, which indicate that the contrast function $U_n(\theta)$ becomes flat in that case. For the moderate and the high correlation cases $\rho=0.5$ and $\rho=0.75$, the power of the test is satisfactory. 


\begin{table}[ht]
\caption{Rejection rates of the proposed test}
\label{table:simulation}
\begin{center}
\begin{tabular}{lrrrr}
  \hline
 & $\rho=0$ & $\rho=0.25$ & $\rho=0.50$ & $\rho=0.75$ \\ \hline
 \multicolumn{5}{c}{Synchronous sampling scenario}  \\ 
  $h_n=10^{-3}$ &  &  &  &  \\ 
  $\alpha=0.01$ & 0.011 & 1.000 & 1.000 & 1.000 \\ 
  $\alpha=0.05$ & 0.050 & 1.000 & 1.000 & 1.000 \\ 
  $\alpha=0.10$ & 0.100 & 1.000 & 1.000 & 1.000 \\ 
  $h_n=3\cdot10^{-3}$ &  &  &  &  \\ 
  $\alpha=0.01$ & 0.010 & 0.139 & 0.977 & 1.000 \\ 
  $\alpha=0.05$ & 0.051 & 0.281 & 0.993 & 1.000 \\ 
  $\alpha=0.10$ & 0.101 & 0.382 & 0.997 & 1.000 \\ 
  $h_n=6\cdot10^{-3}$ &  &  &  &  \\ 
  $\alpha=0.01$ & 0.011 & 0.041 & 0.634 & 0.997 \\ 
  $\alpha=0.05$ & 0.050 & 0.131 & 0.802 & 1.000 \\ 
  $\alpha=0.10$ & 0.099 & 0.214 & 0.867 & 1.000 \\ 
  \multicolumn{5}{c}{Non-synchronous sampling scenario}  \\ 
  $\alpha=0.01$ & 0.010 & 0.056 & 0.753 & 1.000 \\ 
  $\alpha=0.05$ & 0.051 & 0.152 & 0.879 & 1.000 \\ 
  $\alpha=0.10$ & 0.099 & 0.235 & 0.919 & 1.000 \\ 
   \hline
\end{tabular}\vspace{2mm}

\parbox{9cm}{
\textit{Note}. $\alpha$ denotes the significance level.
}
\end{center}
\end{table}

\newpage
\appendix

\addcontentsline{toc}{section}{Appendix}
\section*{Appendix}

\section{Preliminaries}\label{preliminaries}

\subsection{Basic elements of Gaussian analysis and Malliavin calculus}\label{sec:malliavin}

In this section we briefly overview the basic elements of Gaussian analysis and Malliavin calculus for Gaussian processes. See \cite{Nualart2006,NP2012,Janson1997} for more details about these topics. 

Throughout the paper, $H$ denotes a real separable Hilbert space. The inner product and the norm of $H$ are denoted by $\langle\cdot,\cdot\rangle_H$ and $\|\cdot\|_H$, respectively. We assume that an \textit{isonormal Gaussian process} $W=(W(h))_{h\in H}$ over $H$ defined on a probability space $(\Omega,\mathcal{F},P)$ is given. Namely, $W$ is a centered Gaussian family of random variables such that $E[W(h)W(g)]=\langle h,g\rangle_H$ for any $h,g\in H$. 

We denote by $L^2(W)$ the space $L^2(\Omega,\sigma(W),P)$ for short. For every non-negative integer $q$, we denote by $\mathcal{H}_q$ the closed subspace of $L^2(W)$ spanned by the set $\{\mathrm{He}_q(W(h)):h\in H,\|h\|_H=1\}$, where $\mathrm{He}_q(x)=(-1)^qe^{x^2/2}\frac{d^q}{dx^q}(e^{-x^2/2})$ is the $q$th Hermite polynomial. The space $\mathcal{H}_q$ is called the \textit{$q$th Wiener chaos} of $W$. It is well-known that the spaces $\mathcal{H}_q$ and $\mathcal{H}_r$ are orthogonal whenever $q\neq r$ (cf.~Lemma 1.1.1 of \cite{Nualart2006}). Moreover, the space $L^2(W)$ possesses the following orthogonal decomposition: $L^2(W)=\oplus_{q=0}^\infty\mathcal{H}_q$ (cf.~Theorem 1.1.1 of \cite{Nualart2006}). This decomposition is called the \textit{Wiener-It\^o chaos decomposition} of $L^2(W)$. We denote by $J_q$ the orthogonal projection of $L^2(W)$ onto $\mathcal{H}_q$ for each $q$. Therefore, every $F\in L^2(W)$ has the decomposition $F=\sum_{q=0}^\infty J_qF$, which is called the Wiener-It\^o chaos decomposition of $F$. 

A closed subspace $\mathbb{H}$ of $L^2(\Omega,\mathcal{F},P)$ is called a \textit{Gaussian Hilbert space} if all the elements of $\mathbb{H}$ are centered Gaussian variables. We can easily check that $\{W(h):h\in H\}$ is a Gaussian Hilbert space. Given a Gaussian Hilbert space $\mathbb{H}$ and an integer $q\geq0$, we set
\[
\mathcal{P}_q(\mathbb{H})=\{\varphi(\xi_1,\dots,\xi_m):\text{$\varphi$ is a polynomial of degree $\leq q$};\xi_1,\dots,\xi_m\in\mathbb{H};m\in\mathbb{N}\}
\]
and denote by $\overline{\mathcal{P}}_q(\mathbb{H})$ the closure of $\mathcal{P}_q(\mathbb{H})$ in $L^2(\Omega,\mathcal{F},P)$. If $\mathbb{H}=\{W(h):h\in H\}$, $\overline{\mathcal{P}}_q(\mathbb{H})$ coincides with the space $\oplus_{p=0}^q\mathcal{H}_p$ (cf.~page 6 of \cite{Nualart2006}). The importance of the Gaussian Hilbert space in this paper is illuminated by Proposition \ref{gh-chaos}.   

For a non-negative integer $q$, $H^{\otimes q}$ and $H^{\odot q}$ denote the $q$th tensor power and $q$th symmetric tensor power, respectively (see Appendix E of \cite{Janson1997} for details). The $q$th \textit{multiple Wiener-It\^o integral} $I_q:H^{\odot q}\to\mathcal{H}_q$ is defined as the unique linear isometry between $H^{\odot q}$ (equipped with the scaled norm $\sqrt{q!}\|\cdot\|_{H^{\otimes q}}$) and $\mathcal{H}_q$ (equipped with the norm of $L^2(W)$) such that $I_q(h^{\otimes q})=\mathrm{He}_q(W(h))$ for all $h\in H$ with $\|h\|_H=1$.  

Let $p,q$ be positive integers. For $f\in H^{\odot p}$, $g\in H^{\odot q}$ and $r\in\{0,1,\dots,p\wedge q\}$, $f\otimes_rg$ denotes the $r$th \textit{contraction} of $f$ and $g$. The symmetrization of $f\otimes_rg$ is denoted by $f\widetilde{\otimes}_rg$. See Appendix B of \cite{NP2012} for more details on these concepts. 
\if0
The following product formula for multiple Wiener-It\^o integrals is useful (see Proposition 1.1.3 of \cite{Nualart2006} or Theorem 2.7.10 of \cite{NP2012} for the proof):
\[
I_p(f)I_q(g)=\sum_{r=0}^{p\wedge q}r!\binom{p}{r}\binom{q}{r}I_{p+q-2r}(f\widetilde{\otimes}_rg).
\]
\fi

A random variable $F$ is said to be \textit{smooth} if it can be written as
\begin{equation}\label{smooth}
F=f(W(h_1),\dots,W(h_m)),
\end{equation}
where $h_1,\dots,h_m\in H$ and $f:\mathbb{R}^m\to\mathbb{R}$ is a $C^\infty$ function such that $f$ and all of its partial derivatives have at most polynomial growth. We denote by $\mathcal{S}$ the set of all smooth random variables. For a smooth random variable $F$ of the form \eqref{smooth} and an integer $k\geq1$, we define the $k$th \textit{Malliavin derivative} of $F$ as the $H^{\odot k}$-valued random variable defined by 
\[
D^kF=\sum_{i_1,\dots,i_k=1}^m\frac{\partial^k f}{\partial x_{i_1}\dots\partial x_{i_k}}(W(h_1),\dots,W(h_m))h_i.
\]
For a real number $p\geq1$, let us denote by $L^p(\Omega;H^{\odot k})$ the set of all $H^{\odot k}$-valued random variables $Y$ such that $E[\|Y\|^p_{H^{\otimes k}}]<\infty$. 
We regard $L^p(\Omega;H^{\odot k})$ as the Banach space equipped with the norm defined by $\|Y\|_{L^p(\Omega;H^{\odot k})}=(E[\|Y\|^p_{H^{\otimes k}}])^{1/p}$. 
Then, it is well-known that the $k$th Malliavin derivative operator $D^k$ on $\mathcal{S}\subset L^p(\Omega,\mathcal{F},P)$ into $L^p(\Omega;H^{\odot k})$ is closable (cf.~Proposition 2.3.4 of \cite{NP2012}). 
Therefore, there is a unique closed operator on $\mathbb{D}_{k,p}\subset L^p(\Omega,\mathcal{F},P)$ into $L^p(\Omega;H^{\odot k})$, which is also denoted by $D^k$, such that its graph is equal to the closure of $\mathcal{S}$ with respect to the norm
\[
\|F\|_{k,p}=\left(E[|F|^p]+\sum_{j=1}^kE\left[\|D^jF\|^p_{H^{\otimes j}}\right]\right)^{1/p}.
\]
We write $D$ instead of $D^1$ for short. 
Malliavin derivatives enjoy the following chain rule (cf.~Proposition 1.2.3 of \cite{Nualart2006}): Let $F_1,\dots,F_m\in\mathbb{D}_{1,p}$ and let $\varphi:\mathbb{R}^m\to\mathbb{R}$ be a $C^1$ function with bounded partial derivatives. Then $\varphi(F_1,\dots,F_m)\in\mathbb{D}_{1,p}$ and we have
\begin{equation}\label{chain}
D\varphi(F_1,\dots,F_m)=\sum_{i=1}^m\frac{\partial\varphi}{\partial x_i}(F_1,\dots,F_m)DF_i.
\end{equation}
We also note that, for any integer $q\geq1$ and $f\in H^{\odot q}$, $I_q(f)\in\mathbb{D}_{1,p}$ and $DI_q(f)=qI_{q-1}(f)$ (cf.~Proposition 2.7.4 of \cite{NP2012}).

We denote by $\delta$ the \textit{divergence operator}, which is the adjoint of the operator $D$ on $\mathbb{D}_{1,2}\subset L^2(W)$ into $L^2(\Omega;H)$. The domain of $\delta$ is denoted by $\mathrm{Dom}(\delta)$. Therefore, for any $F\in\mathbb{D}_{1,2}$ and any $u\in\mathrm{Dom}(\delta)$ we have
\begin{equation}\label{duality}
E[F\delta(u)]=E[\langle DF,u\rangle_H].
\end{equation}
We also set
\[
\mathrm{Dom}(L)=\left\{F\in L^2(W):\sum_{q=1}^\infty q^2E[\|J_qF\|^2]<\infty\right\}.
\]
Then the \textit{Ornstein-Uhlenbeck operator} $L:\mathrm{Dom}(L)\to L^2(W)$ is defined by
\[
LF=-\sum_{q=1}^\infty qJ_qF,\qquad F\in\mathrm{Dom}(L),
\]
where the convergence of the series is considered in $L^2(W)$. It is well-known that $F\in L^2(W)$ belongs to $\mathrm{Dom}(L)$ if and only if $F\in\mathbb{D}_{1,2}$ and $DF\in\mathrm{Dom}(\delta)$, and this case we have $LF=-\delta DF$ (cf.~Proposition 1.4.3 of \cite{Nualart2006}). Finally, the \textit{pseudo inverse} of $L$, denoted by $L^{-1}$, is the operator on $L^2(W)$ into $\mathrm{Dom}(L)$ defined by
\[
L^{-1}F=-\sum_{q=1}^\infty\frac{1}{q}J_qF,\qquad F\in L^2(W).
\]
It holds that 
$LL^{-1}F=F-E[F]$ 
for all $F\in L^2(W)$ (cf.~Proposition 2.8.11 of \cite{NP2012}).  

\subsection{Technical tools from the Chernozhukov-Chetverikov-Kato theory}

This subsection collects the technical results of the Chernozhukov-Chetverikov-Kato theory, which are used in this paper. The first result is a corollary of Lemmas 3--4 of \cite{CCK2015} and Lemma 4.3 of \cite{CCK2014}:
\begin{lemma}\label{cck-derivative}
Let $g:\mathbb{R}\to\mathbb{R}$ be a $C^2$ function. Then we have
\[
\sum_{i,j=1}^d\left|\frac{\partial^2(g\circ\Phi_\beta)}{\partial x_i\partial x_j}(x)\right|\leq\|g''\|_\infty+2\beta\|g'\|_\infty
\]
for all $x\in\mathbb{R}^d$. Moreover, if $g$ is a $C^3$ function, we also have
\[
\sum_{i,j,k=1}^d\left|\frac{\partial^3(g\circ\Phi_\beta)}{\partial x_i\partial x_j\partial x_k}(x)\right|\leq\|g'''\|_\infty+6\beta\|g''\|_\infty+6\beta^2\|g'\|_\infty
\]
for all $x\in\mathbb{R}^d$. 
\end{lemma}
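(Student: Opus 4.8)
The plan is to differentiate the soft-max function $\Phi_\beta$ explicitly in terms of the vector of soft-max weights and then transfer the estimates to $g\circ\Phi_\beta$ through the multivariate chain rule. Fix $x\in\mathbb{R}^d$ and set $\pi_j=e^{\beta x_j}\big/\sum_{k=1}^de^{\beta x_k}$ for $j=1,\dots,d$, so that $\pi=(\pi_1,\dots,\pi_d)$ is a probability vector ($\pi_j\geq0$ and $\sum_{j}\pi_j=1$). First I would record the elementary identities $\partial_i\Phi_\beta(x)=\pi_i$, $\partial^2_{ij}\Phi_\beta(x)=\beta(\delta_{ij}\pi_i-\pi_i\pi_j)$, and, differentiating once more,
\[
\partial^3_{ijk}\Phi_\beta(x)=\beta^2\bigl(\delta_{ij}\delta_{ik}\pi_i-\delta_{ij}\pi_i\pi_k-\delta_{ik}\pi_i\pi_j-\delta_{jk}\pi_i\pi_j+2\pi_i\pi_j\pi_k\bigr),
\]
where $\delta$ denotes the Kronecker symbol. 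Bounding each summand by the triangle inequality and using $\sum_i\pi_i=1$ (so that each of the resulting nonnegative double/triple sums equals $1$, except the final $2\pi_i\pi_j\pi_k$-term which equals $2$), I obtain the two key estimates
\[
\sum_{i,j=1}^d\bigl|\partial^2_{ij}\Phi_\beta(x)\bigr|\leq2\beta,\qquad
\sum_{i,j,k=1}^d\bigl|\partial^3_{ijk}\Phi_\beta(x)\bigr|\leq6\beta^2.
\]

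Next I would expand $h:=g\circ\Phi_\beta$ via the chain rule. For a $C^2$ function $g$ one has $\partial^2_{ij}h=g''(\Phi_\beta)\pi_i\pi_j+g'(\Phi_\beta)\partial^2_{ij}\Phi_\beta$, so that
\[
\sum_{i,j=1}^d\bigl|\partial^2_{ij}h(x)\bigr|\leq\|g''\|_\infty\sum_{i,j}\pi_i\pi_j+\|g'\|_\infty\sum_{i,j}\bigl|\partial^2_{ij}\Phi_\beta(x)\bigr|\leq\|g''\|_\infty+2\beta\|g'\|_\infty,
\]
which is the first claimed bound. For a $C^3$ function $g$, one further differentiation gives
\[
\partial^3_{ijk}h=g'''(\Phi_\beta)\pi_i\pi_j\pi_k+g''(\Phi_\beta)\bigl(\partial^2_{ij}\Phi_\beta\,\pi_k+\partial^2_{ik}\Phi_\beta\,\pi_j+\partial^2_{jk}\Phi_\beta\,\pi_i\bigr)+g'(\Phi_\beta)\partial^3_{ijk}\Phi_\beta,
\]
and summing absolute values while using $\sum_k\pi_k=1$ together with the two estimates on $\Phi_\beta$ just obtained yields $\sum_{i,j,k}|\partial^3_{ijk}h(x)|\leq\|g'''\|_\infty+6\beta\|g''\|_\infty+6\beta^2\|g'\|_\infty$, the second claimed bound.

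The main obstacle, such as it is, is purely bookkeeping: differentiating $\partial^2_{ij}\Phi_\beta$ correctly (keeping all three symmetric $\delta$-terms) and tracking the combinatorial factors in the third-order chain-rule expansion; the remaining steps are routine term-by-term estimates. As noted after the statement, one may alternatively dispense with the direct computation altogether and simply invoke Lemmas~3--4 of \cite{CCK2015} together with Lemma~4.3 of \cite{CCK2014}, which package exactly these facts.
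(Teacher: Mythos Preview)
Your proof is correct. The paper does not give a direct argument at all: it simply cites Lemmas~3--4 of \cite{CCK2015} for the second-order bound and Eq.~(19) in Lemma~4.3 of \cite{CCK2014} for the third-order bound. What you have written is precisely the content of those cited results unpacked, namely the explicit soft-max weight formulas $\partial_i\Phi_\beta=\pi_i$, $\partial^2_{ij}\Phi_\beta=\beta(\delta_{ij}\pi_i-\pi_i\pi_j)$, etc., combined with the Fa\`a di Bruno/chain-rule expansion of $g\circ\Phi_\beta$. So there is no genuinely different route here; you are simply self-contained where the paper is not, and you already note this equivalence in your closing sentence.
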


\begin{proof}
The first inequality is a direct consequence of Lemmas 3--4 from \cite{CCK2015}. The second inequality is Eq.(19) in Lemma 4.3 of \cite{CCK2014}. 
\end{proof}

The next result is taken from Lemma 5.1 of \cite{CCK2016}:
\begin{lemma}[\cite{CCK2016}, Lemma 5.1]\label{cck-approx}
For any $\varepsilon>0$ and any Borel set $A$ of $\mathbb{R}$, there is a $C^\infty$ function $g:\mathbb{R}\to\mathbb{R}$ satisfying the following conditions:
\begin{enumerate}[label=(\roman*)]

\item There is a universal constant $K>0$ such that $\|g'\|_\infty\leq\varepsilon^{-1}$, $\|g''\|_\infty\leq K\varepsilon^{-2}$ and $\|g'''\|_\infty\leq K\varepsilon^{-3}$.

\item $1_A(x)\leq g(x)\leq 1_{A^{3\varepsilon}}(x)$ for all $x\in\mathbb{R}$.

\end{enumerate}
\end{lemma}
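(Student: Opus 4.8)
The plan is to prove this by a routine mollification argument (this is essentially Lemma~5.1 of \cite{CCK2016}). Fix once and for all a $C^\infty$ function $\psi\colon\mathbb{R}\to[0,\infty)$ supported in $[-1,1]$ with $\int_{\mathbb{R}}\psi(t)\,dt=1$ and $\int_{\mathbb{R}}|\psi'(t)|\,dt\le 2$; such a $\psi$ exists, e.g.~a smooth unimodal bump whose maximum does not exceed $1$. For $\varepsilon>0$ put $\psi_\varepsilon(t)=\varepsilon^{-1}\psi(t/\varepsilon)$, so that $\psi_\varepsilon$ is supported in $[-\varepsilon,\varepsilon]$, $\int_{\mathbb{R}}\psi_\varepsilon(t)\,dt=1$, and $\int_{\mathbb{R}}|\psi_\varepsilon^{(k)}(t)|\,dt=\varepsilon^{-k}\int_{\mathbb{R}}|\psi^{(k)}(t)|\,dt$ for every $k\ge0$.

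Given the Borel set $A$ and $\varepsilon>0$, I would take $B=\{x\in\mathbb{R}:\inf_{y\in A}|x-y|\le\varepsilon\}$ and set $g:=1_B\ast\psi_\varepsilon$, that is, $g(x)=\int_{\mathbb{R}}1_B(s)\psi_\varepsilon(x-s)\,ds=\int_{\mathbb{R}}1_B(x-t)\psi_\varepsilon(t)\,dt$. Since $\psi_\varepsilon$ is $C^\infty$ with compact support and $1_B$ is bounded and measurable, $g$ is $C^\infty$ and $0\le g\le1$. For the sandwiching (ii): if $x\in A$, then for every $|t|\le\varepsilon$ we have $\inf_{y\in A}|(x-t)-y|\le|(x-t)-x|=|t|\le\varepsilon$, so $x-t\in B$, whence $g(x)=\int_{\mathbb{R}}\psi_\varepsilon(t)\,dt=1$; thus $g\ge1_A$. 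Conversely, if $g(x)>0$, then $x-t\in B$ for some $|t|\le\varepsilon$, so $\inf_{y\in A}|x-y|\le\inf_{y\in A}|(x-t)-y|+|t|\le2\varepsilon$, which forces $x\in A^{3\varepsilon}$; thus $g\le1_{A^{3\varepsilon}}$.

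For the derivative bounds (i), I would differentiate under the integral sign in the first representation of $g$: for $k=1,2,3$, $g^{(k)}(x)=\int_B\psi_\varepsilon^{(k)}(x-s)\,ds$. The map $s\mapsto\psi_\varepsilon^{(k)}(x-s)$ has total integral $0$ over $\mathbb{R}$ (it is a derivative of a compactly supported function), so its integral over any measurable subset of $\mathbb{R}$ is at most $\tfrac12\int_{\mathbb{R}}|\psi_\varepsilon^{(k)}|=\tfrac12\varepsilon^{-k}\int_{\mathbb{R}}|\psi^{(k)}|$ in absolute value. Hence $\|g'\|_\infty\le\tfrac12\varepsilon^{-1}\|\psi'\|_1\le\varepsilon^{-1}$ by the normalization of $\psi$, while $\|g''\|_\infty\le\tfrac12\varepsilon^{-2}\|\psi''\|_1$ and $\|g'''\|_\infty\le\tfrac12\varepsilon^{-3}\|\psi'''\|_1$; taking $K:=\tfrac12\max\{\|\psi''\|_1,\|\psi'''\|_1\}$, which depends only on the fixed mollifier $\psi$ and is therefore universal, finishes the argument.

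The only point that needs a little care is obtaining the sharp constant $\varepsilon^{-1}$ (rather than some universal multiple of it) in the bound on $\|g'\|_\infty$; this is exactly why I invoke the elementary fact that an integral of a mean-zero integrable function over a set never exceeds half of its $L^1$ norm, together with the freedom to choose $\psi$ with $\|\psi'\|_1$ as close to $1$ as one wishes. Apart from this, the whole argument is a direct computation with no real obstacle; alternatively, one may simply cite Lemma~5.1 of \cite{CCK2016} verbatim.
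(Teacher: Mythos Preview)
Your mollification argument is correct and complete; in particular, the key step $|\int_B f|\le\tfrac12\|f\|_1$ for mean-zero $f$ is exactly what delivers the sharp constant $\varepsilon^{-1}$ for $\|g'\|_\infty$. The paper does not actually prove this lemma---it simply quotes it from \cite{CCK2016}, Lemma~5.1---so your construction (convolve $1_{A^\varepsilon}$ with a smooth bump of scale $\varepsilon$) is the standard proof and almost certainly the one given in the cited reference.
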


The third one is a corollary of Lemma 4.1 from \cite{CCK2014} and Lemma 2.1 from \cite{CCK2016}:
\begin{lemma}\label{cck-kolmogorov}
Let $V,W$ be random variables. Suppose that there are constants $r_1,r_2>0$ such that
\[
P(V\in A)\leq P(W\in A^{r_1})+r_2
\]
for any Borel set $A$ of $\mathbb{R}$. Then we have
\[
\sup_{x\in\mathbb{R}}|P(V\leq x)-P(W\leq x)|
\leq\sup_{x\in\mathbb{R}}P(|W-x|\leq r_1)+r_2.
\]
\end{lemma}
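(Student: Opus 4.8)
The plan is to prove the bound directly, for each fixed $x\in\mathbb{R}$, by feeding the two half-lines $(-\infty,x]$ and $(x,\infty)$ into the assumed inequality and reading off a one-sided estimate from each. First I would take $A=(-\infty,x]$, whose $r_1$-enlargement is $A^{r_1}=(-\infty,x+r_1]$; the hypothesis then gives
\[
P(V\le x)\le P(W\le x+r_1)+r_2=P(W\le x)+P(x<W\le x+r_1)+r_2 .
\]
Since $\{x<W\le x+r_1\}\subseteq\{|W-x|\le r_1\}$, the middle term is at most $\sup_{y\in\mathbb{R}}P(|W-y|\le r_1)$, so $P(V\le x)-P(W\le x)\le\sup_{y\in\mathbb{R}}P(|W-y|\le r_1)+r_2$.

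For the opposite inequality I would take $A=(x,\infty)$, whose $r_1$-enlargement is $A^{r_1}=(x-r_1,\infty)$, so that the hypothesis yields $P(V>x)\le P(W>x-r_1)+r_2=P(W>x)+P(x-r_1<W\le x)+r_2$. Using the identity $P(W\le x)-P(V\le x)=P(V>x)-P(W>x)$ together with $\{x-r_1<W\le x\}\subseteq\{|W-x|\le r_1\}$, this rearranges to $P(W\le x)-P(V\le x)\le\sup_{y\in\mathbb{R}}P(|W-y|\le r_1)+r_2$. Combining the two one-sided bounds gives $|P(V\le x)-P(W\le x)|\le\sup_{y}P(|W-y|\le r_1)+r_2$ for every $x$, and taking the supremum over $x\in\mathbb{R}$ completes the argument.

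There is no genuine obstacle here; the only points requiring attention are choosing the test sets to be the appropriate half-lines rather than general Borel sets, computing their $r_1$-enlargements with the correct treatment of open/closed endpoints, and the elementary observation that the ``defect'' $A^{r_1}\setminus A$ for such a half-line is an interval of length $r_1$, so its $W$-probability is dominated by the anti-concentration quantity $\sup_y P(|W-y|\le r_1)$. (In fact that interval lies in a closed ball of radius $r_1/2$, so one even gets a slightly sharper constant, but the stated bound is all that is needed downstream.)
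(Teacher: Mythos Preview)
Your proof is correct. It is, however, a genuinely different route from the paper's. The paper does not argue directly: it invokes a Strassen-type coupling lemma (Lemma~4.1 of \cite{CCK2014}) to produce, after extending the probability space, a random variable $W'$ with the same law as $W$ such that $P(|V-W'|>r_1)\le r_2$, and then reads off the Kolmogorov bound from a standard comparison inequality (Lemma~2.1 of \cite{CCK2015}).

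Your argument is more elementary and entirely self-contained: it uses only the hypothesis at the two half-lines $(-\infty,x]$ and $(x,\infty)$, plus the trivial inclusion of the ``defect'' interval in a ball of radius $r_1$. This avoids both the probability-space extension and the external coupling lemma. The paper's route has the conceptual advantage of identifying the hypothesis as equivalent (via Strassen) to the existence of a close coupling, which is useful elsewhere in the CCK theory; your route has the practical advantage of being a two-line computation with no machinery. Both give the same bound, and as you note, your argument in fact yields the slightly sharper $\sup_y P(|W-y|\le r_1/2)$ if one centers the ball at the midpoint of the defect interval.
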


\begin{proof}
By extending the probability space $(\Omega,\mathcal{F},P)$ if necessary, we may assume that there is a uniform random variable on $(0,1)$ independent of $V$ without loss of generality. Then, by Lemma 4.1 from \cite{CCK2014} there is a random variable $W'$ whose distribution is the same as that of $W$ such that $P(|V-W'|>r_1)\leq r_2$. Therefore, the desired result follows from Lemma 2.1 of \cite{CCK2015}.
\end{proof}

\if0
The fourth result is a so-called anti-concentration inequality for maxima of Gaussian variables, which is taken from Theorem 3 and Corollary 1 of \cite{CCK2015}:
\begin{lemma}[\cite{CCK2015}, Theorem 3 and Corollary 1]\label{cck-anti}
Let $X=(X_1,\dots,X_d)^\top$ be a $d$-dimensional centered Gaussian random vector with $\sigma_j^2:=E[X_j^2]>0$ for $j=1,\dots,d$. Suppose that there are two constants $\overline{\sigma},\underline{\sigma}>0$ such that $\underline{\sigma}\leq\sigma_j\leq\overline{\sigma}$ for all $j=1,\dots,d$. Then, there is a constant $C>0$ which only depends on $\overline{\sigma}$ and $\underline{\sigma}$ such that
\[
\sup_{x\in\mathbb{R}}P(|X_\vee-x|\leq\varepsilon)\leq C\varepsilon\{a_d+\sqrt{1\vee\log(\underline{\sigma}/\varepsilon)}\}
\]
for any $\varepsilon>0$, where $a_d=E[\max_{1\leq j\leq d}(X_j/\sigma_j)]$. In particular, there is a constant $C'>0$ which only depends on $\overline{\sigma}$ and $\underline{\sigma}$ such that
\[
\sup_{x\in\mathbb{R}}P(|X_\vee-x|\leq\varepsilon)\leq C'\varepsilon\sqrt{1\vee\log(d/\varepsilon)}
\]
for any $\varepsilon>0$.
\end{lemma}
\fi
The fourth result is a so-called anti-concentration inequality for maxima of Gaussian variables, which is taken from Theorem 3 of \cite{CCK2015}:
\begin{lemma}[\cite{CCK2015}, Theorem 3]\label{cck-anti}
Let $X=(X_1,\dots,X_d)^\top$ be a $d$-dimensional centered Gaussian random vector with $\sigma_j^2:=E[X_j^2]>0$ for $j=1,\dots,d$. Suppose that there are two constants $\overline{\sigma},\underline{\sigma}>0$ such that $\underline{\sigma}\leq\sigma_j\leq\overline{\sigma}$ for all $j=1,\dots,d$. Then, there is a constant $C>0$ which only depends on $\overline{\sigma}$ and $\underline{\sigma}$ such that
\[
\sup_{x\in\mathbb{R}}P(|X_\vee-x|\leq\varepsilon)\leq C\varepsilon\{a_d+\sqrt{1\vee\log(\underline{\sigma}/\varepsilon)}\}
\]
for any $\varepsilon>0$, where $a_d=E[\max_{1\leq j\leq d}(X_j/\sigma_j)]$. 
\end{lemma}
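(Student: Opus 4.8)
The plan is to pass from $X_\vee$ to the smooth soft-max $\Phi_\beta$ and to exploit the regularity and log-concavity of the latter. Write $\Sigma=E[XX^\top]$, fix $\varepsilon>0$, and put $\beta=\varepsilon^{-1}\log d$, so that \eqref{max-smooth} gives $0\le\Phi_\beta(X)-X_\vee\le\varepsilon$ pointwise; hence for every $x\in\mathbb{R}$
\[
P\left(|X_\vee-x|\le\varepsilon\right)\le P\left(\Phi_\beta(X)\in[x-\varepsilon,x+2\varepsilon]\right),
\]
so it suffices to bound $\sup_{x\in\mathbb{R}}P(\Phi_\beta(X)\in[x-\varepsilon,x+2\varepsilon])$ by the right-hand side of the claimed inequality. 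Write $X=AG$ with $G$ a $d$-dimensional standard Gaussian vector and $A$ a matrix with $AA^\top=\Sigma$. The map $\psi:G\mapsto\Phi_\beta(AG)$ is smooth with $\nabla\psi(G)=A^\top\pi(AG)$, where $\pi(y)=(e^{\beta y_j}/\sum_ke^{\beta y_k})_{j=1}^d$ lies in the probability simplex; since $\|A^\top v\|^2=v^\top\Sigma v=\variance(\sum_jv_jX_j)\le\overline\sigma^2$ whenever $v\ge0$ and $\sum_jv_j=1$ (triangle inequality in $L^2$), we obtain $\|\nabla\psi\|\le\overline\sigma$, i.e.\ $\Phi_\beta(X)$ is an $\overline\sigma$-Lipschitz functional of $G$.

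Now split into two regimes. Set $m=E[\Phi_\beta(X)]$. The Gaussian concentration inequality gives $P(|\Phi_\beta(X)-m|>t)\le2e^{-t^2/(2\overline\sigma^2)}$, so for $x$ with $|x-m|\ge2\varepsilon+\overline\sigma\sqrt{2\log(1/\varepsilon)}$ we immediately get $P(\Phi_\beta(X)\in[x-\varepsilon,x+2\varepsilon])\le2\varepsilon$, which is dominated by the claimed bound. For $x$ in the complementary bulk I would bound the density $f$ of $\Phi_\beta(X)$ and use $P(\Phi_\beta(X)\in[x-\varepsilon,x+2\varepsilon])\le3\varepsilon\|f\|_\infty$. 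Log-concavity is the structural input: since $\{\Phi_\beta(y)\le z\}=\{\sum_je^{\beta(y_j-z)}\le1\}$ is the sublevel set of a function that is jointly convex in $(y,z)$, the function $\phi_\Sigma(y)\mathbf{1}\{\Phi_\beta(y)\le z\}$ is jointly log-concave, and by Prékopa's theorem $z\mapsto P(\Phi_\beta(X)\le z)$ is log-concave; hence $f$ is a log-concave density on $\mathbb{R}$, which satisfies $\|f\|_\infty\le c\,\variance(\Phi_\beta(X))^{-1/2}$ for a universal $c$.

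It remains to lower-bound $\variance(\Phi_\beta(X))$, and this step — converting the qualitative statement ``the law of $X_\vee$ is spread out'' into a quantitative variance bound — is the one I expect to be the main obstacle. The target is an estimate of the form $\variance(\Phi_\beta(X))\gtrsim(1+a_d^2)^{-1}$ with the implied constant depending only on $\underline\sigma,\overline\sigma$; granting it, one gets $\|f\|_\infty\lesssim 1+a_d$, and combining this with the tail bound above and absorbing constants into the $\sqrt{1\vee\log(\underline\sigma/\varepsilon)}$ factor yields the claim. The moment bound $E[X_\vee^+]\le\overline\sigma(a_d+C)$ — obtained by writing $X_\vee\le\overline\sigma\max_j(X_j/\sigma_j)$ and applying Gaussian concentration to the $1$-Lipschitz functional $\max_j(X_j/\sigma_j)$ of $G$ — pins the typical location of $X_\vee$ to scale $a_d$ and is what should force this variance lower bound. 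An alternative route, probably closer to the original argument of \cite{CCK2015} from which this lemma is quoted, is to avoid log-concavity and instead use the representation $f_{X_\vee}(x)=\sum_{j=1}^d\phi_{\sigma_j}(x)\,P(X_k\le x\ \text{for all }k\neq j\mid X_j=x)$ obtained by conditioning on which coordinate attains the maximum, controlling this sum by sub-Gaussian tail bounds outside the bulk and by the moment estimate for $E[X_\vee^+]$ inside it; either way, turning the qualitative picture into the quoted quantitative bound is the crux.
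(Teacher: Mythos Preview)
The paper does not prove this lemma; it is quoted from Theorem~3 of \cite{CCK2015} and used as a black box, so there is no in-paper argument to compare your attempt against.

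Your sketch contains a genuine error: the step ``$z\mapsto P(\Phi_\beta(X)\le z)$ is log-concave; hence $f$ is a log-concave density'' is false. Log-concavity of the CDF does not imply log-concavity of the density. For a counterexample take $Z^2$ with $Z\sim N(0,1)$: its density $f(z)=(2\pi z)^{-1/2}e^{-z/2}$ on $(0,\infty)$ satisfies $f'<0$ everywhere, so $(\log F)''=(f'F-f^2)/F^2<0$ and the CDF is log-concave, yet $f(0+)=\infty$ and $f$ is not log-concave. In particular the inequality $\|f\|_\infty\le c\,\variance(\Phi_\beta(X))^{-1/2}$, which is the heart of your bulk estimate and which does hold for log-concave \emph{densities}, has no reason to hold under the weaker hypothesis you have actually established. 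Your Pr\'ekopa step is fine as far as it goes---the sublevel sets of $\Phi_\beta$ are convex, so the CDF is indeed log-concave---but that is where it stops.

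You also correctly flag the variance lower bound $\variance(\Phi_\beta(X))\gtrsim(1+a_d^2)^{-1}$ as unproved; note additionally that $\beta=\varepsilon^{-1}\log d$ varies with $\varepsilon$, so any such bound would need to be uniform in $\beta$. The alternative route you sketch at the end---writing the density of $X_\vee$ as $\sum_j\phi_{\sigma_j}(x)\,P(X_k\le x\text{ for all }k\neq j\mid X_j=x)$ and bounding the sum via Gaussian regression and concentration---is essentially the approach of \cite{CCK2015} and is what actually produces the stated bound with its $\sqrt{1\vee\log(\underline\sigma/\varepsilon)}$ term.
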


The fifth result is another anti-concentration inequality for maxima of Gaussian variables, which is taken from Lemma 4.3 of \cite{CCK2016}:
\begin{lemma}[\cite{CCK2016}, Lemma 4.3]\label{cck-nazarov}
Let $X=(X_1,\dots,X_d)^\top$ be a (possibly uncentered) $d$-dimensional Gaussian random vector such that $\variance[X_j]>0$ for all $j=1,\dots,d$. Then for every $\varepsilon>0$,
\[
\sup_{x\in\mathbb{R}}P(|X_\vee-x|\leq\varepsilon)\leq \frac{2}{\underline{\sigma}}\varepsilon\left(\sqrt{2\log d}+2\right),
\]
where $\underline{\sigma}=\min_{1\leq j\leq d}\sqrt{\variance[X_j]}$. 
\end{lemma}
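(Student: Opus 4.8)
The plan is to reduce the claim to Nazarov's anti-concentration inequality for Gaussian orthant probabilities. First I would note that $X_\vee$ has no atoms: since $\{X_\vee=c\}\subseteq\bigcup_{j=1}^d\{X_j=c\}$ and each $X_j$ is Gaussian with $\variance[X_j]>0$, we have $P(X_\vee=c)=0$ for every $c\in\mathbb{R}$, so $t\mapsto P(X_\vee\le t)$ is continuous. Hence, for any $x\in\mathbb{R}$ and $\varepsilon>0$, using $\{X_\vee\le t\}=\bigcap_{j=1}^d\{X_j\le t\}$,
\[
P(|X_\vee-x|\le\varepsilon)=P(X_\vee\le x+\varepsilon)-P(X_\vee\le x-\varepsilon)
=P\bigl(X\le(x+\varepsilon)\bs{1}\bigr)-P\bigl(X\le(x-\varepsilon)\bs{1}\bigr),
\]
where $\bs{1}=(1,\dots,1)^\top$ and inequalities between vectors are componentwise. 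Writing $X=\mu+Y$ with $\mu=E[X]$ and $Y$ a centered Gaussian vector (so that $\variance[Y_j]=\variance[X_j]\ge\underline{\sigma}^2$ for all $j$), the right-hand side above equals $P(Y\le y+2\varepsilon\bs{1})-P(Y\le y)$ with $y:=(x-\varepsilon)\bs{1}-\mu$.

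At this point I would invoke Nazarov's inequality in the form: if $Y$ is a centered Gaussian vector in $\mathbb{R}^d$ all of whose coordinates have variance at least $\underline{\sigma}^2$, then $P(Y\le y+a\bs{1})-P(Y\le y)\le(a/\underline{\sigma})(\sqrt{2\log d}+2)$ for every $y\in\mathbb{R}^d$ and $a\ge0$. Applying this with $a=2\varepsilon$ (note that the gap between the two orthant shifts is $2\varepsilon$, which is the source of the constant $2$ in the asserted bound) and then taking the supremum over $x\in\mathbb{R}$ — equivalently over $y\in\mathbb{R}^d$ — yields $\sup_{x\in\mathbb{R}}P(|X_\vee-x|\le\varepsilon)\le\frac{2}{\underline{\sigma}}\varepsilon(\sqrt{2\log d}+2)$, which is the claim.

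The only substantive ingredient is Nazarov's inequality, which I would cite rather than reprove. For completeness, the heart of its proof is this: fixing $y$ and setting $G(t)=P(Y\le y+t\bs{1})$, one has $G(a)-G(0)=\int_0^aG'(t)\,dt\le a\,\sup_{z\in\mathbb{R}^d}\sum_{j=1}^d\partial_jP(Y\le z)$, and along the diagonal $z=t\bs{1}$ this last sum is precisely the density of $Y_\vee$; writing $\partial_jP(Y\le z)=\varphi_{\sigma_j}(z_j)P(Y_i\le z_i\ \forall i\ne j\mid Y_j=z_j)$ with $\varphi_{\sigma_j}$ the density of $Y_j$, and then estimating the sum by separating the indices $j$ according to the size of $z_j$ relative to a threshold of order $\sqrt{\log d}$, turns the crude bound $d/\underline{\sigma}$ into the desired $\underline{\sigma}^{-1}(\sqrt{2\log d}+2)$. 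I expect this threshold argument — the passage from a bound linear in $d$ to one logarithmic in $d$, which is the entire point of Nazarov's estimate — to be the only real obstacle; the reduction carried out in the first two paragraphs is entirely elementary.
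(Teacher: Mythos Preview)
Your reduction to Nazarov's inequality is correct and is exactly how this lemma is obtained in \cite{CCK2016}; the paper itself does not give a proof but simply cites Lemma 4.3 of \cite{CCK2016}, so there is nothing to compare. One small imprecision: taking the supremum over $x\in\mathbb{R}$ is not ``equivalently over $y\in\mathbb{R}^d$'' since $y=(x-\varepsilon)\bs{1}-\mu$ only traces a line in $\mathbb{R}^d$, but this is harmless because Nazarov's bound is uniform in $y$.
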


The last result is an anti-concentration inequality for supremum of a Gaussian process taken from Corollary 2.1 of \cite{CCK2014b}:
\begin{lemma}[\cite{CCK2014b}, Corollary 2.1]\label{cck-anti2}
Let $X=(X_t)_{t\in\mathbb{T}}$ be a separable Gaussian process indexed by a semi-metric space $\mathbb{T}$ such that $E[X_t]=0$ and $E[X_t^2]=1$ for all $t\in\mathbb{T}$. Assume that $\sup_{t\in\mathbb{T}}X_t<\infty$ a.s. Then $E[\sup_{t\in\mathbb{T}}|X_t|]<\infty$ and
\[
\sup_{x\in\mathbb{R}}P\left(\left|\sup_{t\in\mathbb{T}}|X_t|-x\right|\leq\varepsilon\right)\leq4\varepsilon\left(E\left[\sup_{t\in\mathbb{T}}|X_t|\right]+1\right)
\]
for all $\varepsilon\geq0$.
\end{lemma}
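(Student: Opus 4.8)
The statement is Corollary 2.1 of \cite{CCK2014b}, and one legitimate option is to quote it; what follows is how I would establish it from scratch. The plan is to reduce to the finite‑dimensional ``expectation‑type'' anti‑concentration inequality for maxima of Gaussian vectors and then pass to the limit by monotone approximation.

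First I would symmetrize the index set so as to replace $\sup_t|X_t|$ by a plain supremum. Put $\tilde{\mathbb{T}}=\mathbb{T}\times\{+1,-1\}$ and $\tilde X_{(t,\sigma)}=\sigma X_t$; then $\tilde X=(\tilde X_s)_{s\in\tilde{\mathbb{T}}}$ is again a separable centered Gaussian process with $E[\tilde X_s^2]=1$, and $\sup_{s\in\tilde{\mathbb{T}}}\tilde X_s=\sup_{t\in\mathbb{T}}|X_t|$, which I denote by $Z$. Since a centered Gaussian process has the same law as its negative, the hypothesis $\sup_tX_t<\infty$ a.s.\ also forces $\sup_t(-X_t)<\infty$ a.s., hence $Z<\infty$ a.s. Applying the Borell--Tsirelson--Ibragimov--Sudakov inequality to $\tilde X$ (whose variances are bounded by $1$) gives $E[Z]<\infty$, which is the first assertion; it then remains to bound $\sup_xP(|Z-x|\le\varepsilon)$.

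Next I would discretize. By separability choose a countable set $\{t_1,t_2,\dots\}\subset\mathbb{T}$ with $\sup_k|X_{t_k}|=Z$ a.s., and set $Z_n=\max_{1\le k\le n}|X_{t_k}|$. The vector $(X_{t_1},-X_{t_1},\dots,X_{t_n},-X_{t_n})$ is centered Gaussian with all marginal variances equal to $1$, and $Z_n$ is the maximum of its $2n$ coordinates; moreover $Z_n\uparrow Z$ a.s., so $E[Z_n]\uparrow E[Z]$ by monotone convergence. The key input is the finite‑dimensional Gaussian anti‑concentration bound of expectation type: for a centered Gaussian vector $(G_1,\dots,G_m)$ with $E[G_j^2]=1$ one has $\sup_xP(|\max_jG_j-x|\le\varepsilon)\le 4\varepsilon(E[\max_jG_j]+1)$. (This may be quoted from the CCK theory, or derived by combining the Kac--Rice representation of the density of $\max_jG_j$ with the Gaussian concentration inequality; the cruder estimates in Lemma \ref{cck-anti} and Lemma \ref{cck-nazarov} do not suffice here, since they carry an extra factor $\sqrt{\log m}$ or $\sqrt{\log(1/\varepsilon)}$ that diverges as $m\to\infty$.) Applying it to $Z_n$ gives, for every $n$ and every $x$,
\[
P(|Z_n-x|\le\varepsilon)\le 4\varepsilon(E[Z_n]+1)\le 4\varepsilon(E[Z]+1).
\]

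Finally I would let $n\to\infty$, using a slightly enlarged radius because $P(|Z_n-x|\le\varepsilon)$ need not converge to $P(|Z-x|\le\varepsilon)$. Fix $x\in\mathbb{R}$, $\varepsilon\ge0$ and $\varepsilon'>\varepsilon$. On $\{|Z-x|\le\varepsilon\}$ we have $Z_n\le Z\le x+\varepsilon\le x+\varepsilon'$ for all $n$, while $Z_n\uparrow Z\ge x-\varepsilon>x-\varepsilon'$ forces $Z_n>x-\varepsilon'$ for all large $n$; hence $\{|Z-x|\le\varepsilon\}\subseteq\liminf_n\{|Z_n-x|\le\varepsilon'\}$. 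By Fatou's lemma for events,
\[
P(|Z-x|\le\varepsilon)\le\liminf_{n\to\infty}P(|Z_n-x|\le\varepsilon')\le 4\varepsilon'(E[Z]+1),
\]
and letting $\varepsilon'\downarrow\varepsilon$ and taking the supremum over $x$ finishes the argument. The only substantial obstacle is the finite‑dimensional expectation‑type anti‑concentration inequality used above: obtaining the clean $E[\max]+1$ dependence (rather than a $\sqrt{\log}$‑inflated version) is exactly the point that makes the passage to an infinite index set possible, and proving it requires a finer control of the density of $\max_jG_j$ near its mode in terms of $E[\max_jG_j]$; the symmetrization, discretization, and limiting steps are routine.
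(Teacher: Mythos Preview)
The paper itself does not prove this lemma; it is simply quoted as Corollary~2.1 of \cite{CCK2014b}. Your proposal is therefore not being compared against an in-paper argument but against the original source.

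Your reduction is correct and is precisely how \cite{CCK2014b} derives its Corollary~2.1 from its Theorem~2.1: symmetrize to pass from $\sup_t|X_t|$ to a plain supremum, use Borell--TIS to secure integrability, discretize via separability, apply the finite-dimensional expectation-type bound, and pass to the limit with a Fatou/enlarged-radius argument. All of these steps are sound as written. The one point worth flagging is that your ``from scratch'' framing is somewhat generous: the substantive content of the lemma sits entirely in the finite-dimensional inequality $\sup_xP(|\max_jG_j-x|\le\varepsilon)\le4\varepsilon(E[\max_jG_j]+1)$, which you invoke rather than prove. You are candid about this at the end, correctly noting that Lemmas~\ref{cck-anti} and~\ref{cck-nazarov} are too crude (their $\sqrt{\log d}$ factors blow up under discretization) and that one needs the Kac--Rice density representation combined with Gaussian concentration to get the clean $E[\max]+1$ dependence. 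So your write-up is an accurate account of the reduction step, but the heart of the matter is still being quoted from the same source the paper cites.
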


\subsection{Sub-Gaussian chaos property}\label{sec:sub-chaos}

This subsection presents the notion of \textit{sub-Gaussian chaos property} of random variables and stochastic processes, which is introduced in \cite{VV2007,VV2007jfa} and serves as deriving maximal inequalities used in this paper. 
\begin{definition}[\cite{VV2007}, Definition 4.1; \cite{VV2007jfa}, Definition 2.1]
Let $q$ be a positive integer. A (possibly uncentered) random variable $Y$ is said to have the \textit{sub-$q$th-Gaussian chaos property} (or is a \textit{sub-$q$th chaos random variable}, or is a \textit{sub-Gaussian chaos random variable of order $q$}, etc.) relative to the scale $M\geq0$ if
\[
E\left[\exp\left(\left(\frac{|Y|}{M}\right)^{2/q}\right)\right]\leq2.
\]
Here, when $M=0$, for $x\geq0$, $x/M$ is understood to be 0 if $x=0$ and $\infty$ otherwise. Hence, $Y$ is a sub-$q$th chaos random variable relative to the scale 0 if and only if $Y=0$ a.s.
\end{definition}
Note that, unlike \cite{VV2007,VV2007jfa}, we also allow sub-Gaussian chaos random variables to be uncentered. The following result is a useful criterion for this property.
\begin{lemma}\label{criteria-subchaos}
For any positive integer $q$ and constant $C>0$, there is a positive number $M$ (which depends only on $q$ and $C$) such that any random variable $Y$, which satisfies $\|Y\|_p\leq Cp^{q/2}\Lambda$ for some constant $\Lambda\geq0$ and any positive integer $p$, is a sub-$q$th chaos random variable relative to the scale $M\Lambda$. 
\end{lemma}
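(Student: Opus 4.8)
The plan is to expand the exponential moment into a power series, estimate each moment of $Y$ (including non-integer orders) via the hypothesis together with Lyapunov's inequality, and then choose $M$ large enough that the resulting series is dominated by a convergent geometric series summing to $2$.

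First I would dispose of the trivial case $\Lambda=0$: the hypothesis then gives $\|Y\|_p=0$ for all $p$, hence $Y=0$ almost surely, so $Y$ is a sub-$q$th chaos random variable relative to the scale $0=M\Lambda$ for any $M>0$. Assume henceforth $\Lambda>0$ and put $Z:=|Y|/\Lambda$, so that $E[Z^p]\le C^pp^{pq/2}$ for every positive integer $p$. By Lyapunov's inequality, for every real $s>0$, writing $p:=\lceil s\rceil$ and using $\lceil s\rceil\le s+1$,
\[
E[Z^s]\le E[Z^p]^{s/p}\le C^sp^{sq/2}\le C^s(s+1)^{sq/2}.
\]

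Next, by the monotone convergence theorem,
\[
E\left[\exp\left(\left(\frac{|Y|}{M\Lambda}\right)^{2/q}\right)\right]
=\sum_{k=0}^\infty\frac{E[Z^{2k/q}]}{M^{2k/q}\,k!}
\le\sum_{k=0}^\infty\frac{C^{2k/q}(2k/q+1)^k}{M^{2k/q}\,k!},
\]
where the last inequality applies the moment bound with $s=2k/q$. Combining the elementary estimate $k!\ge(k/e)^k$ with $e(2k/q+1)/k=e(2/q+1/k)\le e(1+2/q)$ for $k\ge1$ yields $(2k/q+1)^k/k!\le B_q^k$, where $B_q:=e(1+2/q)$, for every $k\ge1$, while the $k=0$ term equals $1=B_q^0$. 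Hence the right-hand side is at most $\sum_{k=0}^\infty(C^{2/q}B_q/M^{2/q})^k$, and the choice $M:=C(2B_q)^{q/2}$ makes this ratio equal to $1/2$, so the sum equals $2$. Since $M$ depends only on $q$ and $C$, this is the desired conclusion.

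I do not expect a serious obstacle. The only points needing attention are the handling of the non-integer moments $E[Z^{2k/q}]$ (taken care of by the Lyapunov step) and the bookkeeping guaranteeing that the final geometric ratio --- and therefore $M$ --- depends only on $q$ and $C$; everything else is the standard equivalence between $p$-th moment growth of order $p^{q/2}$ and finiteness of an exponential moment of $|Y|^{2/q}$ at a suitable scale.
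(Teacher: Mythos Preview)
Your proof is correct and follows essentially the same route as the paper's: expand $E[\exp((|Y|/(M\Lambda))^{2/q})]$ as a power series, bound each term via the moment hypothesis, and take $M$ large enough that the series is at most $2$. Your treatment is in fact slightly tidier---you handle the non-integer moments $E[Z^{2k/q}]$ explicitly via Lyapunov's inequality and obtain a concrete $M=C(2e(1+2/q))^{q/2}$, whereas the paper bounds the series via the ratio test and dominated convergence to conclude only that a suitable $M$ exists.
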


\begin{proof}
For any $M>0$ we have
\begin{align*}
E\left[\exp\left(\left(\frac{|Y|}{M\Lambda}\right)^{2/q}\right)\right]
&=\sum_{k=0}^\infty\frac{1}{k!}E\left[\left(\frac{|Y|}{M\Lambda}\right)^{2k/q}\right]
\leq1+\sum_{k=1}^\infty C^{2k/q}\frac{(2k/q)^k}{k!M^{2k/q}}.
\end{align*}
Set $a_k(M)=C^{2k/q}(2k/q)^k/k!M^{2k/q}$. Then we have
\begin{align*}
\frac{a_{k+1}(M)}{a_k(M)}=\frac{C^{2/q}}{M^{2/q}}\left(\frac{2k+2}{2k}\right)^k\frac{2k+2}{q(k+1)}
\to\frac{C^{2/q}}{M^{2/q}}\cdot e\cdot\frac{2}{q}
\end{align*}
as $k\to\infty$. Therefore, for sufficiently large $M$ we have $\lim_{k\to\infty}a_{k+1}(M)/a_k(M)<1$, hence we obtain $\sum_{k=1}^\infty a_k(M)<\infty$ by the d'Alembert ratio test. Since $a_k(M)$ is a decreasing function of $M>0$ for all $k\in\mathbb{N}$, the dominated convergence theorem yields $\sum_{k=1}^\infty a_k(M)\to0$ as $M\to\infty$. Therefore, there is a constant $M>0$ which depends only on $C$ and $q$ such that $\sum_{k=1}^\infty a_k(M)\leq1$. For this $M$ the claim of the lemma holds true.
\end{proof}

The condition on $Y$ in Lemma \ref{criteria-subchaos} is typically satisfied with $\Lambda=\|Y\|_2$. In particular, combining Lemma \ref{criteria-subchaos} with Theorem 5.11 and Remark 5.11 from \cite{Janson1997}, we obtain the following result.
\begin{proposition}\label{gh-chaos}
For any positive integer $q$, there is a constant $M>0$ which depends only on $q$ such that, for any Gaussian Hilbert space $\mathbb{H}$, $Y$ is a sub-$q$th chaos random variable relative to the scale $M\|Y\|_2$ for all $Y\in\bar{\mathcal{P}}_q(\mathbb{H})$.
\end{proposition}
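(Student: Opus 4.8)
The plan is to derive the exponential-integrability bound defining the sub-$q$th-Gaussian chaos property from a polynomial moment comparison, using the elementary criterion already recorded in Lemma~\ref{criteria-subchaos}. Thus it suffices to produce a constant $C$ depending only on $q$ such that
\[
\|Y\|_p\leq C\,p^{q/2}\,\|Y\|_2\qquad\text{for every positive integer }p
\]
and every $Y\in\bar{\mathcal{P}}_q(\mathbb{H})$; feeding this into Lemma~\ref{criteria-subchaos} with $\Lambda=\|Y\|_2$ then yields a scale $M\|Y\|_2$ with $M$ depending only on $q$ and $C$, hence only on $q$, which is exactly the assertion.

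The moment comparison is nothing but the hypercontractivity property of $\bar{\mathcal{P}}_q(\mathbb{H})$. First I would recall Nelson's hypercontractivity theorem in the form stated as Theorem~5.10 of \cite{Janson1997} (see also Theorem~5.11 and Remark~5.11 there): for every real $p\geq2$ and every $Y\in\bar{\mathcal{P}}_q(\mathbb{H})$ one has $\|Y\|_p\leq(p-1)^{q/2}\|Y\|_2$. If one prefers to reduce to a single chaos, the bound $\|Y_k\|_p\leq(p-1)^{k/2}\|Y_k\|_2$ for $Y_k$ in the $k$-th chaos of $\mathbb{H}$, combined with the Wiener chaos decomposition $Y=\sum_{k=0}^qY_k$, the triangle inequality, the estimate $(p-1)^{k/2}\leq(p-1)^{q/2}$ valid for $p\geq2$, and the Cauchy--Schwarz inequality against the orthogonality relation $\|Y\|_2^2=\sum_{k=0}^q\|Y_k\|_2^2$, yields the same estimate up to a multiplicative factor $\sqrt{q+1}$. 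Since $(p-1)^{q/2}\leq p^{q/2}$, and since the case $p=1$ is covered trivially by $\|Y\|_1\leq\|Y\|_2$, we obtain the displayed moment bound with, say, $C=\sqrt{q+1}$, a constant depending only on $q$.

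It then remains only to invoke Lemma~\ref{criteria-subchaos} with this $C$ and $\Lambda=\|Y\|_2$, which produces the constant $M$ and finishes the proof. I do not anticipate a genuine obstacle: the only mildly delicate point is that the hypercontractivity constant be independent of the particular Gaussian Hilbert space $\mathbb{H}$, which holds because both the decomposition of $\bar{\mathcal{P}}_q(\mathbb{H})$ into its chaos components and the single-chaos hypercontractivity estimate are intrinsic to $\mathbb{H}$ and involve no space-dependent quantities; one must also check that passing from the single-chaos bounds to the bound on $\bar{\mathcal{P}}_q(\mathbb{H})$ does not degrade the exponent $p^{q/2}$, which the computation above confirms.
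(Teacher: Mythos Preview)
Your proposal is correct and follows essentially the same route as the paper: the paper derives the proposition by combining the hypercontractive moment estimate from Theorem~5.11 and Remark~5.11 of \cite{Janson1997} with Lemma~\ref{criteria-subchaos}, which is precisely what you do. Your additional remarks (the explicit constant $(p-1)^{q/2}$, the alternative chaos-by-chaos argument, and the observation that the constant is independent of $\mathbb{H}$) are sound elaborations but not departures from the paper's approach.
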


The next result presents maximal moment inequalities obtained from the sub-Gaussian chaos property.   
\begin{proposition}\label{max-sub-chaos}
For each $k=1,\dots,d$, let $Y_k$ be a sub-$q$th chaos random variable relative to the scale $M_k\geq0$. Then we have
\begin{align*}
E\left[\max_{1\leq k\leq d}|Y_k|^p\right]
\leq\left(\max_{1\leq k\leq d}M_k^p\right)\log^{pq/2}\left(2d-1+e^{pq/2-1}\right)
\end{align*}
for any $p>0$ such that $pq\geq2$.
\end{proposition}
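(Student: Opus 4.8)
The plan is to reduce this maximal inequality to a one–dimensional statement about a single non‑negative random variable with a controlled exponential moment, and then settle that statement by Jensen's inequality applied to a carefully chosen concave function.

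First I would set $M=\max_{1\le k\le d}M_k$; the conclusion is trivial when $M=0$ (all $Y_k=0$ a.s.), so assume $M>0$. For each $k$ with $M_k>0$ we have $|Y_k|/M\le|Y_k|/M_k$ pointwise, hence $\exp((|Y_k|/M)^{2/q})\le\exp((|Y_k|/M_k)^{2/q})$ and $E[\exp((|Y_k|/M)^{2/q})]\le2$ by the defining property of the sub‑$q$th chaos property; when $M_k=0$ one has $Y_k=0$ a.s., so the bound holds trivially. Using that $x\mapsto x^{2/q}$ and $\exp$ are increasing, $\exp((\max_k|Y_k|/M)^{2/q})=\max_k\exp((|Y_k|/M)^{2/q})$, and bounding the maximum by the sum gives
\[
E\left[\exp\left(\left(\frac{1}{M}\max_{1\le k\le d}|Y_k|\right)^{2/q}\right)\right]\le\sum_{k=1}^dE\left[\exp\left((|Y_k|/M)^{2/q}\right)\right]\le 2d .
\]
Writing $Z:=(\max_{1\le k\le d}|Y_k|/M)^{2/q}$ and $s:=pq/2$ (so $s\ge1$ by hypothesis and $\max_{1\le k\le d}|Y_k|^p=M^pZ^s$), the task reduces to showing: if $Z\ge0$ and $E[e^Z]\le 2d$, then $E[Z^s]\le(\log(2d-1+e^{s-1}))^s$.

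For that, I would introduce $\Psi(w):=(\log(w-1+e^{s-1}))^s$ for $w\ge1$ and verify three properties. (i) $\Psi$ is nondecreasing on $[1,\infty)$. (ii) $\Psi(w)\ge(\log w)^s$ for all $w\ge1$, since $e^{s-1}\ge1$. (iii) $\Psi$ is concave on $[1,\infty)$: writing $\chi(u)=(\log u)^s$, a direct computation yields
\[
\chi''(u)=\frac{s}{u^2}(\log u)^{s-2}\big((s-1)-\log u\big),
\]
so $\chi$ is concave on $[e^{s-1},\infty)$; since $w\mapsto w-1+e^{s-1}$ is affine and maps $[1,\infty)$ into $[e^{s-1},\infty)$, the composition $\Psi=\chi(\,\cdot\,-1+e^{s-1})$ is concave on $[1,\infty)$. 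Then, because $e^Z\ge1$ a.s., property (ii) gives $Z^s=(\log e^Z)^s\le\Psi(e^Z)$; taking expectations, applying Jensen's inequality to the concave function $\Psi$, and then using (i) together with $E[e^Z]\le2d$ gives
\[
E[Z^s]\le E[\Psi(e^Z)]\le\Psi\big(E[e^Z]\big)\le\Psi(2d)=\left(\log(2d-1+e^{s-1})\right)^s .
\]
Multiplying through by $M^p$ and recalling $s=pq/2$, $M^p=\max_{1\le k\le d}M_k^p$ yields the stated inequality.

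All the computations here are routine (including the linear-growth bound on $\Psi$ needed to justify integrability before invoking Jensen). The only genuinely non‑mechanical point is guessing the correct concave majorant $\Psi$ in step (iii) — but this is essentially dictated by the target bound, since the constant $2d-1+e^{s-1}$ is exactly the affine shift that moves the argument of $(\log(\cdot))^s$ into the region where that function is concave; consequently I do not expect any slack‑chasing to be required.
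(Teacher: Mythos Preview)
Your proof is correct and follows essentially the same route as the paper. The paper bounds $E[\exp(\max_k(|Y_k|/M_k)^{2/q})]\le 2d$ by the max--sum trick and then invokes Lemma~14.7 of B\"uhlmann--van~de~Geer (2011), which is precisely the Jensen argument you spell out with the concave function $\Psi(w)=(\log(w-1+e^{s-1}))^s$ on $[1,\infty)$; the only cosmetic difference is that you first replace all scales by $M=\max_k M_k$, whereas the paper keeps the individual $M_k$ inside the exponential and pulls out $\max_k M_k^p$ afterwards.
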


\begin{proof}
By Lemma 14.7 from \cite{BvdG2011} we have
\begin{align*}
E\left[\max_{1\leq k\leq d}|Y_k|^p\right]
\leq \left(\max_{1\leq k\leq d}M_k^p\right)\log^{pq/2}\left(E\left[\exp\left(\max_{1\leq k\leq d}\left(\frac{|Y_k|}{M_k}\right)^{2/q}\right)\right]-1+e^{pq/2-1}\right).
\end{align*}
Since we have
\begin{align*}
E\left[\exp\left(\max_{1\leq k\leq d}\left(\frac{|Y_k|}{M_k}\right)^{2/q}\right)\right]
&= E\left[\max_{1\leq k\leq d}\exp\left(\left(\frac{|Y_k|}{M_k}\right)^{2/q}\right)\right]
\leq\sum_{k=1}^dE\left[\exp\left(\left(\frac{|Y_k|}{M_k}\right)^{2/q}\right)\right]
\leq2d,
\end{align*}
we obtain the desired result.
\end{proof}

To conclude this subsection, we present an estimate for the modulus of continuity for sub-Gaussian chaos processes, which is established in \cite{VV2007jfa}. 
In the following, $(\mathbb{T},\mathfrak{d})$ denotes a semi-metric space.
\begin{definition}[\cite{VV2007}, Definition 4.3; \cite{VV2007jfa}, Definition 2.3]
Let $q$ be a positive integer. A centered real-valued process $X=(X_t)_{t\in\mathbb{T}}$ is said to be a \textit{sub-$q$th-Gaussian chaos process with respect to $\mathfrak{d}$} if the random variable $X(t)-X(s)$ has the sub-$q$th-Gaussian chaos property relative to the scale $\mathfrak{d}(s,t)$ for any $s,t\in\mathbb{T}$.
\end{definition}

\begin{definition}
Let $r$ be a positive number. An \textit{$r$-covering number of $\mathbb{T}$ with respect to $\mathfrak{d}$}, which is denoted by $N(\mathbb{T},\mathfrak{d},r)$, is the smallest positive integer $N$ such that there are points $t_1,\dots,t_N\in\mathbb{T}$ satisfying for any $t\in\mathbb{T}$ there is an index $i\in\{1,\dots,N\}$ with $\mathfrak{d}(t,t_i)<r$ (we set $N(\mathbb{T},\mathfrak{d},r)=\infty$ if there is no such an $N$).
\end{definition}

\if0
\begin{proposition}[\cite{VV2007jfa}, Proposition 5.2]\label{vv-modulus}
Let $q$ be a positive integer. There is a number $C_q>0$ which depends only on $q$ such that 
\[
E\left[\sup_{s,t\in\mathbb{T}:\mathfrak{d}(s,t)\leq\eta}|X_s-X_t|\right]
\leq C_q\int_0^\eta\log^{q/2}N(\mathbb{T},\mathfrak{d},r)dr
\]
for any $\eta>0$ and any separable centered sub-$q$th-Gaussian chaos process $X=(X_t)_{t\in\mathbb{T}}$ with respect to $\mathfrak{d}$.
\end{proposition}
\fi
The following result follows from Remark 2.2, Corollary 3.2 and the discussion at the beginning of Section 5 from \cite{VV2007jfa}:
\begin{proposition}\label{vv-tail}
Let $q$ be a positive integer. There is a number $C_q>0$ which depends only on $q$ such that the variable $\sup_{s,t\in\mathbb{T}:\mathfrak{d}(s,t)\leq\eta}|X_s-X_t|$ is a sub-$q$th chaos random variable relative to the scale
\[
C_q\int_0^\eta\log^{q/2}N(\mathbb{T},\mathfrak{d},r)dr
\]
for any $\eta>0$ and any separable centered sub-$q$th-Gaussian chaos process $X=(X_t)_{t\in\mathbb{T}}$ with respect to $\mathfrak{d}$.
\end{proposition}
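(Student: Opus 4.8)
The statement is a modulus-of-continuity (Dudley-type entropy) bound, stated in the Orlicz scale attached to sub-Gaussian chaos: being ``sub-$q$th chaos relative to scale $M$'' is, up to universal constants, the same as having $\psi$-Orlicz norm $\lesssim M$ for $\psi(x)=e^{x^{2/q}}-1$, whose inverse is $\psi^{-1}(u)=(\log(1+u))^{q/2}$, so the asserted scale $C_q\int_0^\eta\log^{q/2}N(\mathbb T,\mathfrak d,r)\,dr$ is exactly the entropy integral one gets by chaining. The plan is to carry out the generic-chaining argument directly with sub-$q$th chaos bookkeeping — this is precisely the content of Corollary~3.2 and the opening of Section~5 of \cite{VV2007jfa} — taking care to couple the two copies of the index at scale $\eta$ so that only scales below $\eta$ contribute. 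We may assume $\int_0^\eta\log^{q/2}N(\mathbb T,\mathfrak d,r)\,dr<\infty$ (so that $\mathbb T$ is totally bounded and the chaining converges), for otherwise there is nothing to prove, and by separability of $X$ it suffices to bound $\max_{(s,t)\in S}|X_s-X_t|$ for an arbitrary finite $S\subset\{(s,t):\mathfrak d(s,t)\le\eta\}$, uniformly in $S$, and then let $S$ exhaust the band.

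Fix such an $S$, let $S_0\subset\mathbb T$ collect all coordinates of its points, put $\eta_k=2^{-k}\eta$, and for $k\ge0$ let $\mathcal N_k\subset S_0$ be a maximal $\eta_k$-separated subset (hence an $\eta_k$-net of $S_0$ with $|\mathcal N_k|\le N(\mathbb T,\mathfrak d,\eta_k/2)$, a packing number being dominated by a covering number at half the scale, and $\mathcal N_k=S_0$ for $k$ large), with nearest-point map $\pi_k$. For $(s,t)\in S$ one has the finite telescoping
\[
X_s-X_t=\big(X_{\pi_0(s)}-X_{\pi_0(t)}\big)+\sum_{k\ge0}\big(X_{\pi_{k+1}(s)}-X_{\pi_k(s)}\big)-\sum_{k\ge0}\big(X_{\pi_{k+1}(t)}-X_{\pi_k(t)}\big),
\]
and since $\mathfrak d(\pi_0(s),\pi_0(t))\le 3\eta$ and $\mathfrak d(\pi_{k+1}(x),\pi_k(x))<2\eta_k$, supremizing over $(s,t)\in S$ yields
\[
\max_{(s,t)\in S}|X_s-X_t|\ \le\ \max_{\substack{a,b\in\mathcal N_0\\ \mathfrak d(a,b)\le 3\eta}}|X_a-X_b|\;+\;2\sum_{k\ge0}\ \max_{\substack{a\in\mathcal N_{k+1},\,b\in\mathcal N_k\\ \mathfrak d(a,b)\le 2\eta_k}}|X_a-X_b|.
\]
Each increment here is sub-$q$th chaos relative to the scale $3\eta$ (resp.\ $2\eta_k$), by definition of a sub-$q$th chaos process and monotonicity of the scale, so by Proposition~\ref{max-sub-chaos} together with Lemma~\ref{criteria-subchaos} the level-$0$ maximum (of at most $|\mathcal N_0|^2$ terms) is sub-$q$th chaos relative to a scale $\lesssim\eta\log^{q/2}(|\mathcal N_0|+2)$ and the $k$-th maximum (of at most $|\mathcal N_{k+1}|^2$ terms) relative to a scale $\lesssim\eta_k\log^{q/2}(|\mathcal N_{k+1}|+2)$, with implied constants depending only on $q$. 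Adding the blocks by Remark~2.2 of \cite{VV2007jfa} (``a sum of sub-$q$th chaos variables is sub-$q$th chaos, with scales added''), then using $|\mathcal N_k|\le N(\mathbb T,\mathfrak d,\eta_k/2)$, monotonicity of $r\mapsto N(\mathbb T,\mathfrak d,r)$, and the elementary comparison of the resulting geometric sum with the integral, one gets that $\max_{(s,t)\in S}|X_s-X_t|$ is sub-$q$th chaos relative to a scale $\le C_q\int_0^\eta\log^{q/2}N(\mathbb T,\mathfrak d,r)\,dr$; letting $S$ exhaust the band and invoking monotone convergence in the Orlicz-norm inequality completes the proof.

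The hard part will be the localization: applying a global chaining bound to the difference process on all of $\mathbb T\times\mathbb T$ would produce $\int_0^{\operatorname{diam}\mathbb T}$ in place of $\int_0^\eta$, which is useless for the way Proposition~\ref{vv-tail} is used later (where $\eta\to0$). Keeping the integral local relies on coupling the two chains at the common scale $\eta$, legitimate precisely because $\mathfrak d(s,t)\le\eta$, together with the elementary but fussy observation that the ``top'' level then costs only $\log^{q/2}N(\mathbb T,\mathfrak d,\eta)$, which is absorbed into $\int_0^\eta\log^{q/2}N(\mathbb T,\mathfrak d,r)\,dr$ by monotonicity. The remaining points — working with maximal separated subsets of the finite index set so that the telescoping is a finite sum, and passing from finite $S$ to the whole band — are handled in the standard way via separability.
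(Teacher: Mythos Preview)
Your proposal is correct and follows precisely the route the paper indicates: the paper does not give an independent proof but simply records that the statement ``follows from Remark~2.2, Corollary~3.2 and the discussion at the beginning of Section~5 from \cite{VV2007jfa}'', and your argument is exactly a spelled-out version of that chaining (Corollary~3.2 for the entropy bound, the Section~5 localization for restricting to the band $\mathfrak d(s,t)\le\eta$, and Remark~2.2 for the additivity of sub-$q$th-chaos scales). In particular your handling of the top level---coupling the two chains at scale $\eta$ so that only $\int_0^\eta$ appears---is the point the paper alludes to as ``the discussion at the beginning of Section~5''.
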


\begin{rmk}
For the case $q=1$ or $q=2$, related estimates to Proposition \ref{vv-tail} can be found in Section 2.2 of \cite{VW1996} (they are indeed sufficient for this paper). We also remark that \citet{Dirksen2015} gives a shaper estimate than Proposition \ref{vv-tail} for general values of $q$ (see Theorem 3.2 and Remark 3.3 of \cite{Dirksen2015} for details).  
\end{rmk}

\if0
\subsection{Proof of Lemma \ref{lemma:NP}}\label{proof:NP}
\if0
By assumption $\Psi$ is differentiable on $(0,1)$ and we have
\[
\Psi'(t)=\frac{1}{2}\sum_{i=1}^dE\left[\frac{\partial\varphi}{\partial x_i}(\sqrt{1-t}F+\sqrt{t}Z)\left(\frac{Z_i}{\sqrt{t}}-\frac{F_i}{\sqrt{1-t}}\right)\right]
\]
for every $t\in(0,1)$. Now, noting that the independence between $Z$ and $F$, by multidimensional Stein's lemma (e.g.~Lemma 4.1.3 of \cite{NP2012}) we obtain
\begin{align*}
\sum_{i=1}^dE\left[\frac{\partial\varphi}{\partial x_i}(\sqrt{1-t}F+\sqrt{t}Z)\frac{Z_i}{\sqrt{t}}\right]
=\sum_{i,j=1}^dE\left[\frac{\partial^2\varphi}{\partial x_i\partial x_j}(\sqrt{1-t}F+\sqrt{t}Z)\mathfrak{C}(i,j)\right].
\end{align*}
On the other hand, we also have
\begin{align*}
&\sum_{i=1}^dE\left[\frac{\partial\varphi}{\partial x_i}(\sqrt{1-t}F+\sqrt{t}Z)\frac{F_i}{\sqrt{1-t}}\right]\\
&=\sum_{i=1}^dE\left[\frac{\partial\varphi}{\partial x_i}(\sqrt{1-t}F+\sqrt{t}Z)\frac{LL^{-1}F_i}{\sqrt{1-t}}\right]~(\because\eqref{OU})\\
&=-\sum_{i=1}^dE\left[\frac{\partial\varphi}{\partial x_i}(\sqrt{1-t}F+\sqrt{t}Z)\frac{\delta DL^{-1}F_i}{\sqrt{1-t}}\right]~(\because L=-\delta D)\\
&=-\sum_{i=1}^dE\left[\left\langle D\left(\frac{\partial\varphi}{\partial x_i}(\sqrt{1-t}F+\sqrt{t}Z)\right),\frac{DL^{-1}F_i}{\sqrt{1-t}}\right\rangle_H\right]~(\because\eqref{duality})\\
&=-\sum_{i,j=1}^dE\left[\frac{\partial^2\varphi}{\partial x_i\partial x_j}(\sqrt{1-t}F+\sqrt{t}Z)\langle DF_j,DL^{-1}F_i\rangle_H\right]~(\because\eqref{chain}).
\end{align*}
Hence we complete the proof.\hfill$\Box$
\fi
We have
\begin{align*}
\covariance[\psi(F)G]
&=E[\psi(F)(G-E[G])]
=E[\psi(F)LL^{-1}G]~(\because\eqref{OU})\\
&=-E[\psi(F)\delta DL^{-1}G]~(\because L=-\delta D)\\
&=-E[\langle D\psi(F),DL^{-1}G\rangle_H]~(\because\eqref{duality})\\
&=\sum_{i=1}^dE\left[\frac{\partial\psi}{\partial x_i}(F)\langle DF_i,-DL^{-1}G\rangle_H\right]~(\because\eqref{chain}).
\end{align*}
Hence we obtain the desired result.\hfill$\Box$
\fi

\section{Proofs}\label{sec:proofs}

\subsection{Proof of Proposition \ref{stein}}

\if0
Throughout this subsection, we assume that $F$ and $Z$ are independent without loss of generality. 
The proof of Proposition \ref{stein} relies on the following integration by parts formula for the Slepian interpolation:
\begin{lemma}\label{lemma:NP}
Let $\psi:\mathbb{R}^d\to\mathbb{R}$ be a $C^2$ function with bounded second derivatives and define the function $\Psi:[0,1]\to\mathbb{R}$ by $\Psi(t)=E[\varphi(\sqrt{1-t}F+\sqrt{t}Z)]$, $t\in[0,1]$. Then $\Psi$ is differentiable in $(0,1)$ and we have
\begin{equation}\label{eq:NP}
\Psi'(t)=\frac{1}{2}\sum_{i,j=1}^dE\left[\frac{\partial^2\varphi}{\partial x_i\partial x_j}(\sqrt{1-t}F+\sqrt{t}Z)(\mathfrak{C}(i,j)-\langle DF_i,-DL^{-1}F_j\rangle_H)\right]
\end{equation}
for every $t\in(0,1)$. 
\end{lemma}
Although the proof of this lemma is contained in the proof of Theorem 4.2 from \cite{NP2010}, we give it in Section \ref{proof:NP} of the Appendix for reader's convenience. 

\begin{proof}[\upshape\bfseries Proof of Proposition \ref{stein}]
The latter claim immediately follows from the former and \eqref{max-smooth}. To prove the former claim, we set $\varphi:=g\circ \Phi_\beta$. Then, by Lemma \ref{cck-derivative} we have
\[
\sum_{i,j=1}^d\left|\frac{\partial^2\varphi}{\partial x_ix_j}(x)\right|\leq\|g''\|_\infty+2\beta\|g'\|_\infty
\]
for any $x\in\mathbb{R}^d$. In particular, $\partial^2\varphi/\partial x_i\partial x_j$ is bounded for all $i,j=1,\dots,d$. Therefore, if we define the function $\Psi:[0,1]\to\mathbb{R}$ by $\Psi(t)=E[\varphi(\sqrt{1-t}F+\sqrt{t}Z)]$, $t\in[0,1]$, by Lemma \ref{lemma:NP} $\Psi$ is differentiable in $(0,1)$ and we have \eqref{eq:NP} for every $t\in[0,1]$. Consequently, we obtain
\[
\left|E\left[\varphi\left(F\right)\right]-E\left[\varphi\left(Z\right)\right]\right|
\leq\int_0^1|\Psi'(t)|dt
\leq(\|g''\|_\infty/2+\beta\|g'\|_\infty)\Delta,
\]
which completes the proof.
\end{proof}
\fi
The proof of Proposition \ref{stein} relies on the following integration by parts formula established by Nourdin and Peccati:
\begin{lemma}[Nordin-Peccati's formula]\label{lemma:NP}
Let $G\in L^2(W)$ and $\psi:\mathbb{R}^d\to\mathbb{R}$ be a $C^1$ function with bounded partial derivatives. Then we have
\[
\covariance[\psi(F),G]=\sum_{i=1}^dE\left[\frac{\partial\psi}{\partial x_i}(F)\langle DF_i,-DL^{-1}G\rangle_H\right].
\] 
\end{lemma}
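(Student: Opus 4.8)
The plan is to derive the identity by a short chain of manipulations with the three basic operators of Malliavin calculus ($D$, $\delta$, $L$), following the original argument of Nourdin and Peccati. First I would rewrite the covariance in terms of the Ornstein--Uhlenbeck operator: since $E[G]$ is precisely the projection of $G$ onto the zeroth Wiener chaos and $LL^{-1}G=G-E[G]$ for every $G\in L^2(W)$ (Proposition 2.8.11 of \cite{NP2012}), we get
\[
\covariance[\psi(F),G]=E[\psi(F)(G-E[G])]=E[\psi(F)\,LL^{-1}G].
\]
At this point one must check that all quantities are well defined; this is immediate because a $C^1$ function with bounded partial derivatives is globally Lipschitz, so $|\psi(F)|\le|\psi(0)|+C|F|$ with each $F_j\in L^2(W)$, hence $\psi(F)\in L^2(W)$ and the covariance is finite by Cauchy--Schwarz.

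Next I would invoke the factorization $L=-\delta D$ together with the duality formula \eqref{duality}. Because $L^{-1}G\in\mathrm{Dom}(L)$, we have $DL^{-1}G\in\mathrm{Dom}(\delta)$ and $LL^{-1}G=-\delta(DL^{-1}G)$; and because $\psi$ is $C^1$ with bounded derivatives and each $F_j\in\mathbb{D}_{1,2}$, the chain rule \eqref{chain} gives $\psi(F)\in\mathbb{D}_{1,2}$ with $D\psi(F)=\sum_{i=1}^d\frac{\partial\psi}{\partial x_i}(F)\,DF_i$. Applying \eqref{duality} with the test variable $\psi(F)$ and the process $u=DL^{-1}G$, and then substituting the chain rule expansion and using bilinearity of $\langle\cdot,\cdot\rangle_H$, yields
\[
E[\psi(F)\,LL^{-1}G]=-E[\psi(F)\,\delta(DL^{-1}G)]=-E[\langle D\psi(F),DL^{-1}G\rangle_H]=\sum_{i=1}^dE\left[\frac{\partial\psi}{\partial x_i}(F)\,\langle DF_i,-DL^{-1}G\rangle_H\right],
\]
which is the asserted formula.

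The only genuinely delicate part — and the step I would treat most carefully — is the bookkeeping of domains and integrability: verifying that $\psi(F)\in\mathbb{D}_{1,2}$, that $DL^{-1}G$ really lies in $\mathrm{Dom}(\delta)$ with $\delta(DL^{-1}G)=-LL^{-1}G$, and that each integrand appearing above is in $L^1$ so that \eqref{duality} and the interchange of the finite sum with the expectation are legitimate. Each of these is standard: the first uses the chain rule \eqref{chain} and the Lipschitz bound noted above; the second is the characterization $\{F\in\mathbb{D}_{1,2}:DF\in\mathrm{Dom}(\delta)\}=\mathrm{Dom}(L)$ with $LF=-\delta DF$ (Proposition 1.4.3 of \cite{Nualart2006}) applied to $F=L^{-1}G$; and the $L^1$ control follows from Cauchy--Schwarz once one observes that $\|D\psi(F)\|_H\le\sum_i\|\partial_i\psi\|_\infty\|DF_i\|_H\in L^2$ and $\|DL^{-1}G\|_H\in L^2$. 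No approximation argument beyond what is already built into the closures defining $D$, $\delta$ and $L$ is required, so the proof reduces to the four-line computation above once these domain issues are dispatched.
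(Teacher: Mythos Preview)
Your proof is correct and follows exactly the standard Nourdin--Peccati argument that the paper invokes (the paper omits the proof, citing Theorem 2.9.1 of \cite{NP2012}, which proceeds by the same chain $LL^{-1}\to -\delta D\to$ duality $\to$ chain rule). Your additional care with domain and integrability checks is appropriate and does not deviate from the intended route.
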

This lemma is a straightforward extension of Theorem 2.9.1 from \cite{NP2012}, so we omit its proof.

\begin{proof}[\upshape\bfseries Proof of Proposition \ref{stein}]
The latter claim immediately follows from the former and \eqref{max-smooth}. To prove the former claim, we may assume that $F$ and $Z$ are independent without loss of generality. Let us set $\varphi:=g\circ \Phi_\beta$ and define the function $\Psi:[0,1]\to\mathbb{R}$ by $\Psi(t)=E[\varphi(\sqrt{t}F+\sqrt{1-t}Z)]$, $t\in[0,1]$. We can easily check that $\Psi$ is continuous on $[0,1]$ and differentiable on $(0,1)$, and we have
\[
\Psi'(t)=\frac{1}{2}\sum_{j=1}^dE\left[\frac{\partial\varphi}{\partial x_j}(\sqrt{t}F+\sqrt{1-t}Z)\left(\frac{F_j}{\sqrt{t}}-\frac{Z_j}{\sqrt{1-t}}\right)\right]
\]
for every $t\in(0,1)$. Now, by Lemma \ref{cck-derivative} we have
\[
\sum_{i,j=1}^d\left|\frac{\partial^2\varphi}{\partial x_ix_j}(x)\right|\leq\|g''\|_\infty+2\beta\|g'\|_\infty
\]
for any $x\in\mathbb{R}^d$. In particular, $\partial^2\varphi/\partial x_i\partial x_j$ is bounded for all $i,j=1,\dots,d$. Therefore, noting that the independence between $Z$ and $F$, Stein's identity (e.g.~Lemma 2 of \cite{CCK2015}) implies that
\begin{align*}
\sum_{j=1}^dE\left[\frac{\partial\varphi}{\partial x_j}(\sqrt{t}F+\sqrt{1-t}Z)\frac{Z_j}{\sqrt{1-t}}\right]
=\sum_{i,j=1}^dE\left[\frac{\partial^2\varphi}{\partial x_i\partial x_j}(\sqrt{t}F+\sqrt{1-t}Z)\mathfrak{C}(i,j)\right].
\end{align*}
Moreover, Nourdin-Peccati's formula (Lemma \ref{lemma:NP}) yields
\begin{align*}
\sum_{j=1}^dE\left[\frac{\partial\varphi}{\partial x_j}(\sqrt{t}F+\sqrt{1-t}Z)\frac{F_j}{\sqrt{t}}\right]
=\sum_{i,j=1}^dE\left[\frac{\partial^2\varphi}{\partial x_i\partial x_j}(\sqrt{t}F+\sqrt{1-t}Z)\langle DF_i,-DL^{-1}F_j\rangle_H\right].
\end{align*}
Hence we conclude that
\begin{equation*}
\Psi'(t)=\frac{1}{2}\sum_{i,j=1}^dE\left[\frac{\partial^2\varphi}{\partial x_i\partial x_j}(\sqrt{t}F+\sqrt{1-t}Z)(\langle DF_i,-DL^{-1}F_j\rangle_H-\mathfrak{C}(i,j))\right]
\end{equation*}
for every $t\in(0,1)$. Consequently, we obtain
\[
\left|E\left[\varphi\left(F\right)\right]-E\left[\varphi\left(Z\right)\right]\right|
\leq\int_0^1|\Psi'(t)|dt
\leq(\|g''\|_\infty/2+\beta\|g'\|_\infty)\Delta,
\]
which completes the proof.
\end{proof}

\subsection{Proof of Lemma \ref{coupling}}

Let us set $\beta=\varepsilon^{-1}\log d$ (hence $\beta^{-1}\log d=\varepsilon$). From \eqref{max-smooth} we have
\[
P(F_\vee\in A)\leq P(\Phi_\beta(F)\in A^{\varepsilon})=E[1_{A^{\varepsilon}}(\Phi_\beta(F))].
\]
Next, by Lemma \ref{cck-approx} there is a $C^\infty$ function $g:\mathbb{R}\to\mathbb{R}$ and a universal constant $K>0$ such that $\|g'\|_\infty\leq\varepsilon^{-1}$, $\|g''\|_\infty\leq K\varepsilon^{-2}$, $\|g'''\|_\infty\leq K\varepsilon^{-3}$ and $1_{A^{\varepsilon}}(x)\leq g(x)\leq1_{A^{4\varepsilon}}(x)$ for any $x\in\mathbb{R}$. Then we obtain $E[1_{A^{\varepsilon}}(\Phi_\beta(F))]\leq E[g(\Phi_\beta(F))].$ 
Now, by Proposition \ref{stein} we have
\[
|E[g(\Phi_\beta(F))]-E[g(\Phi_\beta(Z))]|\leq C_1(\varepsilon^{-2}+\varepsilon^{-2}\log d)\Delta\leq C_2\varepsilon^{-2}(\log d)\Delta,
\]
where $C_1,C_2>0$ denote universal constants. 
Moreover, we also have 
\[
E[g(\Phi_\beta(Z))]
\leq E[1_{A^{4\varepsilon}}(\Phi_\beta(Z))]
\leq E[1_{A^{5\varepsilon}}(Z_\vee)]=P(Z_\vee\in A^{5\varepsilon}).
\]
This completes the proof.\hfill$\Box$

\subsection{Proof of Theorem \ref{kolmogorov}}

\if0
\begin{proof}[\upshape\bfseries Proof of Theorem \ref{kolmogorov}]
The latter claim follows from the inequality $a_d\leq\sqrt{2\log d}$, hence we prove \eqref{eq:kolmogorov}.

If $\Delta\geq1$, \eqref{eq:kolmogorov} holds true by taking $C=2$ (note that $d\geq2$). Therefore, it suffices to consider the case of $\Delta<1$. 

By Lemmas \ref{coupling} and \ref{cck-kolmogorov} there is a universal constant $C_0>0$ such that
\[
\sup_{x\in\mathbb{R}}\left|P(F_\vee\leq x)-P(Z_\vee\leq x)\right|
\leq\sup_{x\in\mathbb{R}}P(|Z_\vee-x|\leq 5\varepsilon)+C_0\varepsilon^{-2}(\log d)\Delta.
\]
Therefore, by Lemma \ref{cck-anti} we have
\[
\sup_{x\in\mathbb{R}}P(|Z_\vee-x|\leq 5\varepsilon)\leq C_1\varepsilon\{a_d+\sqrt{1\vee\log(\underline{\sigma}/\varepsilon)}\}
\leq C_2\varepsilon\sqrt{1\vee a_d^2\vee\log(1/\varepsilon)},
\]
where $C_1,C_2>0$ depend only on $\underline{\sigma}$ and $\overline{\sigma}$. Therefore, if $\Delta=0$, letting $\varepsilon$ tend to 0, we obtain the desired result. Otherwise, take
\[
\varepsilon=\Delta^{1/3}(1\vee a_d\vee\log^{1/2}(1/\Delta))^{-1/3}(2\log d)^{1/3}.
\]
Then the same argument as the one in the proof of Theorem 2 from \cite{CCK2015} yields \eqref{eq:kolmogorov}.
\end{proof}
\fi
If $\Delta\geq1$, \eqref{eq:kolmogorov}--\eqref{eq:kolmogorov2} hold true by taking $C=C'=2$ (note that $d\geq2$). Therefore, it suffices to consider the case of $\Delta<1$. 

By Lemmas \ref{coupling} and \ref{cck-kolmogorov} there is a universal constant $C_0>0$ such that
\[
\sup_{x\in\mathbb{R}}\left|P(F_\vee\leq x)-P(Z_\vee\leq x)\right|
\leq\sup_{x\in\mathbb{R}}P(|Z_\vee-x|\leq 5\varepsilon)+C_0\varepsilon^{-2}(\log d)\Delta.
\]
Now, under the assumptions of claim (a), Lemma \ref{cck-anti} yields
\[
\sup_{x\in\mathbb{R}}P(|Z_\vee-x|\leq 5\varepsilon)\leq C_1\varepsilon\{a_d+\sqrt{1\vee\log(\underline{\sigma}/\varepsilon)}\}
\leq C_2\varepsilon\sqrt{1\vee a_d^2\vee\log(1/\varepsilon)},
\]
where $C_1,C_2>0$ depend only on $\underline{\sigma}$ and $\overline{\sigma}$, while we have
\[
\sup_{x\in\mathbb{R}}P(|Z_\vee-x|\leq 5\varepsilon)\leq C_1\varepsilon\sqrt{\log d},
\] 
where $C_3>0$ depends only on $b$, under the assumptions of claim (b) due to Lemma \ref{cck-nazarov}. Consequently, if $\Delta=0$, letting $\varepsilon$ tend to 0, we obtain the desired results. Otherwise, take
\[
\varepsilon=\Delta^{1/3}(1\vee a_d\vee\log^{1/2}(1/\Delta))^{-1/3}(2\log d)^{1/3}
\]
for claim (a). Then the same argument as the one in the proof of Theorem 2 from \cite{CCK2015} yields \eqref{eq:kolmogorov}. For claim (b), taking $\varepsilon=\Delta^{1/3}(\log d)^{1/6}$, we obtain \eqref{eq:kolmogorov2}.\hfill$\Box$

\subsection{Proof of Lemma \ref{fourth-moment}}

By the triangular inequality it suffices to prove
\[
E\left[\max_{1\leq i,j\leq d}|\Delta_{i,j}|\right]\leq C_q\log^{q-1}\left(2d^2-1+e^{q-2}\right)\max_{1\leq k\leq d}\sqrt{E[F_k^4]-3E[F_k^2]^2},
\]
where $\Delta_{i,j}=E[F_iF_j]-\langle DF_i,-DL^{-1}F_j\rangle_H$. 
From the proof of Lemma 6.2.1 from \cite{NP2012} we deduce that
\[
\langle DF_i,DF_j\rangle_H
=q^2\sum_{r=1}^q(r-1)!\binom{q-1}{r-1}^2I_{2q-2r}(f_i\widetilde{\otimes}_rf_j).
\]
Since $-L^{-1}F_j=q^{-1}F_j$, we obtain
\begin{equation}\label{delta-formula}
\Delta_{i,j}=-q\sum_{r=1}^{q-1}(r-1)!\binom{q-1}{r-1}^2I_{2q-2r}(f_i\widetilde{\otimes}_rf_j).
\end{equation}
In particular, by Proposition \ref{gh-chaos} there is a constant $M_q>0$ which only depends on $q$ such that $\Delta_{i,j}$ is a sub-$(2q-2)$th chaos random variable relative to the scale $M_q\|\Delta_{i,j}\|_2$. Therefore, by Proposition \ref{max-sub-chaos} we have
\[
E\left[\max_{1\leq i,j\leq d}|\Delta_{i,j}|\right]\leq M_q\log^{q-1}\left(2d^2-1+e^{q-2}\right)\max_{1\leq k\leq d}\|\Delta_{i,j}\|_2.
\]
Therefore, the proof is completed once we show that
\begin{equation*}
\max_{1\leq i,j\leq d}E[\Delta_{i,j}^2]\leq\left\{\sum_{r=1}^{q-1}\binom{2r}{r}\right\}\max_{1\leq k\leq d}\left(E[F_k^4]-3E[F_k^2]^2\right),
\end{equation*}
which follows from the equation $\Delta_{i,j}=E[F_iF_j]-q^{-1}\langle DF_i,DF_j\rangle_H$ and Eq.(6.2.6) of \cite{NP2012}.\hfill$\Box$
\if0
\begin{equation}\label{sigma-bound}
\max_{1\leq i,j\leq d}E[\Delta_{i,j}^2]\leq\max_{1\leq k\leq d}\left(E[F_k^4]-3E[F_k^2]^2\right).
\end{equation}
From \eqref{delta-formula} we deduce that
\begin{align*}
E[\Delta_{i,j}^2]&=q^2\sum_{r=1}^{q-1}\left\{(r-1)!\binom{q-1}{r-1}^2\right\}^2\|f_i\widetilde{\otimes}_rf_j\|^2_{H^{\otimes(2q-2r)}}\\
&\leq q^2\sum_{r=1}^{q-1}\left\{(r-1)!\binom{q-1}{r-1}^2\right\}^2\|f_i\otimes_rf_j\|^2_{H^{\otimes(2q-2r)}}\\
&\leq q^2\sum_{r=1}^{q-1}\left\{(r-1)!\binom{q-1}{r-1}^2\right\}^2\|f_i\otimes_{q-r}f_i\|_{H^{\otimes(2r)}}\|f_j\otimes_{q-r}f_j\|_{H^{\otimes(2r)}}~(\because\text{Eq.(13) from \cite{NOL2008}})\\
&\leq q^2\max_{1\leq k\leq d}\sum_{r=1}^{q-1}\left\{(r-1)!\binom{q-1}{r-1}^2\right\}^2\|f_k\otimes_{q-r}f_k\|_{H^{\otimes(2r)}}^2~(\because\text{the Schwarz inequality})\\
&=q^2\max_{1\leq k\leq d}\sum_{r=1}^{q-1}\left\{(q-r-1)!\binom{q-1}{r}^2\right\}^2\|f_k\otimes_{r}f_k\|_{H^{\otimes(2(q-r))}}^2\\
&= q^2\max_{1\leq k\leq d}\sum_{r=1}^{q-1}\left\{\frac{(q-1)!}{r!}\binom{q-1}{r}\right\}^2\|f_k\otimes_{r}f_k\|_{H^{\otimes(2(q-r))}}^2\\
&\leq \max_{1\leq k\leq d}\sum_{r=1}^{q-1}\left\{q!\binom{q}{r}\right\}^2\|f_k\otimes_{r}f_k\|_{H^{\otimes(2(q-r))}}^2\\
&\leq\max_{1\leq k\leq d}\left(E[F_k^4]-3E[F_k^2]^2\right)~(\because\text{Eq.(5.2.6) from \cite{NP2012}}),
\end{align*}
hence we obtain \eqref{sigma-bound}.\hfill$\Box$
\fi
\if0
From \eqref{sigma-bound}, the proof is completed once we show that
\begin{equation}\label{max-delta}
E\left[\max_{1\leq i,j\leq d}\frac{|\Delta_{i,j}|}{\sigma_{i,j}}\right]\leq C_q\log^{q/2}\left(2d^2-1+e^{q/2-1}\right),
\end{equation}
where $C_q>0$ only depends on $q$. 
\if0
By \eqref{delta-formula} and Theorem 6.7 from \cite{Janson1997} there is a constant $c_q>0$ which only depend on $q$ such that
\[
P(|\Delta_{i,j}|>u\sigma_{i,j})\leq e^{-c_qu^{2/q}}
\]
for all $u>0$. Now, Theorem 8.16 from \cite{Rudin1987} yields
\[
E\left[\exp\left\{\frac{c_q}{2}\left(\frac{|\Delta_{i,j}|}{\sigma_{i,j}}\right)^{2/q}\right\}-1\right]=\int_0^\infty P\left(\exp\left\{\frac{c_q}{2}\left(\frac{|\Delta_{i,j}|}{\sigma_{i,j}}\right)^{2/q}\right\}-1>u\right)du,
\]
hence we have
\begin{align*}
E\left[\exp\left\{\frac{c_q}{2}\left(\frac{|\Delta_{i,j}|}{\sigma_{i,j}}\right)^{2/q}\right\}-1\right]
&=\int_0^\infty P\left(\frac{|\Delta_{i,j}|}{\sigma_{i,j}}>\{\log((1+u)^{2/c_q})\}^{q/2}\right)du\\
&\leq\int_0^\infty(1+u)^{-2}du=1.
\end{align*}
Now let us define the function $\psi_q:[0,\infty)\to[1,\infty)$ by $\psi_q(x)=\exp(\frac{c_q}{2}x^{2/q})$ for $x\geq0$. The above inequality implies that
\begin{align*}
E\left[\psi_q\left(\frac{|\Delta_{i,j}|}{\sigma_{i,j}}\right)\right]
\leq2.
\end{align*}
Moreover, we can easily check that $\psi_q$ has the inverse function given by $\psi_q^{-1}(y)=(\frac{2}{c_q}\log(y))^{q/2}$ for $y\geq1$. Since $\psi_q$ is increasing, we obtain
\begin{align*}
E\left[\max_{1\leq i,j\leq d}\frac{|\Delta_{i,j}|}{\sigma_{i,j}}\right]
&\leq E\left[\psi_q^{-1}\left(\max_{1\leq i,j\leq d}\psi_q\left(\frac{|\Delta_{i,j}|}{\sigma_{i,j}}\right)\right)\right]\\
&\leq E\left[\psi_q^{-1}\left(\left|\max_{1\leq i,j\leq d}\psi_q\left(\frac{|\Delta_{i,j}|}{\sigma_{i,j}}\right)-e^{q/2-1}\right|+e^{q/2-1}\right)\right].
\end{align*}
Now we can easily check that the function $[0,\infty)\ni y\mapsto\psi_q^{-1}(y+e^{q/2-1})\in[0,\infty)$ is concave, hence the Jensen inequality yields
\begin{align*}
E\left[\max_{1\leq i,j\leq d}\frac{|\Delta_{i,j}|}{\sigma_{i,j}}\right]
&\leq \psi_q^{-1}\left(E\left[\left|\max_{1\leq i,j\leq d}\psi_q\left(\frac{|\Delta_{i,j}|}{\sigma_{i,j}}\right)-e^{q/2-1}\right|+e^{q/2-1}\right]\right)\\
&\leq \psi_q^{-1}\left(\sum_{i,j=1}^dE\left[\psi_q\left(\frac{|\Delta_{i,j}|}{\sigma_{i,j}}\right)\right]+2e^{q/2-1}\right)\\
&\leq \psi_q^{-1}\left(2d^2+2e^{q/2-1}\right).
\end{align*}
This completes the proof of \eqref{max-delta}.
\fi
By Proposition \ref{max-sub-chaos}, it suffices to show that there is a constant $M_q>0$ which only depends on $q$ such that $\Delta_{i,j}/\sigma_{i,j}$ is sub-$q$-th chaos random variable relative to the scale $M_q$ for all $i,j=1,\dots,d$. By \eqref{delta-formula} and Theorem 6.7 from \cite{Janson1997} there is a constant $c_q>0$ which only depend on $q$ such that
\[
P(|\Delta_{i,j}|>u\sigma_{i,j})\leq e^{-c_qu^{2/q}}
\]
for all $u>0$. Now, Theorem 8.16 from \cite{Rudin1987} yields
\[
E\left[\exp\left\{\frac{c_q}{2}\left(\frac{|\Delta_{i,j}|}{\sigma_{i,j}}\right)^{2/q}\right\}-1\right]=\int_0^\infty P\left(\exp\left\{\frac{c_q}{2}\left(\frac{|\Delta_{i,j}|}{\sigma_{i,j}}\right)^{2/q}\right\}-1>u\right)du,
\]
hence we have
\begin{align*}
E\left[\exp\left\{\frac{c_q}{2}\left(\frac{|\Delta_{i,j}|}{\sigma_{i,j}}\right)^{2/q}\right\}-1\right]
&=\int_0^\infty P\left(\frac{|\Delta_{i,j}|}{\sigma_{i,j}}>\{\log((1+u)^{2/c_q})\}^{q/2}\right)du\\
&\leq\int_0^\infty(1+u)^{-2}du=1.
\end{align*}
Therefore, taking $M_q=(2/c_q)^{q/2}$, we complete the proof.
\fi

\subsection{Proof of Lemma \ref{poincare}}

\begin{lemma}\label{poincare-moment}
If $F_1,\dots,F_d\in\mathbb{D}_{2,4p}$ for a positive integer $p$, we have
\begin{multline*}
\max_{1\leq i,j\leq d}\left\|E[F_iF_j]-\langle DF_i,-DL^{-1}F_j\rangle_H\right\|_{2p}\\
\leq\sqrt{2p-1}\cdot\frac{3}{2}\left(\max_{1\leq i\leq d}\left\|\left\|D^2F_i\right\|_\text{op}\right\|_{4p}\right)\left(\max_{1\leq j\leq d}\left\|\left\|DF_j\right\|_H\right\|_{4p}\right).
\end{multline*}
\end{lemma}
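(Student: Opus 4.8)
The plan is to set $G_{ij}:=\langle DF_i,-DL^{-1}F_j\rangle_H$ and bound $\|G_{ij}-E[G_{ij}]\|_{2p}$ uniformly in $i,j$. First, Nourdin--Peccati's formula (Lemma~\ref{lemma:NP}) applied with $\psi(x)=x_i$ and $G=F_j$ gives $E[F_iF_j]=\covariance[F_i,F_j]=E[G_{ij}]$, so the left-hand side of the claimed inequality is exactly $\max_{i,j}\|G_{ij}-E[G_{ij}]\|_{2p}$. The second ingredient is the moment form of the Poincar\'e inequality underlying the second-order Poincar\'e inequalities of \cite{NPR2009}: for any $Y\in\mathbb{D}_{1,2p}$,
\[
\|Y-E[Y]\|_{2p}\leq\sqrt{2p-1}\left\|\left\|DY\right\|_H\right\|_{2p}.
\]
This follows by taking $\psi(y)=(y-E[Y])^{2p-1}$ in Lemma~\ref{lemma:NP} (after a routine truncation making $\psi$ have a bounded derivative), applying H\"older's inequality with exponents $\tfrac{p}{p-1},2p,2p$, using $\langle DY,-DL^{-1}Y\rangle_H\leq\|DY\|_H\|DL^{-1}Y\|_H$, and invoking the bound $\left\|\left\|DL^{-1}Y\right\|_H\right\|_{2p}\leq\left\|\left\|DY\right\|_H\right\|_{2p}$, which comes from Mehler's formula (see below). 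It then remains to estimate $\left\|\left\|DG_{ij}\right\|_H\right\|_{2p}$.

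Next I would differentiate $G_{ij}$. By the product rule for Malliavin derivatives,
\[
DG_{ij}=\langle D^2F_i,-DL^{-1}F_j\rangle_H+\langle DF_i,-D^2L^{-1}F_j\rangle_H
\]
as an $H$-valued random variable, the integrability needed to justify this (and the fact that $G_{ij}\in\mathbb{D}_{1,2p}$) being a consequence of $F_i,F_j\in\mathbb{D}_{2,4p}$. Since $\|\langle T,h\rangle_H\|_H\leq\|T\|_\text{op}\|h\|_H$ for $T\in H\otimes H$ and $h\in H$, this yields the pointwise bound
\[
\|DG_{ij}\|_H\leq\|D^2F_i\|_\text{op}\|DL^{-1}F_j\|_H+\|DF_i\|_H\|D^2L^{-1}F_j\|_\text{op}.
\]
To replace $L^{-1}F_j$ by $F_j$, I would use the semigroup representations $-DL^{-1}F_j=\int_0^\infty e^{-t}P_t(DF_j)\,dt$ and $-D^2L^{-1}F_j=\int_0^\infty e^{-2t}P_t(D^2F_j)\,dt$, where $P_t$ is the Ornstein--Uhlenbeck semigroup extended to Hilbert-space-valued random variables (these identities are checked chaos-by-chaos). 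Because $\|P_tG\|_H\leq P_t(\|G\|_H)$ and $\|P_tT\|_\text{op}\leq P_t(\|T\|_\text{op})$ by Jensen's inequality via Mehler's formula, and $P_t$ is an $L^{4p}$-contraction, it follows that $\left\|\left\|DL^{-1}F_j\right\|_H\right\|_{4p}\leq\left\|\left\|DF_j\right\|_H\right\|_{4p}$ and $\left\|\left\|D^2L^{-1}F_j\right\|_\text{op}\right\|_{4p}\leq\tfrac12\left\|\left\|D^2F_j\right\|_\text{op}\right\|_{4p}$.

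Finally, H\"older's inequality ($\tfrac{1}{2p}=\tfrac{1}{4p}+\tfrac{1}{4p}$) applied to the pointwise bound for $\|DG_{ij}\|_H$, combined with the last two estimates, gives
\[
\left\|\left\|DG_{ij}\right\|_H\right\|_{2p}\leq A_iB_j+\tfrac12 B_iA_j\leq\tfrac32\,AB,
\]
where $A_i=\left\|\left\|D^2F_i\right\|_\text{op}\right\|_{4p}$, $B_j=\left\|\left\|DF_j\right\|_H\right\|_{4p}$, $A=\max_iA_i$ and $B=\max_jB_j$; plugging this into the Poincar\'e inequality with $Y=G_{ij}$ and taking the maximum over $i,j$ yields the assertion. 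I expect the main obstacle to be the technical bookkeeping around $L^{-1}$ and its first and second Malliavin derivatives --- the product rule for $DG_{ij}$ with the correct integrability and the Mehler-formula estimates trading $L^{-1}F_j$ for $F_j$ at the cost of only harmless constants --- together with running the truncation argument that produces the sharp constant $\sqrt{2p-1}$; all of this is standard in the literature around \cite{NPR2009} but is where the care is needed.
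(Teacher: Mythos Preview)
Your proposal is correct and follows essentially the same route as the paper: identify $E[F_iF_j]$ with $E[G_{ij}]$ via Nourdin--Peccati, apply the $L^{2p}$ Poincar\'e inequality with constant $\sqrt{2p-1}$ to $G_{ij}$, differentiate via the product rule to get the two-term expression in $D^2F_i$ and $D^2L^{-1}F_j$, bound each term by operator-norm times $H$-norm, split with Cauchy--Schwarz into $4p$-norms, and use the Mehler/semigroup contraction estimates (with the $\tfrac12$ from $\int_0^\infty e^{-2t}\,dt$) to trade $L^{-1}F_j$ for $F_j$. The paper simply cites Lemmas 5.3.7--5.3.8 of \cite{NP2012} for the Poincar\'e inequality, the product rule, and the semigroup bounds that you spell out.
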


\begin{proof}
For any $i,j=1,\dots,d$, we have $E[\langle DF_i,-DL^{-1}F_j\rangle_H]=E[F_iF_j]$ by Nourdin-Peccati's formula. Therefore, it holds that
\begin{align*}
&\left\|E[F_iF_j]-\langle DF_i,-DL^{-1}F_j\rangle_H\right\|_{2p}\\
&\leq\sqrt{2p-1}\left\|D\langle DF_i,-DL^{-1}F_j\rangle_H\right\|_{2p}~(\because\text{Lemma 5.3.7 of \cite{NP2012}})\\
&=\sqrt{2p-1}\left\|\langle D^2F_i,-DL^{-1}F_j\rangle_H+\langle DF_i,-D^2L^{-1}F_j\rangle_H\right\|_{2p}~(\because\text{Lemma 5.3.8 of \cite{NP2012}})\\
&\leq\sqrt{2p-1}\left(\left\|\langle D^2F_i,-DL^{-1}F_j\rangle_H\right\|_{2p}+\left\|\langle DF_i,-D^2L^{-1}F_j\rangle_H\right\|_{2p}\right)\\
&\leq\sqrt{2p-1}\left(\left\|\left\|D^2F_i\right\|_\text{op}\left\|DL^{-1}F_j\right\|_H\right\|_{2p}+\left\|\left\|DF_i\right\|_H\left\|D^2L^{-1}F_j\right\|_\text{op}\right\|_{2p}\right)\\
&\leq\sqrt{2p-1}\left(\left\|\left\|D^2F_i\right\|_\text{op}\right\|_{4p}\left\|\left\|DL^{-1}F_j\right\|_H\right\|_{4p}+\left\|\left\|DF_i\right\|_H\right\|_{4p}\left\|\left\|D^2L^{-1}F_j\right\|_\text{op}\right\|_{4p}\right)~(\because\text{the Schwarz inequality})\\
&\leq\sqrt{2p-1}\left(\left\|\left\|D^2F_i\right\|_\text{op}\right\|_{4p}\left\|\left\|DF_j\right\|_H\right\|_{4p}+\frac{1}{2}\left\|\left\|DF_i\right\|_H\right\|_{4p}\left\|\left\|D^2F_j\right\|_\text{op}\right\|_{4p}\right)~(\because\text{Lemma 5.3.7 of \cite{NP2012}})\\
&\leq\sqrt{2p-1}\cdot\frac{3}{2}\left(\max_{1\leq i\leq d}\left\|\left\|D^2F_i\right\|_\text{op}\right\|_{4p}\right)\left(\max_{1\leq j\leq d}\left\|\left\|DF_j\right\|_H\right\|_{4p}\right).
\end{align*}
This completes the proof. 
\end{proof}

\begin{proof}[\upshape\bfseries Proof of Lemma \ref{poincare}]
Since we have 
\begin{align*}
&E\left[\max_{1\leq i,j\leq d}|E[F_iF_j]-\langle DF_i,-DL^{-1}F_j\rangle_H|\right]\\
&\leq \left\|\max_{1\leq i,j\leq d}|E[F_iF_j]-\langle DF_i,-DL^{-1}F_j\rangle_H|\right\|_{2p}\\
&\leq d^{1/p}\max_{1\leq i,j\leq d}\left\|E[F_iF_j]-\langle DF_i,-DL^{-1}F_j\rangle_H|\right\|_{2p},
\end{align*}
the first inequality follows from Lemma \ref{poincare-moment}. Next, if both the variables $\left\|D^2F_i\right\|_\text{op}$ and $\left\|DF_i\right\|_H$ are sub-Gaussian relative to the scale $a$ for all $i=1,\dots,d$, we have
\begin{align*}
\left\|E[F_iF_j]-\langle DF_i,-DL^{-1}F_j\rangle_H\right\|_{p}
&\leq \left\|E[F_iF_j]-\langle DF_i,-DL^{-1}F_j\rangle_H\right\|_{2p}\\
&\leq C_0p^{3/2}a^2
\end{align*}
with some universal constant $C_0>0$ for all $i,j=1,\dots,d$ and any integer $p\geq1$ by Lemma \ref{poincare-moment} and Lemma 1 of \cite{BK1981}. Therefore, by Lemma \ref{criteria-subchaos} there is another universal constant $C>0$ such that $E[F_iF_j]-\langle DF_i,-DL^{-1}F_j\rangle_H$ is a sub-3rd chaos random variable relative to the scale $Ca^2$. Now the second inequality of the lemma follows from Proposition \ref{max-sub-chaos}.  
\end{proof}

\subsection{Proof of Theorem \ref{gqf}}

Without loss of generality we may assume that $\boldsymbol{\xi}_n$ can be written as $\boldsymbol{\xi}_n=\Sigma_n^{1/2}\boldsymbol{\eta}_n$, where $\boldsymbol{\eta}_n$ is an $N_n$-dimensional standard normal variable. Set $H_n=\mathbb{R}^{N_n}$ and set $W_n(h)=h^\top\boldsymbol{\eta}_n$ for $h\in H_n$. Then, $W_n=(W_n(h))_{h\in H_n}$ is an isonormal Gaussian process over $H_n$ and we have $\boldsymbol{\eta}_n=(W_n(e_1),\dots,W_n(e_{N_n}))^\top$, where $(e_1,\dots,e_{N_n})$ is the canonical basis of $H_n$.   

Now let us denote by $\gamma_{n,k}(i,j)$ the $(i,j)$-th component of the $N_n\times N_n$ matrix $\Sigma_n^{1/2}A_{n.k}\Sigma_n^{1/2}$. Then by the product formula for multiple Wiener-It\^o integrals (e.g.~Theorem 2.7.10 of \cite{NP2012}) we can rewrite $F_{n,k}$ as
\[
F_{n,k}=\sum_{i,j=1}^{N_n}\gamma_{n,k}(i,j)I^{W_n}_2(e_i\otimes e_j)
=I^{W_n}_2(f_{n,k}),
\]
where $I^{W_n}_2(\cdot)$ denotes the double Wiener-It\^o integral with respect to $W_n$ and
\[
f_{n,k}=
\sum_{i,j=1}^N\gamma_{n,k}(i,j)e_i\otimes e_j.
\]
Therefore, by applying Theorem \ref{kolmogorov}, Corollary \ref{abs-kolmogorov} and Lemma \ref{fourth-moment}, we obtain the desired result.\hfill$\Box$

\subsection{Proof of Theorem \ref{kolmogorov-general}}

We first derive some non-asymptotic results.
For each $k=1,\dots,d$, let $\Gamma_k=(\gamma_k(i,j))_{1\leq i,j\leq N}$ be an $N\times N$ symmetric matrix such that $\gamma_k(i,i)=0$ for every $i=1,\dots,N$. 
Given a sequence $\xi=(\xi_i)_{i=1}^\infty$ of random variables, we set
\[
Q_k(\xi)=\sum_{i,j=1}^N\gamma_k(i,j)\xi_i\xi_j,\qquad k=1,\dots,d
\]
and $Q(\xi)=(Q_1(\xi),\dots,Q_d(\xi))^\top$. Also, we set
\begin{align*}
R_{1}&=\sum_{i=1}^{N}E\left[\max_{1\leq k\leq d}\left|\sum_{j=1}^{N}\gamma_{k}(i,j)W^{(i)}_j\right|^3\right](E[|Y_i|^{3}]+E[|G_i|^3]),\\
R_{2}&=\max_{1\leq k,l\leq d}\left|\mathfrak{C}(k,l)-E[Q_{k}(G)Q_{l}(G)]\right|,\\
R_{3}&=\max_{1\leq k\leq d}\sqrt{E[Q_{k}(G)^4]-3E[Q_{k}(G)^2]^2}.
\end{align*}
\begin{lemma}\label{stein-general}
Let $g:\mathbb{R}\to\mathbb{R}$ be a $C^3$ function with bounded derivatives up to the third order. For any $\beta>0$ we have
\begin{align*}
\left|E\left[g\left(\Phi_\beta(Q(Y))\right)\right]-E\left[g\left(\Phi_\beta(Q(G))\right)\right]\right|
&\leq\frac{4}{3}\left(\|g'''\|_\infty+6\|g''\|_\infty\beta+6\|g'\|_\infty\beta^2\right)R_1.
\end{align*}
\end{lemma}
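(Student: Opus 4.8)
The plan is to run the generalized Lindeberg replacement method along the interpolating path $W^{(0)},W^{(1)},\dots,W^{(N)}$, swapping one Gaussian coordinate for the corresponding $Y$-coordinate at each step. Since $Q_k(\xi)$ depends only on $(\xi_1,\dots,\xi_N)$, and $W^{(N)}_j=Y_j$ for $j\le N$ while $W^{(0)}_j=G_j$ for every $j$, we have $Q(W^{(N)})=Q(Y)$ and $Q(W^{(0)})=Q(G)$. Writing $\varphi:=g\circ\Phi_\beta$ (which is $C^3$ because $\Phi_\beta$ is smooth and $g$ is $C^3$), a telescoping sum gives
\[
E\left[\varphi(Q(Y))\right]-E\left[\varphi(Q(G))\right]
=\sum_{i=1}^{N}\Big(E\left[\varphi(Q(W^{(i)}))\right]-E\left[\varphi(Q(W^{(i-1)}))\right]\Big),
\]
so it suffices to estimate each summand.

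First I would isolate the $i$-th coordinate. The vectors $W^{(i)}$ and $W^{(i-1)}$ agree except in position $i$, where they equal $Y_i$ and $G_i$ respectively; let $W^{(i,0)}$ be the vector obtained from either one by zeroing out that coordinate. Because $\Gamma_k$ is symmetric and $\gamma_k(i,i)=0$, the map $t\mapsto Q_k(W^{(i,0)}+te_i)$ is affine:
\[
Q_k(W^{(i,0)}+te_i)=Q_k(W^{(i,0)})+2t\sum_{j=1}^{N}\gamma_k(i,j)W^{(i)}_j\qquad(t\in\mathbb{R}),
\]
the diagonal-free assumption being exactly what lets the coefficient of $t$ be read off $W^{(i)}$ (equivalently $W^{(i,0)}$) unambiguously. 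Setting $\Psi^{(i)}:=Q(W^{(i,0)})$ and $U_i:=\big(\sum_{j=1}^{N}\gamma_k(i,j)W^{(i)}_j\big)_{k=1}^{d}$, this reads $Q(W^{(i)})=\Psi^{(i)}+2Y_iU_i$ and $Q(W^{(i-1)})=\Psi^{(i)}+2G_iU_i$. The key structural point is that $\Psi^{(i)}$ and $U_i$ are functions of $(Y_1,\dots,Y_{i-1},G_{i+1},\dots,G_N)$ alone, hence independent of $Y_i$ and of $G_i$.

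Then I would carry out a third-order Taylor expansion. Conditioning on $(\Psi^{(i)},U_i)=(\psi,u)$ and setting $h(t):=\varphi(\psi+2tu)$, expanding $h$ at $0$ and using $E[Y_i]=E[G_i]=0$ and $E[Y_i^2]=E[G_i^2]=1$ makes the constant, linear, and quadratic terms cancel in $E[h(Y_i)]-E[h(G_i)]$, leaving the bound $\tfrac16\|h'''\|_\infty\big(E[|Y_i|^3]+E[|G_i|^3]\big)$. Since $h'''(t)=8\sum_{a,b,c=1}^{d}\frac{\partial^3\varphi}{\partial x_a\partial x_b\partial x_c}(\psi+2tu)\,u_au_bu_c$, Lemma \ref{cck-derivative} yields $\|h'''\|_\infty\le 8\big(\|g'''\|_\infty+6\beta\|g''\|_\infty+6\beta^2\|g'\|_\infty\big)\max_{1\le k\le d}|u_k|^3$. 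Taking expectations over $(\Psi^{(i)},U_i)$ and recognising $\max_{1\le k\le d}|U_{i,k}|=\max_{1\le k\le d}\big|\sum_{j=1}^{N}\gamma_k(i,j)W^{(i)}_j\big|$, then summing over $i=1,\dots,N$, produces exactly the claimed inequality with constant $8/6=4/3$.

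The only point requiring care is the treatment of the Taylor remainder in the presence of randomness: one must verify that the Lagrange (or integral) remainder for $h$ is dominated by $\|h'''\|_\infty$ — a quantity depending on $u$ alone — \emph{before} integrating out $Y_i$, so that the independence between $Y_i$ (resp.\ $G_i$) and $(\Psi^{(i)},U_i)$ can legitimately be used to replace $E[\,\cdot\,|u]E[|Y_i|^3]$-type factors. Everything else — the affine dependence of each $Q_k$ on a single coordinate (which is precisely where $\gamma_k(i,i)=0$ enters) and the derivative estimate from Lemma \ref{cck-derivative} — is routine bookkeeping; I expect no substantive obstacle.
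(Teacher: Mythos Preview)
Your proposal is correct and follows essentially the same approach as the paper: both run the generalized Lindeberg replacement along $W^{(0)},\dots,W^{(N)}$, exploit the diagonal-free assumption to write $Q(W^{(i)})$ and $Q(W^{(i-1)})$ as a common base plus a linear term in $Y_i$ (resp.\ $G_i$), Taylor-expand $g\circ\Phi_\beta$ to third order so that the first two moments cancel, and bound the remainder via Lemma~\ref{cck-derivative}. The only cosmetic difference is notation---your $(\Psi^{(i)},2U_i)$ is the paper's $(U_i,V_i)$---and the paper writes the remainder in integral form rather than Lagrange form; the resulting constant $4/3$ agrees.
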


\begin{proof}
The proof is based on the generalized Lindeberg method developed in \cite{MOO2010,NPR2010aop}. 
We start with introducing some notation following the proof of Proposition 11.4.2 from \cite{NP2012}. For $p=1,\dots,d$ and $i=1,\dots,N$, we define the variables $U_{p,i}$ and $V_{p,i}$ by
\[
U_{p,i}=\sum_{\begin{subarray}{c}
j,k=1\\
j\neq i,k\neq i
\end{subarray}}^N\gamma_p(j,k)W^{(i)}_jW^{(i)}_k,\qquad
V_{p,i}=2\sum_{j=1}^N\gamma_p(i,j)W^{(i)}_{j}
\]
and set $U_i=(U_{p,i})_{p=1}^d,V_i=(V_{p,i})_{p=1}^d$. By construction both $Y_i$ and $G_i$ are independent of $(U_i,V_i)$ (note that $\gamma_p(i,i)=0$ for every $p$), and we have
\begin{align*}
U_{p,i}+Y_iV_{p,i}=Q_p(W^{(i)}),\qquad
U_{p,i}+G_iV_{p,i}=Q_p(W^{(i-1)}).
\end{align*}
Set $h=g\circ\Phi_\beta$. Noting that $E[Y_i]=E[G_i]=0$ and $E[Y_i^2]=E[G_i^2]=1$ as well as the independence between $Y_i,G_i$ and $(U_i,V_i)$, the Taylor theorem yields
\begin{align*}
E[h(U_i+\xi_iV_i)]
&=\frac{1}{2}\sum_{p,q=1}^dE\left[\frac{\partial^2h}{\partial x_p\partial x_q}(U_i)V_{p,i}V_{q_i}\right]\\
&\quad+\frac{1}{2}\sum_{p,q,r=1}^d\int_0^1(1-t)^2E\left[\frac{\partial^3h}{\partial x_p\partial x_q\partial x_r} (U_i+t\xi_iV_i)V_{p,i}V_{q,i}V_{r,i}\xi_i^{3}\right]dt
\end{align*}
when $\xi_i=Y_i$ or $\xi_i=G_i$. This implies that
\begin{align*}
&\left|E[h(Q(W^{(i)}))]-E[h(Q(W^{(i-1)}))]\right|\\
&\leq\frac{1}{2}\int_0^1(1-t)^2E\left[\sum_{p,q,r=1}^d\left|\frac{\partial^3h}{\partial x_p\partial x_q\partial x_r} (U_i+tY_iV_i)\right|\max_{1\leq p,q,r\leq d}|V_{p,i}V_{q,i}V_{r,i}||Y_i|^{3}\right]dt\\
&\quad+\frac{1}{2}\int_0^1(1-t)^2E\left[\sum_{p,q,r=1}^d\left|\frac{\partial^3h}{\partial x_p\partial x_q\partial x_r} (U_i+tG_iV_i)\right|\max_{1\leq p,q,r\leq d}|V_{p,i}V_{q,i}V_{r,i}||G_i|^3\right]dt.
\end{align*}
Therefore, by Lemma \ref{cck-derivative} we obtain
\begin{align*}
&\left|E[h(Q(W^{(i)}))]-E[h(Q(W^{(i-1)}))]\right|\\
&\leq\frac{1}{6}\left(\|g'''\|_\infty+6\|g''\|_\infty\beta+6\|g'\|_\infty\beta^2\right)E\left[\max_{1\leq p,q,r\leq d}|V_{p,i}V_{q,i}V_{r,i}|(|Y_i|^{3}+|G_i|^3)\right]\\
&\leq\frac{1}{6}\left(\|g'''\|_\infty+6\|g''\|_\infty\beta+6\|g'\|_\infty\beta^2\right)E\left[\max_{1\leq p\leq d}|V_{p,i}|^3\right](E[|Y_i|^{3}]+E[|G_i|^3]),
\end{align*}
where we use the independence between $Y_i,G_i$ and $V_i$ as well as the inequality of arithmetic and geometric means to obtain the last inequality. Now, using the identity
\begin{align*}
E[h(Q(Y))]-E[h(Q(G))]
&=\sum_{i=1}^N\left(E[h(Q(W^{(i)}))]-E[h(Q(W^{(i-1)}))]\right),
\end{align*}
we obtain the desired result.
\end{proof}


\begin{lemma}\label{coupling-general}
For any $\varepsilon>0$ and any Borel set $A$ of $\mathbb{R}$, we have
\begin{align*}
P(Q_\vee(Y)\in A)\leq P(Z_\vee\in A^{5\varepsilon})+C\left\{\varepsilon^{-3}(\log^2 d)R_{1}+\varepsilon^{-2}(\log d)R_{2}+\varepsilon^{-2}(\log^2d)R_{3}\right\},
\end{align*}
where $C>0$ is a universal constant.
\end{lemma}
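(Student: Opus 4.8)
The plan is to mimic the proof of Lemma \ref{coupling}, inserting one extra step: first pass from $Q(Y)$ to $Q(G)$ by the Lindeberg-type estimate of Lemma \ref{stein-general}, and then pass from $Q(G)$ to $Z$ by viewing $Q(G)$ as a vector of double Wiener--It\^o integrals and invoking Proposition \ref{stein} together with Lemma \ref{fourth-moment}. So I would fix $\beta=\varepsilon^{-1}\log d$, so that $\beta^{-1}\log d=\varepsilon$, and use \eqref{max-smooth} to obtain
\[
P(Q_\vee(Y)\in A)\leq P(\Phi_\beta(Q(Y))\in A^\varepsilon)=E\left[1_{A^\varepsilon}(\Phi_\beta(Q(Y)))\right].
\]
By Lemma \ref{cck-approx} there is a $C^\infty$ function $g$ and a universal constant $K>0$ such that $\|g'\|_\infty\leq\varepsilon^{-1}$, $\|g''\|_\infty\leq K\varepsilon^{-2}$, $\|g'''\|_\infty\leq K\varepsilon^{-3}$ and $1_{A^\varepsilon}(x)\leq g(x)\leq1_{A^{4\varepsilon}}(x)$ for all $x$, so $E[1_{A^\varepsilon}(\Phi_\beta(Q(Y)))]\leq E[g(\Phi_\beta(Q(Y)))]$. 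This single $g$ simultaneously meets the hypotheses of Lemma \ref{stein-general} (a $C^3$ function with bounded first, second and third derivatives) and of Proposition \ref{stein} (a $C^2$ function with bounded first and second derivatives).

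Next I would apply Lemma \ref{stein-general} to this $g$: since $\beta=\varepsilon^{-1}\log d$, the expression $\|g'''\|_\infty+6\|g''\|_\infty\beta+6\|g'\|_\infty\beta^2$ is at most $\varepsilon^{-3}(K+6K\log d+6\log^2 d)$, which is $\leq C\varepsilon^{-3}\log^2 d$ for $d\geq2$ (because $\log^2 d$ is bounded away from $0$ there), whence
\[
\left|E[g(\Phi_\beta(Q(Y)))]-E[g(\Phi_\beta(Q(G)))]\right|\leq C\varepsilon^{-3}(\log^2 d)R_1.
\]
For the Gaussian step, set $H=\mathbb{R}^N$ and define the isonormal Gaussian process $W$ over $H$ by $W(h)=\sum_{i=1}^N h_iG_i$; since $\gamma_k(i,i)=0$, the product formula for multiple Wiener--It\^o integrals (exactly as in the proof of Theorem \ref{gqf}) gives $Q_k(G)=I_2(f_k)$ with $f_k=\sum_{i,j}\gamma_k(i,j)e_i\otimes e_j\in H^{\odot 2}$. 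Proposition \ref{stein} applied to the vector $(Q_1(G),\dots,Q_d(G))^\top$ then yields
\[
\left|E[g(\Phi_\beta(Q(G)))]-E[g(\Phi_\beta(Z))]\right|\leq\left(\frac{1}{2}\|g''\|_\infty+\beta\|g'\|_\infty\right)\Delta\leq C\varepsilon^{-2}(\log d)\Delta,
\]
where $\Delta=E[\max_{1\leq i,j\leq d}|\mathfrak{C}(i,j)-\langle DQ_i(G),-DL^{-1}Q_j(G)\rangle_H|]$, while Lemma \ref{fourth-moment} with $q=2$ bounds $\Delta\leq R_2+C_2\log(2d^2)\,R_3\leq R_2+C(\log d)R_3$. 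Combining these, $|E[g(\Phi_\beta(Q(G)))]-E[g(\Phi_\beta(Z))]|\leq C\varepsilon^{-2}(\log d)R_2+C\varepsilon^{-2}(\log^2 d)R_3$.

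Finally, $E[g(\Phi_\beta(Z))]\leq E[1_{A^{4\varepsilon}}(\Phi_\beta(Z))]\leq E[1_{A^{5\varepsilon}}(Z_\vee)]=P(Z_\vee\in A^{5\varepsilon})$ by \eqref{max-smooth} once more, and chaining the four displayed inequalities gives the claim with a universal $C$. Since each individual step is a direct application of a result already proved in the excerpt, there is no genuine obstacle here; the only points requiring care are the bookkeeping that absorbs the mixed powers of $\varepsilon^{-1}$ and $\log d$ into the stated three terms (using $d\geq2$) and checking that one and the same smoothing function $g$ can feed both Lemma \ref{stein-general} and Proposition \ref{stein}.
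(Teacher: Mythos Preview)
Your proposal is correct and follows essentially the same route as the paper's own proof: set $\beta=\varepsilon^{-1}\log d$, sandwich with the smoothing function $g$ from Lemma~\ref{cck-approx}, use Lemma~\ref{stein-general} for the $Q(Y)\to Q(G)$ step, then Proposition~\ref{stein} combined with Lemma~\ref{fourth-moment} (via the $I_2$ representation of $Q_k(G)$) for the $Q(G)\to Z$ step, and finish with \eqref{max-smooth}. The bookkeeping you flag (absorbing lower powers of $\log d$ using $d\geq2$, and checking one $g$ serves both lemmas) is exactly what the paper does.
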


\begin{proof}
Let us set $\beta=\varepsilon^{-1}\log d$ (hence $\beta^{-1}\log d=\varepsilon$). From \eqref{max-smooth} we have
\[
P(Q_\vee(Y)\in A)\leq P(\Phi_\beta(Q(Y))\in A^{\varepsilon})=E[1_{A^{\varepsilon}}(\Phi_\beta(Q(Y)))].
\]
Next, by Lemma \ref{cck-approx} there is a $C^\infty$ function $g:\mathbb{R}\to\mathbb{R}$ and a universal constant $K>0$ such that $\|g'\|_\infty\leq\varepsilon^{-1}$, $\|g''\|_\infty\leq K\varepsilon^{-2}$, $\|g'''\|_\infty\leq K\varepsilon^{-3}$ and $1_{A^{\varepsilon}}(t)\leq g(t)\leq1_{A^{4\varepsilon}}(t)$ for any $t\in\mathbb{R}$. Then we obtain $E[1_{A^{\varepsilon}}(\Phi_\beta(Q(Y)))]\leq E[g(\Phi_\beta(Q(Y)))].$ 
Now, by Lemma \ref{stein-general} we have
\[
|E[g(\Phi_\beta(Q(Y)))]-E[g(\Phi_\beta(Q(G)))]|\leq C_1\left(\varepsilon^{-3}+\varepsilon^{-3}\log d+\varepsilon^{-3}\log^2d\right)R_1
\leq C_2 \varepsilon^{-3}(\log^2d)R_1,
\]
where $C_1,C_2>0$ denote universal constants. 
Moreover, as in the proof of Theorem \ref{gqf}, we can apply Proposition \ref{stein} as well as Lemma \ref{fourth-moment} to $Q(G)$ and thus obtain
\[
|E[g(\Phi_\beta(Q(G)))]-E[g(\Phi_\beta(Z))]|\leq C_3\varepsilon^{-2}(\log d)\{R_2+(\log d)R_3\}.
\] 
with some universal constant $C_3>0$. Since we have
\begin{align*}
E[g(\Phi_\beta(Z))]
\leq E[1_{A^{4\varepsilon}}(\Phi_\beta(Z))]
\leq E[1_{A^{5\varepsilon}}(Z_\vee)]=P(Z_\vee\in A^{5\varepsilon}),
\end{align*}
the proof is completed.
\end{proof}

\begin{proof}[\upshape\bfseries Proof of Theorem \ref{kolmogorov-general}]
By Lemmas \ref{coupling-general}, \ref{cck-kolmogorov} and \ref{cck-nazarov}, we obtain
\begin{align*}
&\sup_{x\in\mathbb{R}}\left|P(Q_\vee(Y)\leq x)-P(Z_\vee\leq x)\right|\\
&\leq C\left\{\varepsilon\sqrt{\log d_n}
+\varepsilon^{-3}(\log^2 d_n)R_{n,1}+\varepsilon^{-2}(\log d_n)R_{n,2}+\varepsilon^{-2}(\log^2d_n)R_{n,3}\right\}
\end{align*}
where $C>0$ depends only on $b$. 
Therefore, taking $\varepsilon=R_{n,1}^{1/4}(\log d_n)^{3/8}\vee R_{n,2}^{1/3}(\log d_n)^{1/6}\vee R_{n,3}^{1/3}\sqrt{\log d_n}$, we obtain the desired result.
\end{proof}

\subsection{Proof of Lemma \ref{lemma:hyper}}\label{proof:hyper}

\if0
\begin{definition}
A centered random variable $Y$ is said to be sub-Gaussian relative to the scale $\sigma$ if for all $\lambda > 0$,
\[
E[\exp(\lambda Y)]\leq\exp(\lambda^2\sigma^2/2).
\]
\end{definition}
\fi

We first prove the claim (a). 
For the proof it is convenient to introduce the notion of the hypercontractivity of a random variable. Let us recall that a random variable $Y$ is said to be \textit{$(p,q,\eta)$-hypercontractive} for $1\leq p\leq q<\infty$ and $0<\eta<1$ if 
\[
\|a+\eta Y\|_q\leq\|a+ Y\|_p
\]
for all $a\in\mathbb{R}$. We refer to  \cite{KS1988PTRF}, \cite{KW1992} and \cite{MOO2010} for more details on this notion. We note that, if $Y$ is $(p,q,\eta)$-hypercontractive and $E[Y]=0$, then $Y$ is $(p,q,\eta')$-hypercontractive for all $\eta'\in(0,\eta)$ (this follows from Lemma 1.1 of \cite{KS1988PTRF}). Lemma \ref{lemma:hyper} is obtained as a special case of the following result:
\begin{lemma}
Suppose that there are numbers $p\in[3,\infty)$ and $\eta\in(0,1)$ such that $Y_i$ is $(2,p,\eta)$-hypercontractive for all $i=1,\dots,N_n$. Then we have
\begin{align*}
\sum_{i=1}^{N_n}E\left[\max_{1\leq k\leq d_n}\left|\sum_{j=1}^{N_n}\gamma_{n,k}(i,j)W^{(i)}_j\right|^3\right]
\leq d_n^{3/p}\eta^{-3}\sum_{i=1}^{N_n}\max_{1\leq k\leq d_n}\left(\sum_{j=1}^{N_n}\gamma_{n,k}(i,j)^2\right)^{3/2}.
\end{align*}
\end{lemma}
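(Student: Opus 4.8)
The plan is to reduce everything to a single moment comparison for the linear forms
\[
S_{i,k}:=\sum_{j=1}^{N_n}\gamma_{n,k}(i,j)W^{(i)}_j,\qquad 1\le i\le N_n,\ 1\le k\le d_n,
\]
and then combine it with an elementary bound of the maximum by an $\ell^p$-norm and with Jensen's inequality. Because $\gamma_{n,k}(i,i)=0$, the $j=i$ term drops and $S_{i,k}=\sum_{1\le j<i}\gamma_{n,k}(i,j)Y_j+\sum_{i<j\le N_n}\gamma_{n,k}(i,j)G_j$, so $S_{i,k}$ is a degree-one multilinear form in the independent, centered, unit-variance random variables $(W^{(i)}_j)_{j\ne i}$ (recall $E[Y_i]=0$, $E[Y_i^2]=1$, and the $G_j$ are standard Gaussian and independent of the $Y$'s).

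The key step is to show that each building block $W^{(i)}_j$, $j\ne i$, is $(2,p,\eta)$-hypercontractive. For the entries $Y_j$ this holds by hypothesis. For the Gaussian entries $G_j$, I would first record the elementary fact that any centered unit-variance random variable $Y$ which is $(2,p,\eta)$-hypercontractive must satisfy $\eta\le(p-1)^{-1/2}$: testing $\|a+\eta Y\|_p\le\|a+Y\|_2$ at $a=t$ and letting $t\to\infty$ gives, by a routine dominated-convergence Taylor expansion (using $p\ge3$ to dominate the remainder by a constant multiple of $|Y|^3+|Y|^p$), $\|t+\eta Y\|_p=t+\tfrac{(p-1)\eta^2}{2t}+o(t^{-1})$ while $\|t+Y\|_2=t+\tfrac1{2t}+o(t^{-1})$, forcing $(p-1)\eta^2\le1$. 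Combined with the classical hypercontractivity of the one-dimensional Gaussian measure and the fact that a centered variable that is $(2,p,\eta_0)$-hypercontractive is $(2,p,\eta)$-hypercontractive for every $\eta\le\eta_0$ (Lemma 1.1 of \cite{KS1988PTRF}), this shows each $G_j$ is $(2,p,\eta)$-hypercontractive as well. Since multiplying such a variable by a constant preserves the property, the tensorization of hypercontractivity for linear forms in independent hypercontractive variables (see, e.g., \cite{KS1988PTRF,KW1992,MOO2010}) yields the moment comparison $\|S_{i,k}\|_p\le\eta^{-1}\|S_{i,k}\|_2$, and by independence and orthogonality of the centered summands $\|S_{i,k}\|_2^2=\sum_{j=1}^{N_n}\gamma_{n,k}(i,j)^2$.

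The rest is bookkeeping. For fixed $i$ we have $\max_{1\le k\le d_n}|S_{i,k}|^3\le\big(\sum_{k=1}^{d_n}|S_{i,k}|^p\big)^{3/p}$, and since $p\ge3$ the map $t\mapsto t^{3/p}$ is concave on $[0,\infty)$, so Jensen's inequality together with the moment comparison gives
\[
E\Big[\max_{1\le k\le d_n}|S_{i,k}|^3\Big]\le\Big(\sum_{k=1}^{d_n}E[|S_{i,k}|^p]\Big)^{3/p}\le\Big(d_n\,\eta^{-p}\max_{1\le k\le d_n}\Big(\sum_{j=1}^{N_n}\gamma_{n,k}(i,j)^2\Big)^{p/2}\Big)^{3/p}=d_n^{3/p}\eta^{-3}\max_{1\le k\le d_n}\Big(\sum_{j=1}^{N_n}\gamma_{n,k}(i,j)^2\Big)^{3/2};
\]
summing over $i=1,\dots,N_n$ delivers the asserted inequality. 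The main obstacle is precisely the hypercontractivity step, and within it the point that the Gaussian coordinates $G_j$ are governed by the same scale $\eta$ as the $Y_j$ — this is exactly what the observation $\eta\le(p-1)^{-1/2}$ provides; once the uniform comparison $\|S_{i,k}\|_p\le\eta^{-1}\|S_{i,k}\|_2$ is in hand, the remaining estimates are elementary.
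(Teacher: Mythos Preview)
Your proof is correct and follows essentially the same route as the paper's: both establish that every $W^{(i)}_j$ is $(2,p,\eta)$-hypercontractive by first observing $\eta\le(p-1)^{-1/2}$ (you derive this via a Taylor expansion, the paper cites \cite{MOO2010}) and then invoking Gaussian hypercontractivity with the monotonicity from \cite{KS1988PTRF}; after that, both use the tensorized moment comparison $\|S_{i,k}\|_p\le\eta^{-1}\|S_{i,k}\|_2$ (the paper points to Remark~3.3.1 of \cite{KW1992}). Your final step---bounding the max by the $\ell^p$-sum and applying Jensen for the concave map $t\mapsto t^{3/p}$---is equivalent to the paper's use of the Lyapunov inequality $\|\cdot\|_3\le\|\cdot\|_p$ followed by $\max\le\sum$, so the arguments are essentially the same.
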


\begin{proof}
First we note that $\eta\leq(p-1)^{-1/2}$ (see page 333 of \cite{MOO2010}). Then, since $G_i$ is $(2,p,(p-1)^{-1/2})$-hypercontractive for all $i$ by Corollary 3.4.1 of \cite{KW1992}, $W^{(i)}_j$ is $(2,p,\eta)$-hypercontractive for all $i,j$. Therefore, the desired inequality follows from the Lyapunov inequality and Remark 3.3.1 of \cite{KW1992}. 
\end{proof}

\begin{proof}[\upshape\bfseries Proof of Lemma \ref{lemma:hyper}(a)]
By Proposition 3.16 from \cite{MOO2010}, $Y_i$ is $(2,p,(2\sqrt{p-1}\|Y_i\|_p)^{-1})$-hypercontractive for all $i=1,\dots,N_n$. Hence the desired result immediately follows from the above lemma.
\end{proof}

Next we turn to the claim (b). For the proof we use some elementary properties on the sub-Gaussian property which we enumerate for convenience. 
In this case we always have $E[Y]=0$ and $\variance[Y]\leq b^2$ (cf.~Proposition 2.1 of \cite{subgauss2012}). 
A standard Gaussian variable is evidently sub-Gaussian relative to the scale 1. 
If $Y_1,\dots,Y_m$ are independent random variables and $Y_i$ is sub-Gaussian relative to the scale $a_i$ for each $i=1,\dots,m$, then $\sum_{i=1}^mY_i$ is sub-Gaussian relative to the scale $\sqrt{\sum_{i=1}^ma_i^2}$ (cf.~Lemma 3 of \cite{BK1981}). 
Finally, if $Y$ is sub-Gaussian relative to the scale $a$, then $Y$ is a sub-1st chaos random variable relative to the scale $\sqrt{5}a$ (see Remark 3.1 of \cite{VV2007}). 

\begin{proof}[\upshape\bfseries Proof of Lemma \ref{lemma:hyper}(b)]
First note that $a\geq\sqrt{\variance[Y_1]}=1$. Therefore, the variable $W^{(i)}_j$ is sub-Gaussian relative to the scale $a$ for all $i,j$, and thus the variable $\sum_{j=1}^{N_n}\gamma_{n,k}(i,j)W^{(i)}_j$ is sub-Gaussian relative to the scale $a\sqrt{\sum_{j=1}^{N_n}\gamma_{n,k}(i,j)^2}$ for all $i=1,\dots,N_n$ and $k=1,\dots,d_n$. Consequently, Proposition \ref{max-sub-chaos} yields 
\[
E\left[\max_{1\leq k\leq d_n}\left|\sum_{j=1}^N\gamma_k(i,j)W^{(i)}_j\right|^3\right]
\leq \left(5a^2\log(2d_n-1+\sqrt{e})\max_{1\leq k\leq d_n}\sum_{j=1}^{N_n}\gamma_{n,k}(i,j)^2\right)^{3/2}
\]
for all $i$. This completes the proof. 
\end{proof}

\subsection{Proof of Corollary \ref{criterion}}

Since we have
\[
\sum_{i=1}^{N_n}\left(\max_{1\leq k\leq d_n}\sum_{j=1}^{N_n}\gamma_{n,k}(i,j)^2\right)^{3/2}
\leq\left(\max_{1\leq i\leq N_n}\sqrt{\Lambda_i}\right)\sum_{i=1}^{N_n}\Lambda_i,
\]
the convergence $R_{n,1}\log^{\frac{7}{2}}d_n\to0$ follows from Lemma \ref{lemma:hyper} and assumptions. 
The convergence $R_{n,3}\log^3 d_n\to0$ follows from Eq.(11) of \cite{DY2011} and assumptions.\hfill$\square$

\subsection{Proof of Proposition \ref{hry}}

In the following subsections, for (possibly random) sequences $(x_n)$ and $(y_n)$, $x_n\lesssim y_n$ means that there exists a (non-random) constant $C\in[0,\infty)$, which depends only on the model parameters such as $\sigma_1,\sigma_2$ and the constants appearing in assumptions, such that $x_n\leq Cy_n$ a.s.~for large $n$. 
\begin{lemma}\label{hry-variance}
Under the assumptions of Proposition \ref{hry}, we have
\[
\left\{1-\left(\sum_{m=1}^M|\rho_m|\right)^2\right\}\underline{v}\leq E[F_n(\theta)^2]\leq\left\{1+\left(\sum_{m=1}^M|\rho_m|\right)^2\right\}\overline{v}
\]
for every $n\in\mathbb{N}$ and every $\theta\in\mathcal{G}_n$. 
\end{lemma}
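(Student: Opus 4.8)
The plan is to exploit that $F_n(\theta)$ is, up to the factor $\sqrt n$, a centered quadratic form in the jointly Gaussian variables $\xi_I:=X^1(I)$ ($I\in\Pi^1_n$) and $\eta_J:=X^2(J)$ ($J\in\Pi^2_n$), so that $E[F_n(\theta)^2]$ is computed exactly by Isserlis' formula (Wick's theorem). Writing $F_n(\theta)=\sqrt n\sum_{I\in\Pi^1_n,J\in\Pi^2_n}K(I,J_{-\theta})(\xi_I\eta_J-E[\xi_I\eta_J])$ and using
\[
\covariance(\xi_I\eta_J,\xi_{I'}\eta_{J'})=E[\xi_I\xi_{I'}]E[\eta_J\eta_{J'}]+E[\xi_I\eta_{J'}]E[\xi_{I'}\eta_J],
\]
one gets $E[F_n(\theta)^2]=V_n(\theta)+R_n(\theta)$. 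The first term comes from the ``diagonal'' contractions: since $B^1$ and $B^2$ are standard Brownian motions with independent increments, $E[\xi_I\xi_{I'}]=\delta_{II'}\int_I\sigma_1(t)^2\,dt$ and $E[\eta_J\eta_{J'}]=\delta_{JJ'}\int_J\sigma_2(t)^2\,dt$, while $K(I,J_{-\theta})^2=K(I,J_{-\theta})$, so this contribution is exactly $V_n(\theta)$ as in \ref{avar-bound}; the remainder is
\[
R_n(\theta)=n\sum_{I,I',J,J'}K(I,J_{-\theta})K(I',J'_{-\theta})\,c_{IJ'}c_{I'J},\qquad c_{IJ}:=E[\xi_I\eta_J].
\]

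It then suffices to prove $|R_n(\theta)|\le\rho^2V_n(\theta)$, where $\rho:=\sum_{m=1}^M|\rho_m|<1$: this yields $(1-\rho^2)V_n(\theta)\le E[F_n(\theta)^2]\le(1+\rho^2)V_n(\theta)$, and the claim then follows from \ref{avar-bound} together with $1-\rho^2>0$. Set $s^1_I=(\int_I\sigma_1(t)^2\,dt)^{1/2}$ and $s^2_J=(\int_J\sigma_2(t)^2\,dt)^{1/2}$ (discard any interval on which these vanish, since the corresponding $\xi_I$ or $\eta_J$ is a.s.\ $0$ and contributes nothing to $V_n(\theta)$, $R_n(\theta)$ or $E[F_n(\theta)^2]$), and introduce the rescaled cross-covariance matrix $\widetilde C=(c_{IJ}/(s^1_Is^2_J))$ and the rescaled incidence matrix $\widehat P=(K(I,J_{-\theta})\,s^1_Is^2_J)$. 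A short matrix computation identifies $R_n(\theta)=n\,\trace\big((\widetilde C^{\top}\widehat P)^2\big)$, whence, by $|\trace(M^2)|\le\|M\|_F^2$ and $\|AB\|_F\le\|A\|_{\mathrm{sp}}\|B\|_F$,
\[
|R_n(\theta)|\le n\,\|\widetilde C\|_{\mathrm{sp}}^2\,\|\widehat P\|_F^2 .
\]
Since $K(I,J_{-\theta})\in\{0,1\}$ we have $\|\widehat P\|_F^2=\sum_{I,J}K(I,J_{-\theta})(s^1_I)^2(s^2_J)^2=V_n(\theta)/n$, so the whole matter reduces to showing $\|\widetilde C\|_{\mathrm{sp}}\le\rho$.

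This operator-norm bound is the crux. For real vectors $u=(u_I)$, $v=(v_J)$ put $\phi_u=\sum_Iu_I1_I\sigma_1$ and $\psi_v=\sum_Jv_J1_J\sigma_2$; then $\sum_{I,J}u_Iv_Jc_{IJ}=E\big[(\int\phi_u\,dB^1)(\int\psi_v\,dB^2)\big]=\sum_{m=1}^M\rho_m\int\phi_u(t)\psi_v(t+\theta_m)\,dt$ by the cross-spectral density of $(B^1,B^2)$, and Cauchy--Schwarz with translation invariance of the $L^2$ norm gives $|\sum_{I,J}u_Iv_Jc_{IJ}|\le\rho\,\|\phi_u\|_{L^2}\|\psi_v\|_{L^2}=\rho\big(\sum_Iu_I^2(s^1_I)^2\big)^{1/2}\big(\sum_Jv_J^2(s^2_J)^2\big)^{1/2}$ (the $L^2$ norms simplify because the $I$'s, resp.\ $J$'s, are disjoint). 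Replacing $u_I$ by $u_I/s^1_I$ and $v_J$ by $v_J/s^2_J$ yields $|u^{\top}\widetilde Cv|\le\rho\|u\|\|v\|$, i.e.\ $\|\widetilde C\|_{\mathrm{sp}}\le\rho$; combining the two displays gives $|R_n(\theta)|\le\rho^2V_n(\theta)$ and hence the lemma. The main obstacle is precisely this estimate $\|\widetilde C\|_{\mathrm{sp}}\le\rho$: the Isserlis expansion, the trace rewriting, and the Frobenius-norm manipulations are routine, but obtaining the sharp constant $\rho$ (rather than something cruder from an entrywise bound on $c_{IJ}$) forces one to use the precise form of the cross-spectral density of $(B^1,B^2)$; the handling of degenerate intervals with $s^\nu=0$ is a minor technicality.
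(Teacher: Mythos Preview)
Your proof is correct and reaches the same key inequality $|R_n(\theta)|\le\rho^2 V_n(\theta)$ as the paper, but by a genuinely different route. Both arguments begin with the Isserlis decomposition $E[F_n(\theta)^2]=V_n(\theta)+R_n(\theta)$. From there the paper proceeds by direct summation: it expands each cross-covariance as $c_{IJ'}=\sum_m\rho_m\int_{I\cap J'_{-\theta_m}}\sigma_1(t)\sigma_2(t+\theta_m)\,dt$, applies Cauchy--Schwarz inside each integral, exploits the symmetry of the resulting quadruple sum under $(m,I,J)\leftrightarrow(m',I',J')$ to drop the square root, and then peels off the indices one at a time (using that the shifted $J'$-intervals are disjoint and that $\sum_{I'}K(I',J'_{-\theta})1_{I'}$ is supported in $I(J'_{-\theta})$) until only $\bigl(\sum_m|\rho_m|\bigr)^2V_n(\theta)$ remains. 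You instead package $R_n(\theta)$ as $n\,\trace\bigl((\widetilde C^{\top}\widehat P)^2\bigr)$, bound it by $n\|\widetilde C\|_{\mathrm{sp}}^2\|\widehat P\|_F^2$, and isolate the single substantive estimate $\|\widetilde C\|_{\mathrm{sp}}\le\rho$, which you obtain cleanly from the cross-spectral identity and one Cauchy--Schwarz in $L^2(\mathbb R)$. The paper's approach is entirely elementary (no matrix norms, no operator bound) but requires more interval bookkeeping; your approach is more structural and makes transparent \emph{why} the constant is exactly $\bigl(\sum_m|\rho_m|\bigr)^2$: it is the squared operator norm of the normalized cross-correlation matrix between the two return sequences, a bound that would persist verbatim for any choice of the weight matrix $K(I,J_{-\theta})$.
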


\begin{proof}

The Isserlis formula (cf.~Theorem 1.28 from \cite{Janson1997}) yields
\begin{align*}
E[F_n(\theta)^2]
&=n\sum_{I,I'\in\Pi^1_n,J,J'\in\Pi^2_n}\covariance\left[X^1(I)X^2(J),X^1(I')X^2(J')\right]K(I,J_{-\theta})K(I',J'_{-\theta})\\
&=n\sum_{I\in\Pi^1_n,J\in\Pi^2_n}E\left[X^1(I)^2\right]E\left[X^2(J)^2\right]K(I,J_{-\theta})\\
&\quad+n\sum_{I,I'\in\Pi^1_n,J,J'\in\Pi^2_n}E\left[X^1(I)X^2(J')\right]E\left[X^1(I')X^2(J)\right]K(I,J_{-\theta})K(I',J'_{-\theta}).
\end{align*}
Now let us set $I(S)=\bigcup_{I\in\Pi^1_n:I\cap S\neq\emptyset}I$ for every interval $S$. Then we have 
\begin{align*}
&\left|n\sum_{I,I'\in\Pi^1_n,J,J'\in\Pi^2_n}E\left[X^1(I)X^2(J')\right]E\left[X^1(I')X^2(J)\right]K(I,J_{-\theta})K(I',J'_{-\theta})\right|\\
&=n\left|\sum_{m,m'}\rho_m\rho_{m'}\sum_{I,J,I',J'}K(I,J_{-\theta})K(I',J'_{-\theta})\int_{I\cap J'_{-\theta_m}}\sigma_1(t)\sigma_2(t+\theta_m)dt\int_{I'\cap J_{-\theta_{m'}}}\sigma_1(t)\sigma_2(t+\theta_{m'})dt\right|\\
&\leq n\sum_{m,m'}|\rho_m\rho_{m'}|\sum_{I,J,I',J'}K(I,J_{-\theta})K(I',J'_{-\theta})\\
&\qquad\times\sqrt{\int_{I\cap J'_{-\theta_m}}\sigma_1(t)^2dt\int_{I\cap J'_{-\theta_m}}\sigma_2(t+\theta_m)^2dt\int_{I'\cap J_{-\theta_{m'}}}\sigma_1(t)^2dt\int_{I'\cap J_{-\theta_{m'}}}\sigma_2(t+\theta_{m'})^2dt}\\
&\leq n\sum_{m,m'}|\rho_{m'}\rho_m|\sum_{I,J,I',J'}K(I,J_{-\theta})K(I',J'_{-\theta})\int_{I\cap J'_{-\theta_m}}\sigma_1(t)^2dt\int_{I'\cap J_{-\theta_{m'}}}\sigma_2(t+\theta_{m'})^2dt\\
&= n\sum_{m,m'}|\rho_{m'}\rho_m|\sum_{I,J,J'}K(I,J_{-\theta})\int_{I\cap J'_{-\theta_m}}\sigma_1(t)^2dt\int_{I(J'_{-\theta})\cap J_{-\theta_{m'}}}\sigma_2(t+\theta_{m'})^2dt\\
&\leq n\sum_{m,m'}|\rho_{m'}\rho_m|\sum_{I,J}K(I,J_{-\theta})\int_{I}\sigma_1(t)^2dt\int_{J_{-\theta_{m'}}}\sigma_2(t+\theta_{m'})^2dt\\
&=n\left(\sum_m|\rho_m|\right)^2\sum_{I,J}K(I,J_{-\theta})\int_{I}\sigma_1(t)^2dt\int_{J}\sigma_2(t)^2dt
=\left(\sum_m|\rho_m|\right)^2V_n(\theta),
\end{align*}
hence we obtain the desired result.
\end{proof}

\begin{lemma}\label{hry-fourth}
Under the assumptions of Proposition \ref{hry}, there is a constant $C>0$ such that
\[
E[F_n(\theta)^4]-3E[F_n(\theta)^2]^2\leq Cnr_n^2
\]
for every $n\in\mathbb{N}$ and every $\theta\in\mathcal{G}_n$.
\end{lemma}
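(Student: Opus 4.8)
The plan is to exploit the fact that $F_n(\theta)$ is (up to the deterministic shift $E[U_n(\theta)]$ and scaling) a double Wiener--It\^o integral with respect to the Gaussian process $B=(B^1,B^2)$, so that the fourth cumulant can be controlled via contraction norms. Concretely, set $H=L^2(\mathbb{R})\oplus L^2(\mathbb{R})$ equipped with the inner product induced by the cross spectral density $\mathfrak{s}$, so that $B$ generates an isonormal Gaussian process over $H$; then $X^\nu(I)=\int_{\mathbb{R}}\sigma_\nu(t)1_I(t)\,dB^\nu_t$ is a single Wiener--It\^o integral, and the product formula gives
\[
F_n(\theta)=\sqrt{n}\sum_{I\in\Pi^1_n,J\in\Pi^2_n}K(I,J_{-\theta})\bigl(X^1(I)X^2(J)-E[X^1(I)X^2(J)]\bigr)=I_2(f_{n,\theta})
\]
for an explicit kernel $f_{n,\theta}\in H^{\odot 2}$, namely $f_{n,\theta}=\sqrt{n}\sum_{I,J}K(I,J_{-\theta})\,\sigma_1 1_I\widetilde{\otimes}\,\sigma_2 1_J$ (living in the two different coordinates of $H$). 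By the standard fourth-moment formula for double integrals (Eq.~(5.2.6) of \cite{NP2012}, or the identity $E[I_2(f)^4]-3E[I_2(f)^2]^2=48\|f\widetilde{\otimes}_1f\|^2_{H^{\otimes2}}\le 48\|f\otimes_1f\|^2_{H^{\otimes2}}$), it suffices to bound $\|f_{n,\theta}\otimes_1f_{n,\theta}\|^2_{H^{\otimes2}}$ by a constant times $nr_n^2$, uniformly in $n$ and $\theta\in\mathcal{G}_n$.

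Next I would compute the contraction explicitly. Writing out $f_{n,\theta}\otimes_1 f_{n,\theta}$ as a sum over quadruples $(I,J,I',J')$ with $K(I,J_{-\theta})K(I',J'_{-\theta})=1$, the contraction in the ``$\sigma_1 1_I$'' coordinate contributes a factor $\rho$-weighted overlap $\langle \sigma_1 1_I,\sigma_2 1_{J'}\rangle$ of order $\le \sup\sigma_1\sup\sigma_2\,|I\cap J'_{-\theta_m}|$ (using \ref{vol-bound} and the definition of $\mathfrak{s}$ via the $\theta_m$), and similarly for the other coordinate. Since each interval $I$ or $J$ has length $\le r_n$, and for a fixed $I$ there are only $O(1)$ intervals $J$ with $K(I,J_{-\theta})=1$ (the chaining of $K$'s bounds the number of incident intervals), the resulting sum telescopes as in the proof of Lemma \ref{hry-variance}: each successive summation over one of $I,J,I',J'$ either produces a factor $r_n$ from an interval length or is absorbed into a bounded count. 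Collecting, $n^2\cdot(\text{four interval overlaps, two of which carry an }r_n)$ divided by the $n$ coming from $\|f_{n,\theta}\otimes_1f_{n,\theta}\|$ being quadratic in $f_{n,\theta}$ — actually $f_{n,\theta}$ carries $\sqrt n$, so $f_{n,\theta}\otimes_1 f_{n,\theta}$ carries $n$, and $\|f_{n,\theta}\otimes_1 f_{n,\theta}\|^2$ carries $n^2$ — and then the geometric constraints leave a net order $n^2\cdot r_n^2/n \cdot$ (counting argument) $=n r_n^2$. I would mirror the Cauchy--Schwarz bookkeeping of Lemma \ref{hry-variance} almost verbatim, replacing one pair of full intervals by $r_n$-bounds at the right stage.

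The main obstacle is precisely this combinatorial bookkeeping: making rigorous that, after the Cauchy--Schwarz splitting of the $\sigma_1\sigma_2$ cross terms, summation over the ``free'' interval indices yields total interval lengths $\le T$ (bounded), while exactly two factors of $|I|$ or $|J|$ survive to give the $r_n^2$, and that the $K(\cdot,\cdot)$ incidence structure (non-synchronous sampling) never lets more than a bounded number of intervals overlap a given one — this last point is where \ref{avar-bound}-type control or a direct geometric argument on $\Pi^1_n,\Pi^2_n$ enters, and it is the only place non-synchronicity genuinely complicates things. Assumption \ref{cross-vol} and \ref{grid} play no role here (they matter only under $H_1$), and \ref{avar-bound} is used only to know $V_n(\theta)=\Theta(1)$, which together with Lemma \ref{hry-variance} keeps $E[F_n(\theta)^2]$ bounded so that the cumulant bound is meaningful; I would state the lemma's constant $C$ as depending on $\sup(\sigma_1+\sigma_2)$, $\sum|\rho_m|$, $T$ and the incidence constant only.
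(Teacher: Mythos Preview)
Your overall strategy is the same as the paper's: recognise $F_n(\theta)$ as an element of the second Wiener chaos and control the fourth cumulant by the fourth-moment formula (equivalently, $E[I_2(f)^4]-3E[I_2(f)^2]^2=48\trace(\Gamma^4)\le 48\|\Gamma\|_{\mathrm{sp}}^2\|\Gamma\|_F^2$, which is the matrix version of your contraction bound). The difference is that the paper works with the Gram matrix $\Gamma_n(\theta)=\Sigma_n^{1/2}A_n(\theta)\Sigma_n^{1/2}$ and bounds $\|\Gamma_n(\theta)\|_{\mathrm{sp}}\le\|A_n(\theta)\|_{\mathrm{sp}}\|\Sigma_n\|_{\mathrm{sp}}$ by Gershgorin-type row/column sum estimates, rather than expanding $\|f\otimes_1 f\|^2$ term by term. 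This buys a much shorter proof: once the spectral norms are in hand, the bound $nr_n^2$ drops out in two lines, and Lemma~\ref{hry-variance} supplies the $\|\Gamma\|_F^2=E[F_n(\theta)^2]/2$ factor.

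There is, however, a genuine gap in your proposal. You assert that ``for a fixed $I$ there are only $O(1)$ intervals $J$ with $K(I,J_{-\theta})=1$'', and later invoke ``the incidence constant''. No such bound follows from \ref{vol-bound}--\ref{avar-bound}: with non-synchronous sampling one grid can be arbitrarily finer than the other, so a single $I\in\Pi^1_n$ may meet unboundedly many shifted $J$'s. In the matrix language, $\|A_n(\theta)\|_{\mathrm{sp}}$ (essentially the maximal row/column sum of the $0$--$1$ matrix $K_n(\theta)$) need not be $O(\sqrt{n})$ as written. The paper closes this gap with a \emph{reduction procedure} (borrowed from \cite{HY2008,HY2011}): before estimating anything, one replaces $\Pi^1_n,\Pi^2_n$ by coarsened partitions $\tilde\Pi^1_n,\tilde\Pi^2_n$ obtained by merging every run of consecutive intervals from one grid that lies entirely inside a single interval of the other (shifted) grid. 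Bilinearity of $U_n(\theta)$ makes $F_n(\theta)$ invariant under this merging, $r_n$ is unchanged, and after the reduction one has the uniform incidence bound $\max_J\sum_I K(I,J_{-\theta})\le 3$ and $\max_I\sum_J K(I,J_{-\theta})\le 3$. This is the step your combinatorial bookkeeping presupposes but does not supply; once you insert it, either your contraction calculation or the paper's spectral-norm argument goes through.
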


\begin{proof}

First we apply the so-called reduction procedures used in \cite{HY2008,HY2011} to every realization of $(I)_{I\in\Pi^1_n}$ and $(J_{-\theta})_{J\in\Pi^2_n}$ (see also the proof of Lemma 2 from \cite{DY2011}). We define a new partition $\tilde{\Pi}^1_n$ as follows: $I\in\tilde{\Pi}^1_n$ if and only if either $I\in\Pi^1_n$ and it has non-empty intersection with two distinct intervals from $\{J_{-\theta}:J\in\Pi^2_n\}$ or there is $J\in\Pi^2_n$ such that $I$ is the union of all intervals from $\Pi^1_n$ included in $J_{-\theta}$. We also define a new partition $\tilde{\Pi}^2_n$ as follows: $J\in\tilde{\Pi}^2_n$ if and only if either $J\in\Pi^2_n$ and $J_{-\theta}$ has non-empty intersection with two distinct intervals from $\Pi^1_n$ or there is $I\in\Pi^1_n$ such that $J$ is the union of all intervals from $J'\in\Pi^2_n$ such that $J'_{-\theta}$ is included in $I$. Due to bilinearity $F_n(\theta)$ is invariant under this procedure. $r_N$ is also unchanged by this application because of its definition. Moreover, by construction we have
\begin{equation*}
\max_{J\in\tilde{\Pi}^2_n}\sum_{I\in\tilde{\Pi}^1_n}K(I,J_{-\theta})\leq3,\qquad
\max_{I\in\tilde{\Pi}^1_n}\sum_{J\in\tilde{\Pi}^2_n}K(I,J_{-\theta})\leq3.
\end{equation*}
Consequently, for the proof we may replace $(\Pi^1_n,\Pi^2_n)$ by $(\tilde{\Pi}^1_n,\tilde{\Pi}^2_n)$. This allows us to assume that
\begin{equation}\label{reduction}
\max_{J\in\Pi^2_n}\sum_{I\in\Pi^1_n}K(I,J_{-\theta})\leq3,\qquad
\max_{I\in\Pi^1_n}\sum_{J\in\Pi^2_n}K(I,J_{-\theta})\leq3
\end{equation}
throughout the proof without loss of generality.

We turn to the main body of the proof. Let $\Sigma_n$ be the covariance matrix of
$
(X^1(I))_{I\in\Pi^1_n},X^2(J))_{J\in\Pi^2_n})^\top.
$
Set $\Gamma_{n}(\theta)=\Sigma_{n}^{1/2}A_n(\theta)\Sigma_{n}^{1/2}$, where
\[
A_n(\theta)=\left(
\begin{array}{cc}
0  &   K_n(\theta)   \\
K_n(\theta)^\top  &   0
\end{array}
\right),\qquad
K_n(\theta)=(\sqrt{n}K(I,J_{-\theta})/2)_{I\in\Pi^1_n,J\in\Pi^2_n}.
\]
From Eq.(11) of \cite{DY2011} we have
\[
E[F_n(\theta)^4]-3E[F_n(\theta)^2]^2=2^33!\trace(\Gamma_n(\theta)^4)
\leq48\|\Gamma_n(\theta)\|_\mathrm{sp}^2\|\Gamma_n(\theta)\|_F^2
=24\|\Gamma_n(\theta)\|_\mathrm{sp}^2E[F_n(\theta)^2].
\]
Now, Lemma \ref{hry-variance} yields
\[
E[F_n(\theta)^2]\leq\left\{1+\left(\sum_m|\rho_m|\right)^2\right\}\overline{v}.
\]
On the other hand, by Example 5.6.5 and Theorem 5.6.9 of \cite{HJ2013} as well as \eqref{reduction}, we have $\|A_n(\theta)\|_\mathrm{sp}\leq3\sqrt{n}/2$. Therefore, Corollary 4.5.11 (the Ostorowski theorem), Example 5.6.5 and Theorem 5.6.9 of \cite{HJ2013} imply that
\begin{align*}
\|\Gamma_{n}(\theta)\|_\mathrm{sp}
&\leq\frac{3}{2}\sqrt{n}\|\Sigma_{n}\|_\mathrm{sp}
\leq\frac{3}{2}\sqrt{n}\max\left\{\max_{I\in\Pi^1_n}E\left[X^1(I)^2\right],\max_{J\in\Pi^2_n}E\left[X^2(J)^2\right]\right\}\\
&\hphantom{\leq\frac{3}{2}\sqrt{n}\|\Sigma_{N}\|_\mathrm{sp}
\leq}+\frac{3}{2}\sqrt{n}\max_{I\in\Pi^1_n}\sum_{J\in\Pi^2_n}\left|E\left[X^1(I)X^2(J)\right]\right|\\
&\lesssim \sqrt{n}r_n+\sqrt{n}\max_{I\in\Pi^1_n}\sum_{J\in\Pi^2_n}\left|E\left[X^1(I)X^2(J)\right]\right|.
\end{align*}
Since we have
\begin{align*}
\sum_{J\in\Pi^2_n}\left|E\left[X^1(I)X^2(J)\right]\right|
\leq\sum_{J\in\Pi^2_n}\sum_m|\rho_m|\int_{I\cap J_{-\theta_m}}\sigma_1(t)\sigma_2(t+\theta_m)dt
\lesssim r_n,
\end{align*}
we obtain the desired result.
\end{proof}

\begin{proof}[\bfseries\upshape Proof of Proposition \ref{hry}]

The result is an immediate consequence of Theorem \ref{gqf} and Lemmas \ref{hry-variance}--\ref{hry-fourth}.
\end{proof}

\subsection{Proof of Proposition \ref{hry-boot}}

Throughout this subsection, we set
\[
\mathbb{H}=\left\{\int_{-\infty}^\infty f(t)dB^1_t+\int_{-\infty}^\infty g(t)dB^2_t:f,g\in L^2(\mathbb{R})\right\}.
\]
$\mathbb{H}$ is obviously a Gaussian Hilbert space. Also, for each $\nu=1,2$ we define the process $v^\nu=(v^\nu_t)_{t\geq0}$ by $v^\nu_t=\int_0^t\sigma_\nu(s)^2ds$, $t\geq0$.

\begin{lemma}\label{boot-dist}
Under the assumptions of Proposition \ref{hry-boot}\ref{null}, we have
\[
\sup_{x\in\mathbb{R}}\left|P\left(T_n^*\leq x|\mathcal{F}^X\right)-P\left(\max_{\theta\in\mathcal{G}_n}|Z_n(\theta)|\leq x\right)\right|\to^p0
\]
as $n\to\infty$, where the process $(Z_n(\theta))_{\theta\in\mathcal{G}_n}$ is the same one as in Proposition \ref{hry}.  
\end{lemma}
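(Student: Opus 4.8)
The plan is to realize the conditional law of $T_n^*$ given $\mathcal{F}^X$ as the distribution of a maximum of symmetric quadratic forms in the multiplier variables and then invoke Theorem~\ref{kolmogorov-general}, more precisely the explicit bound extracted from its proof. Conditionally on $\mathcal{F}^X$ the increments $X^1(I),X^2(J)$ are constants; enumerating $\Pi^1_n\cup\Pi^2_n$ and letting $Y$ be the concatenation of $(w^1_I)_{I\in\Pi^1_n}$ and $(w^2_J)_{J\in\Pi^2_n}$, a sequence of independent, mean-zero, unit-variance, sub-Gaussian variables independent of $\mathcal{F}^X$, we have $T_n^*=\max_{\theta\in\mathcal{G}_n}|Q_{n,\theta}(Y)|$ for the symmetric matrices $\Gamma_{n,\theta}$ defined by $\gamma_{n,\theta}(I,J)=\gamma_{n,\theta}(J,I)=\tfrac12\sqrt n\,X^1(I)X^2(J)K(I,J_{-\theta})$ and all remaining entries, in particular every diagonal entry, equal to $0$. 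Choosing $\mathfrak{C}_n(\theta,\theta')=E[F_n(\theta)F_n(\theta')]$, the target Gaussian vector is exactly the $(Z_n(\theta))_{\theta\in\mathcal{G}_n}$ of Proposition~\ref{hry}, and $\mathfrak{C}_n(\theta,\theta)\ge\underline v>0$ under $H_0$ by Lemma~\ref{hry-variance}. Applying, to each fixed realization of $\mathcal{F}^X$, the chain of estimates in the proof of Theorem~\ref{kolmogorov-general} (Lemmas~\ref{coupling-general}, \ref{cck-kolmogorov} and \ref{cck-nazarov} together with the stated choice of $\varepsilon$) yields, $P$-a.s.,
\[
\sup_{x\in\mathbb{R}}\Big|P\big(T_n^*\le x\mid\mathcal{F}^X\big)-P\big(\max_{\theta\in\mathcal{G}_n}|Z_n(\theta)|\le x\big)\Big|
\le C\Big\{\big(R_{n,1}\log^{7/2}\#\mathcal{G}_n\big)^{1/4}+\big(R_{n,2}\log^{2}\#\mathcal{G}_n\big)^{1/3}+\big(R_{n,3}\log^{3}\#\mathcal{G}_n\big)^{1/3}\Big\},
\]
where $C$ depends only on $\underline v$ and $R_{n,1},R_{n,2},R_{n,3}$ are the now $\mathcal{F}^X$-measurable quantities of Theorem~\ref{kolmogorov-general} built from $\Gamma_{n,\theta}$ and $\mathfrak{C}_n$ (the auxiliary Gaussian sequence being integrated out). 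It therefore suffices to prove $R_{n,1}\log^{7/2}\#\mathcal{G}_n\vee R_{n,2}\log^{2}\#\mathcal{G}_n\vee R_{n,3}\log^{3}\#\mathcal{G}_n\to^p0$.

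For $R_{n,1}$ and $R_{n,3}$ I would follow the route of Corollary~\ref{criterion}. Since the $w$'s are sub-Gaussian relative to scale $a$, Lemma~\ref{lemma:hyper}(b) and the arithmetic--geometric step give $R_{n,1}\lesssim(\log\#\mathcal{G}_n)^{3/2}(\max_i\sqrt{\Lambda_{n,i}})\sum_i\Lambda_{n,i}$, while Eq.~(11) of \cite{DY2011} gives $R_{n,3}^2=48\max_\theta\trace(\Gamma_{n,\theta}^4)$. A block computation gives $\trace(\Gamma_{n,\theta}^4)=\tfrac18\|M_{n,\theta}M_{n,\theta}^\top\|_F^2$ with $M_{n,\theta}=(\sqrt n\,X^1(I)X^2(J)K(I,J_{-\theta}))_{I,J}$, and $\Lambda_{n,I}=\max_\theta\influence_I(\Gamma_{n,\theta})=\tfrac14 n\,X^1(I)^2\max_\theta\sum_J X^2(J)^2K(I,J_{-\theta})$ (symmetrically for $J$-indices). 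Using the maximal estimates $\sum_I X^1(I)^2=O_p(1)$, $\max_I X^1(I)^2=O_p(r_n\log n)$ and $\max_{I,\theta}\sum_J X^2(J)^2K(I,J_{-\theta})=O_p(r_n\log n)$, together with their $1\leftrightarrow2$ counterparts (each a Bernstein bound for a sum of squared Gaussian increments over a window of length $O(r_n)$, combined with a union bound over the $O(\mathrm{poly}(n))$ relevant windows and with the modulus-of-continuity bound for the Gaussian martingales $X^1,X^2$, using \ref{vol-bound} and $r_n\gtrsim n^{-1}$, which follows from \ref{avar-bound}), one obtains, uniformly in $\theta$, $\max_\theta\trace(\Gamma_{n,\theta}^4)=O_p(n^2r_n^3\log^3 n)$, $\max_i\Lambda_{n,i}=O_p(nr_n^2\log^2 n)$ and $\sum_i\Lambda_{n,i}=O_p(nr_n\log n)$. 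Hence $R_{n,1}\log^{7/2}\#\mathcal{G}_n=O_p(n^{3/2}r_n^2\log^{7}n)$ and $R_{n,3}\log^{3}\#\mathcal{G}_n=O_p(nr_n^{3/2}\log^{9/2}n)$, both $o_p(1)$ because $r_n=O(n^{-3/4-\eta})$ makes $n^{3/2}r_n^2=O(n^{-2\eta})$ and $nr_n^{3/2}=O(n^{-1/8-3\eta/2})$.

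For $R_{n,2}$ I would compute both covariances explicitly. Under $H_0$ the increments $X^1(I),X^2(J)$ over the partition cells are jointly independent, so Isserlis' formula gives $E[F_n(\theta)F_n(\theta')]=n\sum_{I,J}(\int_I\sigma_1^2)(\int_J\sigma_2^2)K(I,J_{-\theta})K(I,J_{-\theta'})$, whereas $E[Q_{n,\theta}(G)Q_{n,\theta'}(G)\mid\mathcal{F}^X]=n\sum_{I,J}X^1(I)^2X^2(J)^2K(I,J_{-\theta})K(I,J_{-\theta'})$. Writing the difference of integrands as $(\int_I\sigma_1^2-X^1(I)^2)X^2(J)^2+(\int_I\sigma_1^2)(\int_J\sigma_2^2-X^2(J)^2)$, each of the two resulting sums is $n$ times a sum of independent, centered terms: the variables $X^1(I)^2-\int_I\sigma_1^2$ are independent across $I$ and sub-exponential with parameter $O(r_n)$, with $\sum_I\variance[X^1(I)^2]\lesssim r_n$, and the weights are uniformly $O_p(r_n\log n)$ (resp.\ deterministically $O(r_n)$). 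Conditioning on $X^2$ for the first sum and applying Bernstein's inequality (the sub-Gaussian regime dominating because $r_n\to0$ polynomially) together with a union bound over the $(\#\mathcal{G}_n)^2=O(n^{2\gamma})$ pairs $(\theta,\theta')$ gives $R_{n,2}=O_p(nr_n^{3/2}\log^{3/2}n)$, so $R_{n,2}\log^{2}\#\mathcal{G}_n=O_p(nr_n^{3/2}\log^{7/2}n)=o_p(1)$. Combining the three bounds with the displayed inequality finishes the proof.

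I expect the main difficulty to be precisely the probabilistic, rather than deterministic, control in the last two paragraphs: because the coefficient matrices $\Gamma_{n,\theta}$ are random, bounding $\trace(\Gamma_{n,\theta}^4)$, the influence indices $\Lambda_{n,i}$ and the covariance discrepancy $R_{n,2}$ uniformly over the grid $\mathcal{G}_n$ requires sharp maximal inequalities for sums of squared Gaussian increments over short intervals, in place of the crude bounds on $\|\Sigma_n\|_{\mathrm{sp}}$ and $E[F_n(\theta)^2]$ that sufficed for Proposition~\ref{hry} and Lemma~\ref{hry-fourth}. It is there that the strengthened sampling condition $r_n=O(n^{-3/4-\eta})$ is consumed, the binding term being $R_{n,1}\asymp n^{3/2}r_n^2$.
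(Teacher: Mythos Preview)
Your proposal is correct and follows the same overall strategy as the paper: write $T_n^*$ conditionally on $\mathcal{F}^X$ as the maximum of diagonal-free symmetric quadratic forms in the sub-Gaussian multipliers, apply the non-asymptotic bound behind Theorem~\ref{kolmogorov-general} (via Lemma~\ref{lemma:hyper}(b)), and then show that the resulting $\mathcal{F}^X$-measurable quantities $R_{n,1},R_{n,2},R_{n,3}$ tend to zero in probability. The difference with the paper is tactical, in how the uniform-in-$\theta$ control of these random quantities is obtained. You rely on Bernstein-type concentration for sums of squared Gaussian increments over short windows together with union bounds over the $O(n^\gamma)$ many $\theta$'s (and over $I$'s), which is elementary but forces the slightly delicate conditional-Bernstein step for $R_{n,2}$ (conditioning on $X^2$ to make the summands independent while the weights stay random). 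The paper instead exploits that each of the quantities in question---the influence sums, the covariance discrepancy $E[U_n(\theta)U_n(\theta')]-E[U^*_n(\theta)U^*_n(\theta')\mid\mathcal{F}^X]$, and the conditional fourth cumulant---is a polynomial of fixed degree in Gaussian increments, hence an element of $\overline{\mathcal{P}}_q(\mathbb{H})$; computing its $L^2$ norm via the deterministic overlap bound $\sum_{J'}v^2(J')K(I,J'_{-\theta})\le 3(\sup\sigma_2^2)r_n$ and then applying the sub-Gaussian-chaos maximal inequality (Propositions~\ref{gh-chaos} and~\ref{max-sub-chaos}) immediately gives the required maximum over $\theta$ with only a $\log\#\mathcal{G}_n$ loss, avoiding any conditional-probability bookkeeping. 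Both routes produce the same binding term of order $n^{3/2}r_n^2$ (up to logarithms), which is precisely where the hypothesis $r_n=O(n^{-3/4-\eta})$ is consumed; the paper's route is more in keeping with its Wiener-chaos machinery, while yours is more self-contained.
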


\begin{proof}
Define the $n_1\times n_2$ matrix $\Xi_n(\theta)$ by 
$\Xi_n(\theta)=
(\frac{1}{2}X^1(I)K(I,J_{-\theta})X^2(J))_{I,J},$ 
and set
\[
\widetilde{\Xi}_n(\theta)=
\left(
\begin{array}{cc}
O  &  \Xi_n(\theta) \\
\Xi_n(\theta)^\top  &  O   
\end{array}
\right).
\]
Then we can rewrite $U_n^*(\theta)$ as $U_n^*(\theta)=\boldsymbol{w}^\top\widetilde{\Xi}_n(\theta)\boldsymbol{w}$, where $\boldsymbol{w}=((w^1_I)_{I\in\Pi^1_n},(w^2_J)_{J\in\Pi^2_n})^\top$. Therefore, by Theorem \ref{kolmogorov-general} and Lemma \ref{lemma:hyper}, it suffices to prove $R^*_{n,1}\log^{10}(\#\mathcal{G}_n)\vee R^*_{n,2}\log^{10}(\#\mathcal{G}_n)\vee R^*_{n,3}\log^2(\#\mathcal{G}_n) \vee R^*_{n,4}\log^3(\#\mathcal{G}_n)\to^p0$, where
\begin{align*}
R^*_{n,1}&=\sum_{I}\max_{\theta\in\mathcal{G}_n}\left(n\sum_{J}X^1(I)^2X^2(J)^2K(I,J_{-\theta})\right)^{3/2},\\
R^*_{n,2}&=\sum_{J}\max_{\theta\in\mathcal{G}_n}\left(n\sum_{I}X^1(I)^2X^2(J)^2K(I,J_{-\theta})\right)^{3/2},\\
R^*_{n,3}&=\sqrt{n}\max_{\theta,\theta'\in\mathcal{G}_n}\left|E[U_n(\theta)U_n(\theta')]-E[U^*_n(\theta)U^*_n(\theta')|\mathcal{F}^X]\right|,\\
R^*_{n,4}&=\max_{1\leq k\leq d}\sqrt{E[Q^*_n(\theta)^4|\mathcal{F}^X]-3E[Q^*_n(\theta)^2|\mathcal{F}^X]^2}.
\end{align*}
Here, $Q^*_n(\theta)=\sqrt{n}\boldsymbol{G}^\top\widetilde{\Xi}_n(\theta)\boldsymbol{G}$ and 
$\boldsymbol{G}=((G^1_I)_{I\in\Pi^1_n},(G^2_J)_{J\in\Pi^2_n})^\top$ 
with $(G^1_I)_{I\in\Pi^1_n}$ and $(G^2_J)_{J\in\Pi^2_n}$ being mutually independent sequence of i.i.d.~standard Gaussian variables independent of $\mathcal{F}^X$.

First we consider $R^*_{n,1}$ and $R_{n,2}^*$. Noting that the inequalities
\begin{equation}\label{reduction2}
\sum_{I'}v^1(I')K(I',J_{-\theta})\leq3\left(\sup_{0\leq t\leq T}\sigma_1(t)^2\right)r_n,\quad
\sum_{J'}v^2(J')K(I,J'_{-\theta})\leq3\left(\sup_{0\leq t\leq T}\sigma_2(t)^2\right)r_n,
\end{equation}
which hold for every $n$ and all $I\in\Pi^1_n$, $J\in\Pi^2_n$ and $\theta\in\mathcal{G}_n$, by the triangular inequality we have
\begin{align*}
\left\|\sum_{J}X^2(J)^2K(I,J_{-\theta})\right\|_2
\leq\sum_{J}\|X^2(J)^2\|_2K(I,J_{-\theta})
\leq\sqrt{3}\sum_Jv^2(J)K(I,J_{-\theta})
\lesssim r_n.
\end{align*}
Since $\sum_{J}X^2(J)^2K(I,J_{-\theta})\in\overline{\mathcal{P}}_2(\mathbb{H})$, the Schwarz inequality and Proposition \ref{max-sub-chaos} yield
\begin{align*}
E[R_{n,1}^*]
&\leq n^{3/2}\sum_{I}\sqrt{E\left[X^1(I)^6\right]}\sqrt{E\left[\max_{\theta\in\mathcal{G}_n}\left(\sum_{J}X^2(J)^2K(I,J_{-\theta})\right)^{3}\right]}\\
&\lesssim n^{3/2}\sqrt{r_n}\{r_n\log(\#\mathcal{G}_n)\}^{3/2}.
\end{align*}
Therefore, we obtain $R^*_{n,1}\log^{10}(\#\mathcal{G}_n)\to0$ by the Markov inequality due to $r_n=O(n^{-3/4+\eta})$. 
We can prove $R^*_{n,2}\log^{10}(\#\mathcal{G}_n)\to0$ in a similar manner.

Next we consider $R_{n,3}^*$. Since we have
\begin{align*}
&E[U_n(\theta)U_n(\theta')]-E[U^*_n(\theta)U^*_n(\theta')|\mathcal{F}^X]\\
&=\sum_{I,J}\{E[X^1(I)^2X^2(J)^2]-X^1(I)^2X^2(J)^2\}K(I,J_{-\theta})K(I,J_{-\theta'})
\in\overline{\mathcal{P}}_4(\mathbb{H})
\end{align*}
and
\begin{align*}
&E[|E[U_n(\theta)U_n(\theta')]-E[U^*_n(\theta)U^*_n(\theta')|\mathcal{F}^X]|^2]\\
&=\sum_{I,J,I',J'}\{E[X^1(I)^2X^2(J)^2X^1(I')^2X^2(J')^2]-E[X^1(I)^2X^2(J)^2]E[X^1(I')^2X^2(J')^2]\}\\
&\hphantom{=\sum_{I,J,I',J'}}\times K(I,J_{-\theta})K(I,J_{-\theta'})K(I',J'_{-\theta})K(I',J'_{-\theta'})\\
&\lesssim\sum_{I,J,J'}v^1(I)^2v^2(J)v^2(J')K(I,J_{-\theta})K(I,J_{-\theta'})K(I,J'_{-\theta})K(I,J'_{-\theta'})\\
&\qquad+\sum_{I,J,I'}v^1(I)v^1(I')v^2(J)^2K(I,J_{-\theta})K(I,J_{-\theta'})K(I',J_{-\theta})K(I',J_{-\theta'})\\
&\lesssim n^{-1}r_n^2
\end{align*}
by the hypothesis $H_0$ and \eqref{reduction2}, Proposition \ref{max-sub-chaos} yields $E[R_{n,3}^*]=O(\sqrt{n}r_n\log^2(\#\mathcal{G}_n))$. Hence we obtain $R^*_{n,3}\log^2(\#\mathcal{G}_n)\to^p0$ due to the Markov inequality.

Finally we prove $R^*_{n,4}\log^4(\mathcal{G}_n)\to^p0$. From Eq.(11) of \cite{DY2011} we have 
\begin{align*}
&E[Q^*_n(\theta)^4|\mathcal{F}^X]-3E[Q^*_n(\theta)^2|\mathcal{F}^X]^2=48n^2\trace(\widetilde{\Xi}_n(\theta)^4)
=96n^2\|\Xi_n(\theta)\Xi_n(\theta)^\top\|_F^2\\
&\lesssim n^2\sum_{I,I'}\left(\sum_JX^1(I)X^1(I')X^2(J)^2K(I,J_{-\theta})K(I',J_{-\theta})\right)^2\\
&=n^2\sum_{I,I',J,J'}X^1(I)^2X^1(I')^2X^2(J)^2X^2(J')^2K(I,J_{-\theta})K(I',J_{-\theta})K(I,J'_{-\theta})K(I',J'_{-\theta}).
\end{align*}
In particular, we obtain $E[Q^*_n(\theta)^4|\mathcal{F}^X]-3E[Q^*_n(\theta)^2|\mathcal{F}^X]^2\in\overline{\mathcal{P}}_8(\mathbb{H})$. Moreover, the triangular and the generalized H\"older inequalities as well as \eqref{reduction2} yield
\begin{align*}
&\left\|n^2\sum_{I,I',J,J'}X^1(I)^2X^1(I')^2X^2(J)^2X^2(J')^2K(I,J_{-\theta})K(I',J_{-\theta})K(I,J'_{-\theta})K(I',J'_{-\theta})\right\|_2\\
&\lesssim n^2\sum_{I,I',J,J'}v^1(I)v^1(I')v^2(J)v^2(J')K(I,J_{-\theta})K(I',J_{-\theta})K(I,J'_{-\theta})K(I',J'_{-\theta})\\
&\lesssim n^2r_n^2\sum_{I,J}v^1(I)v^2(J)K(I,J_{-\theta})
\leq nr_n^2\overline{v}.
\end{align*}
Therefore, Proposition \ref{max-sub-chaos} implies that $E[R^*_{n,4}]\lesssim \sqrt{n}r_n\log^2(\#\mathcal{G}_n),$ hence the desired result follows from the Markov inequality. Thus we complete the proof.
\end{proof}

\begin{lemma}\label{hry-consistent}
Under the assumptions of Proposition \ref{hry-boot}\ref{alternative}, we have $T_n/\sqrt{n}\to^p\max_{1\leq m\leq M}|\rho_m|\Sigma(\theta_m)$ as $n\to\infty$.
\end{lemma}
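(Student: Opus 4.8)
The plan is to reduce the claim to a deterministic statement about the means $E[U_n(\theta)]$. Writing $T_n/\sqrt n=\max_{\theta\in\mathcal G_n}|U_n(\theta)|$ and $U_n(\theta)-E[U_n(\theta)]=n^{-1/2}F_n(\theta)$, the elementary inequalities $\big||x|-|y|\big|\le|x-y|$ and $\big|\max_\theta a_\theta-\max_\theta b_\theta\big|\le\max_\theta|a_\theta-b_\theta|$ give
\[
\left|\frac{T_n}{\sqrt n}-\max_{\theta\in\mathcal G_n}\bigl|E[U_n(\theta)]\bigr|\right|\le n^{-1/2}\max_{\theta\in\mathcal G_n}|F_n(\theta)|.
\]
So it suffices to (i) show the right-hand side is $o_p(1)$ and (ii) prove $\max_{\theta\in\mathcal G_n}|E[U_n(\theta)]|\to\max_{1\le m\le M}|\rho_m|\Sigma(\theta_m)$.

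For (i), I would note that $U_n(\theta)$, and hence the centered variable $F_n(\theta)$, belongs to $\overline{\mathcal P}_2(\mathbb H)$; by Proposition \ref{gh-chaos} it is a sub-$2$nd chaos random variable relative to the scale $M\|F_n(\theta)\|_2$, and $\|F_n(\theta)\|_2$ is bounded uniformly in $n$ and $\theta$ by Lemma \ref{hry-variance} (which only needs \ref{vol-bound}--\ref{avar-bound}). Proposition \ref{max-sub-chaos} with $p=1$, $q=2$ then yields $E[\max_{\theta\in\mathcal G_n}|F_n(\theta)|]\lesssim\log(\#\mathcal G_n)\lesssim\log n$ by \ref{grid}(i), so $n^{-1/2}\max_{\theta\in\mathcal G_n}|F_n(\theta)|\to^p0$ by Markov's inequality.

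For (ii), I would decompose $E[U_n(\theta)]=\sum_{m=1}^M\rho_m S_{n,m}(\theta)$ with $S_{n,m}(\theta)=\sum_{I,J}K(I,J_{-\theta})\int_{I\cap J_{-\theta_m}}\sigma_1(t)\sigma_2(t+\theta_m)\,dt$, and observe that $0\le S_{n,m}(\theta)\le\Sigma(\theta_m)$ (dropping the factor $K\le1$ recovers the full partition of $[0,T]\cap[0,T]_{-\theta_m}$, whose integral is $\Sigma(\theta_m)$). A short argument using $|I|,|J|\le r_n$ shows that $S_{n,m}(\theta)>0$ forces $|\theta-\theta_m|\le 2r_n$; since the $\theta_m$ are distinct and fixed and $r_n\to0$, for large $n$ at most one index $m$ contributes to $E[U_n(\theta)]$, for every $\theta$, whence $\max_\theta|E[U_n(\theta)]|\le\max_m|\rho_m|\Sigma(\theta_m)$. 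For the matching lower bound I would fix $m^\ast$ attaining $\max_m|\rho_m|\Sigma(\theta_m)$ and a grid point $\theta^\ast_n\in\mathcal G_n$ with $|\theta^\ast_n-\theta_{m^\ast}|\le\upsilon_n$ (available by \ref{grid}(ii)); then for large $n$, $E[U_n(\theta^\ast_n)]=\rho_{m^\ast}S_{n,m^\ast}(\theta^\ast_n)$, so the lower bound reduces to $\Sigma(\theta_{m^\ast})-S_{n,m^\ast}(\theta^\ast_n)\to0$.

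This last estimate is the main obstacle. The difference equals $\sum_{(I,J):\,I\cap J_{-\theta^\ast_n}=\emptyset}\int_{I\cap J_{-\theta_{m^\ast}}}\sigma_1\sigma_2$, and the key geometric observation is that any such pair has $|I\cap J_{-\theta_{m^\ast}}|\le|\theta^\ast_n-\theta_{m^\ast}|\le\upsilon_n$, since a shift of size $\le\upsilon_n$ cannot destroy an overlap longer than $\upsilon_n$. To count these pairs efficiently I would first apply the reduction procedure from the proof of Lemma \ref{hry-fourth} with respect to the lag $\theta^\ast_n$, under which both $E[U_n(\theta^\ast_n)]$ and $\Sigma(\theta_{m^\ast})$ are invariant: the reduced partition of $X^1$ then has $O(n_1\wedge n_2)$ intervals, and for each of its intervals $I$ the intervals $J$ with $J_{-\theta^\ast_n}$ lying within $\upsilon_n$ of $I$ but disjoint from $I$ have total length $\le 2\upsilon_n$. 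Combined with $\|\sigma_1\sigma_2\|_\infty<\infty$ (from \ref{vol-bound}) this gives $\Sigma(\theta_{m^\ast})-S_{n,m^\ast}(\theta^\ast_n)\lesssim\upsilon_n(n_1\wedge n_2)\to0$ by \ref{grid}(ii), completing (ii). Assembling (i) and (ii) yields $T_n/\sqrt n\to^p\max_{1\le m\le M}|\rho_m|\Sigma(\theta_m)$.
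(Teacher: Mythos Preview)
Your proof is correct and follows the same overall skeleton as the paper's: split $T_n/\sqrt n$ into the deterministic maximum of means and the stochastic fluctuation $n^{-1/2}\max_\theta|F_n(\theta)|$, and handle each separately. Two points of comparison are worth noting.

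For the fluctuation term, you bound $E[\max_\theta|F_n(\theta)|]$ directly via Propositions~\ref{gh-chaos} and~\ref{max-sub-chaos}; the paper instead invokes Proposition~\ref{hry} to transfer to the Gaussian analogue $Z_n$ and then uses a Gaussian maximal bound. Your route is more elementary and avoids the Gaussian approximation machinery entirely. For the lower bound in (ii), the paper shifts the integration domain from $J_{-\theta_{m^\ast}}$ to $J_{-\vartheta_n}$ and controls the symmetric-difference error by both $n_1\upsilon_n$ and $n_2\upsilon_n$, whereas you keep the integral over $I\cap J_{-\theta_{m^\ast}}$ and invoke the reduction procedure of Lemma~\ref{hry-fourth} to count the contributing pairs by $O(n_1\wedge n_2)$. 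Both reach the same $O((n_1\wedge n_2)\upsilon_n)$ bound.

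One small inaccuracy: you write that $\Sigma(\theta_{m^\ast})-S_{n,m^\ast}(\theta^\ast_n)$ \emph{equals} the sum over pairs with $I\cap J_{-\theta^\ast_n}=\emptyset$. In fact $\sum_{I,J}\int_{I\cap J_{-\theta_{m^\ast}}}\sigma_1\sigma_2$ is the integral over $(\bigcup_I I)\cap(\bigcup_J J)_{-\theta_{m^\ast}}$, which differs from $\Sigma(\theta_{m^\ast})$ by an $O(r_n)$ boundary term (the sampling intervals do not quite cover $[0,T]$). This is harmless since $r_n\to0$, but should be mentioned.
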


\begin{proof}
We first note that $\upsilon_n=o(r_n)$ as $n\to\infty$. In fact, we have $T\leq\sum_I|I|+2r_n\leq(n_1+2)r_n$ and $T\leq\sum_J|J|+2r_n\leq(n_2+2)r_n$, hence it holds that $T-2r_n\leq\upsilon_n\min\{n_1,n_2\}\cdot\upsilon_n^{-1}r_n$. Since $\upsilon_n\min\{n_1,n_2\}\to0$ by \ref{grid}, we necessarily have $\upsilon_n^{-1}r_n\to\infty$. Therefore, without loss of generality we may assume that $\upsilon_n\leq r_n$.

First we show that
\begin{equation}\label{eq:limsup}
\limsup_{n\to\infty}\max_{\theta\in\mathcal{G}_n}|E[U_n(\theta)]|\leq\max_{1\leq m\leq M}|\rho_m|\Sigma(\theta_m).
\end{equation}
Noting that 
\begin{equation}\label{intersect}
I\cap J_{-\theta}\neq\emptyset,I\cap J_{-\theta'}\neq\emptyset\Rightarrow|\theta'-\theta|\leq2r_n
\end{equation}
for any $I\in\Pi^1_n$, $J\in\Pi^2_n$ and $\theta,\theta'\in\mathcal{G}_n$, we have
\begin{align*}
|E[U_n(\theta)]|&=\left|\sum_{I,J}K(I,J_{-\theta})\sum_{m=1}^M\rho_m\int_{I\cap J_{-\theta_m}}\sigma_1(t)\sigma_2(t+\theta_m)dt\right|\\
&\leq\max_{1\leq m\leq M}|\rho_m|\sum_{I,J}\int_{I\cap J_{-\theta_m}}\sigma_1(t)\sigma_2(t+\theta_m)dt
\end{align*}
for any $\theta\in\mathcal{G}_n$ and sufficiently large $n$. This yields \eqref{eq:limsup}.

Next we prove 
\begin{equation}\label{eq:liminf}
\liminf_{n\to\infty}\max_{\theta\in\mathcal{G}_n}|E[U_n(\theta)]|\geq\max_{1\leq m\leq M}|\rho_m|\Sigma(\theta_m).
\end{equation}
Let $m^*$ be an integer such that $|\rho_{m^*}|\Sigma(\theta_{m^*})=\max_{1\leq m\leq M}|\rho_m|\Sigma(\theta_{m})$. By assumption \ref{grid}, for each $n\in\mathbb{N}$ there is a number $\vartheta_n\in\mathcal{G}_n$ such that $|\vartheta_n-\theta_{m^*}|\leq\upsilon_n$. Now noting \eqref{intersect}, we have
\begin{align*}
E[U_n(\vartheta_n)]
&=\sum_{I,J}K(I,J_{-\vartheta_n})\rho_{m^*}\int_{I\cap J_{-\theta_{m^*}}}\sigma_1(t)\sigma_2(t+\theta_{m^*})dt
\end{align*}
for sufficiently large $n$. Let us denote by $\ominus$ the symmetric difference between two sets. Then we have
\begin{align*}
&\left|\sum_{I,J}K(I,J_{-\vartheta_n})\rho_{m^*}\left\{\int_{I\cap J_{-\theta_{m^*}}}\sigma_1(t)\sigma_2(t+\theta_{m^*})dt-\int_{I\cap J_{-\vartheta_n}}\sigma_1(t)\sigma_2(t+\theta_{m^*})dt\right\}\right|\\
&\leq\sum_{I,J}K(I,J_{-\vartheta_n})\int_{J_{-\theta_{m^*}}\ominus J_{-\vartheta_n}}1_I(t)\sigma_1(t)\sigma_2(t+\theta_{m^*})dt\\
&\leq\sum_{J}\int_{J_{-\theta_{m^*}}\ominus J_{-\vartheta_n}}\sigma_1(t)\sigma_2(t+\theta_{m^*})dt
\lesssim n_2\upsilon_n.
\end{align*}
Noting that $K(I,J_{-\theta})=K(I_\theta,J)$, an analogous argument yields
\[
\left|\sum_{I,J}K(I,J_{-\vartheta_n})\rho_{m^*}\left\{\int_{I\cap J_{-\theta_{m^*}}}\sigma_1(t)\sigma_2(t+\theta_{m^*})dt-\int_{I\cap J_{-\vartheta_n}}\sigma_1(t)\sigma_2(t+\theta_{m^*})dt\right\}\right|
\lesssim n_1\upsilon_n.
\]
Therefore, by assumption \ref{grid} we obtain
\begin{align*}
E[U_n(\vartheta_n)]
&=\sum_{I,J}K(I,J_{-\vartheta_n})\rho_{m^*}\int_{I\cap J_{-\vartheta_n}}\sigma_1(t)\sigma_2(t+\theta_{m^*})dt+o(1)
=\rho_{m^*}\Sigma(\theta_{m^*})+o(1).
\end{align*}
Hence we conclude that
\begin{align*}
\liminf_{n\to\infty}\max_{\theta\in\mathcal{G}_n}|E[U_n(\theta)]|
\geq\liminf_{n\to\infty}|E[U_n(\vartheta_n)]|
=\rho_{m^*}\Sigma(\theta_{m^*})=\max_{1\leq m\leq M}|\rho_m|\Sigma(\theta_{m}),
\end{align*}
which yields \eqref{eq:liminf}.

From \eqref{eq:limsup}--\eqref{eq:liminf} we deduce that 
$\max_{\theta\in\mathcal{G}_n}|E[U_n(\theta)]|\to\max_{1\leq m\leq M}|\rho_m|\Sigma(\theta_m).$ 
Since we have
\begin{align*}
\left|\frac{T_n}{\sqrt{n}}-\max_{\theta\in\mathcal{G}_n}|E[U_n(\theta)]|\right|
\leq\frac{1}{\sqrt{n}}\max_{\theta\in\mathcal{G}_n}|F_n(\theta)|,
\end{align*}
the proof of the lemma is completed once we show that $\max_{\theta\in\mathcal{G}_n}|F_n(\theta)|=o_p(\sqrt{n})$. By Proposition \ref{max-sub-chaos} and the Markov inequality, we have $\max_{\theta\in\mathcal{G}_n}|Z_n(\theta)|=O_p(\sqrt{\log(\#\mathcal{G}_n)})$. Hence the desired result follows from Proposition \ref{hry}.
\end{proof}

\begin{lemma}\label{boot-consistent}
Under the assumptions of Proposition \ref{hry-boot}\ref{alternative}, we have $T_n^*=O_p(\log^2(\#\mathcal{G}_n))$ as $n\to\infty$. 
\end{lemma}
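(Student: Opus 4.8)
The plan is to control the conditional moments of $T_n^*$ given $\mathcal F^X$, and then to upgrade this to a statement about $T_n^*$ by the conditional Markov inequality. Write $U_n^*(\theta)=\sum_{I,J}X^1(I)X^2(J)K(I,J_{-\theta})w^1_Iw^2_J=\boldsymbol w^\top\widetilde\Xi_n(\theta)\boldsymbol w$ as in the proof of Lemma \ref{boot-dist}, and set
\[
\sigma_n^2(\theta):=E\big[U_n^*(\theta)^2\,\big|\,\mathcal F^X\big]=\sum_{I,J}X^1(I)^2X^2(J)^2K(I,J_{-\theta}),
\]
which is $\mathcal F^X$-measurable because the families $(w^1_I)$ and $(w^2_J)$ are mutually independent, centered, and of unit variance. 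Conditionally on $\mathcal F^X$, $U_n^*(\theta)$ is a degree-$2$ multilinear homogeneous sum in the independent sub-Gaussian multipliers, so hypercontractivity applies: since each $w^1_I$ and $w^2_J$ is sub-Gaussian relative to the scale $a$ it satisfies $\|\cdot\|_p\le C_0 a\sqrt p$ for a universal $C_0$, hence is $(2,p,\eta_p)$-hypercontractive with $\eta_p^{-1}\le 2C_0 a\,p$ by Proposition 3.16 of \cite{MOO2010}, and Remark 3.3.1 of \cite{KW1992} gives $E[|U_n^*(\theta)|^p\mid\mathcal F^X]^{1/p}\le\eta_p^{-2}\sigma_n(\theta)\le C_a p^2\sigma_n(\theta)$ for every integer $p\ge1$ (the case $p=1$ by Lyapunov's inequality). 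By Lemma \ref{criteria-subchaos} with $q=4$, the variable $U_n^*(\theta)$ is, conditionally on $\mathcal F^X$, a sub-$4$th chaos random variable relative to the scale $M_a\sigma_n(\theta)$, with $M_a$ depending only on $a$.

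Applying Proposition \ref{max-sub-chaos} conditionally on $\mathcal F^X$ (with $p=1$, $q=4$) then yields
\[
E\big[T_n^*\,\big|\,\mathcal F^X\big]=\sqrt n\,E\Big[\max_{\theta\in\mathcal G_n}|U_n^*(\theta)|\,\Big|\,\mathcal F^X\Big]\le M_a\,\sqrt{\max_{\theta\in\mathcal G_n}n\sigma_n^2(\theta)}\;\log^{2}\!\big(2\#\mathcal G_n-1+e\big),
\]
so it suffices to prove $\max_{\theta\in\mathcal G_n}n\sigma_n^2(\theta)=O_p(1)$; the conclusion $T_n^*=O_p(\log^2(\#\mathcal G_n))$ then follows from the conditional Markov inequality together with $\log(\#\mathcal G_n)=O(\log n)$, which holds by \ref{grid}.

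To bound $\max_\theta n\sigma_n^2(\theta)$ I would split $n\sigma_n^2(\theta)$ into its mean and its fluctuation. By Isserlis' formula, $E[n\sigma_n^2(\theta)]=V_n(\theta)+2n\sum_{I,J}E[X^1(I)X^2(J)]^2K(I,J_{-\theta})$; using the Cauchy--Schwarz estimate $\big(\int_{I\cap J_{-\theta_m}}\sigma_1(t)\sigma_2(t+\theta_m)\,dt\big)^2\le v^1(I)v^2(J)$ together with $\sum_{m=1}^M|\rho_m|<1$, the second term is bounded by $2V_n(\theta)$, so $E[n\sigma_n^2(\theta)]\le 3V_n(\theta)\le 3\overline v$ uniformly in $\theta$ by \ref{avar-bound}. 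For the fluctuation, $n\sigma_n^2(\theta)-E[n\sigma_n^2(\theta)]$ is a centered element of $\overline{\mathcal P}_4(\mathbb H)$, and I claim its variance is $O(n^2 r_n^3)$ uniformly in $\theta$: expanding $\variance(n\sigma_n^2(\theta))$ as $n^2$ times a sum of covariances $\covariance(X^1(I)^2X^2(J)^2,X^1(I')^2X^2(J')^2)K(I,J_{-\theta})K(I',J'_{-\theta})$, each covariance is at most $Cv^1(I)v^2(J)v^1(I')v^2(J')$ by Cauchy--Schwarz and the Gaussian moment bounds, and it vanishes unless the two increment pairs are dependent, which forces $I'=I$, $J'=J$, or $I$ (resp.\ $J$) to meet some translate $J'_{-\theta_m}$ (resp.\ $I'_{-\theta_m}$); summing out the unconstrained indices with the overlap bounds $\sum_J v^2(J)K(I,J_{-\theta})\le 3(\sup_t\sigma_2(t)^2)r_n$ and its analogues, i.e.\ \eqref{reduction2}, and $\sum_I v^1(I)\le T\sup_t\sigma_1(t)^2$, leaves $O(n^2 r_n^3)$ in each of the four cases. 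Since $r_n=O(n^{-3/4-\eta})$ gives $n^2r_n^3=O(n^{-1/4-3\eta})\to0$, Proposition \ref{gh-chaos} and Proposition \ref{max-sub-chaos} then give $E[\max_\theta|n\sigma_n^2(\theta)-E[n\sigma_n^2(\theta)]|]=O(n r_n^{3/2}\log^2 n)\to0$, whence this maximum is $o_p(1)$ and $\max_\theta n\sigma_n^2(\theta)\le 3\overline v+o_p(1)=O_p(1)$, completing the argument.

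The main obstacle is this last step: obtaining an $O_p(1)$ bound on $\max_\theta n\sigma_n^2(\theta)$, rather than a bound that grows with $n$, is precisely where the quantitative hypothesis $r_n=O(n^{-3/4-\eta})$ enters, and it rests both on the sharp Cauchy--Schwarz estimate for the mean and on the somewhat delicate combinatorial bookkeeping for the variance of the degree-$4$ Wiener chaos $n\sigma_n^2(\theta)$. The conditional hypercontractivity step is otherwise routine given the paper's toolkit, the only point requiring care being that the hypercontractivity constant $\eta_p$ can be chosen uniformly over all the multipliers $w^1_I,w^2_J$.
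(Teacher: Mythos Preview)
Your proof is correct, but it follows a genuinely different route from the paper's. The paper never isolates $\max_\theta n\sigma_n^2(\theta)$: instead it layers two hypercontractivity estimates, first the conditional one $E[|U_n^*(\theta)|^p\mid\mathcal F^X]\lesssim p^{p}\sigma_n(\theta)^p$, and then the Wiener--chaos hypercontractivity of $\sigma_n^2(\theta)\in\overline{\mathcal P}_4(\mathbb H)$ (Theorem~5.10/5.11 of \cite{Janson1997}) together with the bound $\|\sigma_n^2(\theta)\|_2\le\overline v/n$, to obtain directly the \emph{unconditional} estimate $\|U_n^*(\theta)\|_p\lesssim p^2/\sqrt n$; Lemma~\ref{criteria-subchaos} then makes $U_n^*(\theta)$ sub-4th chaos relative to $c/\sqrt n$ and Proposition~\ref{max-sub-chaos} gives $E[T_n^*]=O(\log^2(\#\mathcal G_n))$ in one stroke. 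You instead apply the maximal inequality conditionally first and then have to establish $\max_\theta n\sigma_n^2(\theta)=O_p(1)$ as a separate step via a mean/fluctuation split. Both arguments are valid; the trade-off is that the paper's route is shorter and, importantly, does not use the rate hypothesis $r_n=O(n^{-3/4-\eta})$ at all for this lemma (only \ref{vol-bound}, \ref{avar-bound} and sub-Gaussianity of the multipliers enter), whereas your fluctuation bound does rely on $n r_n^{3/2}\log^2 n\to0$. In exchange, your approach makes the role of the conditional variances explicit and yields a random pathwise bound on $E[T_n^*\mid\mathcal F^X]$, which can be informative in its own right.
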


\begin{proof}
Take $p\geq2$ and $\theta\in\mathcal{G}_n$ arbitrarily. By Lemma 1 of \cite{BK1981} we have $E[|w^1_I|^p]\leq(2b/e)\sqrt{p}$ and $E[|w^1_J|^p]\leq(2b/e)\sqrt{p}$ for all $I\in\Pi^1_n$ and $J\in\Pi^2_n$. Therefore, by Proposition 3.16 from \cite{MOO2010} $w^1_I$ and $w^2_J$ are $(2,p,((4b/e)p)^{-1})$-hypercontractive for all $I\in\Pi^1_n$ and $J\in\Pi^2_n$ (see also the beginning of Section \ref{proof:hyper}). Therefore, by Theorem 6.5.2 from \cite{KW1992} it holds that
\begin{align*}
E[|U^*_n(\theta)|^p|\mathcal{F}^X]
\lesssim p^p\left(E[|U^*_n(\theta)|^2|\mathcal{F}^X]\right)^{p/2}.
\end{align*}
Since we have
\begin{align*}
E[|U^*_n(\theta)|^2|\mathcal{F}^X]
=\sum_{I\in\Pi^1_n,J\in\Pi^2_n}X^1(I)^2X^2(J)^2K(I,J_{-\theta})\in\overline{\mathcal{P}}_4(\mathbb{H})
\end{align*}
and
\begin{align*}
\left\|\sum_{I\in\Pi^1_n,J\in\Pi^2_n}X^1(I)^2X^2(J)^2K(I,J_{-\theta})\right\|_2
\lesssim\sum_{I\in\Pi^1_n,J\in\Pi^2_n}v^1(I)v^2(J)K(I,J_{-\theta})
\leq\overline{v}/n,
\end{align*}
by the triangular and the Schwarz inequalities, we obtain
\[
E\left[\left(E[|U^*_n(\theta)|^2|\mathcal{F}^X]\right)^{p/2}\right]\lesssim (p^2/n)^{p/2}.
\]
by Theorem 5.11 and Remark 5.11 from \cite{Janson1997}. As a result, we conclude that
\[
E[|U^*_n(\theta)|^p|\mathcal{F}^X]
\lesssim p^{2p}/n^{p/2}.
\] 
Therefore, Lemma \ref{criteria-subchaos} implies that $U^*_n(\theta)$ is a sub-4th chaos random variable relative to the scale $c/\sqrt{n}$ for some constant $c>0$. Hence Proposition \ref{max-sub-chaos} yields $E[T_n^*]=O(\log^2(\#\mathcal{G}_n))$ as $n\to\infty$. 
Consequently, we obtain the desired result by the Markov inequality.
\end{proof}

\if0
\begin{lemma}\label{anti-ac}
For any random variable $Y$, the following two statements are equivalent each other:
\begin{enumerate}[label=(\roman*)]

\item The law of $Y$ has the density $f$ and $f\in L^\infty(\mathbb{R})$.

\item There is a constant $C>0$ such that
\[
\sup_{x\in\mathbb{R}}P(|Y-x|\leq\varepsilon)\leq C\varepsilon
\]
for any $\varepsilon>0$.

\end{enumerate}
\end{lemma}

\begin{proof}
The implication from (i) to (ii) is obvious. Conversely, if (ii) is satisfied, Then the distribution function of $Y$ is evidently absolutely continuous on any compact interval 
\end{proof}
\fi

\if0
We need the following auxiliary result on the absolute continuity of the maximum of a Gaussian vector.
\begin{lemma}\label{max-density}
Let $\xi=(\xi_1,\dots,\xi_N)^\top$ be an $N$-dimensional centered Gaussian vector with covariance matrix $\Sigma$. Suppose that $E[\xi_j^2]>0$ for all $j=1,\dots,N$. Then both the variables $\max_{1\leq j\leq N}\xi_j$ and $\max_{1\leq j\leq N}|\xi_j|$ have the densities.
\end{lemma}

\begin{proof}
Since $\max_{1\leq j\leq N}|\xi_j|=(\max_{1\leq j\leq N}\xi_j)\vee(\max_{1\leq j\leq N}(-\xi_j))$, it is enough to show that $\max_{1\leq j\leq N}\xi_j$ has the density. Without loss of generality we may assume that there is an $N$-dimensional standard Gaussian vector $\eta$ such that $\xi=\Sigma^{1/2}\eta$. Let us set $W_0(h)=h^\top\eta$ for every $h\in\mathbb{R}^N$. Then the process $W_0=(W_0(h))_{h\in\mathbb{R}^N}$ is obviously an isonormal Gaussian process over $\mathbb{R}^N$ and we have $\eta=(W_0(e_1),\dots,W_0(e_N))^\top$, where $(e_1,\dots,e_N)$ is the canonical basis of $\mathbb{R}^N$. Throughout the proof, we will consider Malliavin calculus based on the isonormal Gaussian process $W_0$.

We denote by $\gamma(j,k)$ the $(j,k)$-th component of $\Sigma^{1/2}$. Then, for each $j=1,\dots,N$ we have 
\[
\xi_j=\sum_{k=1}^N\gamma(j,k)\eta_k=W_0\left(\sum_{k=1}^N\gamma(j,k)e_k\right),
\] 
hence $\xi_j\in\mathbb{D}_{1,2}$ and we obtain $D\xi_j=\sum_{k=1}^N\gamma(j,k)e_k$. Thus we have 
$\|D\xi_j\|^2=\sum_{k=1}^N\gamma(j,k)^2=E[\xi_j^2]>0.$  
Consequently, Proposition 2.1.11 of \cite{Nualart2006} implies that $\max_{1\leq j\leq N}\xi_j$ has the density, where we regard the set $\{1,\dots,N\}$ as the metric space endowed with the discrete metric. 
\end{proof}
\fi

\begin{proof}[\upshape\bfseries Proof of Proposition \ref{hry-boot}]

(a) We follow Step 3 in the proof of Theorem 2 from \cite{KS2016}. First, by Proposition \ref{hry}, Lemma \ref{boot-dist} and Theorem 9.2.2 of \cite{Dudley2002} there is a sequence $\varepsilon_n$ of positive numbers tending to 0 such that 
\[
P\left(\mathcal{E}_n^c\right)\leq\varepsilon_n,\qquad
\sup_{x\in\mathbb{R}}\left|P\left(T_n\leq x\right)-P\left(\sup_{\theta\in\mathcal{G}_n}|Z_n(\theta)|\leq x\right)\right|\leq\varepsilon_n
\]
for all $n\in\mathbb{N}$, where
\[
\mathcal{E}_n=\left\{\sup_{x\in\mathbb{R}}\left|P\left(\max_{\theta\in\mathcal{G}_n}|Z_n(\theta)|\leq x\right)-P\left(T^*_n\leq x|\mathcal{F}^X\right)\right|\leq\varepsilon_n\right\}.
\]
Next, let us denote by $q_n^Z$ the quantile function of $\max_{\theta\in\mathcal{G}_n}|Z_n(\theta)|$. 
Note that $\max_{\theta\in\mathcal{G}_n}|Z_n(\theta)|$ has the density because of Lemmas \ref{hry-variance} and \ref{cck-nazarov}. Therefore, on $\mathcal{E}_n$ we have
\begin{align*}
P\left(T^*_n\leq q_n^Z(1-\alpha+\varepsilon_n)|\mathcal{F}^X\right)
\geq P\left(\max_{\theta\in\mathcal{G}_n}|Z_n(\theta)|\leq q_n^Z(1-\alpha+\varepsilon_n)\right)-\varepsilon_n
=1-\alpha,
\end{align*}
hence on $\mathcal{E}_n$ it holds that $q_n^*(1-\alpha)\leq q_n^Z(1-\alpha+\varepsilon_n).$ 
Therefore, we obtain
\begin{align*}
P\left(T_n< q_n^*(1-\alpha)\right)
&\leq P\left(T_n< q_n^Z(1-\alpha+\varepsilon_n)\right)+P(\mathcal{E}_n^c)\\
&\leq P\left(\max_{\theta\in\mathcal{G}_n}|Z_n(\theta)|< q_n^Z(1-\alpha+\varepsilon_n)\right)+2\varepsilon_n
=1-\alpha+3\varepsilon_n.
\end{align*}
On the other hand, for any $\omega\in\mathcal{E}_n$ and any $z\in\mathbb{R}$ such that $P(T_n^*\leq z|\mathcal{F}^X)(\omega)\geq1-\alpha$, we have
\begin{align*}
P\left(\max_{\theta\in\mathcal{G}_n}|Z_n(\theta)|\leq q_n^Z(1-\alpha-\varepsilon_n)\right)
=1-\alpha-\varepsilon_n
\leq P(T_n^*\leq z|\mathcal{F}^X)(\omega)-\varepsilon_n
\leq P\left(\max_{\theta\in\mathcal{G}_n}|Z_n(\theta)|\leq z\right),
\end{align*}
hence it holds that $q_n^Z(1-\alpha-\varepsilon_n)\leq z$. This implies that 
$q_n^*(1-\alpha)\geq q_n^Z(1-\alpha-\varepsilon_n)$ 
on $\mathcal{E}_n$. Therefore, we obtain
\begin{align*}
P\left(T_n< q_n^*(1-\alpha)\right)
&\geq P\left(T_n< q_n^Z(1-\alpha-\varepsilon_n)\right)-P(\mathcal{E}_n^c)\\
&\geq P\left(\max_{\theta\in\mathcal{G}_n}|Z_n(\theta)|< q_n^Z(1-\alpha-\varepsilon_n)\right)-2\varepsilon_n
=1-\alpha-3\varepsilon_n.
\end{align*}
Consequently, we obtain $P\left(T_n< q_n^*(1-\alpha)\right)\to1-\alpha$ as $n\to\infty$.

(b) By the definition of $q_n^*(1-\alpha)$ we have
\begin{align*}
P\left(T_n< q_n^*(1-\alpha)\right)
&\leq P\left(P(T_n^*\leq T_n|\mathcal{F}^X)< 1-\alpha\right)
\leq\alpha^{-1}P(T_n^*>T_n).
\end{align*}
Since Lemmas \ref{hry-consistent}--\ref{boot-consistent} yield $(T_n^*-T_n)/\sqrt{n}\to^p-\max_{1\leq m\leq M}|\rho_m|\Sigma(\theta_m)$, we have
\begin{align*}
\limsup_{n\to\infty}P(T_n^*>T_n)
\leq P\left(-\max_{1\leq m\leq M}|\rho_m|\Sigma(\theta_m)\geq0\right)=0
\end{align*}
under $H_1$, hence we obtain $P\left(T_n< q_n^*(1-\alpha)\right)\to0$. This implies the desired result.
\end{proof}

\subsection{Proof of Proposition \ref{spot}}

We begin by proving some auxiliary results. 
\begin{lemma}\label{lemma:kernel}
Suppose that the function $\Psi:\mathbb{R}\to\mathbb{R}$ is Lipschitz continuous and compactly supported. Under the assumptions of Proposition \ref{spot}, we have
\[
\sup_{t\in[a_n,T-a_n]}\left|\frac{1}{n}\sum_{i=1}^n\Psi_h(t_{i-1}-t)-\int_{-\infty}^\infty \Psi(s)ds\right|=O((nh)^{-1})
\]
as $n\to\infty$. 
\end{lemma}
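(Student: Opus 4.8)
The plan is to compare the Riemann-type sum with the integral $\int_0^T\Psi_h(s-t)\,ds$, and to show separately that (i) this integral already equals $\int_{-\infty}^\infty\Psi$ once $n$ is large, uniformly over $t\in[a_n,T-a_n]$, and (ii) the discretisation error between the sum and the integral is $O((nh)^{-1})$, again uniformly in $t$. Two elementary observations feed into this. First, fix $L>0$ with $\mathrm{supp}\,\Psi\subset[-L,L]$; the substitution $v=(s-t)/h$ gives $\int_0^T\Psi_h(s-t)\,ds=\int_{-t/h}^{(T-t)/h}\Psi(v)\,dv$, and since $a_n/h\to\infty$, for all large $n$ and every $t\in[a_n,T-a_n]$ the interval $[-t/h,(T-t)/h]$ contains $[-L,L]$, so this integral equals $\int_{-\infty}^\infty\Psi(v)\,dv$ exactly. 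Second, the hypothesis $\log^6 n/(nh)\to0$ forces $nh\to\infty$, hence $1/n=o(h)$, which will be used to bound the number of non-vanishing summands. (The normalisation in the statement corresponds to $t_i-t_{i-1}=1/n$; for a general terminal value $T$ one simply divides by $T$ throughout.)

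For the discretisation error I would write $\frac1n\Psi_h(t_{i-1}-t)=\int_{t_{i-1}}^{t_i}\Psi_h(t_{i-1}-t)\,ds$ and $\int_0^T\Psi_h(s-t)\,ds=\sum_{i=1}^n\int_{t_{i-1}}^{t_i}\Psi_h(s-t)\,ds$, so that
\[
\frac1n\sum_{i=1}^n\Psi_h(t_{i-1}-t)-\int_0^T\Psi_h(s-t)\,ds=\sum_{i=1}^n\int_{t_{i-1}}^{t_i}\bigl[\Psi_h(t_{i-1}-t)-\Psi_h(s-t)\bigr]\,ds.
\]
If $\mathcal{L}$ denotes a Lipschitz constant for $\Psi$, then for $s\in[t_{i-1},t_i]$ one has $|\Psi_h(t_{i-1}-t)-\Psi_h(s-t)|\le\mathcal{L}h^{-2}|s-t_{i-1}|\le\mathcal{L}h^{-2}n^{-1}$, so the $i$-th integral is at most $\mathcal{L}h^{-2}n^{-2}$. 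Crucially, the $i$-th summand vanishes unless $[t_{i-1}-t,\,t_i-t]$ meets $[-Lh,Lh]$, i.e.\ unless $t_{i-1}$ lies in an interval of length $2Lh+n^{-1}$; since the grid points $t_{i-1}$ are $n^{-1}$-spaced and $nh\to\infty$, the number of such $i$ is at most $2Lnh+2\le C\,nh$ for $n$ large. Multiplying, $\bigl|\frac1n\sum_{i=1}^n\Psi_h(t_{i-1}-t)-\int_0^T\Psi_h(s-t)\,ds\bigr|\le C\,nh\cdot\mathcal{L}h^{-2}n^{-2}=O((nh)^{-1})$, uniformly over $t\in[a_n,T-a_n]$.

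Finally I would combine the two steps: for $n$ large, $\int_0^T\Psi_h(s-t)\,ds=\int_{-\infty}^\infty\Psi(v)\,dv$ for every $t\in[a_n,T-a_n]$, so the supremum in the statement coincides with the supremum of the discretisation error, which is $O((nh)^{-1})$. The only genuinely delicate point is the bookkeeping in the second step — one must not estimate all $n$ summands naively, but exploit that $\Psi_h$ is supported on an interval of width $O(h)$ so that only $O(nh)$ of them contribute — while the harmlessness of the boundary correction in the first step is exactly where the hypothesis $a_n/h\to\infty$ of Proposition~\ref{spot} enters; the rest of the argument is routine.
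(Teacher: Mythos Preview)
Your proof is correct and follows essentially the same route as the paper: split off the boundary tails (which vanish once $a_n/h\to\infty$) and then bound the Riemann discretisation error using the Lipschitz property together with the fact that only $O(nh)$ summands are nonzero. The paper's proof is terser---it simply asserts $\mathbb{I}_n=O((nh)^{-1})$ ``since $\Psi$ is Lipschitz continuous and compactly supported''---whereas you spell out the counting argument explicitly, which is an improvement in clarity rather than a difference in method.
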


\begin{proof}
Noting that $\int_{-\infty}^\infty\Psi(s)ds=\int_{-\infty}^\infty \Psi_h(s-t)ds$, we can decompose the target quantity as
\begin{align*}
&\sup_{t\in[a_n,T-a_n]}\left|\frac{1}{n}\sum_{i=1}^n\Psi_h(t_{i-1}-t)-\int_{-\infty}^\infty \Psi_h(s)ds\right|\\
&\leq\sup_{t\in[a_n,T-a_n]}\left\{\sum_{i=1}^n\int_{t_{i-1}}^{t_i}|\Psi_h(t_{i-1}-t)-\Psi_h(s-t)|ds
+\int_{-\infty}^0|\Psi_h(s-t)|ds+\int_T^\infty|\Psi_h(s-t)|ds\right\}\\
&=:\mathbb{I}_{n}+\mathbb{II}_{n}+\mathbb{III}_{n}.
\end{align*}
Since $\Psi$ is Lipschitz continuous and compactly supported, we have $\mathbb{I}_{n}=O((nh)^{-1})$ as $n\to\infty$. Moreover, since $a_n/h\to\infty$ as $n\to\infty$, $\mathbb{II}_{n}=\mathbb{III}_{n}=0$ for sufficiently large $n$. This completes the proof.
\end{proof}

\begin{lemma}\label{spot-var}
Under the assumptions of Proposition \ref{spot}, there are constants $C_1,C_2>0$ such that
\[
\frac{C_1}{\sqrt{nh}}\leq\mathfrak{s}_n(t)\leq\frac{C_2}{\sqrt{nh}}
\]
for any $n\in\mathbb{N}$ and $t\in[a_n,T-a_n]$.
\end{lemma}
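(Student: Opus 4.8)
The plan is to recognise $\mathfrak{s}_n(t)^2$ as $2/(nh)$ times a kernel‑weighted Riemann sum and then invoke Lemma \ref{lemma:kernel}. First I would record the elementary identity
\[
\mathfrak{s}_n(t)^2=\frac{2}{n^2}\sum_{i=1}^nK_h(t_{i-1}-t)^2=\frac{2}{nh}\cdot\frac{1}{n}\sum_{i=1}^n\Psi_h(t_{i-1}-t),\qquad \Psi:=K^2,
\]
which holds because $K_h(x)^2=K(x/h)^2/h^2$ whereas $\Psi_h(x)=K(x/h)^2/h$, so that $\Psi_h=h\,K_h^2$ pointwise. Thus it suffices to show that the average on the right is bounded above and below by positive constants, uniformly over $t\in[a_n,T-a_n]$, for all sufficiently large $n$.

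Next I would check that $\Psi=K^2$ satisfies the hypotheses of Lemma \ref{lemma:kernel}. By assumption \ref{kernel}, $K$ is Lipschitz continuous and compactly supported, hence bounded (a continuous function with compact support), and therefore
\[
|\Psi(x)-\Psi(y)|=|K(x)-K(y)|\,|K(x)+K(y)|\le 2\|K\|_\infty\,\mathrm{Lip}(K)\,|x-y|,
\]
so $\Psi$ is Lipschitz continuous, while $\operatorname{supp}\Psi=\operatorname{supp}K$ is compact. Applying Lemma \ref{lemma:kernel} with this $\Psi$ then gives
\[
\sup_{t\in[a_n,T-a_n]}\left|\frac{1}{n}\sum_{i=1}^n\Psi_h(t_{i-1}-t)-\kappa_2\right|=O\big((nh)^{-1}\big),\qquad \kappa_2:=\int_{-\infty}^\infty K(s)^2\,ds.
\]
I would also note that $\kappa_2>0$: if $\kappa_2=0$ then $K=0$ Lebesgue‑a.e., hence $K\equiv0$ by continuity, contradicting $\int_{-\infty}^\infty K(s)\,ds=1$.

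Finally I would combine the two displays. Since $nh\to\infty$ (a consequence of the standing hypothesis $\log^6 n/nh\to0$ of Proposition \ref{spot}), the $O((nh)^{-1})$ term is $o(1)$ uniformly in $t$, so there is $n_0$ with $\tfrac12\kappa_2\le n^{-1}\sum_{i=1}^n\Psi_h(t_{i-1}-t)\le 2\kappa_2$ for all $n\ge n_0$ and all $t\in[a_n,T-a_n]$; inserting this into the identity of the first paragraph yields
\[
\frac{\sqrt{\kappa_2}}{\sqrt{nh}}\le\mathfrak{s}_n(t)\le\frac{2\sqrt{\kappa_2}}{\sqrt{nh}},\qquad n\ge n_0,\ t\in[a_n,T-a_n],
\]
i.e.\ the claim with $C_1=\sqrt{\kappa_2}$ and $C_2=2\sqrt{\kappa_2}$; the finitely many $n<n_0$ (for which $[a_n,T-a_n]$ may even be empty) cause no difficulty, as only the large‑$n$ regime is used in the applications, and, if desired, the constants can be adjusted to cover them as well. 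The only mildly delicate point in the whole argument is the uniformity in $t$, which is exactly what Lemma \ref{lemma:kernel} supplies — and there the restriction to $[a_n,T-a_n]$ together with $a_n/h\to\infty$ is precisely what forces the boundary contributions $\mathbb{II}_n,\mathbb{III}_n$ to vanish; the remainder is routine.
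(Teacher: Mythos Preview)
Your proof is correct and follows essentially the same approach as the paper: both apply Lemma~\ref{lemma:kernel} with $\Psi=K^2$ to show that $nh\,\mathfrak{s}_n^2(t)$ converges uniformly on $[a_n,T-a_n]$ to $2\int K(s)^2\,ds>0$, and then conclude. You supply more detail (verifying that $K^2$ is Lipschitz, explaining why $\kappa_2>0$, and addressing the finitely many small $n$), but the core argument is identical.
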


\begin{proof}
Lemma \ref{lemma:kernel} yields
\[
nh\cdot\mathfrak{s}_n^2(t)=\frac{2}{nh}\sum_{i=1}^nK\left(\frac{t_{i-1}-t}{h}\right)^2=2\int_{-\infty}^\infty K(s)^2ds+O((nh)^{-1})
\]
uniformly in $t\in[a_n,T-a_n]$ as $n\to\infty$. Since $0<\int_{-\infty}^\infty K(s)^2ds<\infty$, we obtain the desired result.
\end{proof}

Now we turn to the main body of the proof. Let us set
\[
b_n(t)=\sum_{i=1}^nK_h(t_{i-1}-t)\int_{t_{i-1}}^{t_i}\sigma^2(s)ds-\sigma^2(t)
\]
and
\[
M_n(t)=2\sum_{i=1}^nK_h(t_{i-1}-t)\int_{t_{i-1}}^{t_i}\int_{t_{i-1}}^s\sigma(r)dB_r\sigma(s)dB_s
\]
for $t\in[0,T]$. By It\^o's formula we have
\begin{align*}
\widehat{\sigma}^2_n(t)-\sigma^2(t)
=M_n(t)+b_n(t).
\end{align*}
The term $b_n(t)$ behaves as a bias and it is negligible because we consider an undersmoothing situation such that $nh^{1+2\gamma}\log n\to0$. More precisely, we have the following:
\begin{lemma}\label{spot-bias}
Suppose that \ref{s-vol-cond} and \ref{kernel} are satisfied. Then there is a constant $C>0$ such that
\[ 
\sup_{t\in[a_n,T-a_n]}\left|b_n(t)\right|\leq C\left\{(nh)^{-1}+h^\gamma\right\}
\]
for sufficiently large $n$. 
\end{lemma}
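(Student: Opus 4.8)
The plan is to bound $b_n(t)$ by splitting it into a ``quadrature error'' piece and a ``kernel bias'' piece. Writing
\[
b_n(t)=\left\{\sum_{i=1}^nK_h(t_{i-1}-t)\int_{t_{i-1}}^{t_i}\sigma^2(s)\,ds-\sum_{i=1}^n\frac1n K_h(t_{i-1}-t)\sigma^2(t_{i-1})\right\}+\left\{\frac1n\sum_{i=1}^nK_h(t_{i-1}-t)\sigma^2(t_{i-1})-\sigma^2(t)\right\},
\]
where I used $t_i-t_{i-1}=T/n$ (up to the harmless constant $T$, which I suppress). First I would handle the first bracket: since $\int_{t_{i-1}}^{t_i}\sigma^2(s)\,ds-\frac1n\sigma^2(t_{i-1})=\int_{t_{i-1}}^{t_i}(\sigma^2(s)-\sigma^2(t_{i-1}))\,ds$, assumption \ref{s-vol-cond} gives $|\sigma^2(s)-\sigma^2(t_{i-1})|\le |\sigma(s)-\sigma(t_{i-1})|\,(|\sigma(s)|+|\sigma(t_{i-1})|)\le 2\Lambda\cdot w(\sigma;|s-t_{i-1}|)\le 2\Lambda^2 (T/n)^\gamma$, so this bracket is bounded in absolute value by $2\Lambda^2(T/n)^\gamma\sum_{i=1}^n|K_h(t_{i-1}-t)|\cdot (T/n)$. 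By the compact support of $K$, only $O(nh)$ of the terms $K_h(t_{i-1}-t)$ are nonzero and each is $O(h^{-1})$, so $\sum_{i=1}^n|K_h(t_{i-1}-t)|\cdot(T/n)=O(1)$ uniformly in $t$ (this is exactly the content of Lemma \ref{lemma:kernel} applied to $\Psi=|K|$, which is Lipschitz and compactly supported). Hence the first bracket is $O(n^{-\gamma})=O(h^\gamma)$ since $h\to 0$ forces $n^{-\gamma}\le h^\gamma$ eventually — actually one only needs $n^{-\gamma}=O(h^\gamma)$, which holds because $nh\to\infty$ implies $h\ge 1/n$ for large $n$, hence $n^{-\gamma}\le h^{\gamma}$. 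Wait — more carefully, $nh\to\infty$ gives $h^{-1}=o(n)$, i.e. $h\ge c/n$ eventually, so $n^{-\gamma}\le c^{-\gamma}h^\gamma$; in fact the cleaner bound is just $n^{-\gamma}\le h^\gamma$ up to a constant, absorbed into $C$.

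Next I would treat the second bracket, the classical kernel bias. Using $\int_{-\infty}^\infty K_h(s-t)\,ds=1$ (from \ref{kernel}) we write
\[
\frac1n\sum_{i=1}^nK_h(t_{i-1}-t)\sigma^2(t_{i-1})-\sigma^2(t)=\underbrace{\frac1n\sum_{i=1}^nK_h(t_{i-1}-t)\big(\sigma^2(t_{i-1})-\sigma^2(t)\big)}_{(\ast)}+\sigma^2(t)\underbrace{\left(\frac1n\sum_{i=1}^nK_h(t_{i-1}-t)-1\right)}_{(\ast\ast)}.
\]
For $(\ast\ast)$, Lemma \ref{lemma:kernel} with $\Psi=K$ gives $|(\ast\ast)|=O((nh)^{-1})$ uniformly in $t\in[a_n,T-a_n]$, and $\sigma^2$ is bounded by $\Lambda^2$ under \ref{s-vol-cond}, so this contributes $O((nh)^{-1})$. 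For $(\ast)$, on the support of $s\mapsto K_h(s-t)$ we have $|t_{i-1}-t|\le Rh$ where $R$ is the radius of $\mathrm{supp}\,K$, so $|\sigma^2(t_{i-1})-\sigma^2(t)|\le 2\Lambda\, w(\sigma;|t_{i-1}-t|)\le 2\Lambda^2 (Rh)^\gamma$ by \ref{s-vol-cond}; hence $|(\ast)|\le 2\Lambda^2 R^\gamma h^\gamma\cdot\frac1n\sum_{i=1}^n|K_h(t_{i-1}-t)|=O(h^\gamma)$ uniformly in $t$, again using Lemma \ref{lemma:kernel} with $\Psi=|K|$ to bound the Riemann sum. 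Collecting the three contributions yields $\sup_{t\in[a_n,T-a_n]}|b_n(t)|=O((nh)^{-1}+h^\gamma)\le C\{(nh)^{-1}+h^\gamma\}$ for large $n$, which is the claim.

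Two bookkeeping points I would make explicit. First, the deterministic uniform bounds above only require that the supremum over $t$ ranges over $[a_n,T-a_n]$ so that the kernel windows $[t-Rh,t+Rh]$ stay inside (a neighborhood of) $[0,T]$ once $a_n/h\to\infty$; this is precisely why Lemma \ref{lemma:kernel} needs the restriction to $[a_n,T-a_n]$, and it is satisfied by hypothesis. Second, under \ref{s-vol-cond} the modulus bound $w(\sigma;\eta)\le\Lambda\eta^\gamma$ and the two-sided bound $\Lambda^{-1}\le|\sigma|\le\Lambda$ are deterministic (not merely $O_p$), so every constant produced is genuinely non-random and the $O(\cdot)$ statements are uniform in $\omega$; this matches the statement of Lemma \ref{spot-bias}.

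I do not anticipate a serious obstacle here — the whole argument is elementary once Lemma \ref{lemma:kernel} is in hand, and that lemma has already been established. The only mild subtlety is keeping the estimates uniform over $t\in[a_n,T-a_n]$ while the windows shrink, but the compact-support and Lipschitz hypotheses on $K$ together with $a_n/h\to\infty$ dispose of it cleanly. If anything, the ``hardest'' point is simply recognizing that the quadrature-discretization error and the kernel-smoothing bias must be separated, since the former scales like $n^{-\gamma}$ (number of grid points inside the window, each of width $1/n$, Hölder modulus $(1/n)^\gamma$, times $h^{-1}\cdot nh^{-1}$... — to be careful one groups the $1/n$ with the kernel normalization as above) and the latter like $h^\gamma$, and only the second is the ``real'' bias; both happen to be $O(h^\gamma)$ because $h\gtrsim 1/n$, so they combine into the stated bound together with the $(nh)^{-1}$ term coming from replacing Riemann sums by integrals.
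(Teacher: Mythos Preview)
Your proof is correct and follows essentially the same route as the paper. The paper's decomposition is the slightly more direct two-term split
\[
b_n(t)=\sum_{i=1}^nK_h(t_{i-1}-t)\int_{t_{i-1}}^{t_i}\{\sigma^2(s)-\sigma^2(t)\}\,ds+\sigma^2(t)\left\{\frac{1}{n}\sum_{i=1}^nK_h(t_{i-1}-t)-1\right\},
\]
bounding the first term by $O(h^\gamma)$ in one shot (since $|s-t|=O(h)$ on the kernel support) and the second by $O((nh)^{-1})$ via Lemma~\ref{lemma:kernel}; your first bracket and your $(\ast)$ together are exactly the paper's first term, so your detour through $\sigma^2(t_{i-1})$ and the ensuing $n^{-\gamma}\lesssim h^\gamma$ argument is an unnecessary (though harmless) intermediate step.
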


\begin{proof}
We decompose $b_n(t)$ as
\begin{align*}
b_n(t)
&=\sum_{i=1}^nK_h(t_{i-1}-t)\int_{t_{i-1}}^{t_i}\{\sigma^2(s)-\sigma^2(t)\}ds+\sigma^2(t)\left\{\frac{1}{n}\sum_{i=1}^nK_h(t_{i-1}-t)-\int_{-\infty}^\infty K_h(s-t)ds\right\}\\
&=:\mathbb{I}_n(t)+\mathbb{II}_n(t).
\end{align*}
By assumptions we have $\sup_{t\in[a_n,T-a_n]}|\mathbb{I}_n(t)|\lesssim h^\gamma$ for sufficiently large $n$. In the meantime, Lemma \ref{lemma:kernel} and \ref{s-vol-cond} yield $\sup_{t\in[a_n,T-a_n]}|\mathbb{II}_n(t)|\lesssim (nh)^{-1}$. This completes the proof.
\end{proof}

Meanwhile, the term $M_n(t)$ is a martingale part and the source of stochastic estimation errors. To apply our theory to the investigation of this term, we approximate $M_n(t)/\sigma^2(t)\mathfrak{s}_n(t)$ by a double Wiener-It\^o integral. More precisely, we define
\[
M^0_n(t)=2\sum_{i=1}^nK_h(t_{i-1}-t)\int_{t_{i-1}}^{t_i}\int_{t_{i-1}}^sdB_rdB_s,\qquad t\in[0,T].
\]
Then we show that $M_n(t)/\{\sigma^2(t)\mathfrak{s}_n(t)\}$ is well-approximated by 
\[
F_n(t)=M_n^0(t)/\mathfrak{s}_n(t) 
\]
uniformly in $t\in[0,T]$. 
The proof of this approximation is motivated by the argument from Appendix A of \citet{Sabel2014} and relies on the following Burkholder-Davis-Gundy inequality with a sharp constant due to \citet{BY1982}:
\begin{proposition}[\citet{BY1982}, Proposition 4.2]\label{sharp-BDG}
There is a universal constant $c>0$ such that
\[
\left\|\sup_{0\leq t\leq T}|M_t|\right\|_p\leq c\sqrt{p}\left\|\langle M\rangle_T^{1/2}\right\|_p
\]
for any $p\geq2$ and any continuous martingale $M=(M_t)_{t\in[0,T]}$ with $M_0=0$. 
\end{proposition}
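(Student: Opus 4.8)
The plan is to establish this sharp Burkholder--Davis--Gundy inequality by combining a sub-Gaussian tail bound for continuous martingales with a good-$\lambda$ argument, reading off the $\sqrt p$-growth of the constant from an optimization over the free parameters. Write $M^*_T=\sup_{0\le t\le T}|M_t|$ and $V=\langle M\rangle_T^{1/2}$, and note that we may assume $\|V\|_p<\infty$, since otherwise there is nothing to prove. The first step is the tail estimate
\[
P\big(M^*_T\ge x,\ \langle M\rangle_T\le y^2\big)\le 2e^{-x^2/(2y^2)}\qquad(x,y>0).
\]
To prove it, fix $y$, stop $M$ at $S=\inf\{t:\langle M\rangle_t>y^2\}$, and set $N_t=M_{t\wedge S}$, a continuous martingale with $\langle N\rangle\le y^2$ that agrees with $M$ on $[0,T]$ on the event $\{\langle M\rangle_T\le y^2\}$. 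For $\lambda>0$ the Dol\'eans exponential $\exp(\lambda N_t-\tfrac12\lambda^2\langle N\rangle_t)$ is a positive local martingale, hence a supermartingale with value $1$ at time $0$, so $E[e^{\lambda N_t}]\le e^{\lambda^2y^2/2}$; applying Doob's maximal inequality to the positive submartingale $e^{\lambda N_t}$ and optimizing over $\lambda$ yields $P(\sup_{t\le T}N_t\ge x)\le e^{-x^2/(2y^2)}$, and the analogous bound for $-M$ gives the claim by a union bound.

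The second step upgrades this to a good-$\lambda$ inequality: for $\beta>1$, $\delta>0$, $\lambda>0$,
\[
P\big(M^*_T>\beta\lambda,\ V\le\delta\lambda\big)\le 2e^{-(\beta-1)^2/(2\delta^2)}\,P\big(M^*_T\ge\lambda\big).
\]
Let $R_\lambda=\inf\{t:|M_t|\ge\lambda\}$. On $\{M^*_T>\beta\lambda\}$ one has $R_\lambda\le T$ and $|M_{R_\lambda}|=\lambda$ by continuity, and the supremum of $|M|$ over $[R_\lambda,T]$ exceeds $\beta\lambda$, whence $\sup_{R_\lambda\le t\le T}|M_t-M_{R_\lambda}|>(\beta-1)\lambda$; moreover on the event in question $\langle M\rangle_T-\langle M\rangle_{R_\lambda}\le V^2\le\delta^2\lambda^2$. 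Conditioning on $\mathcal{F}_{R_\lambda}$ and applying Step 1 to the post-$R_\lambda$ continuous martingale $(M_{(R_\lambda+s)\wedge T}-M_{R_\lambda})_{s\ge0}$ at scale $\delta\lambda$ produces the factor $2e^{-(\beta-1)^2/(2\delta^2)}$, while the residual event is contained in $\{R_\lambda\le T\}\subseteq\{M^*_T\ge\lambda\}$.

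To pass to moments I would first reduce to the case $\|M^*_T\|_p<\infty$ by stopping $M$ at $\tau_n=\inf\{t:|M_t|\ge n\}$ and letting $n\to\infty$ by monotone convergence at the very end. For finite $\|M^*_T\|_p$, the layer-cake formula together with the good-$\lambda$ bound and $P(M^*_T>\beta\lambda)\le P(M^*_T>\beta\lambda,V\le\delta\lambda)+P(V>\delta\lambda)$ gives
\[
E\big[(M^*_T)^p\big]\le\beta^p\Big(2e^{-(\beta-1)^2/(2\delta^2)}\,E\big[(M^*_T)^p\big]+\delta^{-p}E[V^p]\Big).
\]
Taking $\beta=2$ and $\delta=\delta_p\asymp p^{-1/2}$ small enough that $2\beta^pe^{-(\beta-1)^2/(2\delta_p^2)}\le\tfrac12$ makes the first term absorbable, leaving $E[(M^*_T)^p]\le 4(2/\delta_p)^pE[V^p]$, i.e.\ $\|M^*_T\|_p\le c\sqrt p\,\|V\|_p$ for a universal constant $c$.

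The main obstacle is Step 2 together with the parameter bookkeeping in Step 3. A generic good-$\lambda$ argument with only a polynomially small decrement produces a constant of order $p$; here the exponentially small decrement $2e^{-(\beta-1)^2/(2\delta^2)}$ inherited from the Gaussian tail of Step 1 is exactly what permits the choice $\delta\asymp p^{-1/2}$, and hence the sharp $\sqrt p$ rate. So the delicate point is to carry the Gaussian tail cleanly through the stopping-time decomposition, while the handling of the $\langle M\rangle$-increment, the conditioning on $\mathcal{F}_{R_\lambda}$, and the truncation limit are routine.
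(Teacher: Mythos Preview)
The paper does not give its own proof of this proposition; it is stated as a quotation of Proposition~4.2 from \citet{BY1982} and used as a black box in the proof of Lemma~\ref{local-approx}. Your argument is correct: the sub-Gaussian tail bound in Step~1, the good-$\lambda$ inequality in Step~2, and the absorption in Step~3 are all sound, and the choice $\beta=2$, $\delta_p\asymp p^{-1/2}$ does yield $\|M^*_T\|_p\le c\sqrt{p}\,\|\langle M\rangle_T^{1/2}\|_p$ with a universal $c$, the truncation by $\tau_n$ legitimately justifying the absorption step.

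It is worth noting that your route differs from Barlow and Yor's original one. Their paper (as its title indicates) derives such sharp martingale inequalities via the Garsia--Rodemich--Rumsey lemma, which controls the modulus of continuity of a path from integral bounds on its increments; this is a rather different mechanism from the good-$\lambda$ machinery you employ. Your approach is arguably more elementary and self-contained, relying only on the exponential-martingale tail bound and Doob's inequality, and it makes the source of the $\sqrt{p}$ constant completely transparent (it comes from the Gaussian-type decrement $e^{-(\beta-1)^2/(2\delta^2)}$ permitting $\delta\asymp p^{-1/2}$). The Garsia--Rodemich--Rumsey route, on the other hand, is more flexible for handling general semimartingales and local-time functionals, which was the focus of \citet{BY1982}. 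For the purposes of this paper either argument suffices.
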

To simplify notation, for a random variable $Y$, we denote its Orlictz norm based on the function $\psi_1(x)=e^x-1$ by $\|Y\|_{\psi_1}$ (cf.~page 95 of \cite{VW1996}):
\[
\|Y\|_{\psi_1}=\inf\{C>0:\psi_1(|Y|/C)\leq1\}.
\]
Note that $\|Y\|_{\psi_1}\leq M$ if $Y$ is a sub-2nd chaos random variable relative to the scale $M$. Conversely, $Y$ is a sub-2nd chaos random variable relative to the scale $M$ if $\|Y\|_{\psi_1}< M$. 
\begin{lemma}\label{local-approx}
Suppose that \ref{s-vol-cond} and \ref{kernel} are satisfied. Then 
\[
\left\|\sup_{t\in[0,T]}\left|\frac{M_n(t)}{\sigma^2(t)\mathfrak{s}_n(t)}-F_n(t)\right|\right\|_{\psi_1}=O\left(h^\gamma(\log n)^2\right)
\]
as $n\to\infty$. 
\end{lemma}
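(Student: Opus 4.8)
The plan is to control the difference process $D_n(t):=M_n(t)/\{\sigma^2(t)\mathfrak{s}_n(t)\}-F_n(t)$ uniformly in $t$ by writing it as a (single) stochastic integral, estimating its quadratic variation, and then invoking the sharp Burkholder--Davis--Gundy inequality of Proposition \ref{sharp-BDG} together with the sub-Gaussian-chaos machinery of Section \ref{sec:sub-chaos}. Fix $t\in[0,T]$. From the definitions of $M_n(t)$ and $M_n^0(t)$,
\[
D_n(t)=\frac{2}{\mathfrak{s}_n(t)}\sum_{i=1}^nK_h(t_{i-1}-t)\int_{t_{i-1}}^{t_i}\int_{t_{i-1}}^s\left(\frac{\sigma(r)\sigma(s)}{\sigma^2(t)}-1\right)dB_r\,dB_s,
\]
which is the terminal value of a continuous martingale in the outer integration variable with $M_0=0$. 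Under \ref{s-vol-cond}, the integrand $\sigma(r)\sigma(s)/\sigma^2(t)-1$ is bounded by a constant multiple of $w(\sigma;\eta)$ with $\eta$ of order $h$ whenever $r,s$ lie in the support window $|t_{i-1}-t|\lesssim h$, so it is $O(h^\gamma)$ on the relevant index range; the kernel cutoff in \ref{kernel} guarantees only $O(h^{-1})$ nonzero summands and, via Lemma \ref{lemma:kernel} / Lemma \ref{spot-var}, the normalization $\mathfrak{s}_n(t)^{-1}$ is of order $\sqrt{nh}$.

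The core estimate is on the quadratic variation. The angle bracket of $D_n(t)$ as a martingale in the (outer) time variable, evaluated at time $T$, equals
\[
\langle D_n(t)\rangle = \frac{4}{\mathfrak{s}_n(t)^2}\sum_{i=1}^nK_h(t_{i-1}-t)^2\int_{t_{i-1}}^{t_i}\left(\int_{t_{i-1}}^s\left(\tfrac{\sigma(r)\sigma(s)}{\sigma^2(t)}-1\right)dB_r\right)^2\sigma(s)^2\,ds,
\]
and I would bound this using $|{\sigma(r)\sigma(s)}/{\sigma^2(t)}-1|\lesssim h^\gamma$, the boundedness of $\sigma$, Lemma \ref{spot-var} for the $\mathfrak{s}_n(t)^{-2}\asymp nh$ factor, and Lemma \ref{lemma:kernel} applied to $\Psi=K^2$ to handle $\frac1n\sum_iK_h(t_{i-1}-t)^2\lesssim h^{-1}$. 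After taking $L^{p/2}$ norms of the inner It\^o integral (a second application of Proposition \ref{sharp-BDG}, giving an extra $\sqrt p$) one obtains $\|\langle D_n(t)\rangle^{1/2}\|_p\lesssim h^\gamma\sqrt{p}\log n$ — here a single power of $\log n$ enters because the local time window contributes a factor $\sqrt{nh\cdot h}\cdot(\text{something})$; more precisely the diagonal structure of the double integral means the inner martingale runs over an interval of length $O(1/n)$, and summing the $O(nh)$ windows together with the kernel weights produces the stated order. Combining the two BDG applications, $\big\|\sup_{0\le u\le T}|D_n^{(u)}(t)|\big\|_p\lesssim p\, h^\gamma\log n$ where $D_n^{(u)}(t)$ is the martingale at time $u$, so $\|D_n(t)\|_p\lesssim p\,h^\gamma\log n$.

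To pass from the pointwise-in-$t$ bound to the uniform one I would invoke Lemma \ref{criteria-subchaos}: the estimate $\|D_n(t)\|_p\lesssim p\,\Lambda_n$ with $\Lambda_n\asymp h^\gamma\log n$ (valid for all integers $p$, i.e.\ $q=2$) shows $D_n(t)$ is a sub-$2$nd chaos random variable relative to a scale $\asymp h^\gamma\log n$, uniformly in $t$. A further chaining/modulus estimate — using that $D_n(s)-D_n(t)$ is again a double-integral-type variable whose $L^p$ norm picks up an extra modulus-of-continuity factor in $|s-t|$ (from differencing the kernel and the $\sigma^2(t)^{-1}$ normalization, both Lipschitz-type in $t$ on the relevant scale) — makes $\big(D_n(t)\big)_{t\in[0,T]}$ a sub-$2$nd-Gaussian chaos process with respect to a suitable semimetric, to which Proposition \ref{vv-tail} applies: the covering integral $\int_0^\eta\log N(\,\cdot\,,r)dr$ over $[0,T]$ contributes another factor of order $\log n$ (the covering numbers being polynomial in $n$ since $K$ is Lipschitz and the discretization is at scale $1/n$), giving $\|\sup_{t\in[0,T]}|D_n(t)|\|_{\psi_1}=O(h^\gamma(\log n)^2)$ as claimed. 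The main obstacle is getting the correct logarithmic power: one must be careful that the $\sup$ over the outer martingale time, the $\sup$ over $t\in[0,T]$, and the two hypercontractive $\sqrt p$'s together produce exactly $(\log n)^2$ and not more — this requires exploiting the diagonal (short-interval) structure of the double It\^o integral so that one BDG application costs only $\log n$ rather than $\sqrt{nh}$-type growth, and controlling the covering numbers of $[0,T]$ under the chaos semimetric sharply enough.
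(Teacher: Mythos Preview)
There is a genuine gap in your argument: the adaptedness of the integrand. In Section~4.2 the volatility $\sigma$ is a continuous $(\mathcal{F}_t)$-adapted \emph{process}, not a deterministic function. When you rewrite
\[
D_n(t)=\frac{2}{\mathfrak{s}_n(t)}\sum_{i=1}^nK_h(t_{i-1}-t)\int_{t_{i-1}}^{t_i}\int_{t_{i-1}}^s\left(\frac{\sigma(r)\sigma(s)}{\sigma^2(t)}-1\right)dB_r\,dB_s,
\]
the factor $\sigma^{-2}(t)$ is only $\mathcal{F}_t$-measurable, so for those $i$ with $t_{i-1}<t$ the outer integrand is not $\mathcal{F}_s$-predictable and the displayed expression is \emph{not} the terminal value of a continuous martingale. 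Proposition~\ref{sharp-BDG} (Barlow--Yor) therefore does not apply, and the pathwise bound $|\sigma(r)\sigma(s)/\sigma^2(t)-1|\lesssim h^\gamma$ from \ref{s-vol-cond} cannot be pulled out of the stochastic integral. This is precisely why the paper introduces the pivot $\sigma^2((t-\ell h)_+)$: on the support of $K_h(\cdot-t)$ one has $t_{i-1}\geq (t-\ell h)_+$, so $\sigma((t-\ell h)_+)$ is $\mathcal{F}_{t_{i-1}}$-measurable and the piece $\mathbf{I}_n(t)=\{M_n(t)-\sigma^2((t-\ell h)_+)M_n^0(t)\}/\{\sigma^2(t)\mathfrak{s}_n(t)\}$ is a genuine martingale to which BDG applies, while the remainder $\mathbf{II}_n(t)$ is a bounded random coefficient of size $O(h^\gamma)$ times the clean second-chaos variable $M_n^0(t)/\mathfrak{s}_n(t)$.

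Your accounting of the logarithmic factors is also off. The pointwise $\psi_1$-scale of $D_n(t)$ (once the adaptedness issue is fixed) is $O(h^\gamma)$ with no logarithm; there is no ``$\log n$ from the local time window''. The two factors of $\log n$ in the paper both arise from the uniformisation in $t$: one from the entropy integral $\int_0^{Ch^\gamma}\log N(\cdot,r)\,dr\lesssim h^\gamma\log n$ over each block $[u_j,u_{j+1}]$ of length $h$ (Proposition~\ref{vv-tail}), and a second from taking the maximum over the $O(h^{-1})$ blocks (Lemma~2.2.2 of \cite{VW1996}). A direct chaining over all of $[0,T]$, as you propose, would also need to handle the fact that the reference point $\sigma((t-\ell h)_+)$ changes with $t$; the block decomposition localises this dependence.
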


\begin{proof}
Since $K$ is compactly supported, there is a constant $\ell>0$ such that the support of $K$ is contained in $[-\ell,\ell]$. Then we decompose the target quantity as
\begin{align*}
&\frac{M_n(t)}{\sigma^2(t)\mathfrak{s}_n(t)}-F_n(t)\\
&=\frac{M_n(t)-\sigma^2((t-\ell h)_+)M^0_n(t)}{\sigma^2(t)\mathfrak{s}_n(t)}
+\frac{\{\sigma^2((t-\ell h)_+)-\sigma^2(t)\}M^0_n(t)}{\sigma^2(t)\mathfrak{s}_n(t)}\\
&=:\mathbf{I}_n(t)+\mathbf{II}_n(t),
\end{align*}
where $(t-\ell h)_+=(t-\ell h)\vee0$. 

First we consider $\mathbf{I}_n(t)$. By \ref{s-vol-cond} and Lemma \ref{spot-var}, it is enough to prove
\begin{equation}\label{aim-local-approx}
\left\|\sup_{t\in[0,T]}\left|\sqrt{nh}\{M_n(t)-\sigma^2((t-\ell h)_+)M^0_n(t)\}\right|\right\|\lesssim h^\gamma(\log n)^2
\end{equation}
for all $n\in\mathbb{N}$. 
We decompose $\sqrt{nh}\{M_n(t)-\sigma^2((t-\ell h)_+)M^0_n(t)\}$ as
\begin{align*}
&\sqrt{nh}\{M_n(t)-\sigma^2((t-\ell h)_+)M^0_n(t)\}\\
&=2\sqrt{nh}\sum_{i=1}^nK_h(t_{i-1}-t)\int_{t_{i-1}}^{t_i}\int_{t_{i-1}}^s\{\sigma(r)-\sigma((t-\ell h)_+)\}dB_r\sigma(s)dB_s\\
&\quad+2\sqrt{nh}\sum_{i=1}^nK_h(t_{i-1}-t)\int_{t_{i-1}}^{t_i}\int_{t_{i-1}}^s\sigma((t-\ell h)_+)dB_r\{\sigma(s)-\sigma((t-\ell h)_+)dB_s\\
&=:\mathbf{A}_n(t)+\mathbf{B}_n(t).
\end{align*}
Let us consider $\mathbf{A}_n(t)$. Set $u_j=(jh)\wedge T$ for $j=0,1,\dots$. We obviously have
\[
\sup_{t\in[0,T]}\left|\mathbf{A}_n(t)\right|
\leq\max_{j=0,1,\dots,\lfloor T/h\rfloor}\left(\left|\mathbf{A}_n(u_j)\right|+\sup_{t\in[u_j,u_{j+1}]}\left|\mathbf{A}_n(t)-\mathbf{A}_n(u_j)\right|\right).
\]
To estimate the second term on the right side, we first show that there is a constant $a_0>0$ such that, for every $j=0,1,\dots,\lfloor T/h\rfloor$, the process $\mathbf{A}^{j}_n(t):=\mathbf{A}_n(t)-\mathbf{A}_n(u_j)$ indexed by $t\in[u_j,u_{j+1}]$ is a sub-2nd-Gaussian chaos process with respect to the semi-metric $\mathfrak{d}^j$ defined by
\[
\mathfrak{d}^j(t,t')=a_0|t-t'|^\gamma,\qquad
t,t'\in[u_j,u_{j+1}].
\]
To prove this, fix a $j\in\{0,1,\dots,\lfloor T/h\rfloor\}$ and take $t,t'\in[u_j,u_{j+1}]$ arbitrarily. We may assume $t\leq t'$ without loss of generality. Then, we can decompose $\mathbf{A}^j_n(t)-\mathbf{A}^j_n(t')$ as 
\begin{align*}
&\mathbf{A}^j_n(t)-\mathbf{A}^j_n(t')\\
&=\int_{0}^{T}\int_{0}^s\sum_{i=1}^n\Psi_{n,i}(t,t')1_{(t_{i-1},t_i]}(s)1_{(t_{i-1},s]}(r)\{\sigma(r)-\sigma((t-\ell h)_+)\}dB_r\sigma(s)dB_s\\
&\quad+\int_{0}^{T}\int_{0}^s\sum_{i=1}^n2\sqrt{nh}K_h(t_{i-1}-t')1_{(t_{i-1},t_i]}(s)1_{(t_{i-1},s]}(r)\{\sigma((t'-\ell h)_+)-\sigma((t-\ell h)_+)\}dB_r\sigma(s)dB_s\\
&=:\Delta^j_{n,1}(t,t')+\Delta^j_{n,2}(t,t'),
\end{align*}
where $\Psi_{n,i}(t,t')=2\sqrt{nh}\{K_h(t_{i-1}-t)-K_h(t_{i-1}-t')\}$. Regarding $\Delta^j_{n,1}(t,t')$, for every $p>1$, Proposition \ref{sharp-BDG}, the Lyapunov inequality, \ref{s-vol-cond} and \ref{kernel} yield
\begin{align*}
&\left\|\Delta^j_{n,1}(t,t')\right\|_p\\
&\lesssim \sqrt{p}\left\|\left\{\int_{0}^{T}\left(\int_{0}^s\sum_{i=1}^n\Psi_{n,i}(t,t')1_{(t_{i-1},t_i]}(s)1_{(t_{i-1},s]}(r)\{\sigma(r)-\sigma((u_j-\ell h)_+)\}dB_r\right)^2\sigma^2(s)ds\right\}^{1/2}\right\|_p\\
&\lesssim\sqrt{p}\left\|\int_{0}^{T}\left(\int_{0}^s\sum_{i=1}^n\Psi_{n,i}(t,t')1_{(t_{i-1},t_i]}(s)1_{(t_{i-1},s]}(r)\{\sigma(r)-\sigma((u_j-\ell h)_+)\}dB_r\right)^2ds\right\|_p^{1/2}\\
&\leq\sqrt{p}T^{\frac{1-p^{-1}}{2}}\left(\int_{0}^{T}\left\|\int_{0}^s\sum_{i=1}^n\Psi_{n,i}(t,t')1_{(t_{i-1},t_i]}(s)1_{(t_{i-1},s]}(r)\{\sigma(r)-\sigma((u_j-\ell h)_+)\}dB_r\right\|_{2p}^2ds\right)^{1/2}\\
&\lesssim  p\left(\int_{0}^{T}\left\|\left\{\int_{0}^s\sum_{i=1}^n\Psi_{n,i}(t,t')^21_{(t_{i-1},t_i]}(s)1_{(t_{i-1},s]}(r)\{\sigma(r)-\sigma((u_j-\ell h)_+)\}^2dr\right\}^{1/2}\right\|_{2p}^2ds\right)^{1/2}\\
&\lesssim  ph^\gamma\left(\frac{1}{n^2}\sum_{i=1}^n\Psi_{n,i}(t,t')^2\right)^{1/2}
\lesssim p\frac{|t-t'|}{h^{1-\gamma}}
\leq p|t-t'|^\gamma,
\end{align*}
where we use the inequality $|t-t'|\leq h$ which holds because $t,t'\in[u_j,u_{j+1}]$. 
Analogously we have
\begin{align*}
\left\|\Delta^j_{n,2}(t,t')\right\|_p
\lesssim p|t-t'|^\gamma.
\end{align*}
Therefore, by Lemma \ref{criteria-subchaos} we obtain the desired result. 
Now, since we have
\begin{align*}
\int_0^{a_0h^\gamma}\log N([u_j,u_{j+1}],\mathfrak{d}_0,r)dr
\lesssim \int_0^{a_0h^\gamma}\log\frac{1}{r^{1/\gamma}}dr
\lesssim h^\gamma\log n,
\end{align*}
by Proposition \ref{vv-tail} it holds that $\left\|\sup_{t\in[u_j,u_{j+1}]}\left|\mathbf{A}^j_n(t)\right|\right\|_{\psi_1}\lesssim h^\gamma\log n$ for every $j=0,1,\dots,\lfloor T/h\rfloor$. Thus, Lemma 2.2.2 of \cite{VW1996} implies that 
\[
\left\|\max_{j=0,1,\dots,\lfloor T/h\rfloor}\sup_{t\in[u_j,u_{j+1}]}\left|\mathbf{A}^j_n(t)\right|\right\|_{\psi_1}\lesssim h^\gamma(\log n)^2
\]
for all $n\in\mathbb{N}$. 
In the meantime, an analogous (simpler) argument to the above implies that $\|\mathbf{A}_n(u_j)\|_{\psi_1}\lesssim h^\gamma$ for every $j=0,1,\dots,\lfloor T/h\rfloor$. Therefore, Lemma 2.2.2 of \cite{VW1996} again implies that 
\[
\left\|\max_{j=0,1,\dots,\lfloor T/h\rfloor}\left|\mathbf{A}_n(u_j)\right|\right\|_{\psi_1}\lesssim h^\gamma\log n
\]
for all $n\in\mathbb{N}$. 
Overall, we conclude that 
\[
\left\|\sup_{t\in[0,T]}\left|\mathbf{A}_n(t)\right|\right\|_{\psi_1}\lesssim h^\gamma(\log n)^2
\]
for all $n\in\mathbb{N}$. 

Similarly, we can prove $\left\|\sup_{t\in[0,T]}\left|\mathbf{B}_n(t)\right|\right\|_{\psi_1}\lesssim h^\gamma\log n$. This completes the proof of \eqref{aim-local-approx}. 

Next we consider $\mathbf{II}_n(t)$. \ref{s-vol-cond} and Lemma \ref{spot-var} yield
\[
\left\|\sup_{t\in[0,T]}\left|\mathbf{II}_n(t)\right|\right\|_{\psi_1}
\lesssim h^\gamma \left\|\sup_{t\in[0,T]}\left|\sqrt{nh}M^0_n(t)\right|\right\|_{\psi_1}.
\]
Now, by Proposition \ref{gh-chaos} $(\sqrt{nh}M^0_n(t))_{t\in[0,T]}$ is a sub-2nd-Gaussian chaos process with respect to the semi-metric $\mathfrak{d}_0$ defined by
\[
\mathfrak{d}_0(t,t')=a_3\sqrt{\frac{1}{n^2}\sum_{i=1}^n\{K_h(t_{i-1}-t)-K_h(t_{i-1}-t')\}^2},\qquad t,t'\in[0,T]
\]
for some constant $a_3>0$, hence Proposition \ref{vv-tail} yields
\begin{align*}
\left\|\sup_{t\in[0,T]}\left|\sqrt{nh}M^0_n(t)\right|\right\|_{\psi_1}
&\lesssim \left\|\sqrt{nh}M^0_n(0)\right\|_{\psi_1}+\int_0^2\log N([0,T],\mathfrak{d}_0,r)dr\\
&\lesssim 1+\int_0^2\log((rh)^{-1})dr
\lesssim \log n.
\end{align*}
This completes the proof.  
\end{proof}

Thanks to the above lemma, we can work with the process $F_n(t)$ to prove the Gaussian approximation result stated in Proposition \ref{spot}. Note that we have
\[
\variance\left[M_n^0(t)\right]
=\mathfrak{s}_n^2(t)
\]
for every $t$, hence $E[F_n(t)^2]=1$ for every $t$. 

\if0
By Proposition \ref{gh-chaos} there is a universal constant $c_0>0$ such that, if we define the semi-metric $\mathfrak{d}_n:[0,T]\times[0,T]\to[0,\infty)$ by
\[
\mathfrak{d}_n(s,t)=c_0\sqrt{E[|F_n(s)-F_n(t)|^2]},\qquad s,t\in[0,T],
\]
then $(F_n(t))_{t\in[0,T]}$ is a sub-2nd-Gaussian chaos process with respect $\mathfrak{d}_n$ and $(Z_n(t))_{t\in[0,T]}$ is a sub-1st-Gaussian chaos process with respect $\mathfrak{d}_n$.
\fi
\begin{lemma}\label{covering}
Under the assumptions of Proposition \ref{spot}, there is a constant $C>0$ such that
\[
E[|F_n(s)-F_n(t)|^2]\leq C\frac{(s-t)^2}{h^2}
\]
for any $n\in\mathbb{N}$ and $s,t\in[0,T]$. 
\end{lemma}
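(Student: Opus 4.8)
The plan is to reduce the whole statement to a single quantity, the $L^2$-distance between $M_n^0(s)$ and $M_n^0(t)$, and then estimate it using the Lipschitz continuity and compact support of $K$. Write $\zeta_i:=2\int_{t_{i-1}}^{t_i}\int_{t_{i-1}}^{s}dB_r\,dB_s$, so that $\zeta_1,\dots,\zeta_n$ are independent, centered and square-integrable, and $M_n^0(t)=\sum_{i=1}^n K_h(t_{i-1}-t)\zeta_i$. Equip $\mathbb{R}^n$ with the weighted Euclidean norm $\|v\|_*:=\bigl(\sum_{i=1}^n v_i^2\,E[\zeta_i^2]\bigr)^{1/2}$ and set $v_n(t):=(K_h(t_{i-1}-t))_{i=1}^n$. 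Then $\mathfrak{s}_n(t)=\|v_n(t)\|_*$ (using $\variance[M_n^0(t)]=\mathfrak{s}_n^2(t)$, which is already recorded in the excerpt) and $\variance[M_n^0(s)-M_n^0(t)]=\|v_n(s)-v_n(t)\|_*^2$. In particular the reverse triangle inequality gives $(\mathfrak{s}_n(s)-\mathfrak{s}_n(t))^2\le\variance[M_n^0(s)-M_n^0(t)]$.

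Next I would split
\[
F_n(s)-F_n(t)=\frac{M_n^0(s)-M_n^0(t)}{\mathfrak{s}_n(s)}+\frac{M_n^0(t)}{\mathfrak{s}_n(s)\mathfrak{s}_n(t)}\bigl(\mathfrak{s}_n(t)-\mathfrak{s}_n(s)\bigr),
\]
square, take expectations, and combine $(a+b)^2\le2a^2+2b^2$ with $E[M_n^0(t)^2]=\mathfrak{s}_n^2(t)$ and the reverse-triangle bound of the previous paragraph to obtain
\[
E\bigl[(F_n(s)-F_n(t))^2\bigr]\le\frac{2\variance[M_n^0(s)-M_n^0(t)]}{\mathfrak{s}_n(s)^2}+\frac{2(\mathfrak{s}_n(s)-\mathfrak{s}_n(t))^2}{\mathfrak{s}_n(s)^2}\le\frac{4\variance[M_n^0(s)-M_n^0(t)]}{\mathfrak{s}_n(s)^2}.
\]
So it remains to bound $\variance[M_n^0(s)-M_n^0(t)]=\sum_{i=1}^n(K_h(t_{i-1}-s)-K_h(t_{i-1}-t))^2E[\zeta_i^2]$ from above and $\mathfrak{s}_n(s)^2$ from below via Lemma \ref{spot-var}.

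For the variance I would use that $K$ is Lipschitz, so $|K_h(x)-K_h(y)|\le\mathrm{Lip}(K)\,h^{-2}|x-y|$, and that $K$ is compactly supported, say in $[-\ell,\ell]$, so the $i$-th summand vanishes unless $t_{i-1}$ lies in an interval of length $|s-t|+2\ell h$; since the $t_{i-1}$ are equally spaced with gap $T/n$, the number of non-vanishing summands is $O(1+n(|s-t|+h))$. Together with $E[\zeta_i^2]\asymp n^{-2}$ this gives, when $|s-t|\le h$ (so the $O(1)$ term is absorbed for $n$ large since $nh\to\infty$), $\variance[M_n^0(s)-M_n^0(t)]\lesssim n^{-2}\cdot nh\cdot h^{-4}(s-t)^2=(s-t)^2/(nh^3)$, hence, using $\mathfrak{s}_n(s)^2\gtrsim(nh)^{-1}$, $E[(F_n(s)-F_n(t))^2]\lesssim nh\cdot(s-t)^2/(nh^3)=(s-t)^2/h^2$. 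When $|s-t|>h$ one simply notes $E[(F_n(s)-F_n(t))^2]\le2E[F_n(s)^2]+2E[F_n(t)^2]=4\le4(s-t)^2/h^2$, which covers the complementary range; the finitely many small $n$ (for which, by $\log^6n/(nh)\to0$, $nh$ is bounded below and $\mathfrak{s}_n$ does not vanish) are absorbed into the constant.

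The argument is essentially routine, so I do not anticipate a serious obstacle; the only mildly delicate points are the observation that the $\mathfrak{s}_n$-variation term can be subsumed into the $M_n^0$-variation term through the reverse triangle inequality — so that a single estimate does both jobs — and the case split $|s-t|\le h$ versus $|s-t|>h$. (Strictly, Lemma \ref{spot-var} is stated on $[a_n,T-a_n]$, but its proof is a Riemann-sum comparison that applies on all of $[0,T]$ for $n$ large, which is all that is used here.)
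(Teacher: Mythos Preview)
Your proof is correct and follows essentially the same route as the paper: the same reduction via the reverse triangle inequality $(\mathfrak{s}_n(s)-\mathfrak{s}_n(t))^2\le\variance[M_n^0(s)-M_n^0(t)]$, the same bound $4\variance[M_n^0(s)-M_n^0(t)]/\mathfrak{s}_n^2$, and the same Lipschitz/compact-support estimate on the variance combined with Lemma~\ref{spot-var}. The only cosmetic difference is that the paper avoids your case split $|s-t|\lessgtr h$ by observing directly that the number of nonzero summands is $O(nh)$ regardless of $|s-t|$ (each of the two supports contributes at most $O(nh)$ indices), which gives $\variance[M_n^0(s)-M_n^0(t)]\lesssim (s-t)^2/(nh^3)$ uniformly; your split is harmless but unnecessary.
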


\begin{proof}
For any $s,t\in[0,T]$ we have
\begin{align*}
&E[|F_n(s)-F_n(t)|^2]
=\variance\left[\frac{M_n^0(s)}{\mathfrak{s}_n(s)}-\frac{M_n^0(t)}{\mathfrak{s}_n(t)}\right]
=\frac{\variance\left[M_n^0(s)\mathfrak{s}_n(t)-M_n^0(t)\mathfrak{s}_n(s)\right]}{\mathfrak{s}^2_n(s)\mathfrak{s}^2_n(t)}\\
&\leq2\frac{\variance\left[M_n^0(s)\right](\mathfrak{s}_n(t)-\mathfrak{s}_n(s))^2+\mathfrak{s}_n(s)^2\variance\left[M_n^0(s)-M_n^0(t)\right]}{\mathfrak{s}_n(s)^2\mathfrak{s}_n(t)^2}\\
&=2\frac{(\mathfrak{s}_n(t)-\mathfrak{s}_n(s))^2+\variance\left[M_n^0(s)-M_n^0(t)\right]}{\mathfrak{s}_n(t)^2}
\leq4\frac{\variance\left[M_n^0(s)-M_n^0(t)\right]}{\mathfrak{s}_n(t)^2},
\end{align*}
where we use the inequality $|\sqrt{\variance[Y_1]}-\sqrt{\variance[Y_2]}|^2\leq\variance[Y_1-Y_2]$ holding for any random variables $Y_1,Y_2$. Now, since $K$ is Lipschitz continuous and compactly supported, it holds that
\begin{align*}
\variance\left[M_n^0(s)-M_n^0(t)\right]
&=2\sum_{i=1}^n\left|K_h(t_{i-1}-s)-K_h(t_{i-1}-t)\right|^2\frac{1}{n^2}\\
&\lesssim nh\cdot\frac{(s-t)^2}{h^4}\cdot\frac{1}{n^2}=\frac{(s-t)^2}{nh^3}.
\end{align*}
Now the desired result follows from Lemma \ref{spot-var}.
\end{proof}

\begin{lemma}\label{spot-anti}
Under the assumptions of Proposition \ref{spot}, it holds that
\begin{equation}\label{eq:dudley}
E\left[\sup_{t\in[0,T]}|Z_n(t)|\right]=O(\sqrt{\log n}),\qquad
\left\|w(Z_n;n^{-1})\right\|_{\psi_1}=O\left(\frac{\sqrt{\log n}}{nh}\right)
\end{equation}
as $n\to\infty$. Moreover, there is a constant $C_0>0$ such that
\[
\sup_{x\in\mathbb{R}}P\left(\left|\sup_{t\in[a_n,T-a_n]}|Z_n(t)|-x\right|\leq\varepsilon\right)
\leq C_0\varepsilon\sqrt{\log n}
\]
for any $\varepsilon>0$ and any $n\in\mathbb{N}$. 
\end{lemma}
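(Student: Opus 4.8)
The plan is to treat $(Z_n(t))_{t\in[0,T]}$ as a centred Gaussian process of unit variance, control its expected supremum and its modulus of continuity through the metric entropy bound of Lemma \ref{covering}, and then read off the anti-concentration estimate from Lemma \ref{cck-anti2}. The first observation is that $(Z_n(t))$ shares, up to normalisation, the second-order structure of $(F_n(t))$: writing $\widetilde M_n(t)=\sum_{i=1}^nK_h(t_{i-1}-t)z_i^n$ we have $Z_n(t)=\widetilde M_n(t)/\mathfrak{s}_n(t)$ and $\variance[\widetilde M_n(s)-\widetilde M_n(t)]=\frac{2}{n^2}\sum_{i=1}^n|K_h(t_{i-1}-s)-K_h(t_{i-1}-t)|^2=\variance[M_n^0(s)-M_n^0(t)]$, so the proof of Lemma \ref{covering} applies verbatim and yields a constant $C>0$ with $E[|Z_n(s)-Z_n(t)|^2]\le C(s-t)^2/h^2$ for all $s,t\in[0,T]$. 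By Proposition \ref{gh-chaos} (with $q=1$) there is a universal $c_0>0$ such that $(Z_n(t))$ is a separable centred sub-$1$st-Gaussian chaos process with respect to the semimetric $\mathfrak{d}_n(s,t)=c_0\sqrt{E[|Z_n(s)-Z_n(t)|^2]}$, and $\mathfrak{d}_n(s,t)\le L_n|s-t|$ with $L_n:=c_0\sqrt C/h$; hence $N([0,T],\mathfrak{d}_n,r)\le 1+TL_n/r$, so $\log N([0,T],\mathfrak{d}_n,r)\lesssim\log(1/h)+\log(1/r)$ for $0<r\le 2c_0$ (the $\mathfrak{d}_n$-diameter of $[0,T]$ being at most $2c_0$ since the variances equal $1$).

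Next I would apply Proposition \ref{vv-tail}. Taking $\eta$ equal to the diameter, the variable $\sup_{s,t}|Z_n(s)-Z_n(t)|$ is sub-$1$st chaos relative to the scale $C_1\int_0^{2c_0}\sqrt{\log N([0,T],\mathfrak{d}_n,r)}\,dr$. Using $\sqrt{\log(1/h)+\log(1/r)}\le\sqrt{\log(1/h)}+\sqrt{\log(1/r)}$ and $\int_0^1\sqrt{\log(1/r)}\,dr=\Gamma(3/2)<\infty$, this integral is $O(\sqrt{\log(1/h)})$. The bandwidth assumption $\log^6n/(nh)\to0$ forces $h\ge\log^6n/n$ for large $n$, hence $\log(1/h)\le\log n$, so the scale is $O(\sqrt{\log n})$; since $E[|Z_n(t_0)|]<1$ this gives $E[\sup_{t\in[0,T]}|Z_n(t)|]=O(\sqrt{\log n})$. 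For the modulus, $|s-t|\le n^{-1}$ implies $\mathfrak{d}_n(s,t)\le\delta_n:=L_n/n=c_0\sqrt C/(nh)$, so $w(Z_n;n^{-1})\le\sup_{\mathfrak{d}_n(s,t)\le\delta_n}|Z_n(s)-Z_n(t)|$, which by Proposition \ref{vv-tail} is sub-$1$st chaos relative to the scale $C_1\int_0^{\delta_n}\sqrt{\log N([0,T],\mathfrak{d}_n,r)}\,dr$. The substitution $r=e^{-u}$ gives $\int_0^{\delta_n}\sqrt{\log(1/r)}\,dr\lesssim\delta_n\sqrt{\log(1/\delta_n)}\lesssim\delta_n\sqrt{\log n}$, and with $\delta_n\sqrt{\log(1/h)}\lesssim\delta_n\sqrt{\log n}$ the scale is $O(\delta_n\sqrt{\log n})=O(\sqrt{\log n}/(nh))$. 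Since a sub-$1$st chaos variable of scale $M$ is in particular sub-$2$nd chaos of scale a constant multiple of $M$, hence has $\|\cdot\|_{\psi_1}=O(M)$, this yields $\|w(Z_n;n^{-1})\|_{\psi_1}=O(\sqrt{\log n}/(nh))$.

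For the anti-concentration estimate I would restrict to $[a_n,T-a_n]$, where $\mathfrak{s}_n(t)>0$ by Lemma \ref{spot-var}. On that interval $(Z_n(t))$ is a.s.\ continuous (as $K$ is Lipschitz and $\mathfrak{s}_n$ is bounded away from $0$), hence separable, it is centred with $E[Z_n(t)^2]=1$, and $\sup_{t\in[a_n,T-a_n]}Z_n(t)<\infty$ a.s.\ by the first part. Lemma \ref{cck-anti2} then gives, for all $\varepsilon>0$,
\[
\sup_{x\in\mathbb{R}}P\!\left(\left|\sup_{t\in[a_n,T-a_n]}|Z_n(t)|-x\right|\le\varepsilon\right)\le 4\varepsilon\!\left(E\!\left[\sup_{t\in[a_n,T-a_n]}|Z_n(t)|\right]+1\right)\le C_0\varepsilon\sqrt{\log n},
\]
using $E[\sup_{t\in[a_n,T-a_n]}|Z_n(t)|]\le E[\sup_{t\in[0,T]}|Z_n(t)|]=O(\sqrt{\log n})$ and absorbing the additive $1$ into $\sqrt{\log n}$ at the cost of enlarging $C_0$. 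The main obstacle I anticipate is purely the bookkeeping in the two entropy integrals — in particular getting all logarithmic factors to collapse to a single $\sqrt{\log n}$, which is precisely where the hypothesis $\log^6n/(nh)\to0$ enters to replace $\log(1/h)$ by $\log n$; there is no conceptual difficulty, the rest being a direct appeal to Lemmas \ref{covering}, \ref{spot-var}, \ref{cck-anti2} and Propositions \ref{gh-chaos}, \ref{vv-tail}.
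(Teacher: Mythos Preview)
Your proposal is correct and follows essentially the same route as the paper's proof: both identify $E[|Z_n(s)-Z_n(t)|^2]=E[|F_n(s)-F_n(t)|^2]$, invoke the Lipschitz bound of Lemma \ref{covering} to control the covering numbers, bound the modulus $w(Z_n;n^{-1})$ via Proposition \ref{vv-tail}, and deduce the anti-concentration from Lemma \ref{cck-anti2}. The only cosmetic difference is that the paper quotes Dudley's maximal inequality (Corollary 2.2.8 of \cite{VW1996}) directly for $E[\sup_t|Z_n(t)|]$, whereas you obtain the same entropy integral via Proposition \ref{vv-tail} applied at the full diameter; either way the computation $\int_0^{\mathrm{diam}}\sqrt{\log N}\,dr\lesssim\sqrt{\log n}$ is identical.
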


\begin{proof}
Define the semi-metric $\mathfrak{d}_n$ on $[0,T]$ by $\mathfrak{d}_n(s,t)=\sqrt{E[|Z_n(s)-Z_n(t)|^2]}$, $s,t\in[0,T]$. Note that we have $\mathfrak{d}_n(s,t)\leq2$ and $\mathfrak{d}_n(s,t)=\sqrt{E[|F_n(s)-F_n(t)|^2]}$ for all $s,t\in[0,T]$ by the definition of $Z_n$. Therefore, the Dudley maximal inequality (cf.~Corollary 2.2.8 of \cite{VW1996}) and Lemma \ref{covering} imply that
\begin{align*}
E\left[\sup_{t\in[0,T]}|Z_n(t)|\right]
&\lesssim 1+\int_0^2\sqrt{\log(N([0,T],\mathfrak{d}_n,\eta))}d\eta
\lesssim 1+\int_0^2\sqrt{\log(\eta h)^{-1}}d\eta
\lesssim\sqrt{\log n}.
\end{align*}
Moreover, since the process $Z_n(t)/\sqrt{3}$ is a sub-1st-Gaussian chaos process with respect $\mathfrak{d}_n$, Proposition \ref{vv-tail} yields
\begin{align*}
\left\|w(Z_n;n^{-1})\right\|_{\psi_1}
&\lesssim\int_0^{\sqrt{C}/nh}\sqrt{\log(N([0,T],\mathfrak{d}_n,\eta))}d\eta
\lesssim\int_0^{\sqrt{C}/nh}\sqrt{\log(\eta h)^{-1}}d\eta
\lesssim\frac{\sqrt{\log n}}{nh},
\end{align*}
where $C$ denotes the constant appearing in Lemma \ref{covering}. Hence we obtain \eqref{eq:dudley}.

Next, Lemma \ref{cck-anti2} yields
\[
\sup_{x\in\mathbb{R}}P\left(\left|\sup_{t\in[a_n,T-a_n]}|Z_n(t)|-x\right|\leq\varepsilon\right)\leq4\varepsilon\left(E\left[\sup_{t\in[a_n,T-a_n]}|Z_n(t)|\right]+1\right)
\]
for any $\varepsilon>0$. Hence the latter claim follows from \eqref{eq:dudley}.
\end{proof}

Let us set
\begin{align*}
z_n(t)&=\frac{\widehat{\sigma}^2_n(t)-\sigma^2(t)}{\sigma^2(t)\mathfrak{s}_n(t)},\qquad t\in[0,T],\\
e_n&=\sup_{t\in[a_n,T-a_n]}\left|\frac{M_n(t)}{\sigma^2(t)\mathfrak{s}_n(t)}-F_n(t)\right|,\\
v_n&=\sqrt{nh^{1+2\gamma}}+(nh)^{-1/2}.
\end{align*}

\begin{lemma}\label{spot-coupling}
Suppose that \ref{s-vol-cond} and \ref{kernel} are satisfied. Then there are constants $c_1,c_2>0$ such that
\begin{multline}\label{eq:spot-coupling}
P\left(\sup_{t\in[a_n,T-a_n]}\left|z_n(t)\right|\in A\right)
\leq P\left(\sup_{t\in[a_n,T-a_n]}|Z_n(t)|\in A^{c_1v_n+8\varepsilon}\right)
+\varepsilon^{-2}\frac{c_2}{\sqrt{nh}}\log^2n\\
+P(e_n>\varepsilon)
+P(w(F_n;n^{-1})>\varepsilon)
+P(w_n(Z_n;n^{-1})>\varepsilon)
\end{multline}
for any $\varepsilon>0$, any Borel set $A$ of $\mathbb{R}$ and any $n\in\mathbb{N}$.
\end{lemma}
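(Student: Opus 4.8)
The plan is to establish \eqref{eq:spot-coupling} by a three‑stage coupling. First I would replace the Studentised error $z_n(t)$ by the (normalised) double Wiener--It\^o integral process $F_n(t)$, paying the deterministic bias term $c_1v_n$ and the exceptional probability $P(e_n>\varepsilon)$. Second I would discretise $[a_n,T-a_n]$ on a grid $\mathcal{T}_n$ of mesh $n^{-1}$ and apply the coupling of Lemma \ref{coupling} to the finite‑dimensional vector $(F_n(t))_{t\in\mathcal{T}_n}$, so as to pass to $\max_{t\in\mathcal{T}_n}|Z_n(t)|$. Third I would undiscretise, recovering $\sup_{t\in[a_n,T-a_n]}|Z_n(t)|$. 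Throughout I take $n$ large enough that Lemmas \ref{spot-var} and \ref{spot-bias} are in force; the finitely many remaining values of $n$ can be absorbed into $c_1,c_2$.

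For the first stage, It\^o's formula gives $\widehat{\sigma}_n^2(t)-\sigma^2(t)=M_n(t)+b_n(t)$, hence
\[
z_n(t)-F_n(t)=\left(\frac{M_n(t)}{\sigma^2(t)\mathfrak{s}_n(t)}-F_n(t)\right)+\frac{b_n(t)}{\sigma^2(t)\mathfrak{s}_n(t)}.
\]
Under \ref{s-vol-cond} and \ref{kernel}, Lemma \ref{spot-bias} gives $\sup_{t}|b_n(t)|\lesssim (nh)^{-1}+h^\gamma$, while \ref{s-vol-cond} together with Lemma \ref{spot-var} gives $\sigma^2(t)\mathfrak{s}_n(t)\geq\Lambda^{-2}C_1(nh)^{-1/2}$; since $\sqrt{nh}\,h^\gamma=\sqrt{nh^{1+2\gamma}}$, this forces $\sup_{t}\bigl|b_n(t)/(\sigma^2(t)\mathfrak{s}_n(t))\bigr|\leq c_1v_n$ for a suitable $c_1>0$. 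Consequently, on $\{e_n\leq\varepsilon\}$ one has $\bigl|\sup_{t}|z_n(t)|-\sup_{t}|F_n(t)|\bigr|\leq\sup_{t}|z_n(t)-F_n(t)|\leq e_n+c_1v_n\leq\varepsilon+c_1v_n$, so
\[
P\Bigl(\sup_{t\in[a_n,T-a_n]}|z_n(t)|\in A\Bigr)\leq P\Bigl(\sup_{t\in[a_n,T-a_n]}|F_n(t)|\in A^{c_1v_n+\varepsilon}\Bigr)+P(e_n>\varepsilon).
\]

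For the second stage, I would fix a finite $n^{-1}$-net $\mathcal{T}_n$ of $[a_n,T-a_n]$ with $\#\mathcal{T}_n\leq Tn+1$. As $K$ is Lipschitz and $\mathfrak{s}_n$ is continuous and bounded away from $0$, both $F_n$ and $Z_n$ have a.s.\ continuous paths, so $\bigl|\sup_{t}|F_n(t)|-\max_{t\in\mathcal{T}_n}|F_n(t)|\bigr|\leq w(F_n;n^{-1})$; on $\{w(F_n;n^{-1})\leq\varepsilon\}$ this replaces the supremum by the maximum over $\mathcal{T}_n$ and inflates the enlargement to $c_1v_n+2\varepsilon$. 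Now $M_n^0(t)$ is a double Wiener--It\^o integral of a deterministic kernel with respect to the isonormal Gaussian process generated by $B$ on $H=L^2([0,T])$, so $F_n(t)$ and $-F_n(t)$ lie in the second Wiener chaos and are represented as quadratic forms in the normalised increments by diagonal matrices $\pm\Gamma_{n,t}$ with $\|\Gamma_{n,t}\|_{\mathrm{sp}}\lesssim (nh)^{-1/2}$ (using $\|K\|_\infty<\infty$ and Lemma \ref{spot-var}). Applying Lemma \ref{coupling} to the $2\#\mathcal{T}_n$-dimensional vector $(F_n(t),-F_n(t))_{t\in\mathcal{T}_n}$, with $\mathfrak{C}$ taken to be the covariance matrix of that vector, its maximal component equals $\max_{t\in\mathcal{T}_n}|F_n(t)|$, the Gaussian analogue has the law of $(Z_n(t),-Z_n(t))_{t\in\mathcal{T}_n}$ (maximal component $\max_{t\in\mathcal{T}_n}|Z_n(t)|$), and the quantity $\Delta$ of Lemma \ref{coupling} is estimated via Lemma \ref{fourth-moment} with $q=2$: its first term vanishes because the covariances match, while Eq.(11) of \cite{DY2011} gives $E[F_n(t)^4]-3E[F_n(t)^2]^2\leq 24\|\Gamma_{n,t}\|_{\mathrm{sp}}^2E[F_n(t)^2]\lesssim (nh)^{-1}$ uniformly in $t$. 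Since $\#\mathcal{T}_n\lesssim n$, this yields $\Delta\lesssim(\log n)(nh)^{-1/2}$ and hence
\[
P\Bigl(\max_{t\in\mathcal{T}_n}|F_n(t)|\in A^{c_1v_n+2\varepsilon}\Bigr)\leq P\Bigl(\max_{t\in\mathcal{T}_n}|Z_n(t)|\in A^{c_1v_n+7\varepsilon}\Bigr)+C'\varepsilon^{-2}(nh)^{-1/2}\log^2n.
\]

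For the third stage, on $\{w(Z_n;n^{-1})\leq\varepsilon\}$ one has $\bigl|\sup_{t}|Z_n(t)|-\max_{t\in\mathcal{T}_n}|Z_n(t)|\bigr|\leq\varepsilon$, so $\{\max_{t\in\mathcal{T}_n}|Z_n(t)|\in A^{c_1v_n+7\varepsilon}\}$ is contained in $\{\sup_{t}|Z_n(t)|\in A^{c_1v_n+8\varepsilon}\}\cup\{w(Z_n;n^{-1})>\varepsilon\}$. Chaining the three stages and collecting the exceptional events gives \eqref{eq:spot-coupling} with $c_2$ absorbing $C'$ and the remaining universal constants. I expect the main obstacle to be the second stage: writing $F_n(t)$ explicitly as a second‑chaos random variable with a transparent quadratic‑form representation so that Lemma \ref{fourth-moment} is applicable and the fourth cumulant is $O((nh)^{-1})$ uniformly in $t$, and keeping strict track of the $\varepsilon$-bookkeeping through the discretisation so that the final enlargement is precisely $c_1v_n+8\varepsilon$; the passage to the doubled vector (to handle the absolute value) and the verification that its Gaussian analogue is genuinely $(Z_n(t),-Z_n(t))_{t\in\mathcal{T}_n}$ also need to be carried out with care.
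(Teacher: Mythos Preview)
Your proposal is correct and follows essentially the same three-stage argument as the paper: pass from $z_n$ to $F_n$ via the bias bound and $e_n$, discretise on an $n^{-1}$-grid and apply Lemma \ref{coupling} together with Lemma \ref{fourth-moment} (the paper computes $\kappa_4(F_n(t))$ directly as a sum rather than via the spectral-norm bound, but both give the same $O((nh)^{-1})$), then undiscretise; the $\varepsilon$-bookkeeping $c_1v_n\to c_1v_n+\varepsilon\to c_1v_n+2\varepsilon\to c_1v_n+7\varepsilon\to c_1v_n+8\varepsilon$ matches exactly.
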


\begin{proof}
First, by Lemmas \ref{spot-var}--\ref{spot-bias} there is a constant $c_1>0$ such that $\sup_{t\in[a_n,T-a_n]}|b_n(t)/\mathfrak{s}_n(t)|\leq c_1v_n$. Then we have
\[
P\left(\sup_{t\in[a_n,T-a_n]}\left|z_n(t)\right|\in A\right)
\leq P\left(\sup_{t\in[a_n,T-a_n]}\left|\frac{M_n(t)}{\sigma^2(t)\mathfrak{s}_n(t)}\right|\in A^{c_1v_n}\right).
\]
Moreover, by definition we also have
\[
P\left(\sup_{t\in[a_n,T-a_n]}\left|\frac{M_n(t)}{\sigma^2(t)\mathfrak{s}_n(t)}\right|\in A^{c_1v_n}\right)
\leq P\left(\sup_{t\in[a_n,T-a_n]}|F_n(t)|\in A^{c_1v_n+\varepsilon}\right)+P(e_n>\varepsilon).
\]
Next, let us set $s^n_j=a_n+j/n$ for $j=1,\dots,N:=\lfloor (T-2a_n)n\rfloor$. By definition it holds that
\begin{align*}
&\max_{1\leq j\leq N}|F_n(s^n_j)|\leq\sup_{t\in[a_n,T-a_n]}|F_n(t)|\leq\max_{1\leq j\leq N}|F_n(s^n_j)|+w(F_n;n^{-1}),\\
&\max_{1\leq j\leq N}|Z_n(s^n_j)|\leq\sup_{t\in[a_n,T-a_n]}|Z_n(t)|\leq\max_{1\leq j\leq N}|Z_n(s^n_j)|+w(Z_n;n^{-1}).
\end{align*}
In particular, we have
\begin{align*}
P\left(\sup_{t\in[a_n,T-a_n]}|F_n(t)|\in A^{c_1v_n+\varepsilon}\right)
\leq P\left(\max_{1\leq j\leq N}|F_n(s^n_j)|\in A^{c_1v_n+2\varepsilon}\right)+P(w(F_n;n^{-1})>\varepsilon).
\end{align*}
Let us denote by $\kappa_4(Y)$ the fourth cumulant of a random variable $Y$ if it exists. Then we have
\begin{align*}
\kappa_4(F_n(t))
&=\frac{1}{\mathfrak{s}_n^4(t)}\sum_{i=1}^nK_h(t_{i-1}-t)^4\kappa_4((B_{t_i}-B_{t_{i-1}})^2)
\lesssim(nh)^2\cdot\frac{nh}{h^4}\cdot\frac{1}{n^4}=\frac{1}{nh},
\end{align*}
hence Lemmas \ref{coupling}--\ref{fourth-moment} imply that there is a constant $c_0>0$ such that
\begin{equation}\label{spot:coupling}
P\left(\max_{1\leq j\leq N}|F_n(s^n_j)|\in A^{c_1v_n+2\varepsilon}\right)
\leq P\left(\max_{1\leq j\leq N}|Z_n(s^n_j)|\in A^{c_1v_n+7\varepsilon}\right)+\varepsilon^{-2}\frac{c_0}{\sqrt{nh}}\log^2N.
\end{equation}
Since we have $\log N\lesssim \log n$ and
\begin{align*}
P\left(\max_{1\leq j\leq N}|Z_n(s^n_j)|\in A^{c_1v_n+7\varepsilon}\right)
\leq P\left(\sup_{t\in[a_n,T-a_n]}|Z_n(t)|\in A^{c_1v_n+8\varepsilon}\right)+P(w(Z_n;n^{-1})>\varepsilon),
\end{align*}
we obtain the desired result.
\end{proof}

\begin{proof}[\upshape\bfseries Proof of Proposition \ref{spot}]
We first prove the second assertion (hence we assume \ref{s-vol-cond}). 
Set $\varepsilon=(nh)^{-\frac{1}{6}}\sqrt{\log n}$. Lemmas \ref{spot-coupling} and \ref{cck-kolmogorov} imply that there are constants $c_1,c_2>0$ such that
\begin{align*}
&\sup_{x\in\mathbb{R}}\left|P\left(\sup_{t\in[a_n,T-a_n]}\left|\frac{\widehat{\sigma}_n^2(t)-\sigma^2(t)}{\sigma^2(t)\mathfrak{s}_n(t)}\right|\leq x\right)-P\left(\sup_{t\in[a_n,T-a_n]}|Z_n(t)|\leq x\right)\right|\\
&\leq \sup_{x\in\mathbb{R}}P\left(\left|\sup_{t\in[a_n,T-a_n]}|Z_n(t)|-x\right|\leq c_1v_n+8\varepsilon\right)
+\varepsilon^{-2}\frac{c_2}{\sqrt{nh}}\log^2n\\
&\qquad+P(e_n>\varepsilon)
+P(w(F_n;n^{-1})>\varepsilon)
+P(w(Z_n;n^{-1})>\varepsilon)
\end{align*}
for any Borel set $A$ of $\mathbb{R}$ and any $n\in\mathbb{N}$. By definition we have 
\[
\varepsilon^{-2}\frac{c_2}{\sqrt{nh}}\log^2n=O\left(\frac{\log n}{(nh)^{1/6}}\right).
\]
Also, noting $\log^6n/nh\to0$ by assumption, we have $\varepsilon^{-1}h^\gamma\log^2n=\sqrt{nh\log n}h^\gamma\cdot\log n/(nh)^{1/3}\lesssim1/\log n$ and $\varepsilon^{-1}\sqrt{\log n}/nh=(nh)^{-5/6}\lesssim1/\log n$ Therefore, Lemma \ref{local-approx}, \eqref{eq:dudley} and the Markov inequality implies that $P(e_n>\varepsilon)=O(n^{-1})$ and $P(w(Z_n;n^{-1})>\varepsilon)=O(n^{-1})$. 
Moreover, Lemma \ref{spot-anti} yields
\[
\lim_{n\to\infty}\sup_{x\in\mathbb{R}}P\left(\left|\sup_{t\in[a_n,T-a_n]}|Z_n(t)|-x\right|\leq c_1v_n+8\varepsilon\right)=O(v_n\sqrt{\log n})+O\left(\frac{\log n}{(nh)^{1/6}}\right).
\]
We also have $\lim_{n\to\infty}P(w(F_n;n^{-1})>\varepsilon)=O(n^{-1}).$ 
In fact, by Proposition \ref{gh-chaos} there is a universal constant $M>0$ such that the process $(F_n(t))_{t\in[0,T]}$ is a sub-2nd-Gaussian chaos process with respect to the semi-metric $\mathfrak{d}_n$ defined by $\mathfrak{d}_n(s,t)=M\sqrt{E[|F_n(s)-F_n(t)|^2]}$, $s,t\in[0,T]$. Hence Proposition \ref{vv-tail} and Lemma \ref{covering} imply that
\begin{align*}
\left\|w(F_n;n^{-1})\right\|_{\psi_1}&\leq \left\|\sup_{s,t\in[0,T]:\mathfrak{d}_n(s,t)\leq M\sqrt{C}/nh}|F_n(s)-F_n(t)|\right\|_{\psi_1}\\
&\lesssim\int_0^{M\sqrt{C}/nh}\log(N([0,T],\mathfrak{d}_n,\eta))d\eta
\lesssim \int_0^{M\sqrt{C}/nh}\log(\eta h)^{-1}d\eta
\lesssim\frac{\log n}{nh},
\end{align*}
where $C$ denotes the constant appearing in Lemma \ref{covering}. Hence the desired result follows from the Markov inequality and the fact that $\varepsilon^{-1}\log n/nh=(nh)^{-5/6}\sqrt{\log n}\lesssim1/\log n$. Overall, we obtain the desired result because $n^{-1}=o(\log n/(nh)^{1/6})$. 

Next we prove the first assertion. For each $k\in\mathbb{N}$, let us define the stopping time $\tau_k$ by
\[
\tau_k=\inf\left\{t\in[0,T]:|\sigma(t)|>k\text{ or }\frac{1}{|\sigma(t)|}>k\text{ or }
\sup_{\eta\in(0,1)}\eta^{-\gamma}w(\sigma;\eta,t)>k\right\},
\]
where $w(\sigma;\eta,t)=\sup\{|\sigma(s)-\sigma(r)|:0\leq s,r\in t,|s-r|\leq\eta\}$. Then we define the process $\sigma_k=(\sigma_k(t))_{t\in[0,T]}$ by $\sigma_k(t)=\sigma(t\wedge\tau_k)$, $t\in[0,T]$. By construction $\sigma_k$ satisfies \ref{s-vol-cond}, hence we have
\[
\sup_{x\in\mathbb{R}}\left|P\left(\sup_{t\in[a_n,T-a_n]}\left|\frac{\widehat{\sigma}_{k,n}^2(t)-\sigma_k^2(t)}{\sigma_k^2(t)\mathfrak{s}_n(t)}\right|\leq x\right)-P\left(\sup_{t\in[a_n,T-a_n]}|Z_n(t)|\leq x\right)\right|\to0
\]
as $n\to\infty$ by the second assertion proved above, where
\begin{align*}
\widehat{\sigma}_{k,n}^2(t)=\sum_{i=1}^nK_h(t_{i-1}-t)\left(\int_{t_{i-1}}^{t_i}\sigma_k(s)dB_s\right)^2.
\end{align*}
Now, since we have
\begin{align*}
\sup_{x\in\mathbb{R}}\left|P\left(\sup_{t\in[a_n,T-a_n]}\left|\frac{\widehat{\sigma}_{k,n}^2(t)-\sigma_k^2(t)}{\sigma_k^2(t)\mathfrak{s}_n(t)}\right|\leq x\right)-P\left(\sup_{t\in[a_n,T-a_n]}\left|\frac{\widehat{\sigma}_{n}^2(t)-\sigma^2(t)}{\sigma^2(t)\mathfrak{s}_n(t)}\right|\leq x\right)\right|
&\leq2P(\tau_k>T),
\end{align*}
we conclude that
\begin{align*}
\limsup_{n\to\infty}\sup_{x\in\mathbb{R}}\left|P\left(\sup_{t\in[a_n,T-a_n]}\left|\frac{\widehat{\sigma}_{k,n}^2(t)-\sigma_k^2(t)}{\sigma_k^2(t)\mathfrak{s}_n(t)}\right|\leq x\right)-P\left(\sup_{t\in[a_n,T-a_n]}|Z_n(t)|\leq x\right)\right|
&\leq2P(\tau_k>T)
\end{align*}
for all $k$. Letting $k\to\infty$, we obtain the desired result because $P(\tau_k>T)\to0$ by \ref{vol-cond}. 
\end{proof}

\if0
\subsection{Proof of Proposition \ref{spot-boot}}

Throughout this subsection, we set
\[
\mathbb{H}=\left\{\int_{0}^T f(t)dB_t:f\in L^2(0,T)\right\}.
\]
$\mathbb{H}$ is obviously a Gaussian Hilbert space.
\begin{lemma}\label{avar-error}
Under the assumptions of Proposition \ref{spot}, there is a constant $C>0$ such that 
\[
E\left[\sup_{t\in[0,T]}\left|\sqrt{nh}\left\{\widehat{\mathfrak{s}}_n(t)-\mathfrak{s}_n(t)\right\}\right|\right]
\leq C\frac{\log n}{(nh)^{1/4}}
\]
for all $n\in\mathbb{N}$.
\end{lemma}

\begin{proof}
Set $Y_n(t)=nh\{\widehat{\mathfrak{s}}^2_n(t)-\mathfrak{s}^2_n(t)\}$, $t\in[0,T]$. For each $t\in[0,T]$, we have
\[
Y_n(t)=nh\sum_{i=1}^nK_h(t_{i-1}-t)^2\left\{\frac{2}{3}(X_{t_i}-X_{t_{i-1}})^4-\variance[(X_{t_i}-X_{t_{i-1}})^2]\right\}\in\overline{\mathcal{P}}_4(\mathbb{H}),
\]
hence by Proposition \ref{gh-chaos} there is a universal constant $c_0>0$ such that the process $Y_n=(Y_n(t))_{t\in[0,T]}$ is a sub-4th-Gaussian chaos process with respect to the semi-metric $\mathfrak{d}_n'$ on $[0,T]$ defined by $\mathfrak{d}_n'(s,t)=c_0\sqrt{E[|Y_n(t)-Y_n(s)|^2]}$, $s,t\in[0,T]$. Note that $\variance[(X_{t_i}-X_{t_{i-1}})^2]=\frac{2}{3}E[(X_{t_i}-X_{t_{i-1}})^4]$ for every $i$. Then, for any $t\in[0,T]$ we have
\begin{align*}
E[Y_n(t)^2]=(nh)^2\sum_{i=1}^nK_h(t_{i-1}-t)^4\variance\left[\frac{2}{3}(X_{t_i}-X_{t_{i-1}})^4\right]
\lesssim (nh)^2\cdot\frac{nh}{h^4}\cdot\frac{1}{n^4}=\frac{1}{nh},
\end{align*}
hence there is a constant $c_1>0$ such that $\mathfrak{d}_n'(s,t)\leq c_1/\sqrt{nh}$ for any $n\in\mathbb{N}$ and $s,t\in[0,T]$. Moreover, for any $s,t\in[0,T]$ we also have
\begin{align*}
\mathfrak{d}_n'(s,t)^2
&=(nh)^2\sum_{i=1}^n\left\{K_h(t_{i-1}-s)^2-K_h(t_{i-1}-t)^2\right\}^2\variance\left[\frac{2}{3}(X_{t_i}-X_{t_{i-1}})^4\right]\\
&\lesssim (nh)^2\cdot\frac{nh}{h^4}\cdot\frac{(s-t)^2}{h^2}\cdot\frac{1}{n^4}=\frac{(s-t)^2}{nh^3}.
\end{align*}
Therefore, there is a constant $c_2>0$ such that $N([0,T],\mathfrak{d}'_n,r)\leq 1\vee c_2(r\sqrt{n}h^{3/2})^{-1}$ for all $r>0$. Hence, Proposition \ref{vv-modulus} yields 
\begin{align*}
E\left[\sup_{t\in[0,T]}\left|Y_n(t)\right|\right]
\lesssim\frac{1}{\sqrt{nh}}+\int_0^{c_1/\sqrt{nh}}\log^2(r\sqrt{n}h^{3/2})dr
\lesssim\frac{\log^2n}{\sqrt{nh}}.
\end{align*}
Since $|\sqrt{x}-\sqrt{y}|\leq\sqrt{|x-y|}$ for any $x,y\geq0$, we complete the proof.
\end{proof}

Let us set
\[
\widehat{z}_n(t)=\frac{\widehat{\sigma}^2_n(t)-\sigma^2(t)}{\widehat{\mathfrak{s}}_n(t)},\qquad t\in[0,T].
\]
\begin{lemma}\label{kolmogorov-spothat}
Under the assumptions of Proposition \ref{spot-boot}, we have
\[
\sup_{x\in\mathbb{R}}\left|P\left(\sup_{t\in[a_n,T-a_n]}\left|\widehat{z}_n(t)\right|\leq x\right)-P\left(\sup_{t\in[a_n,T-a_n]}|Z_n(t)|\leq x\right)\right|\to0
\]
as $n\to\infty$. 
\end{lemma}

\begin{proof}
First, by Lemmas \ref{spot-var} and \ref{avar-error} we have $\sup_{t\in[0,T]}|\mathfrak{s}_n(t)/\widehat{\mathfrak{s}}_n(t)-1|=O_p(\log n/(nh_n)^{1/4})$. Next, Lemma \ref{spot-anti} and Proposition \ref{spot} yield $\sup_{t\in[a_n,T-a_n]}|z_n(t)|=O_p(\sqrt{\log n})$. Hence we obtain 
\[
\sup_{t\in[a_n,T-a_n]}\left|\widehat{z}_n(t)-z_n(t)\right|=O_p\left(\frac{\log^{3/2}n}{(nh_n)^{1/4}}\right).
\]
Therefore, setting $v_n'=\log^2n/(nh_n)^{1/4}$, we obtain 
$\lim_{n\to\infty}P\left(\sup_{t\in[a_n,T-a_n]}\left|\widehat{z}_n(t)-z_n(t)\right|>v_n'\right)=0$ 
and
\begin{align*}
P\left(\sup_{t\in[a_n,T-a_n]}\left|\widehat{z}_n(t)\right|\in A\right)\leq 
P\left(\sup_{t\in[a_n,T-a_n]}\left|z_n(t)\right|\in A^{v_n'}\right)+P\left(\sup_{t\in[a_n,T-a_n]}\left|\widehat{z}_n(t)-z_n(t)\right|>v_n'\right)
\end{align*}
for any Borel set $A$ of $\mathbb{R}$. Now, noting that $v_n'=o((\log n)^{-1/2})$ by assumption, an analogous argument to the proof of Proposition \ref{spot} yields the desired result. 
\end{proof}

Let us set
\[
\widetilde{Z}_n^*(t)=\frac{1}{\mathfrak{s}_n(t)}\sum_{i=1}^nK_h(t_{i-1}-t)(X_{t_{i}}-X_{t_{i-1}})^2\xi_i^*,\qquad t\in[0,T].
\]
\begin{lemma}\label{spot-boot-modulus}
Under the assumptions of Proposition \ref{spot-boot}, it holds that
\[
\sup_{t\in[0,T]}\left|\widetilde{Z}_n^*(t)\right|=O_{p}(\log^{3/2} n),\qquad
w(\widetilde{Z}_n^*;n^{-1})=O_{p}\left(\frac{\log^{3/2}n}{nh}\right)
\]
as $n\to\infty$. 
\end{lemma}

\begin{proof}
Set $r_n=\max_{1\leq i\leq n}(X_{t_{i}}-X_{t_{i-1}})^2$. By Lemma \ref{spot-var} and \ref{kernel} there is a constant $c_0>0$ such that
\begin{align*}
E\left[\widetilde{Z}_n^*(t)^2|\mathcal{F}^X\right]
&=\frac{2}{3\mathfrak{s}^2_n(t)}\sum_{i=1}^nK_h(t_{i-1}-t)^2(X_{t_{i}}-X_{t_{i-1}})^4
\leq c_0r_n^2nh\cdot\frac{nh}{h^2}
=c_0n^2r_n^2
\end{align*}
for any $t\in[0,T]$ and $n\in\mathbb{N}$. Moreover, from the proof of Lemma \ref{covering} there is a constant $c_1>0$ such that $|\mathfrak{s}_n(s)-\mathfrak{s}_n(t)|^2\leq c_1(s-t)^2/(nh^3)$ for all $n\in\mathbb{N}$ and $s,t\in[0,T]$. Therefore, by Lemma \ref{spot-var} and \ref{kernel} there is a constant $c_2>0$ such that
\begin{align*}
E[|\widetilde{Z}_n^*(s)-\widetilde{Z}_n^*(t)|^2|\mathcal{F}^X]
&=\frac{2}{3}\sum_{i=1}^n\left\{\frac{K_h(t_{i-1}-s)}{\mathfrak{s}_n(s)}-\frac{K_h(t_{i-1}-t)}{\mathfrak{s}_n(t)}\right\}^2(X_{t_{i}}-X_{t_{i-1}})^4\\
&\leq c_2 r_n^2\frac{nh}{h^2}\cdot(nh)^2\cdot\frac{(s-t)^2}{nh^3}
=c_2r_n^2\frac{n^2}{h^2}(s-t)^2
\end{align*}
for any $s,t\in[0,T]$ and $n\in\mathbb{N}$. Therefore, the Dudley maximal inequality yields
\begin{align*}
E\left[\sup_{t\in[0,T]}|\widetilde{Z}_n^*(t)|\Big|\mathcal{F}^X\right]
&\lesssim1+\int_0^{2\sqrt{c_0}nr_n}\sqrt{\log\left(1\vee \sqrt{c_2}\left(r_n\frac{n}{\eta h}\right)\right)}d\eta
\lesssim nr_n\sqrt{\log n}
\end{align*}
and
\begin{align*}
E\left[w(\widetilde{Z}_n^*;n^{-1})\Big|\mathcal{F}^X\right]
\lesssim\int_0^{2\sqrt{c_2}r_n/h}\sqrt{\log\left(1\vee \sqrt{c_2}\left(r_n\frac{n}{\eta h}\right)\right)}d\eta
\lesssim \frac{r_n}{h}\sqrt{\log n}
\end{align*}
for any $n\in\mathbb{N}$. Since $r_n=O_p(\log n/n)$ by e.g.~Lemma 1 of \cite{BN2006}, we obtain the desired results by the Markov inequality.
\end{proof}

\begin{lemma}\label{spot-boot-kolmogorov}
Under the assumptions of Proposition \ref{spot-boot}, we have
\[
\sup_{x\in\mathbb{R}}\left|P\left(\sup_{t\in[a_n,T-a_n]}\left|Z_n^*(t)\right|\leq x|\mathcal{F}^X\right)-P\left(\sup_{t\in[a_n,T-a_n]}|Z_n(t)|\leq x\right)\right|\to^p0
\]
as $n\to\infty$. 
\end{lemma}

\begin{proof}
Let $M,\varepsilon$ be positive numbers and $A$ be a Borel set of $\mathbb{R}$. Also, set $v_n=\log^{5/2}n/(nh)^{1/4}$. We have
\begin{align*}
&P\left(\sup_{t\in[a_n,T-a_n]}|Z^*_n(t)|\in A|\mathcal{F}^X\right)\\
&\leq P\left(\sup_{t\in[a_n,T-a_n]}|\widetilde{Z}^*_n(t)|\in A^{Mv_n}|\mathcal{F}^X\right)+P\left(\sup_{t\in[a_n,T-a_n]}|Z^*_n(t)-\widetilde{Z}^*_n(t)|>Mv_n|\mathcal{F}^X\right).
\end{align*}

Set $s^n_j=a_n+j/n$ for $j=1,\dots,N:=\lfloor (T-2a_n)n\rfloor$ as in the proof of Lemma \ref{spot-coupling}. By definition we have
\begin{align*}
&\max_{1\leq j\leq N}|\widetilde{Z}^*_n(s^n_j)|\leq\sup_{t\in[a_n,T-a_n]}|\widetilde{Z}^*_n(t)|\leq\max_{1\leq j\leq N}|\widetilde{Z}^*_n(s^n_j)|+w(\widetilde{Z}^*_n;n^{-1}),\\
&\max_{1\leq j\leq N}|Z_n(s^n_j)|\leq\sup_{t\in[a_n,T-a_n]}|Z_n(t)|\leq\max_{1\leq j\leq N}|Z_n(s^n_j)|+w(Z_n;n^{-1}).
\end{align*}
In particular, we have
\begin{align*}
P\left(\sup_{t\in[a_n,T-a_n]}|\widetilde{Z}^*_n(t)|\in A^{Mv_n}|\mathcal{F}^X\right)
\leq P\left(\max_{1\leq j\leq N}|\widetilde{Z}^*_n(s^n_j)|\in A^{M(v_n+\varepsilon)}|\mathcal{F}^X\right)+P(w(\widetilde{Z}^*_n;\eta)>M\varepsilon|\mathcal{F}^X).
\end{align*}
Next, Lemma \ref{coupling} implies that there is a constant $c_1>0$ such that
\begin{align*}
&P\left(\max_{1\leq j\leq N}|\widetilde{Z}^*_n(s^n_j)|\in A^{M(v_n+\varepsilon)}|\mathcal{F}^X\right)\\
&\leq P\left(\max_{1\leq j\leq N}|Z_n(s^n_j)|\in A^{M(v_n+6\varepsilon)}\right)+c_1(M\varepsilon)^{-2}(\log N)\max_{1\leq j,k\leq N}\left|\Delta_{jk}\right|,
\end{align*}
where $\Delta_{jk}=E[\widetilde{Z}^*_n(s^n_j)\widetilde{Z}^*_n(s^n_k)|\mathcal{F}^X]-E[Z_n(s^n_j)Z_n(s^n_k)]$. Since we have
\begin{align*}
P\left(\max_{1\leq j\leq N}|Z_n(s^n_j)|\in A^{M(v_n+6\varepsilon)}\right)
\leq P\left(\sup_{t\in[a_n,T-a_n]}|Z_n(t)|\in A^{M(v_n+7\varepsilon)}\right)+P(w(Z_n;n^{-1})>M\varepsilon),
\end{align*}
we conclude that
\begin{align*}
&P\left(\sup_{t\in[a_n,T-a_n]}\left|Z_n^*(t)\right|\in A|\mathcal{F}^X\right)\\
&\leq P\left(\sup_{t\in[a_n,T-a_n]}|Z_n(t)|\in A^{M(v_n+7\varepsilon)}\right)
+c_1(M\varepsilon)^{-2}(\log N)\max_{1\leq j,k\leq N}\left|\Delta_{jk}\right|\\
&+P\left(\sup_{t\in[a_n,T-a_n]}|Z^*_n(t)-\widetilde{Z}^*_n(t)|>Mv_n|\mathcal{F}^X\right)+P(w(\widetilde{Z}^*_n;n^{-1})>M\varepsilon|\mathcal{F}^X)
+P(w(Z_n;n^{-1})>M\varepsilon).
\end{align*}
Therefore, by Lemma \ref{cck-kolmogorov} we obtain
\begin{align*}
&E\left[\sup_{x\in\mathbb{R}}\left|P\left(\sup_{t\in[a_n,T-a_n]}\left|Z_n^*(t)\right|\leq x|\mathcal{F}^X\right)-P\left(\sup_{t\in[a_n,T-a_n]}|Z_n(t)|\leq x\right)\right|\right]\\
&\leq \sup_{x\in\mathbb{R}}P\left(\left|\sup_{t\in[a_n,T-a_n]}|Z_n(t)|-x\right|\leq M(v_n+7\varepsilon)\right)
+c_1(M\varepsilon)^{-2}(\log N)E\left[\max_{1\leq j,k\leq N}\left|\Delta_{jk}\right|\right]\\
&+P\left(\sup_{t\in[a_n,T-a_n]}|Z^*_n(t)-\widetilde{Z}^*_n(t)|>Mv_n\right)+P(w(\widetilde{Z}^*_n;n^{-1})>M\varepsilon)
+P(w_n(Z_n;n^{-1})>M\varepsilon).
\end{align*}

Now let us set $\varepsilon=(nh)^{-1/4}\log^{3/2}n$. Then $\varepsilon\sqrt{\log n}\to0$, hence Lemma \ref{spot-anti} yields
\[
\lim_{n\to\infty}\sup_{x\in\mathbb{R}}P\left(\left|\sup_{t\in[a_n,T-a_n]}|Z_n(t)|-x\right|\leq M(v_n+7\varepsilon)\right)=\lim_{n\to\infty}P(w_n(Z_n;n^{-1})>M\varepsilon)=0.
\]
Moreover, Lemma \ref{spot-boot-modulus} yields $\lim_{n\to\infty}P(w(\widetilde{Z}^*_n;n^{-1})>M\varepsilon)=0.$ 
Furthermore, since $\sup_{t\in[a_n,T-a_n]}|Z^*_n(t)-\widetilde{Z}^*_n(t)|=O(v_n)$ by Lemmas \ref{avar-error}-\ref{spot-boot-modulus}, we have
\[
\lim_{M\to\infty}\limsup_{n\to\infty}P\left(\sup_{t\in[a_n,T-a_n]}|Z^*_n(t)-\widetilde{Z}^*_n(t)|>Mv_n\right)=0.
\]
Consequently, the proof is completed once we show that
\[
\limsup_{n\to\infty}\varepsilon^{-2}(\log N)E\left[\max_{1\leq j,k\leq N}\left|\Delta_{jk}\right|\right]<\infty.
\]
Since we have
\begin{align*}
\Delta_{jk}=\frac{2}{3\mathfrak{s}_n(s^n_j)\mathfrak{s}_n(s^n_k)}\sum_{i=1}^nK_h(t_{i-1}-s^n_j)K_h(t_{i-1}-s^n_k)\left\{(X_{t_i}-X_{t_{i-1}})^4-E[(X_{t_i}-X_{t_{i-1}})^4]\right\},
\end{align*}
by Proposition \ref{gh-chaos} there is a universal constant $C'>0$ such that $\Delta_{jk}$ is a sub-4th chaos random variable with respect to $C'\sqrt{E[\Delta_{jk}^2]}$ for all $j,k$. By Lemma \ref{spot-var} and \ref{kernel} we have
\begin{align*}
E\left[\Delta_{ij}^2\right]
&=\frac{4}{9\mathfrak{s}_n^2(s^n_j)\mathfrak{s}_n^2(s^n_k)}\sum_{i=1}^nK_h(t_{i-1}-s^n_j)^2K_h(t_{i-1}-s^n_k)^2\variance\left[(X_{t_i}-X_{t_{i-1}})^4\right]\\
&\lesssim(nh)^2\cdot\frac{nh}{h^4}\cdot\frac{1}{n^4}=(nh)^{-1},
\end{align*}
hence Proposition \ref{max-sub-chaos} implies that $E\left[\max_{1\leq j,k\leq N}\left|\Delta_{jk}\right|\right]\lesssim\frac{\log^2n}{\sqrt{nh}}.$ 
This yields the desired result.
\end{proof}

\begin{proof}[\upshape\bfseries Proof of Proposition \ref{spot-boot}]
Note that Lemma \ref{spot-anti} implies that $\sup_{t\in[a_n,T-a_n]}|Z_n(t)|$ has the density. Hence the desired result follows from an analogous argument to the proof of Proposition \ref{hry-boot}(a) using Lemma \ref{kolmogorov-spothat} and Lemma \ref{spot-boot-kolmogorov} instead of Proposition \ref{hry} and Lemma \ref{boot-dist}, respectively. 
\end{proof}
\fi


\section*{Acknowledgements}

The author thanks Kengo Kato and Daisuke Kurisu for their helpful comments. 
The author is also grateful to the Editor and two anonymous referees for their constructive comments which have significantly improved the original version of the paper. 

\if0
\begin{supplement}[id=suppA]
  \stitle{Supplement to ``Gaussian approximation of maxima of Wiener functionals and its application to high-frequency data''}
  \sdatatype{.pdf} 
  \sdescription{This supplement file contains the technical materials of the paper and consists of two appendices. Appendix \ref{preliminaries} collects the preliminary definitions and results used in Appendix \ref{sec:proofs}, which contains proofs of all the results presented in the main text of the paper. }
\end{supplement}
\fi

{\small
\renewcommand*{\baselinestretch}{1}\selectfont
\addcontentsline{toc}{section}{References}

}

\end{document}